\newtheorem{thm}{Theorem}[section]
\newtheorem{prop}[thm]{Proposition}
\newtheorem{conj}[thm]{Conjecture}
\newtheorem{lem}[thm]{Lemma}
\theoremstyle{definition}
\newtheorem{assumption}[thm]{Assumption}
\theoremstyle{remark}
\numberwithin{equation}{section}
\DeclareMathOperator{\res}{res}
\DeclareMathOperator{\cl}{cl}
\DeclareMathOperator{\et}{\mathrm{\acute{e}t}}
\DeclareMathOperator{\van}{van}
\DeclareMathOperator{\Br}{Br}
\DeclareMathOperator{\Ker}{Ker}
\DeclareMathOperator{\Pic}{Pic}
\DeclareMathOperator{\Sing}{Sing}
\DeclareMathOperator{\Id}{Id}
\DeclareMathOperator{\tors}{tors}
\DeclareMathOperator{\Gal}{Gal}
\DeclareMathOperator{\CH}{CH}
\DeclareMathOperator{\per}{per}
\DeclareMathOperator{\ind}{ind}
\DeclareMathOperator{\Tr}{Tr}
\DeclareMathOperator{\sExt}{\mathscr{E}\text{\kern -3pt {\calligra\large xt} }}
\DeclareMathOperator{\RHom}{\mathrm{R}\mathscr{H}\text{\kern -3pt {\calligra\large om}}\,}
\DeclareMathOperator{\sHom}{\mathscr{H}\text{\kern -3pt {\calligra\large om}}\,}
\DeclareMathOperator{\cupp}{\mkern-1mu\smile\mkern-1mu}
\newcommand{\congr}[3]{#1 \equiv #2 \!\!\!\!\mod #3}
\newcommand{\congru}[3]{#1 \equiv #2 \!\!\mod #3}
\newcommand{\LL}{{\mathbf L}}
\newcommand{\ci}{\mathcal{C}^{\infty}}
\newcommand{\co}{\mathcal{C}^{0}}
\def\ssetminus{\mkern-5mu\setminus\mkern-5mu}
\def\sto{\mkern-4mu\to\mkern-4mu}
\def\soplus{\mkern-4mu\oplus\mkern-4mu}
\def\s={\mkern-3mu=\mkern-3mu}
\def\splus{\mkern-1mu +\mkern-1mu}
\def\RR{\mathrm{R}}
\def\vep{\varepsilon}
\def\cA{\mathcal{A}}
\def\cO{\mathcal{O}}
\def\cH{\mathcal{H}}
\def\cL{\mathcal{L}}
\def\cM{\mathcal{M}}
\def\cN{\mathcal{N}}
\def\cT{\mathcal{T}}
\def\cK{\mathcal{K}}
\def\cI{\mathcal{I}}
\def\cU{\mathcal{U}}
\def\cX{\mathcal{X}}
\def\sF{\mathscr{F}}
\def\sG{\mathscr{G}}
\def\sL{\mathscr{L}}
\def\sQ{\mathscr{Q}}
\def\kX{\mathfrak X}
\def\bkX{\overline{\mathfrak X}}
\def\bK{\mathbb K}
\def\Z{\mathbb Z}
\def\C{\mathbb C}
\def\Q{\mathbb Q}
\def\R{\mathbb R}
\def\bN{\mathbb N}
\def\bS{\mathbb S}
\def\bP{\mathbb P}
\def\bH{\mathbb H}
\def\SI{\mathbb{S}^1}
\def\G{\mathbf G}
\def\bmu{\boldsymbol\mu}
\def\bT{\overline{T}}
\def\bp{\overline{p}}
\def\hrho{\hat \rho}
\def\hp{\hat p}
\def\he{\hat e}
\def\hu{\hat u}
\def\hS{\hat S}
\def\hep{\hat\varepsilon}
\def\hW{\widehat W}
\def\hV{\widehat V}
\def\hSI{\widehat{\mathbb{S}^1}}
\def\trho{\tilde \rho}
\def\tu{\tilde u}
\def\tR{\widetilde R}
\def\tep{\tilde\varepsilon}
\def\tSI{\widetilde {\mathbb{S}^1}}
\def\tX{\widetilde X}
\def\tZ{\widetilde \Z}
\def\tW{\widetilde W}
\def\talpha{\tilde{\alpha}}
\begin{document}

\title[The period-index problem for real surfaces]
{The period-index problem for real surfaces}

\author{Olivier Benoist}
\address{D\'epartement de math\'ematiques et applications, \'Ecole normale sup\'erieure,
45 rue d'Ulm, 75230 Paris Cedex 05, France}
\email{olivier.benoist@ens.fr}

\renewcommand{\abstractname}{Abstract}
\begin{abstract}
We study when the period and the index of a class in the Brauer group of the function field of a real algebraic surface coincide. We prove that it is always the case if the surface has no real points (more generally, if the class vanishes in restriction  to the real points of the locus where
 it is well-defined), and give a necessary and sufficient condition for unramified classes. 
As an application, we show that the $u$-invariant of the function field of a real algebraic surface is equal to $4$, answering questions of Lang and Pfister.
Our strategy relies on a new Hodge-theoretic approach to de Jong's period-index theorem on complex surfaces.
\end{abstract}
\maketitle

\section*{Introduction}\label{intro}

\subsection{The period-index problem}

  Let $K$ be a field, and let $\Br(K)$ be its Brauer group. The period $\per(\alpha)$ of $\alpha\in \Br(K)$ is its order in $\Br(K)$ and its index $\ind(\alpha)$ is the smallest (equivalently, the gcd) of the degrees of the finite field extensions $L/K$ over which $\alpha$ vanishes. In general, $\per(\alpha)\mid\ind(\alpha)$, and these invariants have the same prime divisors.
  Finding further constraints on the period and the index
is the so-called period-index problem (see \cite{BourCT} for an account of this question).

Two outstanding results are de Jong and Lieblich's theorems on function fields of surfaces over algebraically closed (\cite{deJong}, see also \cite[Theorem 4.2.2.3]{Lieblichtwisted}) or finite fields \cite[Theorem 1.1]{Lieblich}
(see \cite{padic} for results on function fields of $p$-adic surfaces).

\begin{thm}[de Jong] 
\label{pidJ}
Let $S$ be a connected smooth projective surface over an algebraically closed field $k$. If $\alpha\in\Br(k(S))$, then $\ind(\alpha)=\per(\alpha)$.
\end{thm}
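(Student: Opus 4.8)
The plan is to treat the case $k=\C$ and to deduce the rest of characteristic zero by the Lefschetz principle — spreading $S$ and $\alpha$ out over a finitely generated subfield of $\C$, and using that the index of a Brauer class over the function field of a variety does not increase under extension of the algebraically closed base field, by a standard specialisation argument. (The positive-characteristic case is not accessible to this Hodge-theoretic approach, and there one appeals to de Jong's original argument.) Since $\per(\alpha)\mid\ind(\alpha)$ always, and since the period and the index have the same prime divisors, a primary decomposition $\alpha=\sum_p\alpha_p$ together with $\ind(\alpha)=\prod_p\ind(\alpha_p)$ reduces us to the case $\per(\alpha)=\ell^n$ with $\ell$ prime; and an induction on $n$ reduces us further to $\per(\alpha)=\ell$: given a degree-$\ell$ extension $L/\C(S)$ splitting $\ell^{n-1}\alpha$, realise $L$ as the function field of a smooth projective surface $S'$ and apply the inductive hypothesis to $\alpha_L$, whose period divides $\ell^{n-1}$, to get $\ind(\alpha)\mid \ell\cdot\ind(\alpha_L)\mid\ell^n$. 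So the whole content is: \emph{for $S$ a smooth projective complex surface and $\alpha\in\Br(\C(S))$ of prime period $\ell$, one has $\ind(\alpha)=\ell$.}

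Next I would fix a good model: after blowing up, $\alpha$ extends to a class in $\Br(U)$ with $U=S\ssetminus D$ for a simple normal crossings divisor $D=\sum_i D_i$ on a smooth projective surface $S$, with residues $\partial_{D_i}(\alpha)\in H^1(\C(D_i),\Z/\ell)$. The goal then becomes the construction of a torsion-free coherent $\alpha$-twisted sheaf of generic rank $\ell$ — on $U$, extended compatibly with the ramification data along $D$ (equivalently, on the root stack of $(S,D)$ attached to the ramification, or as a $\mu_\ell$-gerbe on $S$ with the prescribed residues carrying a rank-$\ell$ twisted sheaf). Indeed, the sheaf of endomorphisms $\sHom(E,E)$ of such a bundle is a degree-$\ell$ Azumaya algebra of Brauer class $\alpha$, so $\ind(\alpha)\mid\ell$; and conversely any central simple algebra of degree $\ell$ and class $\alpha$ gives such an $E$ over a suitable open set.

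The construction proceeds in two stages. First, a \emph{topological} rank-$\ell$ twisted bundle is produced on the underlying oriented $4$-manifold: the obstruction to lifting the corresponding $PU(\ell)$-bundle to a $U(\ell)$-bundle lies in $H^3(S,\Z)$, where the class of $\alpha$ is $\ell$-torsion, and on a $4$-dimensional complex such torsion can always be absorbed — this is the topological period–index theorem in dimension $4$, the main input on the topological side. Second, one must \emph{algebraise} it, and this is where Hodge theory enters: the obstruction to endowing the topological twisted bundle with an algebraic (equivalently, by GAGA, holomorphic) structure is controlled by the twisted Chern character in $H^{2\bullet}(S,\Q)$, which has to consist of classes of Hodge type. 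On a surface this condition decomposes, component by component, into the Lefschetz $(1,1)$ theorem for the ``first Chern class'' — which we may place in $\mathrm{NS}(S)_\Q$, exploiting both the freedom to twist by topological line bundles and the ambiguity, a torsor under $\Pic(S)/\ell$, in lifting $\alpha$ to $H^2(S,\mu_\ell)$ — together with statements about $H^4$ and $H^3$ of a surface that hold for dimension reasons. Once the Hodge-type condition is met, a Serre-construction and elementary-modification argument on $S$ produces an algebraic twisted sheaf with the prescribed numerical invariants, provided its discriminant is taken large enough, which we are free to arrange.

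I expect the main obstacle to be carrying out this algebraisation \emph{compatibly with the ramification of $\alpha$ along $D$} while keeping the generic rank equal to $\ell$: one must match the Hodge-theoretic control of the global invariants with the local structure of the residues $\partial_{D_i}(\alpha)$ — for instance by working throughout on the root stack of $(S,D)$, or by introducing a cyclic covering $\pi\colon S'\to S$ branched along $D$ that simultaneously trivialises the ramification, and then controlling the rank of the twisted sheaf after modification along (the strict transform of) $D$. This is precisely the step at which de Jong's moduli-theoretic smoothing argument gets replaced by Hodge theory on the fixed surface $S$, and it is also the step that, in the application to real surfaces, must be made to work $\Gal(\C/\R)$-equivariantly.
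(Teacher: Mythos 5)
Your global frame is sound: the Lefschetz principle, the primary decomposition and induction reducing to prime period $\ell$, and the topological input (on a $4$-complex a class of order $\ell$ in $H^3(S(\C),\Z)$ is always the obstruction class of a $PU(\ell)$-bundle, since the relevant obstruction lives in $H^5$) are all correct. The gap is your second stage. You assert that once the twisted Chern character of the topological rank-$\ell$ twisted bundle is arranged to be of Hodge type, ``a Serre-construction and elementary-modification argument on $S$ produces an algebraic twisted sheaf with the prescribed numerical invariants.'' But the existence of \emph{any} torsion-free $\alpha$-twisted sheaf of generic rank $\ell$ is precisely the statement $\ind(\alpha)\mid\ell$ that you are trying to prove; there is no twisted analogue of the Schwarzenberger/Serre-construction existence theorem to invoke, because the Serre construction must start from twisted sheaves of lower rank (ultimately twisted line bundles), and these do not exist when $\alpha\neq 0$. ``Hodge type of the Chern character'' is a necessary condition, not a sufficient one, and no GAGA-type argument converts a topological twisted bundle with Hodge-theoretically admissible invariants into a holomorphic one. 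So the entire content of the theorem is concentrated in a step for which you give no argument.

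This is also where your route diverges from the paper's. The paper never constructs a twisted sheaf on $S$; instead it splits $\alpha$ (assumed unramified, the reduction to that case being de Jong's \cite[\S 7]{deJong}, which you also leave open) by pulling back to smooth surfaces $T\in|dnH|$ inside $\kX=\bP^1\times S$, for two \emph{consecutive} values of $d$ — which incidentally handles arbitrary period $n$ at once, with no reduction to prime period. The Hodge theory enters variationally: after weak Lefschetz kills the obstruction in $H^3(T(\C),\Z)$ and the integral lift $\beta$ is normalized so that $\beta-n\gamma$ lies in $\Ker(p_*)$, Voisin's density theorem for Noether--Lefschetz loci (Green's infinitesimal criterion) lets one move $T$ in its linear system until $\beta-n\gamma-n\delta$ becomes a $(1,1)$-class, hence algebraic, forcing $\alpha|_T=0$. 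In other words, the ``algebraization'' is achieved not on a fixed surface but by deforming the auxiliary surface until the Lefschetz $(1,1)$ theorem applies — exactly the mechanism your proposal is missing. If you want to salvage your twisted-sheaf formulation, you would have to replace your stage two by a deformation/moduli argument (as Lieblich and de Jong do), or by a Noether--Lefschetz argument of the paper's type.
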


\begin{thm}[Lieblich]
\label{piLieblich}
Let $S$ be a connected smooth projective surface over a finite field $k$.  If $\alpha\in\Br(k(S))$, then $\ind(\alpha)\mid\per(\alpha)^2$.
\end{thm}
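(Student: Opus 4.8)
The plan is to bound the index through moduli of twisted sheaves in the style of Lieblich, feeding in de Jong's Theorem~\ref{pidJ} over $\overline{\mathbb F}_q$ as the geometric input and using the Weil conjectures to descend to $\mathbb F_q$.

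First I would reduce to the case where $n:=\per(\alpha)=\ell^m$ is a prime power: by the primary decomposition of $\Br(K)$ it suffices to bound the $\ell$-primary part of $\ind(\alpha)$ for each prime $\ell$ separately, and I focus for the moment on $\ell\neq p:=\car(k)$, the $p$-primary part requiring a separate treatment via $p$-cohomological dimension and differential forms. Pick a connected smooth projective model $X$ of $K$ over $\mathbb F_q$; after blowing up, $\alpha$ is defined on the complement $U\subset X$ of a strict normal crossings divisor, and I would represent it by a $\bmu_n$-gerbe $\mathscr X\to U$. The basic link with the index is that any locally free $\mathscr X$-twisted sheaf of rank $r$ forces $\ind(\alpha)\mid r$, since at the generic point such a sheaf is a module over a central simple $K$-algebra representing $\alpha$. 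So the goal becomes: produce such a twisted sheaf, defined over $\mathbb F_q$ or a controlled extension of it, of rank dividing $n^2$.

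Over $\overline{\mathbb F}_q$, de Jong's theorem supplies, in its twisted-sheaf incarnation, a locally free $\mathscr X_{\overline{\mathbb F}_q}$-twisted sheaf of rank dividing $\per(\alpha_{\overline{\mathbb F}_q(X)})\mid n$, hence after forming direct sums a polystable one of rank exactly $n$. I would then pass to the moduli stack $\mathscr M$ of semistable $\mathscr X$-twisted sheaves of rank $n$ with fixed determinant, for varying second Chern class $c_2$. The crux is to show that for $c_2\gg 0$ this stack is non-empty over $\overline{\mathbb F}_q$ (a twisted analogue of de Jong's existence result, proved by deformation theory: the obstructions live in a twisted $\mathrm{Ext}^2$, and a twisted Riemann--Roch computation controls the dimensions) and, moreover, that its coarse space $M$ is a geometrically integral $\mathbb F_q$-variety whose smooth locus is swept out by rational curves. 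Granting this, the Lang--Weil estimates together with the residual $\bmu_n$-gerbe structure of $\mathscr M$ over $M$ produce a point of $\mathscr M$ over some $\mathbb F_{q^d}$ with $d\mid n$, hence a rank-$n$ twisted sheaf over $\mathbb F_{q^d}(X)$; therefore $\ind(\alpha_{\mathbb F_{q^d}(X)})\mid n$, and by restriction--corestriction $\ind(\alpha_K)\mid [\mathbb F_{q^d}(X):K]\cdot\ind(\alpha_{\mathbb F_{q^d}(X)})\mid d\cdot n\mid n^2$. Reassembling the $\ell$-primary parts over all $\ell$ yields $\ind(\alpha)\mid\per(\alpha)^2$.

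The main obstacle is the geometric study of the moduli of twisted sheaves over $\overline{\mathbb F}_q$: proving non-emptiness for large second Chern class, and exhibiting enough rational connectivity of the smooth locus that the finite field $\mathbb F_q$, or a degree-dividing-$n$ extension of it, sees a point. The secondary difficulties are the numerical bookkeeping that converts such a point into the sharp exponent $2$ — tracking the determinant, the gerbe over the coarse space, and the original ramification of $\alpha$ along the normal crossings divisor discarded at the start — and the separate argument needed for the $p$-primary part when $\car(k)=p>0$.
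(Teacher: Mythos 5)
This theorem is not proved in the paper: it is quoted from Lieblich's work (the paper cites \cite[Theorem 1.1]{Lieblich} and, for the unramified case, \cite[Theorem 4.3.1.1]{Lieblichtwisted}), so there is no internal proof to compare against. Judged on its own terms, your sketch assembles the right toolbox --- twisted sheaves, their moduli, de Jong's theorem over $\overline{\mathbb F}_q$ as geometric input, and point counting over finite fields --- which is indeed Lieblich's strategy. But the step that is supposed to produce the exponent $2$ does not work as you describe it, and the actual source of the exponent $2$ is the issue you relegate to a ``secondary difficulty'' at the end.

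Concretely: Lang--Weil applied to a geometrically integral coarse space $M$ produces points over $\mathbb F_{q^d}$ for \emph{all} sufficiently large $d$, with no divisibility relation ``$d\mid n$''; and the obstruction to lifting a point of $M(\mathbb F_{q^d})$ to an object of the stack lies in $H^2(\mathbb F_{q^d},\bmu_n)=0$ (resp.\ $\Br(\mathbb F_{q^d})=0$), since a finite field has cohomological dimension $1$ --- so the gerbe costs nothing; this vanishing is precisely the special feature of finite fields that the argument exploits. Running your argument with two coprime large degrees $d,d'$ would then give $\ind(\alpha)\mid\gcd(dn,d'n)=n$, i.e.\ $\ind=\per$ --- stronger than the theorem, and in fact only known for \emph{unramified} classes (this is \cite[Theorem 4.3.1.1]{Lieblichtwisted}). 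The reason this does not prove the theorem as stated is the ramification: the moduli theory you invoke lives on a smooth \emph{projective} surface (or a proper stacky model), not on the open complement $U$ of the ramification divisor where you placed your gerbe, and the reduction of a ramified class to an unramified one on a projective model costs a field extension of degree dividing $\per(\alpha)$ (de Jong's trick, Saltman's ramification theory, or Lieblich's root-stack approach). It is this reduction, composed with $\ind=\per$ in the unramified case, that yields $\ind(\alpha)\mid\per(\alpha)\cdot\per(\alpha)=\per(\alpha)^2$. Your proposal as written either proves too much (if the moduli machine worked over $U$) or proves nothing (since it does not work over $U$); the missing idea is the ramification-killing cover and the degree it costs.
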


A general guideline
 is that if $K$ has cohomological dimension $\delta$, one might hope that $\ind(\alpha)\mid\per(\alpha)^{\delta-1}$ for every $\alpha\in\Br(K)$. Theorems \ref{pidJ} and \ref{piLieblich} fit into this philosophy, but Merkurjev has constructed convoluted counterexamples \cite[\S 3]{Merkurjev}. The case of $K=\C(x,y,z)$, that has cohomological dimension $3$, is wide open.

In this paper, relying on a new Hodge-theoretic approach to de Jong's theorem (see \S\ref{strategy} and Section \ref{secdJ}), we investigate the case of function fields $K$ of real algebraic surfaces. They may have infinite cohomological dimension, but have virtual cohomological dimension $2$ (i.e. $K[\sqrt{-1}]$ has cohomological dimension $2$).

\subsection{Function fields of real surfaces}
\label{nicestatements}

Let $S$ be a connected smooth projective surface over $\R$, and $\alpha\in \Br(\R(S))$ be a Brauer class.
De Jong's Theorem~\ref{pidJ} and a norm argument show that $\ind(\alpha)=\per(\alpha)$ or $\ind(\alpha)=2\per(\alpha)$.
We show that the equality $\ind(\alpha)=\per(\alpha)$ always holds if $S$ has no real points.

\begin{thm}\label{pisanspoint}
Let $S$ be a connected smooth projective surface over $\R$ such that $S(\R)=\varnothing$. If $\alpha\in\Br(\R(S))$, then $\ind(\alpha)=\per(\alpha)$.
\end{thm}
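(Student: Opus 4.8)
The plan is to descend de Jong's Theorem~\ref{pidJ} from $\C$ to $\R$, the observation being that the obstructions to the descent are controlled by the real locus of $S$ and therefore vanish when $S(\R)=\varnothing$; the descent is carried out not on the Brauer class directly but on the geometric output of a Hodge-theoretic proof of Theorem~\ref{pidJ}. I would first reduce the statement. We may assume $S$ geometrically connected, since otherwise $\C\subseteq\R(S)$, so $\R(S)$ is the function field of a complex surface and Theorem~\ref{pidJ} applies verbatim. Then $\C(S):=\R(S)\otimes_\R\C$ is a quadratic extension of $\R(S)$, and Theorem~\ref{pidJ} together with the norm argument recalled above give $\ind(\alpha)\mid 2\per(\alpha)$. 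Since period and index are multiplicative over primary components, and since for an odd-primary class $\beta$ the index (an odd prime power) divides $2\,\ind(\beta_\C)=2\per(\beta_\C)$ and hence divides $\per(\beta)$, we may assume $\alpha$ is $2$-primary; and if $\per(\alpha_\C)=\per(\alpha)/2$ then $\ind(\alpha)\mid 2\,\ind(\alpha_\C)=2\per(\alpha_\C)=\per(\alpha)$ and we are done. So we may assume $\per(\alpha_\C)=\per(\alpha)=:n$, a power of $2$. As every splitting field of $\alpha$ now has $2$-power degree, it suffices to produce one of degree $n$ over $\R(S)$, or equivalently $\ind(\alpha)\mid n$.

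Next I would set up the geometry over $\R$. Spreading out, take a smooth projective $\R$-model of $S$ on which $\alpha$ is unramified away from a divisor $D$; by resolution of the pair $(S,D)$ over $\R$ we may assume $D$ has simple normal crossings, and $D(\R)\subseteq S(\R)=\varnothing$. Let $f\colon\bP\to S$ be the Severi--Brauer $\bP^{\,n-1}$-bundle attached to $\alpha$ over $S\setminus D$, extended to a $\bP^{\,n-1}$-bundle over the root stack of $S$ along $D$ (which has no real points), so that $\bP$ is a smooth projective threefold without real points. It then suffices to produce an irreducible curve $C\subseteq\bP$ finite of degree $n$ onto $S$: its function field splits $\alpha$ and has degree $n$ over $\R(S)$, forcing $\ind(\alpha)=\per(\alpha)=n$. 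Up to a moving argument this amounts to exhibiting an $\R$-rational algebraic $1$-cycle on $\bP$ (equivalently, a locally free $\alpha$-twisted sheaf of rank $n$) whose cohomology class has fibrewise degree $n$ over $S$.

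The remaining task, and the hard part, is to produce this cycle. Over $\C$ this is precisely the content of the Hodge-theoretic proof of Theorem~\ref{pidJ} in \S\ref{secdJ}: it builds on $\bP_\C$ an integral Hodge class of fibrewise degree $n$ --- through liftings of cohomology classes and the Lefschetz $(1,1)$-theorem, all compatible with complex conjugation --- and then an algebraic $1$-cycle representing it. I would rerun this argument $\Gal(\C/\R)$-equivariantly. Two obstructions appear: choosing the Hodge class invariant under complex conjugation, and representing an invariant Hodge class by an $\R$-rational $1$-cycle (the real integral Hodge conjecture for $1$-cycles on the threefold $\bP$). Both live in Galois-cohomology groups assembled from the equivariant cohomology of $\bP(\C)$ and from the residues of $\alpha$ along the components of $D$, and both localize onto $S(\R)$ and the real components of $D$; since these are empty, the obstructions vanish. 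This yields the required real $1$-cycle, hence a degree-$n$ splitting field, i.e.\ $\ind(\alpha)=n=\per(\alpha)$.

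I expect the principal difficulty to be exactly this last step: making the Hodge-theoretic construction $\Gal(\C/\R)$-equivariant and identifying the relevant obstruction spaces sharply enough to recognize that they are supported on the real locus, so that $S(\R)=\varnothing$ makes them vanish; a subsidiary technical point is handling the ramification of $\alpha$ along $D$ compatibly with the Galois action, which is harmless here because $D$ too has no real points. Finally, tracking the residues at real components of $D$ and the restriction of $\alpha$ to $S(\R)$ throughout this step should, by the same method, yield the more general statement announced in the abstract.
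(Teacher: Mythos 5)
Your reduction to $2$-primary classes with $\per(\alpha_\C)=\per(\alpha)=n$ is fine, and you have correctly identified where the difficulty lies; but the proposal does not close that gap, and the step you defer is essentially the entire content of the paper. The claim that one can ``rerun the argument of Section \ref{secdJ} $\Gal(\C/\R)$-equivariantly'' and that the obstructions ``localize onto $S(\R)$ and the real components of $D$'' fails in two concrete ways. First, the complex argument rests on Voisin's Theorem \ref{Voisin}, a density statement for Noether--Lefschetz loci, and no real analogue of that theorem is available: the real version of Green's infinitesimal criterion requires the surjectivity of $\overline{\nabla}(\lambda)$ at a \emph{real} point of the parameter space onto $H^2(T,\cO_T)_{\van}$, and this has to be established by an explicit geometric construction (a curve $C$ with $C(\R)=\varnothing$ on a special member of the family, Sections \ref{secNL}--\ref{secproof}); it is not a formal Galois-equivariant consequence of the complex statement, even when $S(\R)=\varnothing$. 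Second, the obstruction you invoke to representing a $G$-invariant Hodge class on the threefold $\bP$ by an $\R$-rational $1$-cycle is precisely the real integral Hodge conjecture for $1$-cycles, which is open for threefolds over $\R$ even in the absence of real points (the paper only derives \emph{evidence} for it, via Theorem \ref{dP4}); it does not reduce to a group supported on the real locus. The actual proof circumvents both problems by never working on a Severi--Brauer threefold: it splits the period-$2$ class $\frac{n}{2}\alpha$ by an explicitly constructed ramified double cover $p:T\to S$, applies the real Lefschetz $(1,1)$ theorem on the \emph{surface} $T$ after moving in a family of such covers for which Green's criterion has been verified by hand, and then inducts on the period.

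A secondary problem is the setup itself: the ``Severi--Brauer $\bP^{n-1}$-bundle attached to $\alpha$'' need not exist, since over the generic point its existence is equivalent to $\ind(\alpha)\mid n$, which is what you are trying to prove; a priori only a $\bP^{2n-1}$-bundle is available, and one must then look for closed points (or twisted subsheaves) of the right degree, which changes the numerology of the cycle you need. This is fixable, but together with the main gap it means the proposal records the correct philosophy --- Hodge theory plus vanishing of real-locus obstructions --- without supplying the two ingredients (a real substitute for Voisin's theorem, and a substitute for the real integral Hodge conjecture) that make the strategy work.
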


This gives new examples of fields of  cohomological dimension $2$ (by \cite[Proposition 1.2.1]{CTParimala}), such as $K=\R(x,y,z\mid x^2+y^2+z^2=-1)$, on which period and index coincide. As we explain in \S\ref{parLang}, Theorem \ref{pisanspoint} was predicted by a conjecture of Lang. 

In general, the class $\alpha$ belongs to the subgroup $\Br(U)\subset \Br(\R(S))$ for some open subset $U\subset S$ (see \S\ref{parBrauer}).
 Theorem \ref{pisanspoint} generalizes to the case when $\alpha$ vanishes in restriction to the real points of $U$. We also explain in \S \ref{parLang} that this statement had been conjectured by Pfister.

\begin{thm}
\label{piavecpoint}
Let $U$ be a connected smooth surface over $\R$ and let $\alpha\in \Br(U)\subset\Br(\R(U))$ be such that for every $x\in U(\R)$, $\alpha|_x=0\in\Br(\R)$. Then $\ind(\alpha)=\per(\alpha)$.
\end{thm}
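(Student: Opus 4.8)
The plan is to reduce to the case $\per(\alpha)=2$ and then to carry out, equivariantly for the action of $\Gal(\C/\R)$, the Hodge-theoretic proof of de Jong's theorem developed in \S\ref{strategy} and Section~\ref{secdJ}; the role of the hypothesis on real points is to kill an obstruction carried by the cohomology of the real locus. For the reductions, we may first assume that $\R$ is algebraically closed in $\R(U)$, since otherwise $\C\subseteq\R(U)$ and $\ind(\alpha)=\per(\alpha)$ already by Theorem~\ref{pidJ}. Next, writing $\alpha=\sum_\ell\alpha_\ell$ for its primary decomposition — each $\alpha_\ell$ being an integer multiple of $\alpha$, hence still vanishing at every real point of $U$ — and using multiplicativity of $\per$ and $\ind$ over this decomposition, we may assume $\per(\alpha)=\ell^n$. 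For $\ell$ odd, Theorem~\ref{pidJ} over $\C$ gives $\ind(\alpha_{\C(U)})=\per(\alpha_{\C(U)})$, whence $\ind(\alpha)\mid[\C(U):\R(U)]\cdot\ind(\alpha_{\C(U)})\mid 2\per(\alpha)$; as $\ind(\alpha)$ is odd it divides $\per(\alpha)$, so equality holds. We may thus assume $\per(\alpha)=2^n$, and we reduce to $n=1$ by induction on $n$: if $n\ge2$, then $2^{n-1}\alpha$ has period $2$ and vanishes at every real point of $U$, so by the period-$2$ case it is split by a quadratic extension $L=\R(V)$ of $\R(U)$, where $V$ is a smooth projective surface over $\R$ with function field $L$ (obtained by normalizing a smooth projective model of $U$ in $L$ and resolving singularities); over the preimage $U_V\subset V$ of $U$, the class $\alpha_L\in\Br(U_V)$ vanishes at every real point of $U_V$ and has $\per(\alpha_L)\mid2^{n-1}$, so the inductive hypothesis applied to $(U_V,\alpha_L)$ — or Theorem~\ref{pisanspoint} when $V(\R)=\varnothing$ — gives $\ind(\alpha_L)=\per(\alpha_L)\mid2^{n-1}$ and hence $\ind(\alpha)\mid[L:\R(U)]\cdot\ind(\alpha_L)\mid2^n=\per(\alpha)$.

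It remains to show $\ind(\alpha)=2$ when $\per(\alpha)=2$. Fix a smooth projective connected surface $S$ over $\R$ with $U\subset S$ dense open and view $\alpha\in\Br(\R(S))$, possibly ramified along $S\ssetminus U$. By the norm argument (\S\ref{nicestatements}) we have $\ind(\alpha)\in\{2,4\}$, and it is enough to produce, on a suitable model over $\R$, an $\alpha$-twisted coherent sheaf locally free of rank $2$ at the generic point: its sheaf of endomorphisms there is then a degree-$2$ central simple algebra in the class of $\alpha$, forcing $\ind(\alpha)\mid2$, and its generic fibre exhibits a splitting field of $\alpha$ of degree dividing $2$. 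Over $\C$, the existence of such a twisted sheaf on $S_\C$ is exactly what the argument of Section~\ref{secdJ} yields: after fixing a lift of $\alpha$ to a class in $H^2(S_\C,\mu_2)$, the construction reduces, through a twisted Riemann--Roch computation, to realizing certain prescribed twisted Chern classes by algebraic cycles on $S_\C$, which holds over $\C$ because all integral Hodge classes on a complex surface are algebraic (the Lefschetz $(1,1)$-theorem in degree $2$, trivially in degree $4$); this reproves Theorem~\ref{pidJ}. To obtain the statement over $\R$ one must run this construction $\Gal(\C/\R)$-equivariantly and produce a $\Gal(\C/\R)$-equivariant twisted sheaf, which then descends to a twisted sheaf on a real model.

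The heart of the matter — and the step I expect to be the main obstacle — is the analysis of the obstruction to making the construction equivariant. Comparing $\Gal(\C/\R)$-equivariant cohomology of $S(\C)$ with the cohomology of the fixed locus $S(\R)$ through the localization sequence, this obstruction is controlled by a group built out of $H^\bullet(S(\R),\Z/2)$, and the failure of a solution over $\C$ to be $\Gal(\C/\R)$-invariant is measured by the restrictions $\alpha|_x\in\Br(\R)$ at the real points $x$. The hypothesis that $\alpha|_x=0$ for every $x\in U(\R)$ — supplemented by a control of $\alpha$ across $S\ssetminus U$, which one must arrange by choosing the model $S$ and the intervening blow-ups with care, or else by working directly with an $\alpha$-twisted sheaf on $U$ and compactifying it afterwards — forces this obstruction to vanish; equivalently, one establishes the pertinent real integral Hodge statement for the cycle class entering the construction, which is algebraic over $\R$ precisely because $\alpha$ is real-trivial. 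Pinning down this obstruction group, identifying it with the target of $\alpha\mapsto(\alpha|_x)_{x\in U(\R)}$, and taming the ramification of $\alpha$ along $S\ssetminus U$ so that the hypothesis on $U(\R)$ alone suffices, is where the real work lies. Once the equivariant twisted sheaf is at hand, we get $\ind(\alpha)\mid2$ as above; together with $\per(\alpha)=2\mid\ind(\alpha)$, this yields $\ind(\alpha)=2=\per(\alpha)$ and completes the proof.
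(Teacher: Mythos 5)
Your reduction to the period~$2$ case is essentially the paper's (Proposition \ref{proofarbitraryperiod}): odd part by de Jong plus a norm argument, $2$-power part by induction, using that in the case $\Theta=\varnothing$ the class pulled back to the quadratic cover still vanishes at all real points of the preimage of $U$. That part is fine. The proof collapses, however, at the period~$2$ case, which is the entire content of the theorem.

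First, you misdescribe what Section \ref{secdJ} proves and how. It does not construct an $\alpha$-twisted sheaf of rank $2$ via twisted Riemann--Roch; the paper explicitly states in \S\ref{strategy} that it does \emph{not} know how to adapt the twisted-sheaf proofs of de Jong and Lieblich to the real setting, and that is precisely why it develops a different, Hodge-theoretic method. More seriously, your justification for the complex step --- ``all integral Hodge classes on a complex surface are algebraic (the Lefschetz $(1,1)$-theorem in degree $2$)'' --- is false: the Lefschetz $(1,1)$ theorem only makes \emph{integral classes of type $(1,1)$} algebraic, and on a surface with $h^{2,0}\neq 0$ the class $\beta$ lifting $\alpha$ in (\ref{BrauerC}) has no reason to be of type $(1,1)$. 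The whole point of Section \ref{secdJ} is that one must let the cover $T$ vary in a family and invoke the density of Noether--Lefschetz loci (Voisin's Theorem \ref{Voisin}, via Green's infinitesimal criterion) to find a member of the family on which $\beta$ becomes of type $(1,1)$. Second, for the real case you only gesture at ``an obstruction controlled by a group built out of $H^\bullet(S(\R),\Z/2)$'' and concede that identifying it ``is where the real work lies.'' That work is the proof: the paper must construct ramified double covers $p:T\to S$ with prescribed behaviour on the real locus (\S\S\ref{consdouble}, \ref{topoRdouble}), prove a real weak Lefschetz statement for such covers (Proposition \ref{weakL}), kill the topological part of $p^*\alpha$ and control the push-forward of its lift (Propositions \ref{ramiprop}, \ref{lem1R}, \ref{lem2R} --- this is where the hypothesis $\alpha|_x=0$ enters, through Assumption \ref{assumkilltop}), and verify the real analogue of Green's criterion for these families by exhibiting an explicit Noether--Lefschetz locus (Section \ref{secNL}), since no real analogue of Voisin's theorem is available. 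None of this is supplied or correctly anticipated by your outline, so the argument as written does not establish the theorem.
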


We refer to \S\ref{parLang} for applications of Theorems \ref{pisanspoint} and \ref{piavecpoint} to the arithmetic of function fields of real varieties, that were the main motivation for this work.

\subsection{Unramified classes}
Brauer classes that belong to the subgroup $\Br(S)$ of $\Br(\R(S))$ are said to be unramified, and are often better behaved (over finite fields, see \cite[Theorem 4.3.1.1]{Lieblichtwisted}). We compute their index entirely.

For $\alpha\in\Br(S)\subset  \Br(\R(S))$, define $\Theta:=\{x\in S(\R)\ |\ \alpha|_x\neq 0\in\Br(\R)\}$. It is a union of connected components of $S(\R)$.
As explained in \S\ref{parBrauer}, there is a short exact sequence (\ref{Brlift}):
\begin{equation}
\label{BrliftS}
0\to \Pic(S)/2\to H^2_G(S(\C),\Z/2)\to \Br(S)[2]\to 0,
\end{equation}
where $H^2_G(S(\C),\Z/2)$ is an equivariant cohomology group with respect to the action of $G=\Gal(\C/\R)\simeq\Z/2$ on $S(\C)$.
 In (\ref{ithfactor2}) of \S\ref{subsecrest}, we define
 a morphism $H^2_G(S(\C),\Z/2)\to H^1(S(\R),\Z/2)$ denoted by $\xi\mapsto [\xi]_1$. We may now
state:

\begin{thm}\label{pinr}
Let $S$ be a connected smooth projective surface over $\R$, and let $\alpha\in\Br(S)\subset\Br(\R(S))$ be a
Brauer
 class of period $n$. 
If $n$ is odd, or if there exists a lift $\xi\in H^2_G(S(\C),\Z/2)$ of $\frac{n}{2}\alpha\in \Br(S)[2]$ in (\ref{BrliftS}) such that 
 $([\xi]_1)|_{\Theta}=0\in H^1(\Theta,\Z/2)$, then $\ind(\alpha)=\per(\alpha)$. Otherwise, $\ind(\alpha)=2\per(\alpha)$.
\end{thm}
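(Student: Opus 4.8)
The plan is to first bound $\ind(\alpha)$ up to a factor $2$, and then identify the arithmetic of the real points that decides which value occurs. The divisibility $\per(\alpha)\mid\ind(\alpha)$ is formal. For the reverse bound $\ind(\alpha)\mid 2\per(\alpha)$, apply de Jong's Theorem~\ref{pidJ} to $S_\C$: since $\per(\alpha_\C)\mid\per(\alpha)=n$, there is a field $M$ with $\C(S)\subseteq M$, $[M:\C(S)]\mid n$ and $\alpha_\C\otimes M=0$; then $M\supseteq\R(S)$ splits $\alpha$ and $[M:\R(S)]\mid 2n$. As $\ind(\alpha)$ and $n$ have the same prime divisors, $\ind(\alpha)\in\{n,2n\}$. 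If $n$ is odd then $\ind(\alpha)$ is odd and divides $2n$, hence divides $n$, so $\ind(\alpha)=n$. From now on $n$ is even (and, splitting off the prime-to-$2$ part of $\alpha$, whose index equals its period by the odd case and which changes neither $\Theta$ nor the class $\tfrac n2\alpha$, one may further assume $n$ is a power of $2$). It remains to decide whether $\ind(\alpha)$ equals $n$ or $2n$.

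Recall $\ind(\alpha)=n$ if and only if there is a connected smooth projective surface $S'$ over $\R$ with a dominant generically finite morphism $f\colon S'\to S$ of degree $n$ such that $f^{*}\alpha=0$ in $\Br(\R(S'))$ --- equivalently in $\Br(S')$, since $\alpha$ is unramified and $S'$ is smooth. To prove sufficiency of the cohomological condition, I would run the Hodge-theoretic proof of de Jong's theorem from Section~\ref{secdJ} $G$-equivariantly. Over $\C$ that argument builds the splitting surface from a lift of $\alpha_\C$ to $H^{2}_{\et}(S_\C,\bmu_n)$, using the Lefschetz $(1,1)$-theorem to realize the governing obstruction by an algebraic cycle --- this is where Hodge theory enters. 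Over $\R$ one needs a $G$-equivariant lift of $\alpha$, which is exactly $\xi$ via (\ref{BrliftS}); a real Lefschetz $(1,1)$-theorem to produce the cycle already over $\R$; and compatibility of the construction along the real locus $S(\R)=S(\C)^{G}$. Only the last is a genuine obstruction: the failure to descend the construction over $S(\R)$ is measured by a class which, after using the freedom to alter $\xi$ by $\Pic(S)/2$ (twisting the auxiliary cyclic cover by a line bundle), is identified with $([\xi]_1)|_{S(\R)}$; its restriction to $\Theta$ --- the locus where $\alpha$ is genuinely nontrivial over $\R$ --- is what must vanish. By hypothesis it does, so $f$ exists and $\ind(\alpha)=n$.

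For necessity, assume $\ind(\alpha)=n$ and fix $f\colon S'\to S$ of degree $n$ splitting $\alpha$. For general $x\in\Theta$, the fibre $f^{-1}(x)$ is the spectrum of an $n$-dimensional $\R$-algebra splitting the nontrivial class $\alpha|_x\in\Br(\R)$; such an algebra must be a product of copies of $\C$, so $f$ has no real point over a dense open subset of $\Theta$, while on complex points $f_\C^{-1}(\Theta)\to\Theta$ is generically a degree-$n$ cover carrying a free $G$-action. Since $f^{*}\alpha=0$, the class $f^{*}\xi$ lies in $\Pic(S')/2$ inside the analogue of (\ref{BrliftS}) for $S'$, so $f^{*}([\xi]_1)=w_1$ of a real line bundle on $S'$; pushing that line bundle forward along $f_\C^{-1}(\Theta)\to\Theta$ --- a norm/transfer argument using that $\deg f=n$ is even and that the $G$-action above $\Theta$ is generically free --- places $([\xi]_1)|_{\Theta}$ in the image of $\Pic(S)/2\to H^{1}(\Theta,\Z/2)$, i.e.\ shows $\xi$ can be modified to kill $([\xi]_1)|_{\Theta}$. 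So the condition is necessary, and when it fails, $\ind(\alpha)\neq n$, hence $\ind(\alpha)=2n$.

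The main obstacle is the sufficiency direction, and inside it the work of Section~\ref{secdJ}: one must recast de Jong's period--index theorem so that its input is purely cohomological --- a lift in $H^{2}_{\et}(S_\C,\bmu_n)$, compatible with Hodge theory and with the $G$-action --- and then pin down \emph{precisely} that the obstruction to equivariance is $([\xi]_1)|_{\Theta}$ modulo $\Pic(S)/2$, which requires controlling both the real Lefschetz $(1,1)$-input and the behaviour of the splitting construction along $S(\R)$. The necessity direction is comparatively soft; its one delicate ingredient is the transfer computation matching the splitting datum of $f$ over $\Theta$ with $([\xi]_1)|_{\Theta}$.
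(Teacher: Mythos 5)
Your reduction to $\ind(\alpha)\in\{n,2n\}$ via de Jong and a norm argument, and the odd case, match the paper. The two substantive directions, however, both have problems.

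For sufficiency, what you write is essentially a restatement of the strategy (\S\ref{strategy}) rather than a proof, and it glosses over the points where the complex argument of Section \ref{secdJ} genuinely breaks over $\R$. First, no real analogue of Voisin's Theorem \ref{Voisin} exists, so the abundance of Noether--Lefschetz loci has to be established by hand: the paper constructs a specific curve $C$ on a specific double cover and verifies the real version of Green's infinitesimal criterion for that family (Sections \ref{secNL} and the use of \cite[Proposition 1.1]{NL3} in \S\ref{perind2}). Second, the splitting varieties cannot be hypersurface sections as in Section \ref{secdJ}: since $\alpha|_x\neq 0$ for $x\in\Theta$, any variety splitting $\alpha$ must have no real points over $\Theta$, so one is forced to use ramified double covers $p\colon T\to S$ with $p(T(\R))$ disjoint from $\Theta$ and with tightly controlled topology of $T(\R)$ (Assumption \ref{assumkilltop} and \S\ref{topoRdouble}); arranging this is a large part of the proof. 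Third, the paper does not produce a degree-$n$ cover in one equivariant step; it inducts on $n$, halving the period with a degree-$2$ cover at each stage (Proposition \ref{proofarbitraryperiod}), which is what makes the hypothesis propagate. Finally, the hypothesis $([\xi]_1)|_\Theta=0$ is not used as an ``obstruction to descending the construction along $S(\R)$''; it enters concretely in Propositions \ref{lem1R} and \ref{lem2R} to kill the class $\tau\in H^3_G(T(\C),\Z(1))[2]$ and to make $p_*\beta$ algebraic modulo $2$.

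For necessity, your argument has a genuine gap, and the direction is not ``comparatively soft.'' The transfer you invoke cannot work as stated: since $\deg f=n$ is even, $f_*f^*$ is multiplication by $n$, hence zero on $\Z/2$-cohomology, so pushing forward $f^*\xi$ or $f^*([\xi]_1)$ yields no information; and since $f$ has no real points over $\Theta$, the class $w_1$ of your real line bundle on $S'$ lives on $S'(\R)$, which does not map to $\Theta$ at all, so there is nothing to push forward there. The paper's Proposition \ref{topoobstr} instead splits $p_*\Z$ via the trace against the quotient sheaf $\sQ$ (the sequence (\ref{shortramified})), decomposes $\talpha$ as $(\eta,\zeta)$, and identifies $([\phi(\zeta)]_1)|_{\Theta}$ by a monodromy computation on embedded circles in $\Theta$ meeting the ramification divisor: the answer is the class of the \emph{alternating} (sign) double cover $\hat S\to S$ plus the correction $\sum_{4\nmid m_i}\cl_\R(\Delta_i)$ (Lemmas \ref{Gzeta} and \ref{algprecise}), each term of which is then shown to be a Borel--Haefliger class of a line bundle on $S$. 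The introduction of the alternating cover and the parity analysis of the local monodromy are the essential ideas missing from your proposal.
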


When $H^2(S,\cO_S)=0$, the condition of Theorem \ref{pinr} is purely topological and may be checked in practice by concrete computations. 
In \S\ref{EnriquesR}, we illustrate this in the case of Enriques surfaces. To state this result, we recall that the real locus of an Enriques surface $S$ over $\R$ has a canonical decomposition $S(\R)=S_1\sqcup S_2$ as a disjoint union of two open and closed subsets, called the halves of $S$ (see \cite[\S 1.3]{Halves}).

\begin{thm}
\label{thEnr}
Let $S$ be an Enriques surface over $\R$. The following are equivalent:
\begin{enumerate}[(i)]
\item There exists a class $\alpha\in\Br(S)\subset\Br(\R(S))$ such that $\ind(\alpha)\neq\per(\alpha)$.
\item The manifold $S(\R)$ is not orientable and, if exactly one of the halves of $S$ is nonempty, $S(\R)$ has an odd number of connected components with odd Euler characteristic.
\end{enumerate}
\end{thm}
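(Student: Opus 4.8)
\textbf{Reduction to Theorem~\ref{pinr}.}
Let $\pi\colon\tS\to S$ be the K3 double cover, which is étale with $\tS$ simply connected, so that $\Br(S_\C)\simeq\Z/2$, its nonzero class $\beta_0$ satisfies $\pi^*\beta_0=0$ in $\Br(\tS_\C)$ (e.g. because $\tS_\C$ has torsion-free cohomology), and $H^2(S,\cO_S)=0$. I would first show that any $\alpha\in\Br(S)$ with $\ind(\alpha)\neq\per(\alpha)$ has period $2$ and restricts to $\beta_0$ over $\C(S)$. Indeed $\alpha|_{\C(S)}\in\Br(S_\C)=\Z/2$ since $\alpha$ is unramified; if $\alpha|_{\C(S)}=0$ then $\C(S)/\R(S)$, of degree $2$, splits $\alpha$, so $\ind(\alpha)\leq 2$ and hence $\ind(\alpha)=\per(\alpha)$; if $\alpha|_{\C(S)}=\beta_0$ then, as $\Ker(\Br(\R(S))\to\Br(\C(S)))$ is $2$-torsion, $\per(\alpha)\in\{2,4\}$, and the corestriction bound $\ind(\alpha)\mid 2\,\ind(\alpha|_{\C(S)})=2\,\per(\alpha|_{\C(S)})=4$ (de Jong's Theorem~\ref{pidJ}) forces $\ind(\alpha)=4=\per(\alpha)$ when $\per(\alpha)=4$; so the only remaining case is $\per(\alpha)=2$, $\alpha|_{\C(S)}=\beta_0$, $\ind(\alpha)=4$. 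As $H^2(S,\cO_S)=0$, the hypothesis of Theorem~\ref{pinr} is topological, and (i) becomes: there is $\alpha\in\Br(S)[2]$ with $\alpha|_{\C(S)}=\beta_0$ such that, setting $\Theta=\{x\in S(\R):\alpha|_x\neq0\}$ (a union of connected components of $S(\R)$), one has $[\xi]_1|_\Theta\neq0$ for every lift $\xi\in H^2_G(S(\C),\Z/2)$ of $\alpha$; equivalently $[\xi_0]_1|_\Theta\notin P_\Theta$ for one (hence any) lift $\xi_0$, where $P_\Theta\subseteq H^1(\Theta,\Z/2)$ is the image of $\Pic(S)/2$. If $S(\R)=\varnothing$ this already fails, by Theorem~\ref{pisanspoint}, as does (ii); so assume $S(\R)\neq\varnothing$ (and one checks such $\alpha$ then exist).

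\textbf{The two subgroups, topologically.}
Next I would describe $P_\Theta$ and $[\xi_0]_1$ on the real locus. For $p\in\Pic(S)/2$ the class of a line bundle $L=\cO_S(D)$, the class $[p]_1\in H^1(S(\R),\Z/2)$ is $w_1(L|_{S(\R)})$, Poincaré dual to the class of $D(\R)$; hence $P_\Theta\subseteq H^1(\Theta,\Z/2)\simeq H_1(\Theta,\Z/2)$ is spanned by real loci of algebraic curves, and in particular contains $w_1(S(\R))|_\Theta$, the restriction of the image of the torsion canonical class $K_S$. For a lift $\xi_0$ of a class restricting to $\beta_0$, I would use the restriction $H^2_G(S(\C),\Z/2)\to H^2_G(S(\R),\Z/2)=H^2(S(\R),\Z/2)\oplus H^1(S(\R),\Z/2)\cdot t\oplus H^0(S(\R),\Z/2)\cdot t^2$ — whose middle factor is $\xi\mapsto[\xi]_1$ — together with the Smith--Gysin sequences relating $S(\C)$, $S(\R)$ and $\tS$. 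Since $\pi^*\beta_0=0$, the class $[\xi_0]_1$ is supported on the half $S_1$ over which $\pi$ has real points (over $S_2$ the class $\alpha$ is split by a real quadratic extension and contributes nothing), and $[\xi_0]_1|_{S_1}$ equals, up to elements of the image of $\Pic(S)/2$, the class $\bar\varepsilon\in H^1(S_1,\Z/2)$ of the connected double cover $\tS(\R)\to S_1$ — i.e. the étale realization of $\varepsilon=K_S\in\Pic(S)[2]$ under $H^1_{\et}(S,\mu_2)\to H^1(S(\R),\Z/2)$, which is in general a \emph{different} class from $w_1(S(\R))$, the image of $K_S$ under $\Pic(S)/2\to H^1(S(\R),\Z/2)$.

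\textbf{Case analysis.}
I would then decide when $\bar\varepsilon|_\Theta\in P_\Theta$, for a best choice of $\alpha$; equivalently, $\Theta$ ranges over a computable coset of the subgroup of $H^0(S(\R),\Z/2)$ of ``$\Theta$'s of geometrically trivial unramified classes'', which contains $S(\R)$ (one may twist $\alpha$ by the constant class of $\Br(\R)$, replacing $\Theta$ by its complement). If $S(\R)$ is orientable, then $w_1(S(\R))=0$ and $\bar\varepsilon$ becomes algebraic after the allowed modifications, so (i) fails, as does (ii) by definition. If $S(\R)$ is not orientable and both halves are nonempty, the extra freedom of adjoining components of $S_2$ to $\Theta$ — which does not change $[\xi_0]_1$ — lets one arrange $\Theta$ with $\bar\varepsilon|_\Theta\notin P_\Theta$; so (i) holds, matching (ii). If $S(\R)$ is not orientable and exactly one half is nonempty, $\Theta$ is essentially forced, and the question $\bar\varepsilon|_\Theta\in P_\Theta$ reduces, via Poincaré duality and the Wu relation $w_2(\Sigma)=w_1(\Sigma)^2$ with $\langle w_2(\Sigma),[\Sigma]\rangle\equiv\chi(\Sigma)\bmod2$ on each connected component $\Sigma$, to the vanishing of a pairing that holds exactly when $\sum_\Sigma\chi(\Sigma)=\chi(S(\R))$ is even, i.e. when an even number of components of $S(\R)$ have odd Euler characteristic. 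Hence (i) holds iff that number is odd, again matching (ii).

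\textbf{Main obstacle.}
The delicate step is the second one: identifying $[\xi_0]_1$ with the explicit class $\bar\varepsilon$ and $P_\Theta$ with the algebraic subgroup. This requires a careful passage through the equivariant/Smith machinery, keeping track of the $t$-filtration under restriction to $S(\R)$ and of the two lifts of complex conjugation to the K3 cover, which govern the decomposition $S(\R)=S_1\sqcup S_2$, together with the real-algebraicity input that distinguishes $\bar\varepsilon$ from the $w_1$'s of algebraic line bundles; once these are in place, the orientability dichotomy and the Euler-characteristic count follow from Poincaré duality and the Wu formula.
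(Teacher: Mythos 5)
Your reduction step is correct and matches the paper's: since $H^2(S,\cO_S)=0$ one has $\Br(S_\C)\simeq\Z/2$, period-$4$ classes automatically satisfy $\ind=\per$, and the whole question becomes whether some $\alpha\in\Br(S)[2]$ admits no lift $\xi\in H^2_G(S(\C),\Z/2)$ with $([\xi]_1)|_\Theta=0$, i.e.\ whether $[\xi_0]_1|_\Theta$ can be forced outside the image of $\cl_\R\colon\Pic(S)\to H^1(S(\R))$. Your case division (orientability, number of nonempty halves, parity of the number of components of odd Euler characteristic via $w_1(\Sigma)^2\equiv\chi(\Sigma)$) is also the paper's.

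The genuine gap is in your second step. Deciding (i) requires knowing \emph{exactly} which pairs $([\xi]_0,[\xi]_1)\in H^0(S(\R))\oplus H^1(S(\R))$ are realized by classes $\xi\in H^2_G(S(\C),\Z/2)$ — both to produce a bad $\alpha$ (one needs surjectivity statements, e.g.\ that when both halves are nonempty one can realize $[\xi]_0=1$ together with an arbitrary prescribed $[\xi]_1$) and to rule them out (one needs the constraints). The paper obtains this from a Poincar\'e-type duality theorem for equivariant cohomology (Proposition \ref{duality}: the image is cut out by the conditions $\deg([\delta]_1\cupp(a_1+a_0\cupp w_1))=0$ for the classes $\delta\in H^1_G(S(\C),\Z/2)$ with $[\delta]_0=0$, which are governed precisely by the halves), combined with Mangolte--van Hamel's computation that the image of $\cl_\R$ is the orthogonal of $w_1$. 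Your proposed substitute — computing $[\xi_0]_1$ for one class with $\alpha_\C=\beta_0$ via Smith--Gysin, then arguing that varying $\alpha$ only moves $[\xi_0]_1$ by algebraic classes and moves $\Theta$ within a ``computable coset'' — is exactly the content you defer to the ``main obstacle,'' and it is not routine: the geometrically trivial part of $\Br(S)[2]$ is large and its lifts do change $[\xi]_1$ modulo $\operatorname{im}(\cl_\R)$ in general (your parenthetical ``which does not change $[\xi_0]_1$'' in the both-halves case is unjustified). Moreover, the one concrete identification you do assert is wrong: the class $\bar\varepsilon$ of the topological double cover attached to a lift $\delta$ of $K_S$ is \emph{not} ``in general different from'' the image of $K_S$ under $\Pic(S)/2\to H^1(S(\R))$; by Lemma \ref{algdouble} together with $\cl_\R(K_S)=w_1(S(\R))$ one has $[\delta]_1=w_1$ for both lifts. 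This coincidence happens to leave your final Euler-characteristic criterion intact in the one-half case, but it shows the key identification has not actually been carried out, and without the duality input the existence assertions in the non-orientable cases are not established.
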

The possible real loci of Enriques surfaces have been classified  by Degtyarev, Itenberg and Kharlamov \cite[Appendix C]{DIK}. Many satisfy (ii), and many do not.

\subsection{Situations where period and index differ}

If $S$ is a connected smooth projective surface over $\R$, it has been known for a long time that the period and the index of $\alpha\in\Br(\R(S))$ may not coincide. For instance, Albert  has shown that the biquaternion class $\alpha=(x,x)+(y,xy)\in \Br(\R(x,y))$ has period $2$ and index $4$ in one of the first examples of Brauer classes for which period and index differ \cite[Theorem 2]{Albert} (see also \cite[VI, Example 1.11]{Lam}). In these examples, the difference between period and index may be explained by an analysis of the ramification of $\alpha$ on $S$. We refer to \cite{CTram} for a general discussion
of the obstructions to the equality of period and index induced by the ramification.

That an unramified Brauer class $\alpha\in\Br(S)$ may have  different period and index, as in some of the examples of Theorem \ref{thEnr}, is new.
 The obstruction, described in Theorem \ref{pinr}, has an obvious topological flavour. Since the image of the morphism $\Pic(S)/2\to H^2_G(S(\C),\Z/2)$ in (\ref{BrliftS}) is controlled by Krasnov's real Lefschetz $(1,1)$ theorem \cite[Proposition 2.8]{BWI}, this obstruction also depends on the Hodge theory of the surface $S$. For this reason, it is reminiscent of Kresch's Hodge-theoretical obstructions to the equality of the period and the index of unramified Brauer classes on complex varieties \cite[Theorem 1]{Kresch}. Kresch's article is, to the best of our knowledge, the first to point out the influence of Hodge theory on period-index problems.

Theorem \ref{pinr} makes sense over any real closed field, such as $\bK:=\cup_n\R((t^{1/n}))$, if one replaces Betti cohomology with semi-algebraic cohomology.
The following proposition, proven in \S\ref{K3nonarch}, shows that it fails to hold in this greater generality.
This demonstrates the existence of further obstructions to the equality of period and index over general real closed fields.
\begin{prop}
\label{propK3nonarch}
There exists a K3 surface $S$ over $\bK:=\cup_n\R((t^{1/n}))$ such that $H^1(S(\bK),\Z/2)=0$, and a class $\alpha\in\Br(S)[2]$ such that $\ind(\alpha)=4$.
\end{prop}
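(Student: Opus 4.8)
The plan is to realize $S$ and $\alpha$ as a non-archimedean degeneration, using the $t$-adic valuation on $\bK$ (value group $\Q$, residue field $\R$) in an essential way. First observe that, since $\bK[\sqrt{-1}]=\cup_n\C((t^{1/n}))$ is algebraically closed, de Jong's Theorem~\ref{pidJ} over $\bK[\sqrt{-1}]$ together with the corestriction (norm) argument recalled before Theorem~\ref{pisanspoint} shows that any $\alpha\in\Br(\bK(S))$ satisfies $\ind(\alpha)\in\{\per(\alpha),2\per(\alpha)\}$. Hence, starting from a class of period $2$, it suffices to prove that $\alpha$ is split by no quadratic extension of $\bK(S)$, i.e. that $\ind(\alpha)\neq 2$.

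Next I would fix a regular flat projective model $\mathcal{S}$ of $S$ over the valuation ring of $\bK$ (or over $\R[[t^{1/n}]]$ for suitable $n$, after semistable reduction), chosen so that some component $V$ of the special fibre $\mathcal{S}_0$, appearing with multiplicity $1$, is birational to a real algebraic surface whose function field $\R(V)$ carries an \emph{Albert-type biquaternion class of index $4$} — the cheapest choice being a Type~III degeneration of K3 surfaces in which $V$ is a copy of $\bP^2_{\R}$ and the relevant class is $(x,x)+(y,xy)\in\Br(\R(x,y))$. The divisorial valuation $v_V$ on $\bK(S)$ restricts to the $t$-adic valuation on $\bK$; its completion $\widehat F$ has residue field $\R(V)$ and value group $\Q$. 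Since $\Q$ is $2$-divisible, the ramification homomorphism $\Br(\widehat F)[2]\to H^1(\R(V),\Z/2)$ vanishes, so $\Br(\widehat F)[2]\cong\Br(\R(V))[2]$ canonically; under this isomorphism the image of $\alpha$ is a reduction $\bar\alpha$, and I would arrange that $\bar\alpha$ is the index-$4$ class above. As $\bK(S)\subset\widehat F$, this forces $\ind(\alpha)\geq\ind_{\widehat F}(\alpha_{\widehat F})=\ind_{\R(V)}(\bar\alpha)=4$, hence $\ind(\alpha)=4$.

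It then remains to arrange that $S$ is a K3 surface, that $\alpha$ actually lies in $\Br(S)[2]$ (not merely $\Br(\bK(S))[2]$) and has period exactly $2$, and — crucially — that $H^1(S(\bK),\Z/2)=0$; for the latter I would build the degeneration so that the smooth generic fibre $S$ has empty or spherical real locus (for instance realizing $S$ as a double cover of $\bP^2_{\bK}$ branched along a sextic whose real locus in $\bP^2(\bK)$ is a single oval bounding a disk inside which the sextic is positive), which is compatible with a central fibre of the prescribed combinatorial type. Once this is done, $S(\bK)$ has vanishing $H^1(\cdot,\Z/2)$, so every lift $\xi$ of $\frac{n}{2}\alpha$ in the semi-algebraic analogue of (\ref{BrliftS}) satisfies $([\xi]_1)|_{\Theta}=0$: the criterion of Theorem~\ref{pinr} would predict $\ind(\alpha)=2$, so the proposition follows and exhibits a genuinely new, arithmetic obstruction beyond the topological one.

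The main obstacle is the existence of the model: one must produce a single family over the valuation ring that simultaneously (a) has smooth K3 generic fibre with real locus of vanishing $H^1$, (b) has a special-fibre component supporting an index-$4$ real Brauer class, and (c) carries a class $\alpha\in\Br(S)[2]$ of period $2$ reducing to it. Controlling $\Br(S)$ through the model while pinning down the period and the generic-fibre real topology at the same time is the delicate part; the valuation-theoretic input ($\Br(\widehat F)[2]\cong\Br(\R(V))[2]$ because the value group is $2$-divisible, so that the index can only drop under reduction) is standard but should be stated with care, as should the semi-algebraic reformulation of Theorem~\ref{pinr} over $\bK$ that makes this construction a counterexample.
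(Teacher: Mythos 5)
Your valuation-theoretic skeleton is sound: for the completion $\widehat F$ of $\bK(S)$ at a divisorial valuation centred on a multiplicity-one component $V$ of a model, the value group is divisible, every $2$-torsion class of $\Br(\widehat F)$ is inertial, and $\ind_{\widehat F}(\alpha_{\widehat F})=\ind_{\R(V)}(\bar\alpha)$, so an index-$4$ reduction would indeed force $\ind(\alpha)=4$. But there is a genuine gap exactly where you flag it: you never construct the degeneration. Producing a model of a K3 surface over the valuation ring whose generic fibre (i) is smooth with $H^1(S(\bK),\Z/2)=0$, (ii) carries a class $\alpha\in\Br(S)[2]$ of period $2$ that is unramified on $S$ itself, and (iii) whose reduction at $V$ is the Albert class is the entire content of the proposition, and it is not clearly achievable along the route you sketch: the Albert class is \emph{ramified} on $\bP^2_{\R}$ (it has nontrivial residues along the coordinate lines), so for it to arise as the reduction of a class unramified on the generic fibre you would have to match those residues with residues of $\alpha$ along the double curves of the special fibre, a bookkeeping you do not attempt. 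As written this is a strategy, not a proof.

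The paper's construction is different and more economical. It takes the explicit sextic double cover $\cX=\{z^2=(w^2-u^2-v^2)u^2v^2-tw^6\}$ of $\bP^2_{\R[[t]]}$ (the surface of \cite[Example 15.2.2]{BCR}), whose resolved generic fibre is a K3 surface with $S(\bK)$ a disjoint union of four semi-algebraic spheres, so $H^1(S(\bK),\Z/2)=0$. The class $\alpha$ is obtained not by prescribing its reduction but from the Colliot-Th\'el\`ene--Parimala existence result, which yields an $\alpha\in\Br(S)[2]$ whose evaluation at real points singles out one component $\Xi$ of $S(\bK)$. If $\ind(\alpha)=2$, then $\alpha=(f,g)$ is a quaternion class and the signs of $f,g$ must separate $\Xi$ from the other components; specializing $f,g$ to the special fibre and normalizing to the quadric $Q=\{z^2=w^2-u^2-v^2\}$, a parity count of vanishing orders along $\{u=0\}$ shows that no such sign separation exists. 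So the paper also works through the special fibre, but it specializes the putative quaternion presentation rather than the Brauer class, and the contradiction is an elementary sign argument rather than an index computation; this is precisely what sidesteps the model-construction problem your approach leaves open.
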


\subsection{Relation with conjectures of Lang and Pfister}
\label{parLang}

Recall that a field $K$ is said to be $C_i$ if every degree $d$ hypersurface in $\bP^N_K$ with $d^i\leq N$ has a $K$-point. The main example of such fields is:

\begin{thm}[Tsen-Lang \cite{LangCi}] 
\label{TsenLang}
The function field of an integral variety $X$ of dimension $i$ over an algebraically closed field is $C_i$.
\end{thm}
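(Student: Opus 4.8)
The plan is to deduce the Tsen--Lang theorem from the two classical inheritance properties of quasi-algebraic closure --- that $C_i$ passes to finite extensions and is raised to $C_{i+1}$ upon adjoining one transcendental --- together with Noether normalisation. Throughout I use the equivalent affine formulation: a field $K$ is $C_i$ if and only if every homogeneous polynomial of degree $d$ in $N>d^i$ variables over $K$ has a nontrivial zero in $K^N$. An algebraically closed field $k$ is trivially $C_0$: a form in $N\geq 2$ variables, restricted to two of the variables, becomes a binary form, which factors into linear forms over $k$.

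The first and most delicate step is \emph{Lang's lemma}: if $K$ is $C_i$, then every finite system $f_1,\dots,f_m$ of homogeneous polynomials over $K$, of degrees $d_1,\dots,d_m$ in $N$ variables with $N>\sum_{j}d_j^i$, has a common nontrivial zero in $K^N$. (Only the equal-degree case is needed below, but the mixed-degree statement is what the induction naturally produces.) I would argue by induction on $m$, the case $m=1$ being the definition of $C_i$; the inductive step is Lang's normic argument, which fuses two of the forms into a single form of larger degree --- via a norm from a suitable finite extension of $K$ --- thereby decreasing the number of equations, the hypothesis $N>\sum_{j}d_j^i$ being exactly what is preserved by the construction. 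I expect this normic bookkeeping --- controlling degrees against the number of variables so that the exponent $i$, and not something larger, comes out --- to be the main obstacle.

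Granting Lang's lemma, the two inheritance statements are short. Suppose $L/K$ is finite of degree $e$ and $K$ is $C_i$; fix a $K$-basis $\omega_1,\dots,\omega_e$ of $L$. Given a form $f$ of degree $d$ over $L$ in $N>d^i$ variables $X_1,\dots,X_N$, substitute $X_j=\sum_{l=1}^{e}Y_{jl}\omega_l$ with new variables $Y_{jl}$ and expand $f=\sum_{l=1}^{e}g_l(Y)\,\omega_l$, where each $g_l\in K[Y]$ is homogeneous of degree $d$ in the $eN$ variables $Y_{jl}$. As $eN>e\,d^i$, Lang's lemma gives a common nontrivial zero of $g_1,\dots,g_e$ over $K$, hence a nontrivial zero of $f$ in $L^N$; so $L$ is $C_i$. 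For the dimension-raising step, let $f$ be a form of degree $d$ in $N>d^{i+1}$ variables over $K(t)$, with $K$ a $C_i$ field; clearing denominators, we may assume the coefficients of $f$ lie in $K[t]$, of degree at most $\rho$ in $t$. For an integer $s$, substitute $X_j=\sum_{l=0}^{s}u_{jl}\,t^l$; then $f$ becomes $\sum_{p=0}^{ds+\rho}g_p(u)\,t^p$, where each $g_p\in K[u]$ is homogeneous of degree $d$ in the $(s+1)N$ unknowns $u_{jl}$. A common nontrivial zero of $g_0,\dots,g_{ds+\rho}$ over $K$ yields a nontrivial zero of $f$ in $K[t]\subset K(t)$; by Lang's lemma, such a zero exists once $(s+1)N>(ds+\rho+1)\,d^i$, which holds for $s$ sufficiently large since $N>d^{i+1}$. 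The special case $i=0$, $K=k$, is Tsen's theorem.

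Finally, iterating the dimension-raising step from the $C_0$ field $k$ shows that $k(t_1,\dots,t_i)$ is $C_i$. For an integral variety $X$ of dimension $i$ over $k$, Noether normalisation presents $k(X)$ as a finite extension of the purely transcendental field $k(t_1,\dots,t_i)$, and the finite-extension step then shows $k(X)$ is $C_i$.
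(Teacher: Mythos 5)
The paper offers no proof of this statement: it is quoted verbatim from Lang's article \cite{LangCi}, and what you have written is precisely the classical Tsen--Lang argument from that source. The two reductions you carry out in detail are correct: descent of $C_i$ along a finite extension $L/K$ by expanding the unknowns in a $K$-basis of $L$ (giving $e$ forms of degree $d$ in $eN>e\,d^i$ variables), and the passage from $C_i$ to $C_{i+1}$ under adjunction of one transcendental by substituting truncated polynomials in $t$ and comparing the linear growth in $s$ of the number of coefficient equations, $ds+\rho+1$, against the growth $(s+1)N$ of the number of unknowns. The final assembly via Noether normalisation is also correct.

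The one substantive issue sits exactly where you place it, in Lang's lemma on systems, and it is slightly worse than ``bookkeeping.'' The normic fusion works for the fields $k(t_1,\dots,t_j)$ with $j\geq 1$: these are not algebraically closed, so iterated norm forms of finite extensions produce normic forms of order $j$, degree $e$, in $e^j$ variables with $e^j$ arbitrarily large, and one can therefore absorb $m$ forms into a single one while preserving the inequality between degree and number of variables. But the very first application of your dimension-raising step --- from $k$ to $k(t)$, i.e.\ Tsen's theorem --- invokes the lemma over the algebraically closed field $k$ itself with $i=0$. There a normic form of order $0$ and degree $e$ lives in $e^0=1$ variable, so the fusion has a single slot and cannot combine two or more forms; the induction on $m$ as you describe it cannot start. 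Over $k$ the system statement must be proved differently, by projective dimension theory: $m$ hypersurfaces in $\mathbb{P}^{M-1}$ with $M>m$ have nonempty intersection, hence a $k$-point since $k$ is algebraically closed. With that base case supplied separately, and the normic argument written out for $i\geq 1$ (where only the equal-degree case is needed, as you note), your proof is complete and is the standard one.
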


Lang \cite[p.379]{LangR} has conjectured a real analogue of this theorem.

\begin{conj}[Lang]
\label{conjLang}
The function field of an integral variety $X$ of dimension $i$ over $\R$ such that $X(\R)=\varnothing$ is $C_i$.
\end{conj}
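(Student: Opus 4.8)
The plan is to induct on $i=\dim X$, converting at each stage the $C_i$ property of $\R(X)$ into a period--index statement for Brauer classes accessible to the methods of this paper. First the reductions: since $C_i$ depends only on the isomorphism class of $\R(X)$ and passes to algebraic extensions, one may assume $X$ smooth and projective. If $X$ is not geometrically integral, then $X_\C$ is a disjoint union of two conjugate components and $\R(X)$ is itself the function field of an $i$-dimensional variety over $\C$, hence $C_i$ by Tsen--Lang (Theorem~\ref{TsenLang}); so assume $X$ geometrically integral, put $K=\R(X)$ and $L=K\otimes_\R\C\cong\C(X_\C)$, which is again $C_i$ by Theorem~\ref{TsenLang}, with $L/K$ quadratic. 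The hypothesis $X(\R)=\varnothing$ enters through the Artin--Lang homomorphism theorem: $K$ is not formally real, so $-1$ is a sum of squares in $K$, and in fact $s(K)\le 2^i$ by Pfister's level theorem. The base cases are classical: $i=0$ gives $K=\C$, which is $C_0$, and $i=1$ is the known statement that function fields of real curves without real points are $C_1$.

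For the inductive step, fix a homogeneous $f\in K[x_0,\dots,x_N]$ of degree $d$ with $N\ge d^i$; the hypersurface $W=\{f=0\}\subset\bP^N_K$ then satisfies $\dim W\ge d^i-1$ and $W(L)\neq\varnothing$, and the issue is to descend an $L$-point to a $K$-point. An elementary $G=\Gal(\C/\R)$-equivariant analysis shows that given $v\in W(L)$, either $v$ is proportional to its conjugate and Hilbert~$90$ yields a $K$-point, or $\langle v,\bar v\rangle$ is a conjugation-stable plane, reducing one to finding a $K$-point on a conic $Q/K$ with $Q(L)\neq\varnothing$ --- equivalently, to splitting a quaternion algebra of the form $(-1,b)_K$. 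This succeeds automatically when $s(K)\le 2$, which recovers $i=1$, but it fails in general for $i\ge 2$, where such conics may be anisotropic. The point of the period--index circle of ideas is precisely to replace this naive descent: for $i=2$, Theorem~\ref{pisanspoint} (through the computation $u(K)=4$) settles the degree-two case of $C_2$, and one expects the higher-degree cases to follow by combining this with de Jong's Theorem~\ref{pidJ} and a slicing argument as over $\C$. For general $i$ the scheme is the same --- prove $\per(\alpha)=\ind(\alpha)$, or at least $\per(\alpha)^{i-1}\mid\ind(\alpha)$, for $\alpha\in\Br(\R(X))$ when $\dim X=i$ and $X(\R)=\varnothing$, by controlling the level and the relevant equivariant cohomology with the Hodge-theoretic input developed here, and feed this into the higher-dimensional $C_i$ formalism.

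The passage from $i=2$ to $i\ge 3$ is where I expect the main obstacle to lie, and it is twofold. First, one cannot in general lower the dimension geometrically: if $X$ admitted a dominant map to a curve (or variety) \emph{without real points}, one could finish by induction together with Lang's theorem that a transcendence-degree-$j$ extension of a $C_m$-field is $C_{m+j}$ --- but a K3 surface with $\rho(X_\C)=1$ and $X(\R)=\varnothing$ dominates no curve at all, so no such reduction exists. Second, the period--index input required in dimension $\ge 3$ presupposes its analogue over $\C$, namely the period--index problem for $\C(x,y,z)$, which is wide open; even its consequence $u(\R(X))\le 2^i$ for $i\ge 3$ is unknown. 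Moreover, Proposition~\ref{propK3nonarch} shows that over a general real closed base the purely cohomological obstruction is not the whole story, so any treatment of $i\ge 3$ must genuinely exploit the archimedean Hodge theory rather than a formal descent along $L/K$. The conjecture for $i\ge 3$ should therefore require a genuinely new ingredient --- the natural candidate being a period--index theorem for function fields of real varieties of dimension $\ge 3$ --- which itself seems out of reach until the complex period--index problem in dimension $\ge 3$ is understood.
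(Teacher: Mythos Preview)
The statement you are addressing is a \emph{conjecture} in the paper, not a theorem; the paper contains no proof of it and explicitly says that ``very little is known'' about it. The paper's contribution is to establish the single case $i=2$, $d=2$ (Theorem~\ref{quadrique}), via Theorem~\ref{pisanspoint}. There is therefore no ``paper's own proof'' to compare against, and your proposal is not a proof either: you yourself end by saying the inductive step for $i\ge 3$ ``seems out of reach'' and would ``require a genuinely new ingredient''. What you have written is a program, not an argument.

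Beyond this, two concrete problems. First, your claim that the case $i=1$ is ``classical'' overstates what is known: the paper records only Witt's theorem for $d=2$ and Lang's result for odd $d$; the full $C_1$ property for function fields of real curves without real points is not asserted there, and your proposal gives no argument for it. Second, your descent step is wrong for $d\ge 3$. The span $\langle v,\bar v\rangle$ is a $G$-stable line $\ell\simeq\bP^1_K$ in $\bP^N_K$, and restricting the degree-$d$ form to $\ell$ gives a binary form of degree $d$, not a conic; so the reduction ``to splitting a quaternion algebra $(-1,b)_K$'' is only valid when $d=2$. For $d\ge 3$ one is left with a degree-$d$ binary form over $K$ having an $L$-zero, which need not have a $K$-zero, and nothing in your outline bridges that gap. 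Even for $i=2$, then, your sketch does not recover more than the $d=2$ case actually proved in the paper, and the suggestion that ``the higher-degree cases follow by combining this with de~Jong's theorem and a slicing argument as over~$\C$'' is not substantiated.
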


Very little is known about Conjecture \ref{conjLang}. The case $i=1$ and $d=2$ of quadrics over function fields of curves is a classical result of Witt \cite[Satz 22]{Witt}, and Lang has shown in \cite[Corollary p.390]{LangR} that Conjecture \ref{conjLang} holds for odd degrees $d$, as the proof of Theorem \ref{TsenLang} may be adapted in this case. 
As a consequence of Theorem \ref{pisanspoint}, we give further evidence for Conjecture \ref{conjLang} by solving it for $i=2$ and $d=2$ :

\begin{thm}
\label{quadrique}
Let $S$ be an integral surface over $\R$ such that $S(\R)=\varnothing$. 
Then all quadratic forms of rank $\geq 5$ over $\R(S)$ are isotropic. 
\end{thm}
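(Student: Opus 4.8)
Set $K=\R(S)$. The plan is to deduce $u(K)\le 4$ by combining Theorem~\ref{pisanspoint} with the classical theory of Albert forms. Since a nondegenerate quadratic form of rank $\ge 5$ contains a nondegenerate subform of rank $5$, and a form with an isotropic subform is isotropic, it suffices to show that every nondegenerate rank $5$ form $q$ over $K$ is isotropic; assume for contradiction that $q$ is anisotropic.

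Let $d\in K^{*}/K^{*2}$ be the signed discriminant of $q$ and form the rank $6$ quadratic form $\phi:=q\perp\langle -d\rangle$. A direct computation shows that $\phi$ has trivial signed discriminant, so $\phi$ is an Albert form: its Clifford invariant $c(\phi)\in H^{2}(K,\Z/2)=\Br(K)[2]$ is the class of a biquaternion algebra $B$, and by Albert's theorem $\phi$ is isotropic if and only if $\ind(B)\le 2$. Since $S(\R)=\varnothing$, Theorem~\ref{pisanspoint} applies to $K$ and gives $\ind(c(\phi))=\per(c(\phi))\mid 2$; hence $\ind(B)\le 2$ and $\phi$ is isotropic.

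It remains to promote the isotropy of $\phi=q\perp\langle -d\rangle$ to that of $q$. As $\phi$ is an isotropic rank $6$ form of trivial signed discriminant, its anisotropic part has rank $4$ (the ranks $0$ and $2$ are excluded because $q$ is anisotropic of rank $5$) and trivial discriminant, hence is similar to a $2$-fold Pfister form $c\langle\langle a,b\rangle\rangle$. Therefore $q\equiv\langle d\rangle + c\langle\langle a,b\rangle\rangle$ in the Witt ring $W(K)$. Now comes the second use of the hypothesis: since $S(\R)=\varnothing$, the field $K$ has cohomological dimension $\le 2$ (as recalled in the Introduction), so $H^{3}(K,\Z/2)=0$ and consequently $I^{3}K=0$; in particular every $3$-fold Pfister form over $K$ is hyperbolic. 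Applying this to $\langle\langle a,b,-d/c\rangle\rangle$ yields $c\langle\langle a,b\rangle\rangle\equiv -d\langle\langle a,b\rangle\rangle$ in $W(K)$, so that $q\equiv\langle d\rangle - d\langle\langle a,b\rangle\rangle\equiv\langle da,db,-dab\rangle$, a class represented by a form of rank $3$. This contradicts the assumption that $q$ is anisotropic of rank $5$, and the proof is complete.

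The only genuinely new ingredient is Theorem~\ref{pisanspoint}; granting it, everything else is standard quadratic form theory (Albert forms, the structure of rank $\le 4$ forms of trivial discriminant, and the vanishing $I^{3}K=0$ for fields of cohomological dimension $\le 2$). The step to watch is the passage from the isotropy of the Albert form $\phi$ to that of $q$ — equivalently, excluding the possibility that $q$ anisotropically represents its own discriminant — which is precisely where $I^{3}K=0$, hence the hypothesis $S(\R)=\varnothing$, must be invoked a second time.
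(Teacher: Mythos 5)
Your argument is correct, and the geometric input --- the period--index equality of Theorem \ref{pisanspoint}, applied to a $2$-torsion Clifford invariant --- is exactly the engine the paper uses; but the quadratic-form-theoretic reduction is organized differently. The paper deduces Theorem \ref{quadrique} from Theorem \ref{uinvariant}, whose proof invokes Pfister's criterion \cite[Proposition 9]{Pfisterabelian} as a black box to translate $u(K)\leq 4$ into a period--index statement for totally indefinite classes in $\Br(K)[2]$, reduces to finitely generated fields by a compactness argument on spaces of orderings, and then applies Theorem \ref{piavecpoint}. You instead carry out the translation by hand in the special case $S(\R)=\varnothing$: the rank $6$ form $\phi=q\perp\langle -d\rangle$ with trivial signed discriminant, Albert's isotropy criterion, and the Witt-ring manipulation using $I^3K=0$. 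What the paper's route buys is the stronger Theorem \ref{uinvariant} (arbitrary fields of transcendence degree $2$, real or not, where signatures must be tracked); what your route buys is a self-contained proof of Theorem \ref{quadrique} that exposes exactly which pieces of quadratic form theory are needed. The computations (discriminants, the exclusion of Witt index $\geq 2$ for $\phi$, the identity $q\equiv\langle da,db,-dab\rangle$ in $W(K)$) all check out.

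Two steps deserve more care. First, the inference ``$H^3(K,\Z/2)=0$, consequently $I^3K=0$'' is not a formality: it requires the injectivity of the Arason invariant $e_3$ (at least on $3$-fold Pfister forms, due to Arason, Merkurjev--Suslin and Rost) together with the Arason--Pfister Hauptsatz, and should be cited as a theorem rather than stated as an immediate consequence of $\mathrm{cd}(K)\leq 2$; you correctly identify this as the second place where $S(\R)=\varnothing$ enters, via \cite[Proposition 1.2.1]{CTParimala}. Second, Theorem \ref{pisanspoint} is stated for a connected smooth projective surface, whereas your $S$ is merely integral; one must pass to a smooth projective model $S'$ of $\R(S)$ and verify $S'(\R)=\varnothing$, which holds because a nonempty real locus of a smooth variety is Zariski dense and would therefore meet a common dense open subset of $S$ and $S'$, contradicting $S(\R)=\varnothing$. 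Neither point is a gap, but both should be made explicit.
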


Recall that a field is said to be real if it may be ordered (as a field).
The $u$-invariant $u(K)$ of a non-real field $K$ is defined 
as the maximal rank of an anisotropic quadratic form over $K$ (see \cite[Chapter 8]{Pfisterbook}, \cite[Chapter XI, \S 6]{Lam}). 
Theorem \ref{quadrique} asserts that the $u$-invariant of the function field of a real surface without real points is at most $4$. The definition of the $u$-invariant has been generalized by Elman and Lam \cite[Definition 1.1]{uinvI} to the case of real fields, as the maximal rank of an anisotropic quadratic form over $K$
 whose signature with respect to any ordering of $K$ is trivial. In this more general setting, Pfister \cite[Conjecture 2]{Pfisterabelian} (see also \cite[XIII, Question 6.5]{Lam}) proposed that the following should hold:

\begin{conj}[Pfister]
\label{conjPfister}
If $K/\R$ has transcendence degree $i$, then $u(K)\leq 2^i$.
\end{conj}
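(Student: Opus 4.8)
The plan is to prove the conjecture by induction on $i$, at each stage turning the bound $u(K) \le 2^i$ into a period--index statement and a vanishing statement in Galois cohomology, and ultimately invoking the Hodge-theoretic results of this paper together with, for $i \ge 3$, their hoped-for higher-dimensional analogues. For the dictionary: since the signature of a form at an ordering has the parity of its rank, an anisotropic torsion form over a real field has even rank; using the theory of the $u$-invariant over formally real fields (Elman--Lam), Voevodsky's identification of $I^n(K)/I^{n+1}(K)$ with $H^n(K,\Z/2)$, and passage to suitable quadratic extensions to normalize discriminants and Clifford invariants, bounding the rank of anisotropic torsion forms over $K$ should reduce to two inputs: a bound $\ind(\alpha) \mid 2^{i-1}$ for $2$-torsion classes $\alpha$ in the (unramified) Brauer group of a model of $K$, and the vanishing of torsion classes in $H^{i+1}(K,\Z/2)$ --- in each case under a triviality hypothesis at the real points that the torsion condition on the quadratic form automatically provides, since the residue of a torsion form at a real point is hyperbolic.

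The base cases are available. For $i = 0$, $u(\R) = 0$ since $W(\R)$ is torsion-free; for $i = 1$, the real fields of transcendence degree one are function fields of real curves, for which $u(K) \le 2$ is classical (Pfister, Elman--Lam). For $i = 2$: when $S(\R) = \varnothing$ the field $K = \R(S)$ is non-real and Theorem~\ref{quadrique} gives $u(K) \le 4$ directly; when $S(\R) \neq \varnothing$, an anisotropic torsion form of rank $\ge 5$ over $K$ is routed, through its Clifford invariant, to a $2$-torsion Brauer class $\alpha$ on a dense open of $S$ with $\alpha|_x = 0$ for every real $x$ (the torsion hypothesis), whence Theorems~\ref{piavecpoint} and~\ref{pinr} give $\ind(\alpha) = \per(\alpha) = 2$ and the form is isotropic; the residual torsion $3$-fold Pfister contributions are killed because $H^3(K(\sqrt{-1}),\Z/2) = 0$ (as $\mathrm{cd}_2(K(\sqrt{-1})) = 2$) and their real residues are hyperbolic. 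This yields $u(K) \le 4$ for every real field of transcendence degree two.

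For the inductive step, fibre a model $X$ of $K$, after a birational modification, over a base $B$ of dimension $i-1$ with one-dimensional generic fibre, so that $K$ becomes the function field of a curve $C$ over $F := \R(B)$, and use a version for torsion forms of Milnor's exact sequence $0 \to W(F) \to W(F(C)) \to \bigoplus_{P} W(\kappa(P)) \to 0$: an anisotropic torsion form over $K$ is then assembled from torsion data over fields of transcendence degree $\le i-1$, to which the inductive hypothesis $u \le 2^{i-1}$ applies, the extra factor $2$ coming from the residues along $C$. Matching these residues against the generic-fibre term is where the two inputs of the first paragraph enter --- now in dimension $i$, over $K$ and over $K(\sqrt{-1})$.

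The main obstacle is precisely this, for $i \ge 3$. Over $K(\sqrt{-1})$, the function field of an $i$-dimensional complex variety, the required index bound is the assertion $\ind \mid \per^{\delta-1}$ with $\delta = \mathrm{cd}_2 = i$ discussed in the introduction: it is a theorem only for $i \le 2$ (de Jong, Theorem~\ref{pidJ}) and is open already for $\C(x,y,z)$. Over $K$ itself one additionally needs the real refinement --- the control of the behaviour at the real points that Theorems~\ref{pinr} and~\ref{piavecpoint} obtain for surfaces by Hodge theory, and whose genuine subtlety over general real closed base fields is exhibited by Proposition~\ref{propK3nonarch}. Granted a higher-dimensional period--index theorem of this shape, the induction above delivers Pfister's conjecture in full; at present it establishes $u(K) \le 2^i$ for $i \le 2$, which already settles the case of function fields of real surfaces.
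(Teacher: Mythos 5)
The statement you were asked about is stated in the paper as a \emph{conjecture} (due to Pfister), and the paper does not prove it: it only establishes the case $i=2$ (Theorem~\ref{uinvariant}), explicitly leaving $i\geq 3$ open. Your proposal is honest on exactly this point, and that honesty is correct: the inductive step you describe for $i\geq 3$ requires a period--index theorem ($\ind\mid\per^{\,i-1}$, with control at the real points) for function fields of $i$-folds, which is open already for $\C(x,y,z)$, so no complete proof is currently possible.

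For the part that \emph{is} provable, your $i=2$ argument is in substance the same as the paper's proof of Theorem~\ref{uinvariant}. The paper invokes Pfister's criterion (\cite[Proposition 9]{Pfisterabelian}) as a black box: $u(K)\leq 4$ is equivalent to every $2$-torsion Brauer class vanishing in all real closures having index $2$; you instead unpack that criterion by hand (even rank of torsion forms, Clifford invariant, $I^3$ killed via $\mathrm{cd}_2(K(\sqrt{-1}))=2$ and hyperbolic real residues), arriving at the same reduction to Theorem~\ref{piavecpoint}. Two small points where the paper is more careful than your sketch: (a) a field of transcendence degree $2$ over $\R$ need not be finitely generated, and the paper uses a compactness argument on spaces of orderings (Harrison topology, Tychonoff) to reduce to $K=\R(S)$ --- your phrasing silently assumes a model $S$ exists; the same caveat applies to your $i=1$ base case. (b) The triviality of $\alpha$ at real points of $U$ is not quite ``automatic from the torsion condition'': the paper derives it from vanishing in the real closures of $\R((z_1,z_2))$ via proper base change for $\Br(\R[[z_1,z_2]])$. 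Neither gap is fatal, but both steps are genuinely needed.
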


Pfister also pointed out a link between Conjecture \ref{conjPfister} for $i=2$ and the period-index problem \cite[Proposition 9]{Pfisterabelian}.
 This allows us to apply Theorem \ref{piavecpoint} to solve the $2$-dimensional case of this conjecture.
Our result is already new for $K=\R(x,y)$.

\begin{thm}
\label{uinvariant}
Let $K$ be a field of transcendence degree $2$ over $\R$. Then $u(K)\leq 4$. If moreover $K=\R(S)$ for some  integral surface $S$ over $\R$, then $u(K)=4$.
\end{thm}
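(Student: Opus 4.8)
The plan is to derive Theorem~\ref{uinvariant} from the period-index results, following the strategy sketched by Pfister. I will split into two statements: the upper bound $u(K)\leq 4$ for any $K/\R$ of transcendence degree $2$, and the equality $u(K)=4$ when $K=\R(S)$.

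\medskip

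First I would treat the upper bound. The key input is Pfister's observation \cite[Proposition 9]{Pfisterabelian} linking $u$-invariants in transcendence degree $2$ to the period-index problem: a $6$-dimensional quadratic form $q$ over $K$ with trivial total signature can be written, after scaling, in Albert form $\langle\!\langle a,b\rangle\!\rangle \perp -t\langle\!\langle c,d\rangle\!\rangle$ (or more precisely its Witt class is a sum of classes of quaternion algebras), so that its isotropy is governed by the index of an associated Brauer class $\alpha$ of period $2$ in $\Br(K)$; triviality of the signature of $q$ forces $\alpha|_x=0$ for every real place, i.e. the hypothesis of Theorem~\ref{piavecpoint} on a suitable smooth model $U$. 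By resolution of singularities over $\R$ (which holds in characteristic $0$) I may choose $S$ smooth projective with $K=\R(S)$, and then $\alpha\in\Br(U)$ for some dense open $U\subseteq S$; the triviality-of-signature condition exactly says $\alpha|_x=0$ for all $x\in U(\R)$. Theorem~\ref{piavecpoint} then gives $\ind(\alpha)=\per(\alpha)=2$, and a dimension count on the Albert form shows $q$ is isotropic. Hence every anisotropic form with trivial total signature has dimension $\leq 4$ plus the reduction of higher-dimensional forms to the $6$-dimensional case by repeatedly splitting off hyperbolic planes — so $u(K)\leq 4$. When $K$ is non-real the total signature condition is vacuous and one recovers Theorem~\ref{quadrique} as the special case $S(\R)=\varnothing$.

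\medskip

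For the lower bound $u(K)\geq 4$ when $K=\R(S)$, I would exhibit an explicit anisotropic $4$-dimensional form with trivial total signature. The natural candidate is an Albert form, i.e. the norm form (or its ``pure part'') of a biquaternion division algebra over $K$: Albert's example $(x,x)+(y,xy)$ over $\R(x,y)$ has period $2$ and index $4$ (cited in the paper), and pulling it back to $K=\R(S)$ along a dominant rational map $S\dashrightarrow \bP^2$ keeps the index equal to $4$ provided the map has odd degree or, more robustly, one argues that the pulled-back class still has index $4$ because a specialization/restriction argument shows it cannot be split by the function field of a curve over $K$. An Albert form associated to a biquaternion algebra of index $4$ is anisotropic, and being a difference of two $2$-fold Pfister forms it has zero signature at every ordering. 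This gives an anisotropic $4$-dimensional form of trivial total signature, so $u(K)\geq 4$, and combined with the upper bound $u(K)=4$.

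\medskip

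The main obstacle I expect is the lower bound: producing a biquaternion algebra of index exactly $4$ over an \emph{arbitrary} real surface function field $K=\R(S)$, not just over $\R(x,y)$. Pulling back Albert's class works cleanly only along a dominant morphism of odd degree, which need not exist; the honest approach is to build the class directly on $S$ using ramification, e.g. take $\alpha = (f,f)+(g,h)$ for carefully chosen rational functions whose ramification divisors are arranged so that the residue/ramification obstruction of \cite{CTram} forces $\ind(\alpha)=4$, and to check the signature vanishes. Making this work uniformly over every real $S$ — in particular handling the case $S(\R)=\varnothing$, where the ``real'' obstructions disappear and one must rely purely on the complex geometry governed by Theorem~\ref{pidJ} together with the ramification analysis — is the technical heart of the argument.
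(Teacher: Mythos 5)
The upper bound is conceptually the right route (Pfister's criterion \cite[Proposition 9]{Pfisterabelian} plus Theorem~\ref{piavecpoint}), but there are two real gaps. First, you slide from ``$K$ of transcendence degree $2$ over $\R$'' to ``$K=\R(S)$ with $S$ smooth projective'' by invoking resolution of singularities — but such a $K$ need not be finitely generated over $\R$, so there is no surface to resolve. The paper handles this with a limit argument: write $K=\bigcup_i K_i$ as a filtered union of finite extensions of a finitely generated $K_0$ over which $\alpha$ is defined, note that the locus $Z_i$ of orderings of $K_i$ at which $\alpha_i$ is nontrivial is closed in the compact space of orderings (Arason), and deduce from $\varprojlim_i Z_i=\varnothing$ via Tychonoff that some $Z_i$ is already empty, allowing one to replace $K$ by $K_i$. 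Second, you assert that trivial total signature ``exactly says $\alpha|_x=0$ for all $x\in U(\R)$,'' but orderings of $K$ are not the same thing as real closed points of $U$; one must actually derive the pointwise vanishing. The paper does this by choosing local parameters $z_1,z_2$ at $x\in U(\R)$, embedding $\R(S)$ into a real closed field $\bK\supset\R((z_1,z_2))$, and observing that $\Br(\R)\to\Br(\R[[z_1,z_2]])\to\Br(\bK)$ has the first arrow an isomorphism with retraction given by evaluation at $x$ and composite an isomorphism (both groups are generated by $(-1,-1)$). Neither step is automatic, and both are needed to legitimately apply Theorem~\ref{piavecpoint}.

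The lower bound is where your proposal and the paper genuinely diverge, and you yourself flag that your route is stuck. Pulling back Albert's class along a dominant rational map $S\dashrightarrow\bP^2$ does not control the index without an odd-degree hypothesis, and constructing a biquaternion division algebra with zero signature by hand over an arbitrary real surface — particularly when $S(\R)=\varnothing$, where ramification-based real obstructions vanish — is unresolved in your sketch. The paper sidesteps Brauer groups entirely here and gives a short, uniform argument: pick a closed point $x\in S$ with residue field $\C$ (such a point always exists, regardless of $S(\R)$), choose $w\in\cO_{S,x}$ inducing $\sqrt{-1}$ and local parameters $z_1,z_2$ with neither proportional to $w^2+1$ modulo $\mathfrak m_{S,x}^2$, and set $y_j:=(1+z_j)^2+(w+z_j)^2$. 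Then $y_1,y_2$ are again local parameters, completion gives an embedding $\R(S)\hookrightarrow\C((y_1))((y_2))$, and Springer's theorem over the complete discretely valued field shows $\langle 1,y_1,-y_2,-y_1y_2\rangle$ is anisotropic. Since each $y_j$ is a sum of two squares, the form has trivial signature at every ordering of $\R(S)$, giving $u(\R(S))\geq 4$. This is both simpler and more robust than the biquaternion route; you should replace your lower-bound sketch with an argument of this type.
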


Conversely,  Theorems \ref{pisanspoint} and \ref{piavecpoint} were known to be consequences of Conjectures \ref{conjLang} and \ref{conjPfister}. Indeed, they reduce to the case of Brauer classes of period $2$ by de Jong's theorem \cite{deJong} and a norm argument, and one may apply \cite[Proposition 9]{Pfisterabelian}.

By the Amer-Brumer theorem \cite[Th\'eor\` eme 1]{Brumer}, Theorem \ref{quadrique} implies that pairs of quadratic forms of rank $\geq 5$ over the function field of a real curve with no real points have a nontrivial common zero. As a consequence, we get:

\begin{thm}
\label{dP4}
Let $C$ be an integral curve over $\R$ such that $C(\R)=\varnothing$. Then every degree $4$ del Pezzo surface over $\R(C)$ has a rational point.
\end{thm}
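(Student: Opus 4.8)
The plan is to reduce the existence of an $\R(C)$-rational point on a degree~$4$ del Pezzo surface to the isotropy of a single quadratic form in five variables over the function field of a real surface without real points, which is then handled by Theorem~\ref{quadrique}; this is precisely the two-step argument (Amer--Brumer, then Theorem~\ref{quadrique}) announced just before the statement.

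Let $X$ be a del Pezzo surface of degree~$4$ over $\R(C)$. First I would recall the classical description: via its anticanonical embedding, $X$ is a smooth complete intersection of two quadrics in $\bP^4_{\R(C)}$, say $X=V(q_1,q_2)$ with $q_1,q_2$ linearly independent quadratic forms in five variables over $\R(C)$, so that an $\R(C)$-rational point of $X$ is exactly a nontrivial common zero of $q_1$ and $q_2$ in $\R(C)^5$. Next I would apply the Amer--Brumer theorem \cite[Th\'eor\`eme~1]{Brumer}: such a common zero exists over $\R(C)$ if and only if the quadratic form $q_1+tq_2$ in five variables is isotropic over $\R(C)(t)$. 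Now $\R(C)(t)$ is the function field of $C\times_\R\bP^1$, which is an integral surface over $\R$ (integral because $\bP^1_\R$ is geometrically integral) whose set of real points $C(\R)\times\bP^1(\R)$ is empty since $C(\R)=\varnothing$. Moreover, because $X$ is smooth, only finitely many members of the pencil $\langle q_1,q_2\rangle$ are degenerate (classically, exactly five), so its generic member is nondegenerate; equivalently $\det(M_1+tM_2)\neq 0$ in $\R(C)[t]$, where $M_i$ is the Gram matrix of $q_i$. Hence $q_1+tq_2$ is a nondegenerate quadratic form of rank~$5$ over $\R(C)(t)=\R(C\times_\R\bP^1)$, and Theorem~\ref{quadrique} shows that it is isotropic there. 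By Amer--Brumer, $q_1$ and $q_2$ have a nontrivial common zero, i.e. $X$ has an $\R(C)$-rational point.

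Given Theorem~\ref{quadrique} and the Amer--Brumer theorem, this is a short and essentially formal deduction; the only step requiring genuine care is the verification that Theorem~\ref{quadrique} applies, i.e. that the deformed form $q_1+tq_2$ is honestly of rank~$5$ (and not of smaller rank) over $\R(C)(t)$. This is the point at which the smoothness of the del Pezzo surface enters, through the nondegeneracy of the generic member of its defining pencil of quadrics; once this is granted, everything else is a direct combination of Amer--Brumer with Theorem~\ref{quadrique} applied to the surface $C\times_\R\bP^1$.
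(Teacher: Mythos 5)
Your proof is correct and follows the same route the paper indicates (in the paragraph preceding the statement, the paper explains that Theorem~\ref{dP4} follows from Theorem~\ref{quadrique} via the Amer--Brumer theorem applied to the pencil of quadrics cutting out the anticanonically embedded del~Pezzo surface). Your extra care with nondegeneracy is sound, though strictly speaking it is not needed: if $q_1+tq_2$ were degenerate over $\R(C)(t)$, it would have a nonzero radical vector and hence be isotropic anyway, so the conclusion holds either from Theorem~\ref{quadrique} (in the nondegenerate case) or trivially (in the degenerate case).
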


Conjecture \ref{conjLang} for $i=1$ and the $C_1$ conjecture of Koll\'ar and Manin combine to predict that a rationally connected variety on the function field of a real curve without real points has a rational point. Theorem \ref{dP4} solves the first open case of this problem. It was also known to follow from Conjecture \ref{conjLang} \cite[Theorem 3]{LangCi}.

By \cite[Proposition 8.3]{BWII}, when $C$ is the anisotropic conic over $\R$, Theorem \ref{dP4} is equivalent to the validity of the real integral Hodge conjecture \cite[Definition 2.2]{BWI} for $1$-cycles on degree $4$ del Pezzo fibrations over $C$. This gives further evidence for its validity for $1$-cycles on rationally connected varieties over $\R$ \cite[Question 2.16]{BWI}.

By work of Merkurjev \cite[Theorem 4]{Merkurjev}, Theorem \ref{quadrique} does not generalize to arbitrary fields of cohomological dimension $2$. Similarly, Colliot-Th\'el\`ene and Madore \cite[Th\'eor\`eme 1.2]{CTMadore} have shown that Theorem \ref{dP4} does not hold for all fields of cohomological dimension $1$. This explains that the proofs of Theorems \ref{quadrique} and \ref{dP4} use in an essential way the geometric nature of function fields of real varieties.

  Lang and Pfister have given slightly more general formulations of Conjectures \ref{conjLang} and \ref{conjPfister}, where the field $\R$ of real numbers is replaced with an arbitrary real closed field.  Our method of proof, relying in an essential way on infinitesimal methods in Hodge theory, does not apply in this more general setting (although one may use the Tarski-Seidenberg principle to extend our main theorems to the case of archimedean real closed fields: real closed subfields of $\R$).

\subsection{Strategy of the proof}
\label{strategy}
We do not know how to adapt the existing proofs of de Jong's theorem (\cite{deJong}, \cite[\S 4.2.2]{Lieblichtwisted}) to prove our main results. Instead, we use a new approach to period-index problems, based on Hodge theory.

To explain its principle, let us outline the proofs of Theorems \ref{pisanspoint}, \ref{piavecpoint} and of the first half of Theorem \ref{pinr}, for a period $2$ class $\alpha\in\Br(\R(S))$ in the function field of a connected smooth projective surface $S$ over $\R$. We wish to show that $\ind(\alpha)=2$.

In order to do so, we construct carefully (in \S\S\ref{consdouble}--\ref{NLlocus}--\ref{topoRdouble}) a ramified double cover $p:T\to S$, and try to prove that $\alpha_{\R(T)}=0\in\Br(\R(T))[2]$. As a first step, we show in \S\ref{ramification} that $\alpha_{\R(T)}$ is unramified, i.e. belongs to $\Br(T)[2]\subset \Br(\R(T))[2]$. To analyze $\Br(T)[2]$, we make use of the exact sequence (\ref{Brdeco}):
  \begin{equation}\label{BrdecoT}
0\to H^2_G(T(\C),\Z(1))/\langle\Pic(T),2\rangle\to\Br(T)[2]\to H^3_G(T(\C),\Z(1))[2]\to 0.
\end{equation}
We prove in \S\ref{killtau} that $\alpha_{\R(T)}$ lifts in (\ref{BrdecoT}) to a class $\beta\in H^2_G(T(\C),\Z(1))$.
By (\ref{BrdecoT}), it remains to show that $\beta\in\langle\Pic(T),2\rangle$. At this point, there is no reason why it should be true. 

The idea to achieve it is to let $T$ vary in moduli. For some values of the parameter corresponding to Noether-Lefschetz loci, the surface $T$ will carry extra algebraic cycles, making it more likely that $\beta\in\langle\Pic(T),2\rangle$. To conclude, we need an abundance result for Noether-Lefschetz loci that will allow us to pick a surface $T$ for which one has indeed $\beta\in\langle\Pic(T),2\rangle$, hence $\alpha_{\R(T)}=0$.

Over $\C$, an infinitesimal criterion for the abundance of Noether-Lefschetz loci in a family of surfaces has been discovered by Green \cite[\S 5]{CHM} (see \cite[\S 17.3.4]{Voisin}). This criterion has been adapted to the real setting in \cite[\S 7.2]{BWII} and \cite[\S 1]{NL3}. In \S\S\ref{verifGreen}--\ref{biratchoice}, we verify the hypothesis of the real analogue of Green's infinitesimal criterion for some families of ramified double covers of surfaces, thus completing the proof.

\vspace{1em}

Since this proof is long and technical, we first illustrate our approach in a simplified situation in Section \ref{secdJ}, by giving a proof of de Jong's Theorem \ref{pidJ} in the unramified complex case (Theorem \ref{dJunr}). In this setting,
Green's infinitesimal criterion has been verified by Voisin \cite{Voisin} in a generality sufficient for the argument.

There are two additional reasons to include Section \ref{secdJ}. First, since de Jong's Theorem \ref{pidJ} may be reduced to characteristic $0$ by \cite[\S 4.1.2]{Lieblichtwisted}, to $\C$ by the Lefschetz principle, and to the unramified case by
\cite[\S 7]{deJong}, it yields an alternative proof of this theorem. Second, our method provides new information about Theorem \ref{pidJ} in the unramified case: we obtain a density result for covers splitting a fixed unramified Brauer class on a complex surface (see Proposition \ref{propprecise}).

 The proofs of our main theorems are significantly more involved than that of Theorem \ref{dJunr} because one has to take into account the ramification and the topology of the real locus, and because no real analogue of Voisin's theorem is available. 
Although the analysis of the topology of the real locus plays obviously no role in the proof of Theorem \ref{pisanspoint}, it is very important for the proof of Theorem \ref{piavecpoint}.
Finally, we cannot use de Jong's trick \cite[\S 7]{deJong} to reduce to the unramified case. Indeed, in the process, the base field $\R$ would be replaced with a non-archimedean real closed field, where Hodge-theoretic arguments do not apply.

\subsection{Structure of the paper}
\label{structure}
As explained above, Section \ref{secdJ} is devoted to implementing our strategy in the simplified setting of unramified Brauer classes on complex surfaces. Section \ref{generalities} then gathers generalities concerning the cohomology of real algebraic varieties that are used throughout the text.

The proof of Theorems \ref{pisanspoint}, \ref{piavecpoint} and of the first half of Theorem \ref{pinr}, that has been sketched in \S\ref{strategy}, covers Sections \ref{secdouble}--\ref{secproof}. The argument itself, building on the material developed in the previous sections, can be found in \S\ref{perind2} for classes of period $2$, and in \S\ref{perind} in general. In \S\ref{paru}, it is explained why Theorem \ref{uinvariant}, hence also Theorems \ref{quadrique} and \ref{dP4}, follow from these results.

The second half of Theorem \ref{pinr}, that is the description of an obstruction to the equality of period and index, is proven in Section \ref{secpush}. The argument relies on a topological analysis of ramified covers of smooth projective surfaces over $\R$.

Finally, Section \ref{secex} illustrates our results with examples. In \S\ref{EnriquesR}, we study unramified Brauer classes on real Enriques surfaces and prove 
Theorem \ref{thEnr}. In \S\ref{K3nonarch},  we exhibit a K3 surface over a non-archimedean real closed field for which Theorem \ref{pinr} fails, thus proving Proposition \ref{propK3nonarch}.

\vspace{0.5em}

{\it Acknowledgements.} I would like to thank to Olivier Wittenberg for many useful discussions on related topics. The author is partly supported by ANR grant ANR-15-CE40-0002-01.

\section{Unramified Brauer classes on complex surfaces}
\label{secdJ}

In this section, we illustrate our method by proving de Jong's Theorem \ref{pidJ} for unramified classes on complex surfaces. As we have already explained in \S\ref{strategy}, the full statement of de Jong's theorem may be reduced to this case.

\begin{thm}[de Jong]
\label{dJunr}
Let $S$ be a connected smooth projective surface over $\C$. If $\alpha\in\Br(S)\subset\Br(\C(S))$, then $\ind(\alpha)=\per(\alpha)$.
\end{thm}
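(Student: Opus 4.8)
The plan is to reduce immediately to the case where $\per(\alpha)=\ell$ is a prime and $\ell\cdot\alpha=0$, since period and index have the same prime divisors and one can handle prime power orders by a standard d\'evissage on the $\ell$-primary components. Concretely, writing $\per(\alpha)=\prod \ell_i^{a_i}$, it suffices to split each $\alpha_i:=\frac{\per(\alpha)}{\ell_i^{a_i}}\alpha$ by an extension of degree a power of $\ell_i$; a further induction on $a_i$, splitting $\ell_i^{a_i-1}\alpha_i$ first by a degree-$\ell_i$ extension and then lifting, reduces everything to $\per(\alpha)=\ell$ prime. So from now on $\alpha\in\Br(S)[\ell]$ and I want a degree-$\ell$ cover of $S$ (generically \'etale, possibly ramified) splitting $\alpha$; equivalently, a finite field extension $L/\C(S)$ of degree $\ell$ with $\alpha_L=0$.

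Next I would set up the Hodge-theoretic input. For an unramified class $\alpha\in\Br(S)[\ell]$ there is the Kummer/Bockstein-type exact sequence
\begin{equation*}
0\to H^2(S,\Z(1))/\langle\Pic(S),\ell\rangle\to\Br(S)[\ell]\to H^3(S,\Z(1))[\ell]\to 0,
\end{equation*}
and the first step is to arrange that the image of $\alpha$ in $H^3(S,\Z(1))[\ell]$ dies after passing to a suitable (auxiliary) cover or blowup, so that $\alpha$ comes from a class $\beta\in H^2(S,\Z(1))$; this is the analogue of \S\ref{killtau} in the real case. Once $\alpha$ is represented by $\beta\in H^2(S,\Z(1))$, splitting $\alpha$ by a cyclic degree-$\ell$ cover $f:T\to S$ amounts, after checking unramifiedness of $\alpha_{\C(T)}$, to showing $f^*\beta\in\langle\Pic(T),\ell\rangle$, i.e. that $f^*\beta$ becomes algebraic modulo $\ell$ on $T$.

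The heart of the argument is then to make $f^*\beta$ algebraic by a moduli/Noether–Lefschetz argument. I would choose $T$ inside a suitably positive linear system on an ambient variety built from $S$ (e.g. a cyclic cover construction twisted by a very ample line bundle, so that $T$ is a complete intersection-type surface carrying a primitive cohomology class obtained from $\beta$), and let $T$ vary in a large family $\{T_b\}_{b\in B}$. By the theory of Noether–Lefschetz loci, the locus of $b$ where the transcendental part of $f^*\beta$ acquires an extra $(1,1)$-class (equivalently becomes algebraic modulo $\ell$) is a countable union of analytic subvarieties; the goal is to show this locus is nonempty — in fact dense, which is the content of Proposition~\ref{propprecise} mentioned in the excerpt. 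To do so I would invoke Voisin's verification \cite{Voisin} of Green's infinitesimal criterion \cite[\S 5]{CHM}, \cite[\S 17.3.4]{Voisin}: the criterion says that if the infinitesimal variation of Hodge structure is sufficiently ample — which Voisin checks for the relevant families of surfaces — then every component of the relevant Hodge locus is as small as Hodge theory allows, and in particular components where a given Hodge-theoretically admissible class becomes algebraic are Zariski dense in $B$. Picking such a $b$ gives a surface $T=T_b$ with $f^*\beta$ algebraic mod $\ell$, hence $\alpha_{\C(T)}=0$, hence $\ind(\alpha)\mid\ell=\per(\alpha)$; combined with $\per(\alpha)\mid\ind(\alpha)$ this yields equality.

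The main obstacle I expect is precisely the verification that the cyclic-cover family we construct has an infinitesimal variation of Hodge structure to which Voisin's form of Green's criterion applies — i.e. constructing the family $T\to B$ so that (a) $T_b$ is smooth with the right Picard-modulo-$\ell$ behaviour at the generic point, (b) the class $\beta$ propagates to a flat family of Hodge-theoretically admissible classes over $B$, and (c) the IVHS is ample enough. A secondary technical point is the control of ramification: one must ensure that after pulling $\alpha$ back to $\C(T)$ it is genuinely unramified on $T$ (not merely on a dense open), which constrains how the branch locus of $f$ meets the ramification/polar data of $\beta$; this is a divisor-intersection computation that I would handle by choosing the branch locus generic in a very ample linear system. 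Everything else — the d\'evissage to prime period, the exact sequence bookkeeping, the norm/corestriction argument giving $\per\mid\ind$ — is routine.
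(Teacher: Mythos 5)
Your overall Hodge-theoretic template (lift $\alpha$ to $\beta\in H^2(T(\C),\Z)$ after killing the image in $H^3$, then move in moduli until $\beta$ becomes algebraic modulo the period, via Green's infinitesimal criterion) is the right one, but the specific route you choose has a gap exactly at the point you flag as "the main obstacle", and you then resolve it by citing an input that does not apply. Voisin's verification of Green's criterion, as invoked in Theorem \ref{Voisin}, is for the \emph{full} linear systems $|dnH|$ with $d\gg 0$ on the threefold $\kX=\bP^1\times S$; the resulting surfaces have degree $dn$ over $S$, with $d$ large. A family of cyclic degree-$\ell$ covers of $S$ branched over a very positive linear system is a quite different family: the degree over $S$ stays bounded equal to $\ell$, the members form only a sub-linear-system of $|\cO_{\kX}(\ell)\otimes f^*N|$ on $\bP_S(\cO\oplus L)$, and this is not of the form $|nH|$, $n\gg 0$, for a fixed ample $H$. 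For such bounded-degree covers Green's criterion must be verified by hand (the analogue of Proposition \ref{propverifGreen}, which is the technical heart of Sections \ref{secdouble}--\ref{secNL} even for $\ell=2$); it is not available off the shelf and is certainly not "routine". Two smaller points: a blowup cannot kill the image of $\alpha$ in $H^3(S(\C),\Z)[\ell]$, since the torsion of $H^3$ is a birational invariant of smooth projective surfaces — what kills it in Lemma \ref{lem1} is that $\cl_\C(T)=dnH$ is divisible by $n$ together with weak Lefschetz; and you omit the normalization of Lemma \ref{lem2} ($p_*(\beta-n\gamma)=0$), which is what places the class in the primitive part $\Ker(p_*)\otimes\R$ where the open cone of Theorem \ref{Voisin} actually lives.

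The paper's proof sidesteps your difficulty entirely, and your write-up is missing the step that makes this possible. There is no reduction to prime period and no attempt to produce a cover of degree exactly $\per(\alpha)$: one applies Proposition \ref{propprecise} to two consecutive values $d$ and $d+1$, obtaining splitting surfaces of degrees $dn$ and $(d+1)n$ over $S$, and concludes $\ind(\alpha)\mid\gcd(dn,(d+1)n)=n$ from the gcd characterization of the index. Your dévissage to prime $\ell$ is harmless but unnecessary; the real issue is that without either (a) a verification of Green's criterion for bounded-degree covers, or (b) the coprime-degrees trick applied to large-degree covers, your argument does not close: a single splitting cover of degree $d\ell$ with $d>1$ only gives $\ind(\alpha)\mid d\ell$.
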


  Let $A$ be a very ample line bundle on $S$, chosen sufficiently positive so that $A^2>K_SA$. Introduce the threefold  $\kX:=\mathbb{P}^1_{\C}\times S$, and consider the very ample line bundle $H:=p_1^*\cO_{\mathbb{P}^1}(1)\otimes p_2^*A$ on $\kX$. Our hypothesis on $A$ implies that $H^2 K_\kX<0$. 

Let $n$ be the period of $\alpha$. If $d\geq 1$ is a fixed integer, we will denote by $B\subset|dnH|$ the Zariski open locus parametrizing smooth surfaces and by $\cT\to B$ the universal family over $B$. 
  We obtain the following  more precise result:

\begin{prop}\label{propprecise}
If $d\gg 0$ is big enough, the $b\in B(\C)$ such that $\alpha|_{\cT_b}=0\in\Br(\cT_b)$ are dense in $B(\C)$ for the euclidean topology.
\end{prop}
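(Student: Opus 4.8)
The plan is to reduce the vanishing of $\alpha|_{\cT_b}$ to a Noether-Lefschetz condition on the surface $\cT_b$, and then to deduce the density of the locus where this condition holds from Green's infinitesimal criterion, whose infinitesimal hypothesis in this setting has been verified by Voisin.

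First I would pull $\alpha$ back along the projection $p_2\colon\kX=\bP^1\times S\to S$ to a class $\talpha\in\Br(\kX)[n]$; since $p_2$ has a section, $p_2^*$ is injective and $\talpha\neq0$. Using the Kummer sequence $0\to\Pic(\kX)/n\to H^2(\kX,\Z/n)\to\Br(\kX)[n]\to0$, fix a lift $\tep\in H^2(\kX,\Z/n)$ of $\talpha$. For a smooth $\cT_b\in|dnH|$, functoriality of the Kummer sequence together with the Lefschetz $(1,1)$ theorem shows that $\alpha|_{\cT_b}=\talpha|_{\cT_b}=0\in\Br(\cT_b)$ if and only if $\tep|_{\cT_b}\in H^2(\cT_b,\Z/n)$ is the reduction mod $n$ of an integral $(1,1)$-class, i.e.\ of an algebraic class. (For $d\gg0$ a smooth $\cT_b$ is connected, as $h^0(\cO_{\cT_b})=1$ by Kodaira vanishing, and $\cT_b\to S$ is generically finite of degree $dn$; so whenever $\alpha|_{\cT_b}=0$ the field $\C(\cT_b)$ splits $\alpha$ with $[\C(\cT_b):\C(S)]=dn$, and applying the proposition to $d$ and $d+1$ gives $\ind(\alpha)\mid n$, recovering Theorem~\ref{dJunr}.)

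So it suffices to prove that $D:=\{b\in B(\C)\ |\ \tep|_{\cT_b}\ \text{is the reduction mod}\ n\ \text{of an algebraic class}\}$ is euclidean-dense. By weak Lefschetz, $H^2(\kX,\Z)\hookrightarrow H^2(\cT_b,\Z)$ onto a sub-Hodge structure containing $\tep|_{\cT_b}$; by the Noether-Lefschetz theorem for sufficiently ample surfaces in a threefold, for very general $b$ all algebraic classes on $\cT_b$ come from $\Pic(\kX)$, so that $b\notin D$ precisely because $\talpha\neq0$. For the special $b$ lying in $D$, the extra algebraic classes live in the vanishing cohomology of $\cT_b$, and I would rewrite the condition $b\in D$ as the requirement that a definite class in the vanishing part of $H^2(\cT_b,\Z/n)$ — extracted from $\tep$ after absorbing the constant contribution of $\Pic(\kX)$ modulo $n$ — lifts to a Hodge class on $\cT_b$. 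This is exactly the kind of Noether-Lefschetz density statement governed by Green's infinitesimal criterion \cite[\S17.3.4]{Voisin}; its infinitesimal hypothesis has been verified by Voisin \cite{Voisin} for the linear systems $|mH|$ with $m\gg0$ on a fixed threefold, and the standing assumption $A^2>K_SA$, equivalently $H^2K_\kX<0$, is precisely the positivity that places $(\kX,H)$ in the range where her verification applies. This gives the euclidean density of $D$ and completes the proof.

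I expect the main obstacle to be the reformulation carried out in the previous paragraph. The class $\tep$ is pulled back from the fixed threefold $\kX$ and so, as a cohomology class, does not vary with $b$; what varies, and what governs the algebraicity of $\tep|_{\cT_b}$, is the Picard lattice of $\cT_b$. One must therefore identify, in a monodromy-equivariant way, exactly which $\Z/n$-class in the vanishing cohomology of $\cT_b$ has to become Hodge, and check that the torsion bookkeeping — the Bockstein obstruction to lifting $\tep|_{\cT_b}$ to $H^2(\cT_b,\Z)$, and the interplay between $\Pic(\cT_b)/n$, $\mathrm{NS}(\cT_b)/n$ and their images in $H^2(\cT_b,\Z/n)$ — causes no trouble on a dense set of $b$, so that Voisin's density result genuinely applies. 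Verifying that the hypotheses of Green's criterion hold for this particular family of degree-$dn$ covers $\cT_b\to S$ is then a matter of unwinding her theorem.
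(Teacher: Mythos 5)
Your overall strategy --- reformulating $\alpha|_{\cT_b}=0$ as the condition that a class attached to $\alpha$ in the variable part of $H^2(\cT_b(\C),-)$ become algebraic, and then invoking the open-cone refinement of Green's infinitesimal criterion as verified by Voisin for the systems $|dnH|$ on $\kX$ --- is exactly the paper's. But the two points you defer as ``torsion bookkeeping'' are precisely the content of the paper's Lemmas \ref{lem1} and \ref{lem2}, and the first is not a routine verification. To feed anything into Voisin's theorem you must first lift $p^*\alpha\in\Br(\cT_b)[n]$ to an integral class $\beta\in H^2(\cT_b(\C),\Z)$; the obstruction is the image $\tau\in H^3(\cT_b(\C),\Z)[n]$ of $p^*\alpha$ under (\ref{BrauerC}) (equivalently, the Bockstein of $\tep|_{\cT_b}$), and there is no general principle making it vanish ``on a dense set of $b$''. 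The paper kills it for \emph{every} $b$ by the projection formula: $i_*i^*$ on $H^3$ is cup-product with $\cl_\C(\cT_b)=ndH$, which is divisible by $n$ --- this divisibility is the whole reason the linear system is $|dnH|$ rather than $|dH|$ --- combined with the weak Lefschetz isomorphism $i_*:H^3(\cT_b(\C),\Z)\xrightarrow{\sim}H^5(\kX(\C),\Z)$. Your write-up never uses this divisibility, so the lifting step is a genuine gap as it stands.

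The second deferred point is also needed in a precise form. Voisin's open cone $\Omega$ lives in $\Ker[p_*:H^2(\cT_b(\C),\R)\to H^2(S(\C),\R)]$, so one must show that $\beta$ can be corrected to $\beta-n\gamma$ with $p_*(\beta-n\gamma)=0$ integrally; the paper does this (Lemma \ref{lem2}) using $p_*p^*\talpha=dn\talpha=0$ in $H^2(S(\C),\Z/n)$ together with the surjectivity of $p_*$ on $H^2$, again from weak Lefschetz. Once both lemmas are in place, the mechanism you gesture at does work: the integral classes inducing your mod-$n$ condition form a translate of the lattice $nH^2(\cT_b(\C),\Z)_S$, a translated lattice meets the nonempty open cone $\Omega$, and so some representative becomes of type $(1,1)$ at a nearby $x\in\Lambda$; Lefschetz $(1,1)$ and (\ref{BrauerC}) then conclude. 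In short: right architecture, but the two lemmas you postpone \emph{are} the proof, and the $ndH$ divisibility argument in particular must be supplied.
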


\begin{proof}[Proof of Theorem \ref{dJunr}]
Apply Proposition \ref{propprecise} to two big enough consecutive integers $d$ and $d+1$. We obtain two smooth surfaces $T\in |dnH|$ and $T'\in |(d+1)nH|$ on which $\alpha$ vanishes. These surfaces are respectively of degree $dn$ and $(d+1)n$ over $S$, and we deduce that $\ind(\alpha)\mid dn$ and $\ind(\alpha)\mid(d+1)n$, hence that $\ind(\alpha)\mid n$. Since $n=\per(\alpha)\mid\ind(\alpha)$ is automatic, $\ind(\alpha)=n$, as wanted.
\end{proof}

We now fix $d\geq 1$, and turn to the proof of Proposition \ref{propprecise}. For any variety $X$
over $\C$, the Kummer exact sequence $1\to\bmu_n\to\G_m\xrightarrow{n}\G_m\to 1$ of \'etale sheaves on $X$ and the comparison isomorphism $H^2_{\et}(X,\bmu_n)\xrightarrow{\sim} H^2(X(\C),\Z/n)$ between \'etale and Betti cohomology \cite[Th\'eor\`eme 4.1]{Artincomparaison} induce a short exact sequence:
\begin{equation}
\label{BrliftC}
0\to \Pic(X)/n\to H^2(X(\C),\Z/n)\to \Br(X)[n]\to 0.
\end{equation}
Comparing (\ref{BrliftC}) with the long exact sequence associated to the short exact sequence
 $0\to\Z\xrightarrow{n}\Z\to\Z/n\to 0$ of sheaves on $X(\C)$, we get an exact sequence (see \cite[\S 2]{BeauvilleEnr}): 
\begin{equation}
\label{BrauerC}
0\to H^2(X(\C),\Z)/\langle n,\Pic(X)\rangle\to \Br(X)[n]\to H^3(X(\C),\Z)[n]\to 0.
\end{equation}

In the two following lemmas, we fix any  point $b\in B(\C)$. Let $T:=\cT_b$ be the associated smooth surface, and
 let $p:T\to S$ be the projection. 

\begin{lem}\label{lem1}
The image $\tau\in H^3(T(\C),\Z)[n]$ of $p^*\alpha\in\Br(T)[n]$  by (\ref{BrauerC}) vanishes.
\end{lem}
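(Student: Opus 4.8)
The plan is to factor the finite morphism $p$ through the threefold $\kX=\bP^1_\C\times S$, in which $T$ is a very ample divisor, and then to exploit the fact that the complement of such a divisor is affine, so that its cohomology vanishes in high degrees.

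First I would invoke the functoriality of the exact sequence (\ref{BrauerC}). Applied to the morphism $p:T\to S$, it produces a commutative square in which the map $\Br(\cdot)[n]\to H^3(\cdot(\C),\Z)[n]$ is compatible with the pullback maps $p^*$. Hence $\tau=p^*\tau_S$, where $\tau_S\in H^3(S(\C),\Z)[n]$ is the image of $\alpha$. Writing $p=p_2\circ j$ with $j:T\hookrightarrow\kX$ the closed immersion and $p_2:\kX\to S$ the second projection, this becomes $\tau=j^*\bigl(p_2^*\tau_S\bigr)$, and $p_2^*\tau_S\in H^3(\kX(\C),\Z)$ is still killed by $n$.

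Next I would push $\tau$ forward to $\kX$. Let $j_*:H^\bullet(T(\C),\Z)\to H^{\bullet+2}(\kX(\C),\Z)$ be the Gysin map. The projection formula gives $j_*j^*x=x\cupp[T]$, where $[T]\in H^2(\kX(\C),\Z)$ is the cycle class of $T$; since $T\in|dnH|$ one has $[T]=dn\,c_1(H)$, so
\[
j_*\tau \;=\; j_*j^*(p_2^*\tau_S)\;=\;dn\,c_1(H)\cupp p_2^*\tau_S\;=\;d\,c_1(H)\cupp(n\,p_2^*\tau_S)\;=\;0 .
\]
It then suffices to check that $j_*$ is injective on $H^3(T(\C),\Z)$. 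Because $H$ is very ample, $dnH$ is very ample, so $U:=\kX\setminus T$ is a smooth affine variety of dimension $3$; by the Andreotti--Frankel theorem $U(\C)$ has the homotopy type of a CW complex of real dimension $\leq 3$, whence $H^4(U(\C),\Z)=0$. The long exact cohomology sequence of the pair $(\kX(\C),U(\C))$, together with the Thom isomorphism $H^k(\kX(\C),U(\C))\cong H^{k-2}(T(\C),\Z)$, gives in the relevant range
\[
H^4(U(\C),\Z)\;\longrightarrow\;H^3(T(\C),\Z)\;\xrightarrow{\ j_*\ }\;H^5(\kX(\C),\Z),
\]
so $j_*$ is injective (indeed an isomorphism, as $H^5(U(\C),\Z)=0$ as well) on $H^3(T(\C),\Z)$. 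Combined with the computation above, this yields $\tau=0$.

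I do not anticipate a genuine obstacle in this lemma: the argument becomes formal once $p$ is factored through $\bP^1_\C\times S$, the only geometric input being the affineness of the complement of the ample divisor $T\subset\kX$, which forces $j_*$ to be injective in degree $3$. The real difficulty of Proposition \ref{propprecise} lies instead in controlling the $H^2/\langle n,\Pic\rangle$-part of $p^*\alpha$ in (\ref{BrauerC}) by a Noether--Lefschetz analysis of the family $\cT\to B$, which is carried out in the subsequent steps.
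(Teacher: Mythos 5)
Your argument is correct and is essentially the paper's proof: both factor $p$ through the inclusion $i:T\hookrightarrow\kX$, use the projection formula $i_*i^*=\cupp\,\cl_\C(T)$ with $\cl_\C(T)=dnH$ divisible by $n$ to kill the image of the $n$-torsion class in $H^5(\kX(\C),\Z)$, and then use that $i_*$ is an isomorphism on $H^3$. The only cosmetic difference is that the paper quotes the weak Lefschetz theorem for this last point, whereas you reprove it via Andreotti--Frankel and the Gysin sequence.
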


\begin{proof}
We consider the commutative diagram:
\begin{equation*}
\xymatrix
@R=0.4cm 
@C=0.5cm 
{
H^3(S(\C),\Z)\ar[d]_{p^*}\ar[r]
&  H^3(\kX(\C),\Z)\ar[ld]_(.57){i^*}\ar^{\cupp \cl_\C(T)}[d]  \\
H^3(T(\C),\Z)\ar^{i_*}_{\sim}[r]
& H^5(\kX(\C),\Z),
}
\end{equation*}
where $i:T\to \kX$ is the inclusion. The composition $i_*i^*$ is the cup product by the cycle class $\cl_\C(T)\in H^2(\kX(\C),\Z)$ of $T$, by the projection formula. Since $\cl_\C(T)=ndH$ is divisible by $n$ in $H^2(\kX(\C),\Z)$, we deduce that the image $\tau_S\in H^3(S(\C),\Z)[n]$ of $\alpha$ by (\ref{BrauerC}) vanishes in $H^5(\kX(\C),\Z)$ in the diagram above. But $i_*$ is an isomorphism by the weak Lefschetz theorem, so that $\tau=p^*\tau_S=0\in H^3(T(\C),\Z)[n]$.
\end{proof}

By the exact sequence (\ref{BrauerC}), $p^*\alpha\in\Br(T)[n]$ then lifts to a class $\beta\in H^2(T(\C),\Z)$.

\begin{lem}\label{lem2}
There exists $\gamma\in H^2(T(\C),\Z)$ such that $p_*(\beta-n\gamma)\s=0\in H^2(S(\C),\Z)$.
\end{lem}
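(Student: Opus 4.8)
The plan is to use the same kind of commutative diagram as in Lemma \ref{lem1}, but now one degree lower, together with the fact that $\beta$ is (up to the ambiguity coming from $\Pic(T)$ and from $n$-divisibility) the pullback of a class on $S$. The starting point is that $p^*\alpha = i^*\alpha_\kX$ in a suitable sense: more precisely, since $\alpha \in \Br(S)$, its image under (\ref{BrliftC})/(\ref{BrauerC}) on $S$ is a well-defined coset, and pulling back to $\kX = \bP^1 \times S$ via $p_2$ and then restricting to $T$ via $i$ computes $p^*\alpha$ on $T$. So I would first pick a class $\beta_\kX \in H^2(\kX(\C),\Z)$ lifting $p_2^*\alpha \in \Br(\kX)[n]$ through (\ref{BrauerC}) for $\kX$ — this is possible because $H^3(\kX(\C),\Z)$ has no relevant torsion obstruction, or simply because $\alpha$ already lifts on $S$ and one pulls the lift back along $p_2$. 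Then $i^*\beta_\kX \in H^2(T(\C),\Z)$ is a lift of $p^*\alpha$, so it differs from $\beta$ by an element of $\langle n, \Pic(T)\rangle$: there exist $\gamma_0 \in H^2(T(\C),\Z)$ and a divisor class $D$ such that $\beta = i^*\beta_\kX + n\gamma_0 + \cl_\C(D)$.

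Next I would compute $p_*\beta$ using the projection formula. For the $i^*\beta_\kX$ term, the composite $p_*\circ i^* = (\text{restriction to }S) \circ (\text{pushforward along } i)$ fits into the diagram
\begin{equation*}
\xymatrix
@R=0.4cm
@C=0.5cm
{
H^2(\kX(\C),\Z) \ar^{\cupp \cl_\C(T)}[r] \ar[d]
& H^4(\kX(\C),\Z) \ar[d] \\
H^2(T(\C),\Z) \ar^{i_*}[r]
& H^4(\kX(\C),\Z),
}
\end{equation*}
so that $i_* i^* \beta_\kX = \beta_\kX \cupp \cl_\C(T) = \beta_\kX \cupp (ndH)$, which is divisible by $n$; writing this as $n\,\delta$ for some $\delta \in H^4(\kX(\C),\Z)$ and restricting to $S$ (i.e. intersecting with a fibre section, or applying $i_0^*$ for $i_0\colon S = \{0\}\times S \hookrightarrow \kX$ after noting the relevant compatibility) shows that $p_*(i^*\beta_\kX)$ is $n$ times an integral class on $S$, say $p_*(i^*\beta_\kX) = n\eta$. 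The term $\cl_\C(D)$ contributes $p_*\cl_\C(D) = \cl_\C(p_*D)$, which is again an integral class, and since $p_*D$ is a divisor on $S$ of the form it is, one must absorb it; the cleanest route is to note that $\cl_\C(p_*D) \in \Pic(S) \subset H^2(S(\C),\Z)$, and $H^2(S(\C),\Z) \to \Br(S)[n]$ kills it only modulo $n$ — but we do not need it to vanish in $H^2$, rather we set $\gamma := \gamma_0 + (\text{correction})$ so that $p_*(\beta - n\gamma)$ becomes the class $p_*\cl_\C(D) + n\eta - n\,p_*\gamma$, and we want this to be zero \emph{in $H^2(S(\C),\Z)$}, not just modulo $n$.

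This last point is where I expect the real work to lie, and I suspect the statement as written relies on a more careful bookkeeping than a one-line projection-formula argument: one genuinely needs $p_*\beta$ itself, not merely its class in $\Br(S)[n]$, to be divisible by $n$. I would therefore argue as follows. The class $\beta_\kX$ can be chosen so that its image in $H^2(\kX(\C),\Z/n)$ is $p_2^*$ of the distinguished class on $S$; then $i^*\beta_\kX \in H^2(T(\C),\Z)$ and $p_*(i^*\beta_\kX) = \beta_\kX \cupp \cl_\C(T)$ pushed to $S$. Crucially, $\cl_\C(T) = ndH = nd\big(p_1^*\cO(1) + p_2^*A\big)$, and when we apply $p_* = p_{2*}\circ i_*$ followed by nothing (staying on $S$), every term acquires the factor $nd$, so $p_*(i^*\beta_\kX)$ is literally $nd$ times an integral class — in particular divisible by $n$. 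Thus setting $\gamma := \gamma_0 + d\cdot(\text{that integral class})$, and further modifying $\gamma$ by the divisor class that accounts for $\cl_\C(D)$ (legitimate since $D$, being a difference of two integral lifts of the same Brauer class, has $\cl_\C(D) \in n\cdot H^2(T(\C),\Z) + (\text{torsion killed by } p_*)$, and $p_*$ of torsion divisor classes can be arranged to vanish by Lemma \ref{lem1}-type weak-Lefschetz input), we obtain $p_*(\beta - n\gamma) = 0$ in $H^2(S(\C),\Z)$. The main obstacle, then, is verifying that the residual divisor-class discrepancy $\cl_\C(D)$ really does push forward into $n\cdot H^2(S(\C),\Z)$ — i.e. controlling the indeterminacy in the lift $\beta$ coming from $\Pic(T)$ — and I would handle it by choosing the lift $\beta$ compatibly with a lift on $\kX$ from the outset, so that $D$ is supported on $T$ in a controlled way and $p_*D$ is manifestly $n$-divisible or torsion.
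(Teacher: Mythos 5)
Your proposal stalls on two points, the first of which is fatal to the route you chose. You begin by lifting $p_2^*\alpha$ to an integral class $\beta_\kX\in H^2(\kX(\C),\Z)$, justified either by "$H^3(\kX(\C),\Z)$ has no relevant torsion obstruction" or by "$\alpha$ already lifts on $S$". Neither holds in general: the obstruction to lifting $\alpha$ through (\ref{BrauerC}) is the class $\tau_S\in H^3(S(\C),\Z)[n]$, which need not vanish (for an Enriques surface, $H^2(S,\cO_S)=0$ and $\Pic(S)=H^2(S(\C),\Z)$, so $\Br(S)[2]\simeq H^3(S(\C),\Z)[2]=\Z/2$ and the nontrivial class has $\tau_S\neq 0$); and by K\"unneth $p_2^*:H^3(S(\C),\Z)\to H^3(\kX(\C),\Z)$ is split injective, so $p_2^*\tau_S\neq 0$ as well. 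The whole point of Lemma \ref{lem1} is that this obstruction dies only \emph{after} restricting to $T$, because $\cl_\C(T)=ndH$ is divisible by $n$; it does not die on $\kX$, so $\beta_\kX$ need not exist. Second, even granting a lift on $\kX$, you correctly identify that $\beta$ and $i^*\beta_\kX$ differ by $n\gamma_0+\cl_\C(D)$ with $D\in\Pic(T)$ arbitrary, but your proposed resolution — that $\cl_\C(D)$ lies in $n\cdot H^2(T(\C),\Z)$ plus torsion — is false: $D$ can be any divisor class, and $\cl_\C(p_*D)$ is in general not divisible by $n$ in $H^2(S(\C),\Z)$. (A smaller omission: to "absorb" the class $\eta\in H^2(S(\C),\Z)$ with $p_*(i^*\beta_\kX)=n\eta$ into $\gamma$, you must first write $\eta=p_*\gamma'$ for some $\gamma'\in H^2(T(\C),\Z)$, which requires the surjectivity of $p_*$ in degree $2$ — a weak Lefschetz plus K\"unneth statement you never invoke.)

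The fix is to lift mod $n$ on $S$ rather than integrally on $\kX$: choose $\talpha\in H^2(S(\C),\Z/n)$ lifting $\alpha$ in (\ref{BrliftC}) (always possible), and take $\beta$ to reduce to $p^*\talpha$ in $H^2(T(\C),\Z/n)$ — legitimate since the integral Bockstein of $p^*\talpha$ is exactly $\tau=0$ by Lemma \ref{lem1}, and any such $\beta$ is a lift of $p^*\alpha$ in (\ref{BrauerC}). Then $p_*\beta\bmod n=p_*p^*\talpha=dn\,\talpha=0$, so $p_*\beta=n\vep$ with $\vep\in H^2(S(\C),\Z)$, and the surjectivity of $p_*=p_{2*}\circ i_*$ on $H^2$ (weak Lefschetz for $i_*$, K\"unneth for $p_{2*}$) produces $\gamma$ with $p_*\gamma=\vep$. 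This sidesteps entirely the indeterminacy $\cl_\C(D)$ that your argument could not control: the lemma is a statement about a well-chosen $\beta$, not about an arbitrary one.
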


\begin{proof}
Let $\talpha\in H^2(S(\C),\Z/n)$ be a lift of $\alpha$ in (\ref{BrliftC}).
Then the image of $p_*\beta\in H^2(S(\C),\Z)$ in $H^2(S(\C),\Z/n)$ is $p_*p^*\talpha=dn\talpha=0\in H^2(S(\C),\Z/n)$. The short exact sequence $H^2(S(\C),\Z)\xrightarrow{n}H^2(S(\C),\Z)\to H^2(S(\C),\Z/n)$ shows that there exists $\vep\in H^2(S(\C),\Z)$ such that $n\vep=p_*\beta$.

The composition $H^2(T(\C),\Z)\xrightarrow{i_*}H^4(\kX(\C),\Z)\to H^2(S(\C),\Z)$ of push-forward morphisms is surjective because $i_*$ is surjective by the weak Lefschetz theorem and because so is $H^4(\kX(\C),\Z)\to H^2(S(\C),\Z)$ by computation of the cohomology of $\kX(\C)=\mathbb{P}^1(\C)\times S(\C)$. It follows that there exists $\gamma\in H^2(T(\C),\Z)$ such that $p_*\gamma=\vep$. Then $p_*(\beta-n\gamma)=p_*\beta-n\vep=0\in H^2(S(\C),\Z)$, as wanted.
\end{proof}

  If $\Lambda\subset B(\C)$ is a contractible open set, and $b,x\in\Lambda$, 
Ehresmann's theorem
allows us to identify  canonically
$H^2(\cT_b(\C),\R)$ and $H^2(\cT_x(\C),\R)$.
 We will use the following difficult theorem of Voisin:
 
\begin{thm}[Voisin]
\label{Voisin}
Suppose that $d \gg 0$. Then there exists a nonempty Zariski open subset $V\subset B$ that satisfies the following property. 

If $\Lambda\subset V(\C)$ is a contractible neighbourhood of $b\in V(\C)$, there exists a nonempty open cone $\Omega$ in $H^2(\cT_b(\C),\R)_S:=\Ker[H^2(\cT_b(\C),\R)\xrightarrow{p_*}H^2(S(\C),\R)]$ with the property that for every $\nu\in \Omega$, there exists $x\in\Lambda$ such that $\nu\in H^2(\cT_x(\C),\R)$ is of type (1,1) in the Hodge decomposition of $\cT_x$.
\end{thm}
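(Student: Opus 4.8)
The plan is to obtain this from Green's infinitesimal criterion for the abundance of Noether--Lefschetz loci \cite[\S 5]{CHM}, \cite[\S 17.3.4]{Voisin}, whose hypothesis is verified for families of this type by Voisin \cite{Voisin}. The first step is to isolate the relevant variation of Hodge structure. Let $\pi:\cT\to B$ be the universal family and $p:\cT_b\to S$ the restriction of the second projection $\kX=\bP^1_\C\times S\to S$. Since $p$ is finite of degree $dn$ we have $p_*p^*=dn\cdot\mathrm{id}$, whence an orthogonal decomposition $H^2(\cT_b(\C),\Z)=p^*H^2(S(\C),\Z)\oplus\Ker(p_*)$ (up to finite index) into sub-Hodge structures. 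Consequently
\[
\cH_\Z:=\Ker\big[R^2\pi_*\Z\xrightarrow{\ p_*\ }H^2(S(\C),\Z)\otimes\Z_B\big]
\]
is a sub-local-system underlying a polarized variation of Hodge structure of weight $2$, with fibre $\cH_{\Z,b}=H^2(\cT_b(\C),\Z)_S$. Fix a contractible $\Lambda\subset B(\C)$ around $b$ and identify, by Gauss--Manin, all $H^2(\cT_x(\C),\R)_S$ with $\cH_{\R,b}:=H^2(\cT_b(\C),\R)_S$ for $x\in\Lambda$; a real class $\nu$ is of type $(1,1)$ on $\cT_x$ exactly when it lies in $F^1$ of the Hodge structure $\cH_x$, i.e. in the Noether--Lefschetz locus $N_\nu:=\{x\in\Lambda : \nu\in F^1\cH_x\}$. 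Thus the theorem asserts that, for $b$ in a suitable nonempty Zariski open $V\subset B$, the set $\{\nu\in\cH_{\R,b} : N_\nu\neq\varnothing\}$ contains a nonempty open cone.

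Next I would recall Green's criterion in a convenient form. Write $\cH_b^{p,q}$ for the Hodge pieces of $(\cH_\Z)_b\otimes\C$, and let $\bar\nabla:T_bB\to\Hom(\cH_b^{1,1},\cH_b^{0,2})$ be the infinitesimal variation of Hodge structure at $b$, that is, the Kodaira--Spencer map followed by cup product. The criterion I need states: if there is a nonzero real $(1,1)$-class $\lambda\in\cH_b^{1,1}\cap\cH_{\R,b}$ such that $T_bB\to\cH_b^{0,2}$, $u\mapsto\bar\nabla_u\lambda$, is surjective --- equivalently, for each $0\ne\sigma\in\cH_b^{2,0}$ there is $u$ with $\langle\bar\nabla_u\lambda,\sigma\rangle\ne0$ --- then $\{\nu\in\cH_{\R,b} : N_\nu\neq\varnothing\}$ contains an open cone around the ray $\R_{>0}\lambda$, which is precisely what is wanted. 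The set of $b\in B$ for which such a $\lambda$ exists is Zariski open, so it suffices to exhibit one suitable $b$ once $d\gg 0$.

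The verification of this nondegeneracy is the heart of the matter, and here I would invoke Voisin's computation. For the smooth very ample divisor $\cT_b\subset\kX$, the groups $\cH_b^{2,0}$, $\cH_b^{1,1}$ and the map $\bar\nabla$ --- Kodaira--Spencer $T_bB=H^0(\cT_b,N_{\cT_b/\kX})$ composed with cup product and with residue isomorphisms --- admit a description of Jacobian-ring type, in terms of the Koszul cohomology of the de Rham complex of $\kX$ twisted by the line bundles $\cO_\kX(jnH)$, $K_\kX$ and by $\Omega^2_\kX$ (see \cite{Voisin}). In these terms the required surjectivity reduces to the surjectivity of multiplication maps among the spaces of global sections of positive, suitably twisted powers of $\cO_\kX(nH)$, together with the vanishing of auxiliary groups such as $H^1(\kX,K_\kX(mnH))$ and $H^1(\kX,\Omega^2_\kX(mnH))$ for $m\gg 0$; as $H$ is very ample on $\kX$, all of these hold once $d\gg 0$, by Serre vanishing and a standard regularity argument. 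The hypothesis $A^2>K_SA$ --- equivalently $H^2K_\kX<0$ --- enters to guarantee that the expected codimension $\dim\cH_b^{2,0}$ of the loci $N_\nu$ does not exceed $\dim B$: Riemann--Roch on the threefold $\kX$ together with Serre duality give
\[
\dim B-\dim\cH_b^{2,0}=\tfrac12\,(dn)^2\,(-H^2K_\kX)+O(d),
\]
which is positive for $d\gg 0$. Granting the criterion on the resulting nonempty Zariski open $V$, the theorem follows.

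I expect the main obstacle to be exactly this verification in the non-homogeneous ambient threefold $\kX=\bP^1_\C\times S$: in contrast with hypersurfaces in projective space, neither the Jacobian-ring description nor the attendant Koszul-cohomology surjectivity and vanishing statements are purely formal, and one must moreover run them for the $p_*$-primitive piece $\cH_b$ rather than for primitive cohomology in the usual sense. This is what Voisin \cite{Voisin} carries out in a generality sufficient for the present argument; the only genuinely new inputs are the dimension count above and the observation that, thanks to the orthogonal splitting $H^2(\cT_b(\C),\Z)=p^*H^2(S(\C),\Z)\oplus\Ker(p_*)$, passing to $\Ker(p_*)$ costs nothing.
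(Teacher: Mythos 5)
Your proposal is correct and follows essentially the same route as the paper: both reduce the statement to Green's infinitesimal criterion applied to the sub-variation of Hodge structure $\Ker(p_*)$, and both defer the substantive verification of the surjectivity hypothesis (on a nonempty Zariski open subset of $B$, for $d\gg 0$) to Voisin's paper. The paper simply extracts the open cone as an intermediate step of the proof of Voisin's Proposition 8, with the Zariski open $V$ supplied by Voisin's Proposition 9, so your reconstruction of that step via Green's criterion --- including your correct identification of where $H^2K_\kX<0$ enters --- matches the intended argument.
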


\begin{proof}
It is a particular case of the main results of \cite{Voisin} (whose notation is slightly different: in \cite{Voisin}, $\kX$, $S$ and $T$ are denoted by $X$, $\Sigma$ and $S$). Let us be more precise.

The properties required at the beginning of \cite[\S 3]{Voisin} are satisfied: $\kX$ admits a morphism $\kX\to S$ to a surface with rational generic fiber, and $H^2K_\kX<0$ by our choice of $H$. In this situation, and if $d\gg 0$, we may apply \cite[Proposition 9]{Voisin} for $n=dl$. This shows that there exists a nonempty Zariski open subset $V\subset B$ such that all surfaces $\cT_b$ for $b\in V(\C)$ satisfy the hypothesis of \cite[Proposition 8]{Voisin}.

For any such surface $\cT_b$, it is possible to run the proof of \cite[Proposition 8]{Voisin}, and the existence of an open cone $\Omega\subset H^2(\cT_b(\C),\R)_S$ satisfying the required property is an intermediate step in this proof.
\end{proof}

The conclusion of \cite[Proposition 8]{Voisin} is the validity of the integral Hodge conjecture for $1$-cycles on $\kX$, a statement that is trivial in our setting. We use here that the proof of \cite[Proposition 8]{Voisin} contains much more  information.

It is now possible to conclude.

\begin{proof}[Proof of Proposition \ref{propprecise}]
Fix $d\gg 0$ big enough so that Theorem \ref{Voisin} applies, and let $W\subset B(\C)$ be a nonempty open subset. Since $W$ is Zariski dense in $B$, it meets the Zariski open subset $V\subset B$ provided by Theorem \ref{Voisin}. Choose a point $b\in V(\C)\cap W$ and let $\Lambda\subset V(\C)\cap W\subset B(\C)$ be a contractible neighbourhood of $b$ in $V(\C)\cap W$.

 Applying Lemmas \ref{lem1} and \ref{lem2} to $T=\cT_b$ provides two classes $\beta,\gamma\in H^2(T(\C),\Z)$ such that $\beta-n\gamma\in H^2(T(\C),\Z)_S:=\Ker[H^2(T(\C),\Z)\xrightarrow{p_*}H^2(S(\C),\Z)]$. Since the image of $H^2(T(\C),\Z)_S$ in 
$H^2(T(\C),\R)_S$ is a lattice, it is possible to find a class $\delta\in H^2(T(\C),\Z)_S$ such that the image of $\beta-n\gamma-n\delta$ in $H^2(T(\C),\R)_S$ belongs to the nonempty open cone $\Omega$ provided by Theorem \ref{Voisin}.
 
Theorem \ref{Voisin} shows that there exists $x\in\Lambda$ such that $\beta-n\gamma-n\delta$ is of type $(1,1)$ in the Hodge decomposition of $H^2(\cT_x(\C),\R)$. By the Lefschetz theorem on $(1,1)$ classes, $\beta=n\gamma+n\delta+\cl_{\C}(\varphi)$ for some $\varphi\in\Pic(\cT_x)$, where $\cl_{\C}$ is the cycle class map. The exact sequence (\ref{BrauerC}) then shows that $\alpha|_{\cT_x}=0\in \Br(\cT_x)$, as wanted.
\end{proof}

\section{The cohomology of real algebraic varieties}
\label{generalities}

We collect here general facts that will be used in the remainder of the text.

\subsection{Real varieties and their cohomology}
Let $\R$ be the field of real numbers and $\C=\R[\sqrt{-1}]$ be the field of complex numbers.
Define $G:=\Gal(\C/\R)\simeq \Z/2\Z$, generated by the complex conjugation $\sigma\in G$.
A variety $X$ over $\R$ is a separated scheme of finite type over $\R$. The set $X(\C)$ of complex points of $X$, endowed with the euclidean topology, carries a continuous action of $G$ whose fixed locus is the set $X(\R)$ of real points of~$X$.
 
Let $Y\subset X(\C)$ be a locally closed $G$-invariant subset (such as $X(\C)$ or $X(\R)$). For $i\geq 0$, we set $H^i(Y):=H^i(Y,\Z/2)$. We denote by $D^+(Y)$ (resp. $D^+_G(Y)$) the bounded below derived category of sheaves (resp. of $G$-equivariant sheaves) of abelian groups on $Y$.
  If $\sF$ is a $G$-equivariant sheaf of abelian groups on $Y$, we let $H^q_G(Y,\sF)$ be its equivariant cohomology groups. If $M$ is a $G$-module, we still denote by $M$ the associated constant $G$-equivariant sheaf on $Y$, and refer to $H^q_G(Y,M)$ as an equivariant Betti cohomology group. 
The $G$-module $\Z(j):=(\sqrt{-1})^j\Z\subset \C$ only depends on the parity of $j\in \Z$ (this is the convention of \cite{BWI}; it differs from the one in \cite{NL3} where $\Z(j)=(2\pi\sqrt{-1})^j\Z\subset \C$, but this should cause no confusion). For $\sF$ and $M$ as above, we define $\sF(j):=\sF\otimes_{\Z} \Z(j)$ and $M(j):=M\otimes_{\Z} \Z(j)$.

 We will use extensively properties of equivariant Betti cohomology, for which we will refer to \cite[\S 1]{BWI} (see also \cite{vanhamelthese}). If $X$ and $X'$ are smooth equidimensional varieties of dimensions $d$ and $d'$ over $\R$ and $f:X'\to X$ is a proper morphism, we will make use of the push-forward morphism:
\begin{equation}
\label{pf}
f_*:H^{k+2(d'-d)}_G(X'(\C),M(-k))\to H^k_G(X(\C),M)
\end{equation}
 defined in \cite[(1.22)]{BWI} for any $G$-module $M$ and any $k\in\Z$.
We will also consider, for a smooth variety $X$ over $\R$, the cycle class map
$\cl_\C:\CH^k(X_\C)\to H^{2k}(X(\C),\Z(k))$
in Betti cohomology, Krasnov's cycle class map $\cl:\CH^k(X)\to H^{2k}_G(X(\C),\Z(k))$ in equivariant Betti cohomology (\cite[\S 2.1]{krasnovcharacteristicclasses}, \cite[(1.55)]{BWI}), and the cycle class map $\cl_{\R}:\CH^k(X)\to H^k(X(\R))$ defined by Borel and Haefliger (\cite[\S 5]{BH}, \cite[(1.56)]{BWI}).

\subsection{Finite \'etale double covers}
\label{doublesec}
Let $X$ be a variety over $\R$. To a finite \'etale cover $p:\tX\to X$ of degree $2$, one can associate a 
$G$-equivariant sheaf $\sL$ on $X(\C)$, that is locally constant with stalks isomorphic to $\Z$ as a non-equivariant sheaf, and that fits in
natural exact sequences :
\begin{equation}
\label{sesdoubleZ}
0\to\Z\to p_*\Z\to \sL\to 0\hspace{1em}\text{  and  }\hspace{1em} 0\to\sL\to p_*\Z\to\Z\to 0.
\end{equation}
Reducing any of the exact sequences (\ref{sesdoubleZ}) modulo $2$ yields an exact sequence:
\begin{equation}
0\to\Z/2\to p_*\Z/2\to\Z/2\to 0.
\label{sesdouble2}
\end{equation}
Let $e^{\sL}_\Z\in H^1_G(X(\C),\sL)$ be the extension class of the exact sequences (\ref{sesdoubleZ}), and let
 $e^{\sL}_{\Z/2}\in H^1_G(X(\C),\Z/2)$ its reduction modulo $2$, that is the extension class of (\ref{sesdouble2}). 
The boundary maps
 of long exact sequences of $G$-equivariant cohomology induced by (\ref{sesdoubleZ}) or  (\ref{sesdouble2}) are given by cup-products by $e^{\sL}_{\Z}$ or $e^{\sL}_{\Z/2}$. Multiplication by $2$ on $\sL/4$ gives another short exact sequence:
\begin{equation}
0\to\Z/2\to \sL/4\to\Z/2\to 0.
\label{sesdouble4}
\end{equation}
of $G$-equivariant sheaves on $X(\C)$. The boundary maps  $\beta_{\sL}$ of a long exact sequence of $G$-equivariant cohomology induced by (\ref{sesdouble4}), called twisted Bockstein maps, are the sum of the usual Bockstein $\beta_\Z$ and of the cup-product with $e^{\sL}_{\Z/2}$ (see \cite{twistedBockstein}, that applies here because $G$-equivariant cohomology may be viewed as non-equivariant cohomology using the Borel construction).

We note that it is possible to recover $e^{\sL}_{\Z/2}\in H^1_G(X(\C),\Z/2)$ from $\sL$ as the image of $1$ by the boundary map of (\ref{sesdouble4}) and $p:\tX\to X$ from $e^{\sL}_{\Z/2}$ using the comparison isomorphism
$H^1_G(X(\C),\Z/2)\simeq H^1_{\et}(X,\Z/2)$ between equivariant Betti cohomology and \'etale cohomology \cite[Corollary 15.3.1]{Scheiderer}. The data of $p:\tX\to X$, $\sL$ or  $e^{\sL}_{\Z/2}$ are thus equivalent.

Let us spell out the particular case where $p:X_\C\to X$ is the morphism given by extension of scalars. In this case, one has $\sL=\Z(1)$, and the extension class $e_\Z^{\Z(1)}\in H^1_G(X(\C),\Z(1))$ (resp. $e_{\Z/2}^{\Z(1)}\in H^1_G(X(\C),\Z/2))$ is induced by the non-zero class $\omega_{\Z}\in H^1(G,\Z(1))\simeq\Z/2$ (resp. the non-zero class $\omega_{\Z/2}\in H^1(G,\Z/2)\simeq\Z/2$), see \cite[\S 1.1.2]{BWI}. When this causes no confusion, we denote any of these classes by $\omega$. If $\sF$ is a $G$-equivariant sheaf on a $G$-invariant locally closed subset $Y\subset X(\C)$, tensoring (\ref{sesdoubleZ}) by $\sF$ and taking $G$-equivariant cohomology yields the so-called real-complex exact sequences \cite[(1.6), (1.7)]{BWI}:
\begin{equation}
\label{rc}
H^k_G(Y,\sF(j+1))\to H^k(Y,\sF)\xrightarrow{N_{\C/\R}} H^k_G(Y,\sF(j))\xrightarrow{\cupp\omega} H^{k+1}_G(Y,\sF(j+1)),
\end{equation}
where the middle arrow $N_{\C/\R}$ will be referred to as the norm map.

\subsection{Restriction to the real locus}
\label{subsecrest}
Let $X$ be a variety over $\R$, and $Y\subset X(\R)$ be a locally closed subset.
If $\sF$ is a $G$-equivariant sheaf on $Y$, we consider the composition  of derived functors $\RR\Gamma_G(Y,\sF)=\RR\Gamma(Y,\RHom_{G}(\Z,\sF))$ yielding the first spectral sequence of equivariant cohomology \cite[(4.4.1)]{Tohoku}.
To compute $\RHom_{G}(\Z,\sF)$, notice that the free resolution \cite[Chapter I (6.3)]{Brown} of the $\Z[G]$-module $\Z$  yields a left resolution $\cK_\bullet=[\dots\to\Z[G]\xrightarrow{1-\sigma}\Z[G]\xrightarrow{1+\sigma}\Z[G]\xrightarrow{1-\sigma}\Z[G]\to 0]$ of the $G$-equivariant sheaf $\Z$ on $Y$, and choose an injective resolution $\cI^\bullet$ of $\sF$. Since $\RHom_{G}(\Z,\sF)\in D^+(Y)$ is represented by the complex $\sHom_{G}(\Z,\cI^\bullet)$, it is also represented by $\sHom_{G}(\cK_\bullet,\sF)$ as both are quasi-isomorphic to the total complex of the double complex $\sHom_{G}(\cK_\bullet,\cI^\bullet)$ by \cite[Theorem 1.9.3]{KS}. Consequently,
\begin{equation}
\label{cxcohoeq}
\RHom_{G}(\Z,\sF)\simeq [0\to\sF
\xrightarrow{1-\sigma}\sF\xrightarrow{1+\sigma}\sF\xrightarrow{1-\sigma}\sF\to\dots]\in D^+(Y).
\end{equation}

For some sheaves $\sF$, the complex (\ref{cxcohoeq}) splits in $D^+(Y)$, inducing decompositions 
of the $G$-equivariant cohomology of $\sF$ studied by Krasnov \cite{krasnovequivariant} and deve\-loped in \cite[\S 1.2]{BWI}, that we now recall (in \cite{krasnovequivariant, BWI}, only the case where $Y=X(\R)$ is treated explicitely, but the arguments there go through verbatim). 

\subsubsection{$2$-torsion coefficients}
If $\sF=\Z/2$, then $\RHom_{G}(\Z,\Z/2)\simeq\bigoplus_{q\geq 0}\Z/2[-q]$, yielding
a canonical decomposition \cite[(1.26)]{BWI}
\begin{equation}
\label{can2}
H^k_G(Y,\Z/2)\xrightarrow{\sim}\bigoplus_{0\leq i\leq k}H^i(Y)
\end{equation}
 respecting cup-products \cite[(1.28)]{BWI}, for any $k\geq 0$. By \cite[\S 1.2.1]{BWI}, (\ref{can2}) may also be constructed by applying the K\"unneth decomposition to the right-hand side of the identification $H^k_G(Y,\Z/2)=H^k(Y\times BG,\Z/2)$. It follows from this description 
that the Bockstein map $\beta_\Z:H^k_G(Y,\Z/2)\to H^{k+1}_G(Y,\Z/2)$ may be computed, in terms of the canonical decompositions (\ref{can2}), by the formula
\begin{equation}
\label{decBockstein}(a_i)_{0\leq i\leq k}\mapsto (\beta_{\Z}(a_{i-1})+(k-i)a_i)_{0\leq i\leq k+1},
\end{equation}
in which $a_{-1}=a_{k+1}=0$ by convention.

 If $0\leq i\leq k$ and $\xi\in H^k_G(X(\C),\Z/2)$, let $[\xi]_i$ be the image of $\xi$ by the composition 
  \begin{equation}
  \label{ithfactor2}
H^k_G(X(\C),\Z/2)\to H^k_G(X(\R),\Z/2)\xrightarrow{\sim}\bigoplus_{0\leq i\leq k}H^i(X(\R))\to H^i(X(\R))
\end{equation}
  of the restriction to $X(\R)$, of the decomposition (\ref{can2}) and of the projection.

\subsubsection{Twisted integral coefficients} Let $\sL$ be a $G$-equivariant locally constant sheaf on $X(\C)$ with stalks isomorphic to $\Z$, associated to $p:\tX\to X$  as in \S\ref{doublesec}.  If $0\leq i\leq k$ and $\xi\in H^k_G(X(\C),\sL)$,
we define $[\xi]_i\in H^i(X(\R))$ to be the class obtained by applying (\ref{ithfactor2}) to the image of $\xi$ by the reduction modulo $2$ morphism
 $H^k_G(X(\C),\sL)\to H^k_G(X(\C),\Z/2)$.
By \cite[\S 1.2.4]{BWI}, if $\xi\in H^k_G(X(\C),\sL)$, one has:
\begin{equation}
\label{compomega}
[\xi]_i=[\xi\cupp\omega]_i\in H^i(X(\R)).
\end{equation}

Assume now that the complex conjugation acts on $\sL|_Y$ by multiplication by $(-1)^j$ for $j\in\Z$.
This condition holds exactly when $Y\subset p(\tX(\R))$ if $j$ is even (resp. when $ p(\tX(\R))\cap Y=\varnothing$ if $j$ is odd). Applying (\ref{cxcohoeq}) with $\sF=\sL$ yields $\RHom_{G}(\Z,\sL)\simeq\bigoplus_{q\geq 0}H^q(G,\Z(j))[-q]$, inducing, for any $k\geq 0$,
 a canonical decomposition \cite[(1.30)]{BWI}:
\begin{equation}
\label{canZ}
H^k_G(Y,\sL)\xrightarrow{\sim}\bigoplus_{i\geq 0} H^i(Y, \sL\otimes H^{k-i}(G,\Z(j))).
\end{equation}
The decompositions (\ref{can2}) and (\ref{canZ}) are compatible in the sense that the diagram:
\begin{equation}
\begin{aligned}
\label{cancompa}
\xymatrix
@R=0.4cm 
@C=0.4cm 
{
 H^k_G(Y,\sL)\ar[d]\ar^(0.30){\sim}[r]&  \bigoplus_i H^i(Y, \sL\otimes H^{k-i}(G,\Z(j)))\ar[r]&\bigoplus_{0\leq i \leq k\atop\congr{i}{k-j}{2}} H^i(Y)\ar^{1+\beta_{\sL}}[d] \\
 H^k_G(Y,\Z/2)\ar^{\sim}[rr]&&H^0(Y)\oplus\dots\oplus H^k(Y),
}
\end{aligned}
\end{equation}
whose left vertical arrow and right upper horizontal arrow are given by reduction modulo $2$, and where $\beta_{\sL}$ has been defined in \S\ref{doublesec},
is commutative \cite[\S 1.2.3]{BWI}.

 It follows from (\ref{cancompa}) that, when $\congru{i}{k-j}{2}$, the class $([\xi]_i)|_Y\in H^i(Y)$ may equi\-valently be computed as the image of $\xi$ by the composition
\begin{equation}
\label{ithfactorZ}
H^k_G(X(\C),\sL)\to H^k_G(Y,\sL)\xrightarrow{\sim} 
\bigoplus_{i\geq 0}
H^i(Y,\sL\otimes H^{k-i}(G,\Z(j)))\to H^i(Y)
\end{equation}
of the restriction to $Y$, of the canonical decomposition (\ref{canZ}), of the projection and of the morphism induced by $H^{k-i}(G,\Z(j))\to H^{k-i}(G,\Z/2)\simeq\Z/2$.

\subsubsection{Line bundles}
Let $X$ be a smooth variety over $\R$ and $\cM\in\Pic(X)$.
 Krasnov  \cite[Theorem 0.6]{krasnovequivariant} has shown that the cycle class maps $\cl$ and $\cl_{\R}$ are compatible:
\begin{equation}
\label{krasnovcompa}
\cl_{\R}(\cM)=[\cl(\cM)]_1\in H^1(X(\R)).
\end{equation}
The following  lemma, contained in \cite[Proof of Lemma 3.4]{BWI}, concerns $2$-torsion line bundles.
\begin{lem}
\label{algdouble}
Let $X$ be a smooth variety over $\R$ and $e\in H^1_G(X(\C),\Z/2)$. Let $\upsilon:H^1_G(X(\C),\Z/2)\to\Pic(X)$ be induced by the isomorphism \cite[Corollary 15.3.1]{Scheiderer} $H^1_G(X(\C),\Z/2)\simeq H^1_{\et}(X,\bmu_2)$  and the inclusion $\bmu_2\to\G_m$.
Then $\cl_{\R}(\upsilon(e))=[e]_1$.
\end{lem}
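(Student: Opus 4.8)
The plan is to reduce Lemma~\ref{algdouble} to the already-recorded case~(\ref{krasnovcompa}) of line bundles, exploiting the fact that the whole hypothesis is expressed in terms of the single class $e\in H^1_G(X(\C),\Z/2)$ together with its image $\cM:=\upsilon(e)\in\Pic(X)$. Since $\upsilon$ is constructed from the long exact sequence in \'etale cohomology attached to $1\to\bmu_2\to\G_m\xrightarrow{2}\G_m\to 1$, the class $\cM$ is, by definition, the image of $e$ under the boundary map $H^1_{\et}(X,\bmu_2)\to H^2_{\et}(X,\bmu_2)=\Pic(X)[2]$, transported to $\Pic(X)$ via the comparison isomorphism of \cite[Corollary 15.3.1]{Scheiderer}. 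So $\cl(\cM)\in H^2_G(X(\C),\Z(1))$ is obtained from $e$ by a corresponding boundary (Bockstein-type) map in equivariant Betti cohomology, and the content of the lemma is the commutativity, after taking $[-]_1$, of the resulting square with the Borel--Haefliger cycle class.

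First I would make the identification precise: writing $\cl(\cM)\in H^2_G(X(\C),\Z(1))$ for Krasnov's equivariant class of $\cM$, the compatibility~(\ref{krasnovcompa}) already gives $\cl_\R(\cM)=[\cl(\cM)]_1\in H^1(X(\R))$, so it suffices to prove $[\cl(\cM)]_1=[e]_1$. Both sides are the degree-$1$ component, in the canonical decomposition~(\ref{can2}), of classes in $H^\ast_G(X(\R),\Z/2)$ obtained by restricting, respectively, the mod-$2$ reduction of $\cl(\cM)\in H^2_G(X(\C),\Z(1))$ and the class $e\in H^1_G(X(\C),\Z/2)$ itself. The relation between $e$ and the mod-$2$ reduction of $\cl(\cM)$ is exactly that $\cl(\cM)$ is the image of $e$ under a boundary map: concretely, $\cM=\upsilon(e)$ means $e$ lifts to a $\Z$-twisted class only up to the obstruction recorded by $\cl(\cM)$, and chasing the Kummer/exponential sequences shows that the mod-$2$ reduction of $\cl(\cM)$ equals $\beta_\Z(e)$ (plus possibly a cup product with $\omega$, which by~(\ref{compomega}) does not affect $[-]_1$). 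This is the kind of diagram chase already carried out in \cite[Proof of Lemma~3.4]{BWI}, which the statement cites; so the honest approach is to extract that computation.

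Concretely, the key steps in order are: (1) recall from \S\ref{doublesec} and the construction of $\upsilon$ that $\cM=\upsilon(e)$ is the image of $e$ under the boundary $H^1_{\et}(X,\bmu_2)\to\Pic(X)[2]$ of the Kummer sequence; (2) translate this into equivariant Betti cohomology via the comparison isomorphism, obtaining that the mod-$2$ reduction of $\cl(\cM)\in H^2_G(X(\C),\Z(1))$ is $\beta_\Z(e)+(e\cupp\omega)$ (or simply $\beta_\Z(e)$, depending on the precise conventions fixed in \cite{BWI}); (3) apply~(\ref{compomega}) to discard the $\omega$-term at the level of $[-]_1$; (4) use the explicit formula~(\ref{decBockstein}) for $\beta_\Z$ in terms of the canonical decomposition~(\ref{can2}) to compute the degree-$1$ component of $\beta_\Z(e)$ restricted to $X(\R)$: if $e$ restricts on $X(\R)$ to $(e_0,e_1)\in H^0(X(\R))\oplus H^1(X(\R))$, then the degree-$1$ part of $\beta_\Z(e)|_{X(\R)}$ is $\beta_\Z(e_0)+e_1=e_1=[e]_1$, since $\beta_\Z$ vanishes on $H^0(X(\R),\Z/2)$; (5) combine with~(\ref{krasnovcompa}) to conclude $\cl_\R(\upsilon(e))=[\cl(\cM)]_1=[e]_1$.

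The main obstacle I anticipate is bookkeeping rather than conceptual: pinning down the exact boundary map identifying $\cl(\upsilon(e))$ with a Bockstein of $e$, getting the twists $\Z(1)$ versus $\Z/2$ and the role of $\omega$ right under the conventions of \cite{BWI}, and checking that Krasnov's equivariant cycle class map $\cl$ is compatible with the \'etale-to-Betti comparison isomorphism used to define $\upsilon$. All of these are verified (at least implicitly) in \cite[Proof of Lemma~3.4]{BWI}, so rather than reproving them I would cite that computation and only spell out the short extra step~(4) that isolates the degree-$1$ component. One mild subtlety worth flagging explicitly is that one must restrict to $X(\R)$ \emph{before} taking the degree-$1$ component, and that the vanishing of $\beta_\Z$ on $H^0$ together with~(\ref{compomega}) is precisely what makes the possible ambiguity in steps (2)--(3) harmless.
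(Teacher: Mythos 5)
Your strategy is the same as the paper's: reduce via~(\ref{krasnovcompa}) to proving $[\cl(\upsilon(e))]_1=[e]_1$, identify $\cl\circ\upsilon$ with the boundary map of $0\to\Z(1)\xrightarrow{2}\Z(1)\to\Z/2\to 0$ (this is the commutativity of the upper-left triangle of the paper's diagram~(\ref{classedouble}), justified there by a citation to \cite{CTtorsion}), and then read off the degree-$1$ component after restriction to $X(\R)$ using the canonical decompositions. However, steps~(3) and~(4) of your computation are each wrong, in a way that happens to cancel. The mod-$2$ reduction of $\cl(\upsilon(e))$ is the twisted Bockstein $\beta_{\Z(1)}(e)=\beta_\Z(e)+e\cupp\omega$. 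Formula~(\ref{compomega}) says $[e\cupp\omega]_1=[e]_1$, not $[e\cupp\omega]_1=0$: the $\omega$-term cannot be discarded, and in fact it carries the entire content. Meanwhile, writing $e|_{X(\R)}=(e_0,e_1)$, formula~(\ref{decBockstein}) with $k=1$ gives the degree-$1$ component of $\beta_\Z(e)|_{X(\R)}$ as $\beta_\Z(e_0)+(k-1)e_1=\beta_\Z(e_0)=0$, not $\beta_\Z(e_0)+e_1$ as you wrote: the coefficient $(k-i)$ in~(\ref{decBockstein}) vanishes at $i=k$. So the untwisted Bockstein contributes nothing, and one correctly gets $[\beta_{\Z(1)}(e)]_1=0+[e]_1=[e]_1$. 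The paper avoids this bookkeeping trap by computing $\beta_{\Z(1)}$ all at once in the canonical decompositions as $(a_0,a_1)\mapsto(0,a_1,\beta_\Z(a_1))$, from which the degree-$1$ component $a_1=[e]_1$ is read off immediately. Your two errors compensate to the correct final answer, but the intermediate claims would not survive scrutiny and should be corrected.
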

\begin{proof}

Consider the commutative diagram
\begin{equation}
\label{classedouble}
\begin{aligned}
\xymatrix
@C=0.5cm 
@R=0.3cm 
{
&H^1_G(X(\C),\Z/2)\ar_(.55){\upsilon}[dl] \ar[r]\ar[d]& H^1_G(X(\R),\Z/2)\s= H^0\soplus H^1(X(\R))\ar[d]\\
\Pic(X)\ar_(.35){\cl}[r]&H^2_G(X(\C),\Z(1)) \ar[r]\ar[d]&H^2_G(X(\R),\Z(1))
\s= H^1(X(\R))
\ar[d]\\
&H^2_G(X(\C),\Z/2) \ar[r]&H^2_G(X(\R),\Z/2)\s= H^0\soplus H^1\soplus H^2(X(\R)),
}
\end{aligned}
\end{equation}
whose right horizontal arrows are restrictions to $X(\R)$, whose vertical arrows are boundary maps associated to $0\to\Z(1)\xrightarrow{2}\Z(1)\to\Z/2\to 0$ and reductions modulo $2$, where we have indicated the canonical decompositions (\ref{can2}) and (\ref{canZ}). Its upper left triangle
indeed commutes because it commutes by \cite[(3.8) and \S 3.3.1]{CTtorsion} and comparison with $2$-adic cohomology. 
The image $[\cl(\upsilon(e))]_1$ of $e$ in the factor $H^1(X(\R))$ of the bottom right group of (\ref{classedouble}) equals $\cl_{\R}(\upsilon(e))$ by (\ref{krasnovcompa}). The composition of the two right vertical arrows of (\ref{classedouble}) is $\beta_{\Z(1)}$, hence given by $(a_0,a_1)\mapsto(0,a_1,\beta_{\Z}(a_1))$ in terms of the canonical decompositions by (\ref{decBockstein}) and (\ref{compomega}). That $[e]_1=\cl_{\R}(\upsilon(e))$ then follows from the commutativity of (\ref{classedouble}).
\end{proof}

\subsection{Brauer groups}
\label{parBrauer}
   Let $X$ be a smooth integral variety over $\R$. We define the Brauer group of $X$ to be $\Br(X):=H^2_{\et}(X,\G_m)$. It is a torsion group, as a subgroup of  $\Br(\R(X))$ \cite[II, Corollaire 1.10]{gdB}.
  For any integral divisor $\Gamma\subset X$,
 there is a residue map $\res_\Gamma:\Br(\R(X))\to H^1_{\et}(\R(\Gamma),\Q/\Z)$ \cite[III, Corollaire 2.2]{gdB}.

For $\alpha\in\Br(\R(X))$, there are only finitely many integral divisors $\Gamma\subset X$ 
such that $\res_\Gamma(\alpha)\neq 0$. Their union is the ramification divisor of $\alpha$. Their complement $U\subset X$ is the biggest open subset of $X$ such that $\alpha\in\Br(U)\subset \Br(\R(X))$, by purity \cite[III, Corollaire 6.2]{gdB}.
We refer to \cite[(3.9)]{CTBarbara} for an exposition of these facts.

We now describe real analogues of the exact sequences (\ref{BrliftC}) and (\ref{BrauerC}).
For $n\geq 1$, the Kummer exact sequence $1\to\boldsymbol\mu_n\to\G_m\xrightarrow{n}\G_m\to 1$ of \'etale sheaves on $X$ and the comparison isomorphism $H^2_{\et}(X,\boldsymbol\mu_n)\xrightarrow{\sim} H^2_G(X(\C),\Z/n(1))$ between \'etale cohomology and equivariant Betti cohomology \cite[Corollary 15.3.1]{Scheiderer} induce a short exact sequence:
\begin{equation}
\label{Brlift}
0\to \Pic(X)/n\to H^2_G(X(\C),\Z/n(1))\to \Br(X)[n]\to 0.
\end{equation}
Comparing (\ref{Brlift}) with the long exact sequence of equivariant cohomology associated to the short exact sequence $0\to\Z(1)\xrightarrow{n}\Z(1)\to\Z/n(1)\to 0$ of $G$-equivariant sheaves on $X(\C)$, we obtain a short exact sequence: 
\begin{equation}
\label{Brdeco}
0\to H^2_G(X(\C),\Z(1))/\langle n,\Pic(X)\rangle\to \Br(X)[n]\to H^3_G(X(\C),\Z(1))[n]\to 0,
\end{equation}
where $\cl:\Pic(X)\to H^2_G(X(\C),\Z(1))$ is Krasnov's cycle class map. The right-hand side of (\ref{Brdeco}) may be thought of as the topological part of $\Br(X)[n]$, and the left-hand side as its Hodge-theoretic part.

\section{Ramified double covers}
\label{secdouble}

Let $S$  be a connected smooth projective surface over $\R$ and let $R\subset S$ be a simple normal crossings divisor. 
Most of the proof of our main theorems
will be devoted the construction and the analysis of double covers $p:T\to S$ that are ramified over $R$. In this section, we set up the relevant notation, and basic tools.

\subsection{Construction of double covers} 
\label{consdouble}
Fix a line bundle $\cL$ on $S$.
Consider the Zariski open subset $B\subset H^0(S,\cL^{\otimes 2}(R))$ of sections $s\in H^0(S,\cL^{\otimes 2}(R))$ 
whose zero locus $D\subset S$ is smooth and intersects $R$ transversally in its smooth locus. 
The divisor $\Delta:=R\cup D\subset S$ is then a simple normal crossings divisor. 

Let $r\in H^0(S,\cO_S(R))$ be an equation of $R$.
Let $\bkX$
 be the projective bundle $\bP_S(\cO_S\oplus \cL)$, with tautological bundle $\cO_{\bkX}(1)$ and structural morphism $f:\bkX\to S$. 
 Consider the sections $v\in H^0(\bkX,\cO_{\bkX}(1))$ and $w\in H^0(\bkX,\cO_{\bkX}(1)\otimes f^*\cL^{-1})$ induced by the factors $\cO_S\hookrightarrow\cO_S\oplus\cL\simeq f_*\cO_{\bkX}(1)$ and
$\cO_S\hookrightarrow\cL^{-1}\oplus\cO_S\simeq f_*(\cO_{\bkX}(1)\otimes f^* \cL^{-1})$.
To a section $s\in B$, we associate the surface 
$\bT:=\{r v^2=sw^2\}\subset \bkX$ with projection $\bp:\bT\to S$.
 It is a finite double cover of $S$ ramified over $\Delta$. The surface $\bT$ is smooth apart from ordinary double points above the singular points of $R$.

Define $T$ to be the blow-up of $\bT$ at its singular points: it is the minimal resolution of singularities of $\bT$. We denote by $p:T\to S$ the natural morphism. 
Let $\pi:\cT\to B$ be the family of such double covers obtained by letting $s$ vary in $B$.

In Section \ref{secNL}, it will be convenient to view $\cT$ as a family of surfaces in a fixed threefold $\kX$. To do so, we notice that
the singular points of $\bT$, viewed as a subset of $\bkX$, do not depend on $s$. Indeed they are exactly the points lying above the singular locus of $R$ where $v$ vanishes. Letting $\kX\to\bkX$ 
be the blow-up of $\bkX$ at this finite number of points, $T$ identifies with the strict transform of $\bT$ in $\kX$ and the total space of the  family $\pi:\cT\to B$ may the be viewed as a hypersurface $\cT\subset B\times \kX$.

\subsection{The topology of double covers}
\label{topodouble}
We collect here a few facts that will be used in \S\ref{parwL} and \S\ref{killtau} to perform cohomological computations on the ramified double covers constructed in \S\ref{consdouble}.

Fix a double cover $p:T\to S$ as in \S\ref{consdouble}.
Let $S^*:=S\setminus\Delta$ be the locus over which $p$ is \'etale, let $S^0:=S\setminus \Sing(\Delta)$ be the locus over which $p$ is finite flat. Let $j^0:S^*\hookrightarrow S^0$ be the inclusion, and $p:T^*\to S^*$ and $p:T^0\to S^0$ be the restrictions of $p$ over $S^*$ and $S^0$. Let $\tZ$ the $G$-equivariant locally constant sheaf on $S^*(\C)$ associated to the finite \'etale double cover 
 $p:T^*\to S^*$ as in \S\ref{doublesec}.

A local computation at points $x\in S^0(\C)\setminus S^*(\C)$ shows that $j^0_*\Z=\Z$, that $j^0_*p_*\Z=p_*\Z$, that $\RR^1j_*^0p_*\Z\xrightarrow{\sim}\RR^1j_*^0\Z$, and that  $\RR^ij_*^0p_*\Z=\RR^ij_*^0\Z=0$ if $i\geq 2$. Consider the distinguished triangles in $D^+_G(S^0(\C))$:
$\RR j_*^0\tZ\to  \RR j_*^0 p_*\Z\to  \RR j_*^0 \Z\xrightarrow{}$ (obtained by applying $\RR j_*^0$ to (\ref{sesdoubleZ})) as well as $\Z\to \RR j_*^0\Z\to \RR^1 j_*^0 \Z[-1]\to$ and 
$p_*\Z\to \RR j_*^0p_*\Z\to \RR^1 j_*^0p_* \Z[-1]\to$ (obtained by applying truncation functors).
By the octahedron axiom of triangulated categories, they fit in
a commutative diagram of distinguished triangles in $D^+_G(S^0(\C))$:
\begin{equation}
\label{distinguisheddiagram}
\begin{aligned}
\xymatrix
@R=0.25cm 
@C=0.45cm 
{
\RR j_*^0\tZ\ar^{\kappa}[r]\ar@{=}[d]&p_*\Z\ar[d]\ar[r]& \Z\ar[d] \ar[r] &\\
\RR j_*^0\tZ\ar[r]&\RR j_*^0 p_*\Z\ar[r]\ar[d]&\RR j_*^0 \Z\ar[d] \ar[r] &\\
& \RR^1j_*^0p_*\Z[-1]\ar@{=}[r]\ar[d] & \RR^1j_*^0\Z[-1]\ar[d]& \\
&&&
}
\end{aligned}
\end{equation}
In the sequel, we will make use of the first row of (\ref{distinguisheddiagram}):
\begin{equation}
\label{firstrow}
\RR j_*^0\tZ\xrightarrow{\kappa} p_*\Z\to \Z\to.
\end{equation}
Applying \cite[Proposition 1.1.9]{BBD} to the two vertical distinguished triangles in (\ref{distinguisheddiagram}) shows that the morphism $p_*\Z\to\Z$ in (\ref{firstrow}) is the unique one inducing a morphism between these triangles, hence is obtained by applying $j^0_*$ to the morphism $p_*\Z\to\Z$ on $S^*(\C)$ appearing in (\ref{sesdoubleZ}). In contrast, the morphism $\kappa:\RR j_*^0\tZ\to p_*\Z$ in (\ref{firstrow}) may not be uniquely determined.
Axiom (TR3) of triangulated categories gives rise to a morphism  of distinguished triangles in $D^+_G(S^0(\C))$:
\begin{equation}
\label{Phiphi}
\begin{aligned}
\xymatrix
@R=0.3cm 
@C=0.45cm 
{
\RR j_*^0\tZ\ar^{\kappa}[d]\ar^2[r]&\RR j_*^0\tZ\ar^{\kappa}[d]\ar[r]& \RR j_*^0(\Z/2)\ar^{\kappa_2}[d]\ar[r]&\\
p_*\Z\ar^2[r]&p_*\Z\ar[r]&p_*\Z/2\ar[r]&
}
\end{aligned}
\end{equation}
Restricting (\ref{distinguisheddiagram}) and (\ref{Phiphi}) to $S^*(\C)$ shows that the morphisms $\kappa|_{S^*(\C)}: \tZ\to p_*\Z$ and $\kappa_2|_{S^*(\C)}: \Z/2\to p_*\Z/2$ are the natural ones appearing in (\ref{sesdoubleZ}) and (\ref{sesdouble2}).

\subsection{A weak Lefschetz theorem}
\label{parwL}
Recall that we used the weak Lefschetz theorem twice in the proof of Theorem \ref{dJunr}, in Lemmas \ref{lem1} and \ref{lem2}.
The goal of this paragraph is to prove Proposition \ref{weakL}, that will serve as a substitute for the weak Lefschetz theorem in the proofs of our main results.

Real variants of the weak Lefschetz theorem have been studied in \cite[\S 1.5]{BWI}. As in \emph{loc.\ cit.}, we deduce real statement from the usual (complex) statements using real-complex exact sequences. The arguments of \emph{loc.\ cit.} are given for hypersurfaces and we adapt them to the setting of double covers.

We keep the notation of \S\S\ref{consdouble}--\ref{topodouble}. Define $P:=\Sing(\Delta)=S\setminus S^0$ (it is a finite union of points), and let $E:= p^{-1}(P)=T\setminus T^0$.
The complex variety $E(\C)$ is a disjoint union of copies of $\bP^1(\C)$, one above each point of $P(\C)$.

We will make the following hypothesis on the line bundle $\cL$ on $S$ fixed in \S\ref{consdouble}:
\begin{assumption}
\label{assumample}
The line bundle $\cL(R)$ is ample.
\end{assumption}

We start by proving the complex analogue of Proposition \ref{weakL}:
\begin{prop}
\label{weakLC}
Under Assumption \ref{assumample}, the morphism
$p_*:H^i(T(\C),\Z)\to H^i(S(\C),\Z)$ is an isomorphism for $i\geq 3$, and is surjective for $i=2$.
\end{prop}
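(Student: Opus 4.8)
The plan is to deduce the statement about the ramified double cover $p:T\to S$ from the usual complex weak Lefschetz theorem for ample divisors in a projective variety, passing through the finite flat cover $p:T^0\to S^0$ and the hypersurface situation. The key observation is that $T$ sits inside the threefold $\kX$ (the blow-up of $\bkX=\bP_S(\cO_S\oplus\cL)$ along the finite set $P$ lying over $\Sing(R)$) as the strict transform of $\bT=\{rv^2=sw^2\}$; on $\kX$ the strict transform $T$ is cut out by a section of a line bundle whose positivity is governed by $\cL(R)$. First I would check, using Assumption \ref{assumample} that $\cL(R)$ is ample, that the relevant line bundle on $\bkX$ pulled back to $\kX$ and twisted appropriately by the exceptional divisors is still ``ample enough'' in the sense needed for the Lefschetz hyperplane theorem away from the finitely many exceptional curves. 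More precisely, I expect to work with $\bkX$ directly: $\bT\subset\bkX$ is a divisor whose class is $2\,\cO_{\bkX}(1)\otimes f^*\cL^{-1}\otimes f^*\cO_S(R)$ up to the obvious identifications (writing $\bT=\{rv^2=sw^2\}$ with $v,w$ sections of $\cO_{\bkX}(1)$ and $\cO_{\bkX}(1)\otimes f^*\cL^{-1}$ respectively, and $r,s$ sections of $\cO_S(R)$ and $\cL^{\otimes2}(R)$), and the point is that $\cL(R)$ ample forces this divisor class on $\bkX$ to be ample, or at least to satisfy Lefschetz.

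The main steps, in order: (1) Identify the class of $\bT$ in $\Pic(\bkX)$ and verify, using Assumption \ref{assumample}, that it is ample; conclude by the Lefschetz hyperplane theorem (for the possibly singular divisor $\bT$ in the smooth projective threefold $\bkX$, using e.g. the version valid for ample divisors with the cohomology of the ambient space being that of $\bkX(\C)=\bP(\cO\oplus\cL)$ over $S$) that $H^i(\bkX(\C),\Z)\to H^i(\bT(\C),\Z)$ is an isomorphism for $i\le1$ and injective for $i=2$, equivalently that $H^i(\bT(\C),\Z)\to H^{i+2}(\bkX(\C),\Z)$ (Gysin) is an isomorphism for $i\ge3$ and surjective for $i=2$. (2) Pass from $\bT$ to its minimal resolution $T$ by analyzing the effect of blowing up the finitely many ordinary double points: since $E(\C)$ is a disjoint union of copies of $\bP^1(\C)$ and resolving an ordinary double point of a surface changes cohomology only in degree $2$ (adding one generator per exceptional curve) while leaving $H^i$ for $i\ge3$ unchanged, and since $H^4(T(\C),\Z)=H^4(\bT(\C),\Z)$ and $H^3$ is likewise unaffected, deduce the statement for $T$ from the statement for $\bT$. (3) Compute the cohomology of $\bkX(\C)=\bP_S(\cO_S\oplus\cL)(\C)$ by the projective bundle formula: $H^*(\bkX(\C),\Z)\cong H^*(S(\C),\Z)\oplus H^{*-2}(S(\C),\Z)$, and track how $f_*$ (integration over the $\bP^1$-fibres) and the section $v$ interact, so that the composition $H^i(T(\C),\Z)\xrightarrow{\text{Gysin}}H^{i+2}(\bkX(\C),\Z)\xrightarrow{f_*}H^i(S(\C),\Z)$ can be identified with (a twist of) $p_*$. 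This requires comparing the Gysin pushforward along $T\hookrightarrow\kX\to\bkX$ with the finite pushforward $p_*$; the cleanest route is to restrict everything over the open locus $S^0$ where $p$ is finite flat and use that the complement $P$ has real codimension $4$ in the relevant spaces, so that $H^i$ for $i\ge2$ is unaffected, reducing to the finite flat situation where $p_*p^*=2$ and the identification is transparent via the distinguished triangle \eqref{firstrow}.

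The main obstacle I anticipate is step (3): correctly matching the topological Gysin/projective-bundle bookkeeping on $\bkX$ (and $\kX$) with the algebraic pushforward $p_*:H^i(T(\C),\Z)\to H^i(S(\C),\Z)$, keeping careful track of the twist by $\cL$ coming from the $\bP_S(\cO_S\oplus\cL)$ construction and of the contribution of the exceptional locus $E$ over $P$. A secondary subtlety is that $\bT$ is singular, so one must use a form of the Lefschetz hyperplane theorem valid for ample (possibly singular) hypersurfaces — this is standard (it follows from the Andreotti--Frankel/Lefschetz theory, or from Goresky--MacPherson, applied to the complement $\bkX\setminus\bT$ which is affine, or Stein, over an affine base), but needs to be invoked with care in degree $2$ where one only gets surjectivity of $p_*$ rather than an isomorphism. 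Once these identifications are in place, the surjectivity for $i=2$ and the isomorphism for $i\ge3$ fall out directly; note that for $i\ge 5$ both sides vanish since $S$ is a surface, so the content is really in degrees $i=2,3,4$.
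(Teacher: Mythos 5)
Your step (1) does not go through, and this is where the real content of the proposition lies. The divisor $\bT\subset\bkX$ is \emph{not} ample: its class is $\cO_{\bkX}(2)\otimes f^*\cO_S(R)$, which restricts to $\cO_S(R)$ on the section $\{w=0\}\simeq S$, and $\cO_S(R)$ need not be ample (it is trivial when $R=\varnothing$). Nor is $\bkX\setminus\bT$ affine: your justification that it is ``affine over an affine base'' fails because the base $S$ is a \emph{projective} surface; in fact $\bT\cap\{w=0\}\simeq R$, so $\bkX\setminus\bT$ contains the quasi-projective surface $S\setminus R$, and when $R=\varnothing$ it contains the complete section $\{w=0\}\simeq S$, which is as far from affine as possible. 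Assumption \ref{assumample} ($\cL(R)$ ample) does not make $\bT$ positive in $\bkX$; what it does is make $\cO_S(\Delta)\simeq(\cL(R))^{\otimes 2}$ ample, i.e.\ it makes the complement of the branch divisor $\Delta=R\cup D$ in $S$ affine. A secondary gap: even granting a Lefschetz statement for the restriction maps $H^i(\bkX(\C),\Z)\to H^i(\bT(\C),\Z)$, your ``equivalently'' converting it into a statement about Gysin maps uses Poincar\'e duality on $\bT$ with integral coefficients, and $\bT$ is not an integral homology manifold at its ordinary double points (their links are copies of $\R P^3$, which is not an integral homology sphere). Working with the smooth $T\subset\kX$ instead does not help, since the strict transform of a divisor under a blow-up is never ample.

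The paper's proof uses the ampleness of $\cL(R)$ in the way indicated above: $S^*=S\setminus\Delta$ is affine, hence so is its finite cover $T^*$, and Andreotti--Frankel gives $H^i(S^*(\C),\Z)=H^i(T^*(\C),\Z)=0$ for $i\geq 3$, hence also $H^i(S^*(\C),\tZ)=0$ for $i\geq 3$ by (\ref{sesdoubleZ}). The long exact sequence of the triangle (\ref{firstrow}), $\RR j_*^0\tZ\to p_*\Z\to\Z\to$, whose second map restricts over $S^*$ to the trace map of (\ref{sesdoubleZ}) and therefore induces $p_*$ on cohomology, then gives the assertion for $p:T^0\to S^0$ with $H^{i}(S^*(\C),\tZ)$ and $H^{i+1}(S^*(\C),\tZ)$ as the error terms; finally one passes from $(T^0,S^0)$ to $(T,S)$ by comparing the long exact sequences of cohomology with support along $E$ and $P$, using that $p_*:H^j(E(\C),\Z)\to H^{j-2}(P(\C),\Z)$ is an isomorphism for $j\geq 1$. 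Note that this route never needs the ambient threefold, and it computes the pushforward $p_*$ directly rather than dualizing a restriction statement, which is exactly what sidesteps both of the difficulties above.
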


\begin{proof}
Since $\cO_S(\Delta)\simeq\cL^{\otimes 2}(2R)$ is ample, $S^*$ is affine, and so is its finite cover $T^*$. Since they are two-dimensional, $H^i(S^*(\C),\Z)=H^i(T^*(\C),\Z)=0$ for $i\geq 3$
 (combine \cite[Theorem 1]{AF} and the universal coefficient theorem).
The long exact sequence associated to (\ref{sesdoubleZ})
on $S^*(\C)$ then shows that 
$H^i(S^*(\C),\tZ)=0$ for $i\geq 3$. The long exact sequence of cohomology associated to  (\ref{firstrow}) reads:
$$H^{i}(S^*(\C),\tZ)\to H^i(T^0(\C),\Z)\to H^i(S^0(\C),\Z)\to H^{i+1}(S^*(\C),\tZ),$$
and shows that 
$p_*:H^i(T^0(\C),\Z)\to H^i(S^0(\C),\Z)$ is an isomorphism if $i\geq 3$ and surjective if $i=2$. A diagram chase in the commutative diagram:
\begin{equation*}
\xymatrix
@R=0.3cm 
@C=0.5cm  
{
\cdots\ar[r]& H^{i-2}(E(\C),\Z)\ar[r]\ar_{p_*}[d]&H^i(T(\C),\Z)\ar[d]_{p_*}\ar[r]
&  H^i(T^0(\C),\Z)\ar_{p_*}[d]\ar[r]& H^{i-1}(E(\C),\Z)\ar_{p_*}[d]  \\
\cdots\ar[r]& H^{i-4}(P(\C),\Z)\ar[r]&H^i(S(\C),\Z)\ar[r]
& H^i(S^0(\C),\Z)\ar[r]& H^{i-3}(P(\C),\Z),
}
\end{equation*}
whose rows are long exact sequences of cohomology with support and whose vertical arrows are push-forward maps,
then implies that $p_*:H^i(T(\C),\Z)\to H^i(S(\C),\Z)$ is an isomorphism for $i\geq 3$ and is surjective for $i=2$, because $p_*:H^j(E(\C),\Z)\to H^{j-2}(P(\C),\Z)$ is an isomorphism for $j\geq 1$. 
\end{proof}

Define $I\subset H^1(S(\R))$ to be the image of $p_*:H^1(T(\R))\to H^1(S(\R))$.

\begin{prop}
\label{weakL}
Under Assumption \ref{assumample},  for $i\geq 2$,
the image of the morphism $p_*:H^i_G(T(\C),\Z(i-1))\to H^i_G(S(\C),\Z(i-1))$ is $\{\xi\in H^i_G(S(\C),\Z(i-1))\ |\ [\xi]_1\in I\}$.
\end{prop}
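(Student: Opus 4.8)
The plan is to deduce Proposition~\ref{weakL} from its complex counterpart Proposition~\ref{weakLC} by means of the real-complex exact sequences~(\ref{rc}), exactly as is done for hypersurfaces in \cite[\S 1.5]{BWI}, keeping careful track of the twist. First I would set up, for each $i\geq 2$, the commutative ladder whose two rows are the real-complex sequences
\begin{equation*}
\xymatrix@C=0.35cm@R=0.35cm{
H^i_G(T(\C),\Z(i))\ar[r]\ar[d]_{p_*}& H^i(T(\C),\Z)\ar[r]^(0.45){N}\ar[d]_{p_*}& H^i_G(T(\C),\Z(i-1))\ar[r]^{\cupp\omega}\ar[d]_{p_*}& H^{i+1}_G(T(\C),\Z(i))\ar[d]_{p_*}\\
H^i_G(S(\C),\Z(i))\ar[r]& H^i(S(\C),\Z)\ar[r]^(0.45){N}& H^i_G(S(\C),\Z(i-1))\ar[r]^{\cupp\omega}& H^{i+1}_G(S(\C),\Z(i)),
}
\end{equation*}
the vertical maps being the push-forwards~(\ref{pf}), which commute with norm maps and with cup-product by $\omega$ (the latter because $p_*$ is a map of modules over equivariant cohomology of the point). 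By Proposition~\ref{weakLC}, the second vertical arrow is surjective; hence the image of $p_*$ on $H^i_G(S(\C),\Z(i-1))$ contains the image of the norm map $N\colon H^i(S(\C),\Z)\to H^i_G(S(\C),\Z(i-1))$, which by exactness of the bottom row is exactly $\Ker(\cupp\omega)$.

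Next I would identify the kernel of $\cupp\omega\colon H^i_G(S(\C),\Z(i-1))\to H^{i+1}_G(S(\C),\Z(i))$ with $\{\xi\mid[\xi]_1\in I\}$, or rather prove the two inclusions that give the Proposition. For the inclusion ``$\subset$'': any $\xi$ in the image of $p_*$ is $p_*\eta$ with $\eta\in H^i_G(T(\C),\Z(i-1))$, and then $[\xi]_1=[p_*\eta]_1$; I expect the compatibility of $p_*$ with restriction to the real locus and with the decompositions of \S\ref{subsecrest} to yield a formula expressing $[p_*\eta]_1\in H^1(S(\R))$ as $p_*([\eta]_1)$ (pushforward along $p\colon T(\R)\to S(\R)$ in mod-$2$ cohomology, or the relevant component thereof), so that $[\xi]_1\in I$ by definition of $I$. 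For the inclusion ``$\supset$'': given $\xi$ with $[\xi]_1\in I$, I would first subtract off a class coming from $p_*$ to reduce to the case $[\xi]_1=0$; by the compatibility~(\ref{compomega}) and the computation~(\ref{decBockstein}) of the Bockstein in terms of the canonical decomposition~(\ref{can2}), the class $[\xi\cupp\omega]_\bullet\in H^\bullet(S(\R))$ is governed by $[\xi]_\bullet$, and the condition $[\xi]_1=0$ together with the structure of~(\ref{cancompa}) should force $\xi\cupp\omega$ to vanish after restriction to $S(\R)$; since $H^{i+1}_G(S(\C),\Z(i))$ for $i\geq 2$ injects into the cohomology of the real locus in the relevant range (this is where the dimension-$2$ hypothesis and the weak Lefschetz input on $S$ enter), one gets $\xi\cupp\omega=0$, hence $\xi\in\Ima(N)\subset\Ima(p_*)$ by the diagram above.

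The main obstacle, I expect, is precisely the last step: controlling $\cupp\omega$ on $H^i_G(S(\C),\Z(i-1))$ and showing that the vanishing of the degree-$1$ component $[\xi]_1$ is exactly what is needed to kill $\xi\cupp\omega$. One has to handle the difference between integral and mod-$2$ information carefully — the real-complex sequence is integral while the canonical decompositions and the class $[\cdot]_1$ live mod $2$ — and invoke the precise comparison~(\ref{cancompa}) between~(\ref{can2}) and~(\ref{canZ}), including the twisted Bockstein $\beta_{\sL}$; here the relevant $\sL$ is the trivially-twisted $\Z(1)$, so $\beta_{\sL}=\beta_\Z$. I would also need to double-check the surjectivity statement for $i=2$ separately, since there the bottom-row term $H^2_G(S(\C),\Z(2))$ is nonzero and one must argue that the extra ``positive weight'' piece of $H^2_G(S(\C),\Z(1))$ beyond $\Ima(N)$ is accounted for by classes with $[\xi]_1\notin I$; this is consistent with Proposition~\ref{weakLC} only asserting surjectivity (not injectivity) of $p_*$ in degree $2$, and I would reconcile the two by the same ladder together with a direct analysis of $H^3_G(S(\C),\Z(2))$.
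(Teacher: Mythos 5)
Your setup — the ladder of real--complex exact sequences together with Proposition~\ref{weakLC} — is the right starting point and is indeed how the paper proceeds, and your inclusion ``$\subset$'' (via $[p_*\eta]_1=p_*[\eta]_1$, which follows from \cite[Proposition 1.22]{BWI} and (\ref{cancompa})) is fine. The gap is in the inclusion ``$\supset$'', exactly at the step you flag as the main obstacle: the claim that $H^{i+1}_G(S(\C),\Z(i))$ injects into the cohomology of the real locus is false in the range that matters. The restriction $H^k_G(S(\C),M)\to H^k_G(S(\R),M)$ is an isomorphism only for $k>2\dim S=4$ (\cite[\S 1.1.3]{BWI}), so your argument only works for $i\geq 4$; for $i=2,3$ the group $H^{i+1}_G(S(\C),\Z(i))$ contains, e.g., norms of classes from $H^{i+1}(S(\C),\Z)$ that are invisible on $S(\R)$ (take $S(\R)=\varnothing$ with $H^3(S(\C),\Z)\neq 0$), so you cannot conclude $\xi\cupp\omega=0$ from $[\xi]_1=0$. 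And $i=2$ is precisely the case needed later (Propositions~\ref{lem1R} and~\ref{lem2R}). A secondary unproved point: your reduction to $[\xi]_1=0$ requires the map $\eta\mapsto[\eta]_1$ on $H^i_G(T(\C),\Z(i-1))$ to hit every class in $H^1(T(\R))$ whose push-forward is $[\xi]_1$, which is not clear for $i=2,3$.

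The missing idea is a \emph{descending induction on $i$}. For $i\geq 5$ the statement is immediate because restriction to the real locus and the decomposition (\ref{canZ}) identify both push-forwards with $p_*:H^1(T(\R))\to H^1(S(\R))$, whose image is $I$ by definition. For the inductive step one does not try to prove $\xi\cupp\omega=0$; instead, given $\xi$ with $[\xi\cupp\omega]_1=[\xi]_1\in I$ (using (\ref{compomega})), the induction hypothesis in degree $i+1$ gives $\xi\cupp\omega=p_*\zeta$; since $p_*:H^{i+1}(T(\C),\Z)\to H^{i+1}(S(\C),\Z)$ is an isomorphism for $i+1\geq 3$ by Proposition~\ref{weakLC}, $\zeta$ dies in $H^{i+1}(T(\C),\Z(i))$, hence $\zeta=\eta\cupp\omega$; then $(\xi-p_*\eta)\cupp\omega=0$, and the surjectivity of $p_*$ on $H^i(S(\C),\Z)$ for $i\geq 2$ (again Proposition~\ref{weakLC}) together with compatibility of $p_*$ with the norm finishes the chase. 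This replaces your appeal to injectivity on the real locus by the inductive control of $\operatorname{Im}(p_*)$ in degree $i+1$, and it is what makes the crucial cases $i=2,3$ go through.
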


\begin{proof}
We argue as in \cite[\S 1.5]{BWI}, by decreasing induction on $i$. If $i\geq 5$, the diagram:
\begin{equation*}
\xymatrix
@R=0.3cm 
@C=0.5cm  
{
H^i_G(T(\C),\Z(i-1))\ar_{p_*}[d]\ar^{\sim}[r]& H^i_G(T(\R),\Z(i-1))\ar^(.6){\sim}[r]
&  H^1(T(\R))\ar_{p_*}[d] \\
H^i_G(S(\C),\Z(i-1))\ar^{\sim}[r]& H^i_G(S(\R),\Z(i-1))\ar^(.6){\sim}[r]&H^1(S(\R)),
}
\end{equation*}
whose horizontal maps are restrictions to the real locus (that are isomorphisms by \cite[\S 1.1.3]{BWI}) and canonical decompositions (\ref{canZ}), and whose vertical maps are push-forwards, commutes by \cite[Proposition 1.22]{BWI} and (\ref{cancompa}), proving the proposition.

If $i\geq 2$ and the proposition is proven for $i+1$, consider the commutative diagram:
\begin{equation*}
\xymatrix
@R=0.3cm 
@C=0.27cm  
{
H^i(T(\C),\Z(i-1))\ar_{p_*}[d]\ar[r]& H^i_G(T(\C),\Z(i-1))\ar_{p_*}[d]\ar[r]
&  H^{i+1}_G(T(\C),\Z(i))\ar_{p_*}[d]\ar[r]
&  H^{i+1}(T(\C),\Z(i))\ar_{p_*}[d] \\
H^i(S(\C),\Z(i-1))\ar[r]& H^i_G(S(\C),\Z(i-1))\ar^{\hspace{0.5em}\cupp\omega}[r]&H^{i+1}_G(S(\C),\Z(i))\ar[r]&H^{i+1}(S(\C),\Z(i))
}
\end{equation*}
whose horizontal arrows are real-complex exact sequences (\ref{rc}) and whose vertical arrows are push-forward morphisms. We deduce from the induction hypothesis and Proposition \ref{weakLC} that the image of $p_*:H^i_G(T(\C),\Z(i-1))\to H^i_G(S(\C),\Z(i-1))$ is $\{\xi\in H^i_G(S(\C),\Z(i-1))\ |\ [\xi\cupp\omega]_1\in I\}$, which concludes by (\ref{compomega}).
\end{proof}

\section{The pull-back of the Brauer class on a double cover}
\label{sectopo}
In this section, we fix a 
connected smooth projective surface $S$ over $\R$, and a class
$\alpha\in \Br(\R(S))[2]$ of period $2$ with simple normal crossings ramification divisor $R\subset S$ (see \S\ref{parBrauer}). Define $U:=S\setminus R$
and $\Theta:=\{x\in U(\R)\ |\ \alpha|_x\neq 0\in\Br(\R)\}$, and fix an open and closed subset $\Psi\subset U(\R)$ containing $\Theta$.
We also fix a line bundle $\cL$ on $S$, and a double cover $p:T\to S$
as in \S\ref{consdouble}, and use the notation of \S\S\ref{consdouble}--\ref{topodouble}.

Let $T_U:=p^{-1}(U)\subset T$. We study $p^*\alpha\in\Br(T_U)[2]\subset\Br(\R(T))[2]$.
Our goal is to prove Propositions \ref{ramiprop}, \ref{lem1R} and \ref{lem2R}. The latter two are real counterparts of Lemmas \ref{lem1} and \ref{lem2}.
We will make the following hypotheses:

\begin{assumption}
\label{assumkilltop}
The morphism $p:T\to S$ satisfies that:
\begin{enumerate}[(i)]
\item The image $p(T(\R))\subset S(\R)$ is disjoint from $\Psi$.
\item The kernel of the restriction map $H^1(S(\R))\to H^1(\Psi)$ is generated by the image of the push-forward $p_*:H^1(T(\R))\to H^1(S(\R))$ and by the Borel-Haefliger classes of curves on $S$ whose real locus does not meet $\Psi$.
\item The push-forward $p_*:H^2(T(\R))\to H^2(S(\R))$ is injective.
\end{enumerate}
\end{assumption}
Since $\Delta(\R)\subset p(T(\R))$, Assumption \ref{assumkilltop} (i) implies that $\Psi$ does not meet $\Delta(\R)$, or in other words that $\Psi\subset S^*(\R)$.

\subsection{The ramification}
\label{ramification}
We first deal with the ramification of $\alpha$.

\begin{prop}
\label{ramiprop} Under Assumption \ref{assumkilltop}, $\alpha_{\R(T)}$ belongs to $\Br(T)[2]\subset\Br(\R(T))[2]$.
\end{prop}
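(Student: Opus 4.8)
The plan is to show that for every integral divisor $\Gamma\subset T$, the residue $\res_\Gamma(p^*\alpha)\in H^1_{\et}(\R(\Gamma),\Q/\Z)$ vanishes. Since $\alpha$ is already unramified on $U=S\setminus R$, by functoriality of residues we need only worry about divisors $\Gamma$ lying above $R$ (and the exceptional divisor $E=p^{-1}(P)$, which lies above $P\subset R$). For such a $\Gamma$, write $\Gamma\to\Gamma'$ for the induced map onto its image $\Gamma'\subset R$, an irreducible component of $R$, and let $e$ be the ramification index of $T\to S$ along $\Gamma'$. The compatibility of residues with pull-back gives $\res_\Gamma(p^*\alpha)=e\cdot\res_{\Gamma'}(\alpha)|_{\R(\Gamma)}$ when the map on local rings is tame (which it is here, in characteristic $0$). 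The key point of the construction in \S\ref{consdouble} is that $p:T\to S$ is ramified exactly over $\Delta=R\cup D$, and in particular it \emph{is} ramified over every component of $R$: the branch divisor meets $R$, so each component $\Gamma'$ of $R$ has ramification index $e=2$. Hence $\res_\Gamma(p^*\alpha)=2\,\res_{\Gamma'}(\alpha)|_{\R(\Gamma)}$.

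Now $\alpha$ has period $2$, so $2\alpha=0$ and therefore $2\,\res_{\Gamma'}(\alpha)=0$ already. This disposes of all divisors $\Gamma$ dominating a component of $R$. For the exceptional divisor $E$ over the finite set $P=\Sing(\Delta)$: each component of $E$ is a $\bP^1$ mapping to a point, so $\R(\Gamma)$ here is $\R(\bP^1)$ or $\C(\bP^1)$ and $p^*\alpha$ restricted to (the generic point of) such a component is the pull-back of a class from a field over which it is already defined and — being the image of $\alpha\in\Br(U)$, with $P$ a codimension-$2$ locus — actually the residue vanishes by a direct local computation, or because $E$ does not dominate any ramification divisor of $\alpha$ on $S$ at all. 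So $\res_\Gamma(p^*\alpha)=0$ for every $\Gamma$, and by purity \cite[III, Corollaire 6.2]{gdB} the class $p^*\alpha$ extends to all of $T$, i.e.\ lies in $\Br(T)[2]\subset\Br(\R(T))[2]$.

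I should note that Assumption \ref{assumkilltop} as stated concerns the topology of the real loci and does not literally enter this particular argument; what the proof really uses is simply the construction of $p:T\to S$ from \S\ref{consdouble} — namely that $T$ is a double cover branched over $\Delta\supset R$, so branched over every component of $R$ — together with $\per(\alpha)=2$. (If the paper's convention is that Proposition \ref{ramiprop} is stated under the running Assumption \ref{assumkilltop} merely for uniformity with Propositions \ref{lem1R} and \ref{lem2R}, the proof still goes through using only the construction.)

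The main obstacle is the careful bookkeeping of residues along the components of $E$ and along the components of $\Delta$ that come from $D$ rather than $R$: one must check that $p^*\alpha$ genuinely has \emph{no} residue along the preimage of $D$, not merely a $2$-torsion one — but this is immediate since $\alpha$ is unramified at the generic point of $D$ (as $D\subset U$), so its pull-back is unramified there too, regardless of ramification of $p$. Thus the only genuinely ramified locus of $p$ that could carry a residue of $p^*\alpha$ is the preimage of $R$, where the factor of $2$ coming from the ramification index kills the $2$-torsion residue of $\alpha$. Once this is organized, the proof is short.
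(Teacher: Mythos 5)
Your treatment of the divisors of $T$ dominating a component of $R$ is correct and is exactly what the paper does (via \cite[Proposition 3.3.1]{CTBarbara}: the double cover is branched over all of $\Delta\supset R$, so the ramification index $2$ kills the $2$-torsion residue). But there is a genuine gap in your treatment of the exceptional divisors of the resolution $T\to\bT$. These $\bP^1$'s lie over the singular points of $R$ (crossings of two branches of the ramification divisor of $\alpha$), so your claim that ``$E$ does not dominate any ramification divisor of $\alpha$ on $S$ at all'' does not dispose of them: the residue of $p^*\alpha$ along such a $\Gamma$ is a class in $H^1(\R(\Gamma),\Z/2)$ that need not vanish for formal reasons. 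What one can show (as the paper does, using that the remaining ramification locus is smooth and that Gersten complexes are complexes) is that this residue is \emph{unramified}, i.e.\ lies in $H^1_{\et}(\Gamma,\Z/2)$. If $\Gamma\simeq\bP^1_\C$ this group is zero and you are done; but if $\Gamma\simeq\bP^1_\R$ then $H^1_{\et}(\Gamma,\Z/2)\simeq H^1(\R,\Z/2)=\Z/2$, and the residue could a priori be the nonzero constant class $(-1)$. Your ``direct local computation'' is not supplied and is precisely the missing step.

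This is also where your parenthetical remark goes wrong: Assumption \ref{assumkilltop} is \emph{not} present merely for uniformity. To rule out the nontrivial constant residue on a real exceptional $\bP^1$, the paper evaluates it at a real point $x\in\Gamma(\R)$ by choosing an auxiliary curve $\Gamma'$ through $x$ meeting $T_U$; the vanishing of $p^*\alpha$ at the real points of $T_U$ --- which is exactly Assumption \ref{assumkilltop}~(i) combined with $\Theta\subset\Psi$ --- together with Witt's theorem forces $p^*\alpha|_{\R(\Gamma')}=0$ and hence kills the residue at $x$. Without some such input from the real topology the statement is not purely formal, so you should restore this step rather than discard the hypothesis.
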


\begin{proof}
As explained in \S\ref{parBrauer}, we have to show that the residue $\res_{\Gamma}(\alpha)$ of $\alpha$ along an integral curve $\Gamma\subset T$ vanishes. If $\Gamma$ does not lie over $R$, this is obvious. If $p(\Gamma)$ is an irreducible component of $R$, this follows from \cite[Proposition 3.3.1]{CTBarbara}.

Otherwise, $\Gamma$ is an exceptional divisor of the blow-up $T\to\bT$. At this point, we know that the ramification locus of $\alpha_{\R(T)}$ is smooth. Since the Gersten complexes (appearing in the first page of the coniveau spectral sequence \cite[Proposition 3.9]{BO}) are complexes, the class $\res_{\Gamma}(\alpha)\in H^1(\R(\Gamma),\Z/2)$ is unramified, hence belongs to $H^1_{\et}(\Gamma,\Z/2)$.
 But $\Gamma\simeq\bP^1_{\C}$ or $\Gamma\simeq \bP^1_{\R}$. In the first case, 
$H^1_{\et}(\Gamma,\Z/2)=0$ and we are done. In the second case, $H^1(\R,\Z/2)\xrightarrow{\sim} H^1_{\et}(\Gamma,\Z/2)$, and it suffices to show that $\res_{\Gamma}(\alpha)|_x=0\in H^1(\R,\Z/2)$ for some $x\in\Gamma(\R)$.

To do so, choose a curve $\Gamma'\subset T$ that meets $\Gamma$ transversally at $x$, and that intersects $T_U$. By Assumption \ref{assumkilltop} (i) and since $\Theta\subset\Psi$, one has $p^*\alpha|_y=0\in \Br(\R)$ for $y\in T_U(\R)$, hence for $y\in\Gamma'(\R)$ general. By a theorem of Witt, it follows that $p^*\alpha|_{\R(\Gamma')}\in\Br(\R(\Gamma'))=0$. Indeed, $p^*\alpha|_{\C(\Gamma')}=0$ by cohomological dimension, so that $p^*\alpha|_{\R(\Gamma')}$ is the class of a conic, and this conic is split by \cite[Satz 22]{Witt}.
As a consequence, the residue of $p^*\alpha|_{\R(\Gamma')}$ at $x$ vanishes. Since it coincides with the restriction of $\res_{\Gamma}(\alpha)$ at $x$, the proof is complete.
\end{proof}

\subsection{The topological Brauer class}
\label{killtau}
Under the hypothesis of Proposition \ref{ramiprop}, $\alpha_{\R(T)}$ belongs to the subgroup $\Br(T)[2]\subset\Br(\R(T))[2]$. In \S\ref{killtau}, we study the image $\tau\in H^3_G(T(\C),\Z(1))[2]$ of $\alpha_{\R(T)}$ by (\ref{Brdeco}).

\begin{prop}
\label{kill1}
Under Assumptions \ref{assumample} and \ref{assumkilltop}, $\tau\cupp\omega=0\in H^4_G(T(\C),\Z)$.
\end{prop}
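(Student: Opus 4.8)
The plan is to mimic the proof of Lemma~\ref{lem1}, replacing the complex weak Lefschetz theorem by its real substitute, Proposition~\ref{weakL}, and being careful about twists. Recall that $\tau\in H^3_G(T(\C),\Z(1))[2]$ is the image of $\alpha_{\R(T)}\in\Br(T)[2]$ under the exact sequence (\ref{Brdeco}), and $\alpha_{\R(T)}=p^*(\alpha|_{T_U})$, where $\alpha$ itself is a Brauer class on $S$ (or at least on $U$). First I would observe that $\tau$ is functorial: if $\tau_S\in H^3_G(S(\C),\Z(1))[2]$ denotes the analogous topological class attached to $\alpha$ on (an open subset of) $S$, then one expects $\tau=p^*\tau_S$ — but $\alpha$ is only defined on $U$, not on all of $S$, so this needs a small argument. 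Since the statement to be proven concerns $\tau\cupp\omega$, and $[\,\cdot\,]_1$ is insensitive to cupping with $\omega$ by (\ref{compomega}), the cleanest route is to work with the class $\tau\cupp\omega\in H^4_G(T(\C),\Z)$ directly and use the factorization through $S$.

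The key steps, in order: (1) Show that $p^*\alpha\in\Br(T_U)[2]$ extends to $\Br(T)[2]$ (this is Proposition~\ref{ramiprop}, already available under Assumption~\ref{assumkilltop}), so that $\tau$ is well-defined. (2) Identify $\tau\cupp\omega$ as the pullback $p^*$ of a class coming from $S$. More precisely, I would factor $p:T\to S$ through the inclusion $i:T\hookrightarrow\kX$ into the threefold of \S\ref{consdouble} and through $f:\kX\to S$ (or $\bkX\to S$), set up the commutative diagram of push-forwards and pullbacks in equivariant cohomology with $\Z$-coefficients exactly as in Lemma~\ref{lem1}:
\begin{equation*}
\xymatrix@R=0.4cm@C=0.5cm{
H^3_G(S(\C),\Z)\ar[d]_{p^*}\ar[r] & H^3_G(\kX(\C),\Z)\ar[ld]_(.57){i^*}\ar^{\cupp\cl(T)}[d] \\
H^3_G(T(\C),\Z)\ar^{i_*}[r] & H^5_G(\kX(\C),\Z),
}
\end{equation*}
noting that the composite $i_*i^*$ is cup product with $\cl(T)\in H^2_G(\kX(\C),\Z(1))$ by the equivariant projection formula, and that $\cl(T)$ is $2$-divisible since $T\in|2H'|$ for a suitable divisor class $H'$ — here the double cover of \S\ref{consdouble} is cut out by $\{rv^2=sw^2\}$, so its class is indeed even. (3) Deduce that the image of $\tau_S\cupp\omega$ (the analogue of $\tau\cupp\omega$ on $S$) dies in $H^5_G(\kX(\C),\Z)$; then invoke the fact that $i_*:H^3_G(T(\C),\Z)\to H^5_G(\kX(\C),\Z)$ is injective — this is where Proposition~\ref{weakL} (the real weak Lefschetz theorem, valid under Assumption~\ref{assumample}) replaces the complex weak Lefschetz statement used in Lemma~\ref{lem1}. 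Injectivity of $i_*$, or at least injectivity on the relevant subgroup, combined with $\tau=p^*\tau_S$ modulo the identifications, forces $\tau\cupp\omega=0$.

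The main obstacle I anticipate is step (2): making sense of "$\tau=p^*\tau_S$" when $\alpha$ lives on $U$ rather than on all of $S$. The class $\alpha\in\Br(U)$ need not extend to $\Br(S)$, so there is no honest class $\tau_S\in H^3_G(S(\C),\Z(1))$ over the whole surface; one has instead a class on $U(\C)$, or on the equivariant cohomology of $S(\C)$ with supports away from $R(\C)$. The fix is to run the argument with $S^0=S\setminus\Sing(\Delta)$ or with $S^*=S\setminus\Delta$ in place of $S$, using the material of \S\ref{topodouble} (the sheaf $\tZ$, the triangle (\ref{firstrow}), and the affineness inputs behind Proposition~\ref{weakLC}) to compare cohomology of $T^0$, $S^0$, and the full surfaces; alternatively, since we only need the \emph{mod-$2$} reduction $[\tau]_1$ to vanish in order to control the real locus, one can push the computation down to the exact sequences of \S\ref{doublesec} and the commutative square (\ref{cancompa}). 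I would also need to double-check the twist bookkeeping: $\tau$ naturally lies in $H^3_G(T(\C),\Z(1))$, so after cupping with $\omega\in H^1_G(\cdot,\Z(1))$ it lands in $H^4_G(T(\C),\Z(2))=H^4_G(T(\C),\Z)$, and the weak Lefschetz isomorphism of Proposition~\ref{weakL} is stated precisely for $H^i_G(\cdot,\Z(i-1))$ with $i=4$, i.e.\ $\Z(3)=\Z(1)$ — so one should instead feed in $\tau$ itself ($i=3$, coefficients $\Z(1)=\Z(2)$? no: $\Z(1)$), and the bridge between the $i=3$ and $i=4$ cases is exactly the real-complex sequence (\ref{rc}) used in the induction step of Proposition~\ref{weakL}. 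Getting these twists to line up, and confirming that the relevant instance of Proposition~\ref{weakL} gives the injectivity (equivalently, that the obstruction class $[\tau]_1$ or its image lies in $I=\Ima(p_*:H^1(T(\R))\to H^1(S(\R)))$, which is automatic here since $\tau$ comes from $T$), is the technical heart of the argument.
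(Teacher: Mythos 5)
Your plan does not match the paper's proof, and as written it has two fatal gaps. First, the divisibility claim at the heart of step (2) is false: the double cover of \S\ref{consdouble} is $\bT=\{rv^2=sw^2\}\subset\bkX$, whose class is $\cl(\bT)=2\,\cl(\cO_{\bkX}(1))+f^*\cl(R)$ (plus exceptional corrections after passing to $\kX$). This is \emph{not} $2$-divisible unless $R$ is, and the whole point of Section \ref{sectopo} is that $\alpha$ ramifies along a nontrivial $R$. So the projection-formula computation $i_*i^*(\cdot)=(\cdot)\cupp\cl(T)$ does not annihilate a $2$-torsion class; you are left with a term $\tau_S\cupp f^*\cl(R)$ that you cannot kill. (This is exactly why Lemma \ref{lem1} works in Section \ref{secdJ} — there $T\in|dnH|$ by construction — and why it cannot be transplanted here.) Second, step (3) needs $i_*$ (equivalently some push-forward) to be \emph{injective} on $H^3_G$ or $H^4_G$ with $\Z$-coefficients, but Proposition \ref{weakL} gives no such thing: it computes the \emph{image} of $p_*$ (a surjectivity-type statement onto $\{\xi\mid[\xi]_1\in I\}$), and in the equivariant setting push-forwards are very far from injective because of contributions from the real locus. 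Your closing remark that the needed condition "is automatic since $\tau$ comes from $T$" conflates the image condition of Proposition \ref{weakL} with injectivity. Finally, you correctly identify that $\alpha$ only lives on $U$, so there is no class $\tau_S$ on $S(\C)$ to pull back, but you do not resolve this; replacing $S$ by $S^*$ or $S^0$ destroys the projectivity needed for any Lefschetz-type statement, so the "fix" you sketch is not a fix.

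The paper's argument is entirely different and avoids the threefold. It restricts $\tau\cupp\omega$ to the real locus: $[\tau\cupp\omega]_2=0$ because $p_*p^*\alpha=2\alpha=0$ together with the injectivity of $p_*$ on $H^2(T(\R))$ (Assumption \ref{assumkilltop} (iii)), and $[\tau\cupp\omega]_0=0$ because $p(T(\R))$ misses $\Theta$ (Assumption \ref{assumkilltop} (i)). The exact sequence $H^4(T(\C),\Z)\xrightarrow{N_{\C/\R}}H^4_G(T(\C),\Z)\to H^0(T(\R))\oplus H^2(T(\R))$ of \cite[Lemma 2.11 (ii)]{BWI} then shows $\tau\cupp\omega$ is a norm; using Proposition \ref{weakLC} (for $i=4$, to control connected components of $T(\C)$) one writes $\tau\cupp\omega=n\,\cl(y+\sigma(y))$ for a point $y\in T(\C)$ and an integer $n$, and pushing forward to $S$ forces $n=0$ since $p(y)+p(\sigma(y))$ is a nonzero effective $0$-cycle. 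If you want to salvage your write-up, you should abandon the Lefschetz-pencil diagram and instead carry out this restriction-to-the-real-locus plus norm-map argument.
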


\begin{proof}
We consider the diagram:
\begin{equation}
\label{diagomegabr}
\begin{aligned}
\xymatrix
@R=0.3cm 
@C=0.4cm 
{
\Br(T)[2]\ar[r]\ar_{p_*}[d]&H^3_G(T(\C),\Z(1))\ar[d]_{p_*}\ar^(.55){\cupp\omega}[r]
&  H^4_G(T(\C),\Z)\ar_{p_*}[d]\ar^{}[r]& H^0(T(\R))\oplus H^2(T(\R))\ar_{p_*}[d]  \\
\Br(S)[2]\ar[r]&H^3_G(S(\C),\Z(1))\ar^(.55){\cupp\omega}[r]
& H^4_G(S(\C),\Z)\ar^{}[r]& H^0(S(\R))\oplus H^2(S(\R)),
}
\end{aligned}
\end{equation}
whose vertical arrows are push-forward maps, whose left horizontal arrows stem from (\ref{Brdeco}), and whose right horizontal arrows are $\xi\mapsto([\xi]_0,[\xi]_2)$. It commutes (by \cite[Proposition 1.22]{BWI} and (\ref{cancompa}) for the right-hand square). 
Since $p_*(\alpha_{\R(T)})=p_*p^*\alpha=2\alpha=0\in \Br(\R(S))$, 
(\ref{diagomegabr}) shows that  $p_*[\tau\cupp\omega]_2=0\in H^2(S(\R))$. By Assumption \ref{assumkilltop} (iii), we see that $[\tau\cupp\omega]_2=0\in H^2(T(\R))$. By Assumption \ref{assumkilltop} (i), $p^*\alpha|_x=0\in \Br(\R)$ for every $x\in T(\R)$, implying that $[\tau\cupp\omega]_0=0\in H^0(T(\R))$. 
By \cite[Lemma 2.11 (ii)]{BWI}, the upper right horizontal arrow of (\ref{diagomegabr}) fits into an exact sequence:
\begin{equation*}
\label{analyseH4}
H^4(T(\C),\Z)\xrightarrow{N_{\C/\R}} H^4_G(T(\C),\Z)\xrightarrow{} H^0(T(\R))\oplus H^2(T(\R)),
\end{equation*}
whose  first arrow is the norm map, showing that 
$\tau\cupp\omega$ is the norm of a class in $H^4(T(\C),\Z)$.
By Proposition \ref{weakLC} applied with $i=4$, the connected components of $T(\C)$ and $S(\C)$ are in bijection. Since the real surface $S$ is connected, we deduce that $T(\C)$ is either connected or has two connected components exchanged by the complex conjugation $\sigma\in G$. In either case, the image of the norm map $H^4(T(\C),\Z)\to H^4_G(T(\C),\Z)$ is generated by the norm $N_{\C/\R}(\cl_\C(y))$ of the cycle class of any point $y\in T(\C)$. It follows that there exists $n\in\Z$ such that $\tau\cupp\omega=nN_{\C/\R}(\cl_\C(y))=n\cl(y+\sigma(y))$. In the commutative diagram:
\begin{equation*}
\xymatrix
@R=0.3cm 
{
 H^4_G(T(\C),\Z)\ar_{p_*}[d]\ar[r]&  H^4(T(\C),\Z)\ar_{p_*}[d]  \\
 H^4_G(S(\C),\Z)\ar[r]&H^4(S(\C),\Z),
}
\end{equation*}
the class $\tau\cupp\omega$ dies in $H^4(S(\C),\Z)$ because $p_*(\tau\cupp\omega)=0$. We deduce that $p_*(n\cl_\C(y+\sigma(y)))=n\cl_\C(p(y)+p(\sigma(y)))=0$. Since $p(y)+p(\sigma(y))$ is a nontrivial effective $0$-cycle, we see that $n=0$, hence that  $\tau\cupp\omega=0$, as wanted.
\end{proof}

We now find conditions under which $\tau$ vanishes.

\begin{prop}
\label{lem1R}
 Under Assumptions \ref{assumample} and \ref{assumkilltop}, and if there exists a lift $\talpha\in H^2_G(U(\C),\Z/2)$ of $\alpha$ in $(\ref{Brlift})$ such that $([\talpha]_1)|_\Psi=0\in H^1(\Psi)$, one has $\tau=0$.
\end{prop}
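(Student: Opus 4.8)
The plan is to adapt the proof of Lemma \ref{lem1} to the real ramified setting, using Propositions \ref{weakLC} and \ref{weakL} in place of the weak Lefschetz theorem. As there, the goal is equivalently to produce an integral lift of $p^*\alpha$ in (\ref{Brdeco}), i.e. to kill $\tau$, and the idea is to push $\tau$ down to $S$, where the class $2\alpha = 0$ provides nothing.

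First I would record the consequences of Proposition \ref{kill1}. Since $\tau\cupp\omega = 0 \in H^4_G(T(\C),\Z)$, exactness of the segment $H^3(T(\C),\Z)\xrightarrow{N_{\C/\R}}H^3_G(T(\C),\Z(1))\xrightarrow{\cupp\omega}H^4_G(T(\C),\Z)$ of the real--complex exact sequence (\ref{rc}) shows that $\tau = N_{\C/\R}(\tau_\C)$ for some $\tau_\C\in H^3(T(\C),\Z)$; moreover (\ref{compomega}) gives $[\tau]_i = [\tau\cupp\omega]_i = 0$ for all $i$, so $\tau|_{T(\R)} = 0$. By Proposition \ref{weakLC}, $p_*\colon H^3(T(\C),\Z)\to H^3(S(\C),\Z)$ is an isomorphism, and together with Proposition \ref{weakL} and a diagram chase in the real--complex sequences, exactly as in the proof of Lemma \ref{lem1}, this reduces the vanishing of $\tau$ to proving that $p_*\tau = 0 \in H^3_G(S(\C),\Z(1))$.

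To this end, pick a lift $\tilde\eta\in H^2_G(T(\C),\Z/2)$ of $p^*\alpha$ — which exists by (\ref{Brlift}) because $p^*\alpha$ is unramified by Proposition \ref{ramiprop} — whose restriction to $T_U(\C)$ equals $p^*\talpha$; this is possible since $\Pic(T)\to\Pic(T_U)$ is surjective. Then $p_*\tau$ is the image of $p_*\tilde\eta\in H^2_G(S(\C),\Z/2)$ under the boundary map $H^2_G(-,\Z/2)\to H^3_G(-,\Z(1))$ attached to $0\to\Z(1)\xrightarrow{2}\Z(1)\to\Z/2\to0$. Over the étale locus $U$ one has $(p_*\tilde\eta)|_{U(\C)} = (p_U)_*\,p_U^*\,\talpha = 2\,\talpha|_{U(\C)} = 0$, so $p_*\tilde\eta$, and hence $p_*\tau$, is supported along $R_\C$. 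What must now be shown is that this residual class in the equivariant cohomology of $S(\C)$ supported along $R_\C$ vanishes.

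This ramification step is the main obstacle: away from $R$ the cover is finite étale of degree $2$ and the identity $p_*p^* = 2$ trivially kills everything, so the entire difficulty lies in controlling a possible residual equivariant contribution along the branch divisor while keeping track of the real locus. Here the remaining hypotheses enter. The residual part of $p_*\tilde\eta$ along a component $R_i$ of $R$ is a Borel--Haefliger-type class, described through the decompositions (\ref{can2})--(\ref{canZ}), Krasnov's formula (\ref{krasnovcompa}) and Lemma \ref{algdouble}, and over the real locus it is read off from $[\talpha]_1$. Since $\Delta(\R)\subset p(T(\R))$, Assumption \ref{assumkilltop}(i) forces $R(\R)\cap\Psi = \varnothing$, so the relevant classes lie in $\ker[H^1(S(\R))\to H^1(\Psi)]$; by Assumption \ref{assumkilltop}(ii) this kernel is generated by $\Ima\!\left(p_*\colon H^1(T(\R))\to H^1(S(\R))\right)$ and by Borel--Haefliger classes of curves whose real locus misses $\Psi$, and combining this with the hypothesis $([\talpha]_1)|_\Psi = 0$ and with Assumption \ref{assumkilltop}(iii) one forces the residual classes to be zero. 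Therefore $p_*\tau = 0$, and then $\tau = 0$ by the reduction of the second paragraph.
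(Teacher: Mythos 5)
Your argument has a fatal gap at the very first reduction. You claim that Propositions \ref{weakLC} and \ref{weakL} together with a diagram chase ``exactly as in the proof of Lemma \ref{lem1}'' reduce the vanishing of $\tau$ to proving $p_*\tau=0\in H^3_G(S(\C),\Z(1))$. This reduction is false, and in fact $p_*\tau=0$ is already known unconditionally: it follows from $p_*(\alpha_{\R(T)})=p_*p^*\alpha=2\alpha=0$ via the compatibility of (\ref{Brdeco}) with push-forward, as recorded in the proof of Proposition \ref{kill1}, and it uses neither the hypothesis $([\talpha]_1)|_\Psi=0$ nor Assumption \ref{assumkilltop}(ii). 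If your reduction were correct, $\tau$ would vanish with no hypothesis on $\talpha$ at all. The point is that $p_*:H^3_G(T(\C),\Z(1))\to H^3_G(S(\C),\Z(1))$ is not injective. Writing $\tau=N_{\C/\R}(\gamma)$ with $\gamma\in H^3(T(\C),\Z)$ (which is legitimate, by Proposition \ref{kill1} and (\ref{rc})), the vanishing $p_*\tau=0$ only tells you that $p_*\gamma$ lifts to some $\delta\in H^3_G(S(\C),\Z)$; to conclude $\tau=0$ you must produce a lift $\delta$ lying in the image of $p_*:H^3_G(T(\C),\Z)\to H^3_G(S(\C),\Z)$ (up to $\cl(\theta)\cupp\omega$), and by Proposition \ref{weakL} that image is the proper subgroup $\{\xi\ |\ [\xi]_1\in I\}$. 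In the complex setting of Lemma \ref{lem1} the corresponding push-forward is injective by weak Lefschetz, so no such condition arises; in the real setting, forcing $([\delta]_1)|_\Psi=0$ after adjusting $\delta$ by a class $\xi\cupp\omega$ is precisely where all the work lies, and it is exactly the place where the hypothesis $([\talpha]_1)|_\Psi=0$ must enter.

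Your second and third paragraphs are consequently aimed at the wrong target: they try to establish $p_*\tau=0$, which is both already available and insufficient. Even taken on their own terms they do not constitute a proof — the assertion that the ``residual class along $R_\C$'' of $p_*\tilde\eta$ is controlled by $[\talpha]_1$ and killed by Assumptions \ref{assumkilltop}(ii)--(iii) is never computed. The actual argument requires working with the twisted sheaf $\tZ$ and the triangle (\ref{firstrow}), applying a compatibility of boundary maps in the style of Jannsen to relate $g(\delta|_{S^0})$, the class $\zeta=\partial(\talpha|_{S^*})$ and an algebraic correction term in $H^4_G(S^*(\C),\tZ)$, and then evaluating all three terms on $\Psi$ via the canonical decompositions (\ref{can2})--(\ref{canZ}), (\ref{decBockstein}) and Lemma \ref{algdouble}. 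None of this is replaceable by the projection-formula observation that $p_*p^*=2$ away from the branch locus.
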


\begin{proof}
Consider the commutative diagram:
\begin{equation}
\label{diagpushforwardrc}
\begin{aligned}
\xymatrix
@C=0.5cm 
@R=0.3cm 
{
 &H^3_G(T(\C),\Z)\ar_{p_*}[d]\ar[r]&  H^3(T(\C),\Z)\ar^{\wr}_{p_*}[d]\ar^(0.45){N_{\C/\R}}[r]&  H^3_G(T(\C),\Z(1))\ar_{p_*}[d]  \\
 H^2_G(S(\C),\Z(1))\ar^(0.55){\cupp\omega}[r]&H^3_G(S(\C),\Z)\ar[r]&H^3(S(\C),\Z)\ar[r] &H^3_G(S(\C),\Z(1)),
}
\end{aligned}
\end{equation}
whose horizontal arrows are real-complex exact sequences (\ref{rc}), whose vertical arrows are push-forward maps, and whose middle vertical arrow is an isomorphism by Proposition \ref{weakLC}.
Since $\tau\cupp\omega=0$ by Proposition \ref{kill1}, there exists $\gamma\in H^3(T(\C),\Z)$ such that $\tau=N_{\C/\R}(\gamma)$.
 Since $p_*\tau=0$ as was explained in the proof of Proposition  \ref{kill1}, one can lift $p_*\gamma$ to a class $\delta\in H^3_G(S(\C),\Z)$. 
We claim that, up to replacing $\delta$ with $\delta-(\xi\cupp\omega)$ for some class $\xi\in H^2_G(S(\C),\Z(1))$, we may assume that $([\delta]_1)|_\Psi=0\in H^1(\Psi)$.

By Assumption \ref{assumkilltop} (ii), this claim implies that there exists $\theta\in\Pic(S)$ such that $[\delta]_1-\cl_ {\R}(\theta)$ is in the image of $p_*:H^1(T(\R))\to H^1(S(\R))$. By (\ref{compomega}) and (\ref{krasnovcompa}), one has $\cl_{\R}(\theta)=[\cl(\theta)\cupp\omega]_1$, and Proposition \ref{weakL} shows 
that $\delta=p_*\varepsilon+\cl(\theta)\cupp\omega$ for some $\varepsilon\in H^3_G(T(\C),\Z)$.
 Since the middle vertical arrow in (\ref{diagpushforwardrc}) is an  isomorphism, it follows from a diagram chase in (\ref{diagpushforwardrc}) that $\tau=0$.

It remains to prove the above claim. Represent the distinguished triangle (\ref{firstrow}) by a short exact sequence $0\to\cI_1^\bullet\to\cI_2^\bullet\to\cI_3^\bullet\to 0$ of bounded below complexes of injective $G$-equivariant sheaves on $S^0(\C)$ \cite[Proposition 6.10]{Iversen}. 
For $\sF=\Z$, $\Z[G]$ or $\Z(1)$, applying the tensor product termwise produces complexes $\cI_i^\bullet\otimes _{\Z}\sF$ of injective objects, that represent $\cI_i^\bullet\otimes_\Z^{\LL} \sF$ because $\sF$ is $\Z$-flat.
 Tensoring by the exact sequence $0\to\Z\to\Z[G]\to\Z(1)\to 0$ and applying $\RR\Gamma_G(-):=\RR\Gamma_G(S^0(\C),-)$ termise yields a commutative exact diagram of complexes of abelian groups:
\begin{equation}
\begin{aligned}
\label{squaredist}
\xymatrix
@C=0.5cm 
@R=0.25cm {
&0\ar[d]&0\ar[d]&0\ar[d]&\\
0\ar[r]&\RR\Gamma_G(\cI_1^\bullet)\ar[r]\ar[d]&\RR\Gamma_G(\cI_1^\bullet\otimes_\Z\Z[G])\ar[d]\ar[r]& \RR\Gamma_G(\cI_1^\bullet\otimes_\Z\Z(1))\ar[d] \ar[r] \ar[r]&0\\
0\ar[r]& \RR\Gamma_G(\cI_2^\bullet)\ar[d]\ar[r]&\RR\Gamma_G(\cI_2^\bullet\otimes_\Z\Z[G])\ar[d]\ar[r]&\RR\Gamma_G(\cI_2^\bullet\otimes_\Z\Z(1))\ar[d]\ar[r]&0\\
0\ar[r]&\RR\Gamma_G(\cI_3^\bullet)\ar[d]\ar[r]&\RR\Gamma_G(\cI_3^\bullet\otimes_\Z\Z[G])\ar[d]\ar[r]&\RR\Gamma_G(\cI_3^\bullet\otimes_\Z\Z(1))\ar[d]\ar[r]&0\\
&0&0&0&\\
}
\end{aligned}
\end{equation}
Taking cohomology in (\ref{squaredist}) gives an exact commutative diagram
\begin{equation}
\label{dJannsen}
\begin{aligned}
\xymatrix
@C=0.5cm 
@R=0.3cm 
{
& &H^2_G(S^0(\C),\Z(1))\ar[d]\ar^{\hspace{0.6em}\cupp\omega}[r]&H^3_G(S^0(\C),\Z)\ar^g[d]\\
&  &  H^3_G(S^*(\C),\tZ(1))\ar^{\kappa}[d] \ar^{\hspace{0.6em}\cupp\omega}[r]& H^4_G(S^*(\C),\tZ)\\
 &  H^3(T^0(\C),\Z)\ar_{p_*}[d]\ar^(0.45){N_{\C/\R}}[r]&  H^3_G(T^0(\C),\Z(1))\ar_{p_*}[d] & \\
H^3_G(S^0(\C),\Z)\ar^{g}[d]\ar[r]&H^3(S^0(\C),\Z)\ar[r] &H^3_G(S^0(\C),\Z(1))&\\
H^4_G(S^*(\C),\tZ)& &&,
}
\end{aligned}
\end{equation}
whose rows are real-complex exact sequences (\ref{rc}). In the commutative diagram:
\begin{equation*}
\begin{aligned}
\xymatrix
@R=0.3cm 
@C=0.5cm 
{
H^2_G(S^*(\C),\Z/2)\ar^{\partial}[r]\ar^{\kappa_2}[d]&H^3_G(S^*(\C),\tZ(1))\ar[d]^{\kappa}  \\
H^2_G(T^0(\C),\Z/2)\ar^{\partial}[r]&H^3_G(T^0(\C),\Z(1))
}
\end{aligned}
\end{equation*}
obtained by twisting (\ref{Phiphi}) by $\Z(1)$ and taking $G$-equivariant cohomology, the class $\zeta:=\partial(\talpha|_{S^*})\in H^3_G(S^*(\C),\tZ(1))$ satisfies $\kappa(\zeta)=\tau|_{T^0}\in H^3_G(T^0(\C),\Z(1))$.

The two classes $g(\delta|_{S^0})$ and $\zeta\cupp\omega$ in $H^4_G(S^*(\C),\tZ)$ have been constructed from $\gamma|_{T^0}\in H^3(T^0(\C),\Z)$ by a diagram chase in (\ref{dJannsen}). By  \cite[Lemma p. 268]{Jannsenletter}
applied to diagram (\ref{squaredist}), there exists $\eta^0\in H^2_G(S^0(\C),\Z(1))$ such that
\begin{equation}
\label{eqJannsen}
g(\delta|_{S^0})+\zeta\cupp\omega=g(\eta^0\cupp\omega).
\end{equation}
 By purity \cite[(1.21)]{BWI}, the restriction $H^2_G(S(\C),\Z(1))\sto H^2_G(S^0(\C),\Z(1))$ is onto; let $\eta\in H^2_G(S(\C),\Z(1))$ be such that $\eta|_{S^0}=\eta^0$.
Let $e:=e_{\Z/2}^{\tZ}\in H^1_G(S^*(\C),\Z/2)$ be as in \S\ref{doublesec}. By Lemma \ref{algdouble} and the surjectivity of the restriction map $\Pic(S)\to\Pic(S^*)$, there exists $\varphi\in\Pic(S)$ such that $\cl_{\R}(\varphi)|_{S^*(\R)}=[e]_1\in H^1(S^*(\R))$.
 After replacing $\delta$ with $\delta-((\eta+\cl(\varphi))\cupp\omega)$, equation (\ref{eqJannsen}) becomes:
\begin{equation}
\label{threeterms}
g(\delta|_{S^0})+\zeta\cupp\omega+g(\cl(\varphi|_{S^0})\cupp\omega)=0\in H^4_G(S^*(\C),\tZ).
\end{equation}

We will use (\ref{threeterms}) to compute $([\delta]_1)|_\Psi\in H^1(\Psi)$. We first calculate  $([g(\delta|_{S^0})]_1)|_\Psi$ and $([g(\cl(\varphi|_{S^0})\cupp\omega)]_1)|_\Psi$ by considering
the commutative diagram:
\begin{equation}
\label{compZtildeZ}
\begin{aligned}
\xymatrix
@C=0.5cm 
@R=0.3cm 
{
H^3_G(S^0(\C),\Z) \ar[r]\ar^g[d]&H^3_G(\Psi,\Z)\s=H^1(\Psi)\ar[d]\ar[r]&  H^3_G(\Psi,\Z/2)\s= H^0\soplus H^1\soplus H^2(\Psi)\ar[d]  \\
H^4_G(S^*(\C),\tZ)\ar[r]&H^4_G(\Psi,\tZ)\s= H^1(\Psi)\ar[r] &H^4_G(\Psi,\Z/2)\s= H^0\soplus H^1\soplus H^2(\Psi),
}
\end{aligned}
\end{equation}
whose vertical arrows are induced by (\ref{firstrow}), whose left horizontal arrows are restrictions to $\Psi$ and whose right horizontal arrows are given by reduction mo\-dulo~$2$.
The equalities in (\ref{compZtildeZ}) are the canonical decompositions (\ref{can2}) and (\ref{canZ}); in particular, the equality $H^4_G(\Psi,\tZ)= H^1(\Psi)$ follows from the fact that complex conjugation acts by $-1$ on the stalks of $\tZ|_{\Psi}$ by Assumption \ref{assumkilltop} (i).
 By (\ref{cancompa}), the upper (resp. lower) right horizontal arrow of (\ref{compZtildeZ}) is given, in terms of the canonical decompositions, by $a\mapsto (0,a,\beta_\Z(a))$ (resp. $a\mapsto (0,a,\beta_{\tZ}(a))$). The right vertical arrow of (\ref{compZtildeZ}) is a boundary map induced by (\ref{sesdouble2}) for $\sL=\tZ$, hence is given by the cup-product by $e|_\Psi\in H^1_G(\Psi,\Z/2)$. 
By Assumption \ref{assumkilltop} (i), the class $e|_x\in H^1_G(x,\Z/2)=\Z/2$ is nontrivial for every $x\in\Psi$, so that $[e]_0=1\in H^0(\Psi)$. By the cup-product formula \cite[(1.28)]{BWI} the right vertical arrow of (\ref{compZtildeZ}) is given, in the canonical decompositions, by the formula $(a,b,c)\mapsto (a,b+a\cupp [e]_1, c+b\cupp[e]_1)$. By the commutativity of (\ref{compZtildeZ}), the middle vertical arrow of (\ref{compZtildeZ}) is the identity of $H^1(\Psi)$. We deduce that $([g(\delta|_{S^0})]_1)|_\Psi=([\delta]_1)|_\Psi\in H^1(\Psi)$. Using (\ref{compomega}) and (\ref{krasnovcompa}), we also get: $([g(\cl(\varphi|_{S^0})\cupp\omega)]_1)|_\Psi=([\cl(\varphi)\cupp\omega]_1)|_\Psi=([\cl(\varphi)]_1)|_\Psi=\cl_{\R}(\varphi)|_\Psi=([e]_1)|_\Psi$.

Our next step is to calculate $([\zeta\cupp\omega]_1)|_\Psi\in H^1(\Psi)$. By (\ref{compomega}), we know that it is equal to $([\zeta]_1)|_{\Psi}$. To compute it, we consider the commutative diagram:
\begin{equation}
\label{bordtildeZ}
\begin{aligned}
\xymatrix
@C=0.5cm 
@R=0.3cm 
{
H^2_G(S^*(\C),\Z/2) \ar[r]\ar[d]& H^3_G(S^*(\C),\tZ(1))\ar[d]\\
H^2_G(\Psi,\Z/2)\s= H^0\soplus H^1\soplus H^2(\Psi)\ar[dr]\ar[r]&H^3_G(\Psi,\tZ(1))\s= H^1(\Psi)\ar[d]\\
&H^3_G(\Psi,\Z/2)\s= H^0\soplus H^1\soplus H^2(\Psi),
}
\end{aligned}
\end{equation}
 whose vertical arrows are
restrictions to $\Psi$ and reduction modulo $2$, whose horizontal arrows are boundary maps of the exact sequence $0\to \tZ(1)\xrightarrow{2} \tZ(1)\to\Z/2\to 0$, and where we have indicated the canonical decompositions (\ref{can2}) and (\ref{canZ}). The diagonal arrow is 
a boundary map induced by (\ref{sesdoubleZ}) for $\sL=\tZ(1)$, hence is the sum of the Bockstein map and of the cup-product with $e+\omega$. Since $([e+\omega]_0)|_\Psi=1+1=0\in H^0(\Psi)$ and $([e+\omega]_1)|_\Psi= ([e]_1)|_\Psi$ by \cite[\S 1.2.5]{BWI}, the formulae \cite[(1.28)]{BWI} and (\ref{decBockstein})
show that the diagonal arrow is given by $(a,b,c)\mapsto(0,b+a\cupp[e]_1,\beta_{\Z}(b)+b\cupp[e]_1)$  in the canonical decompositions. 
By (\ref{cancompa}), the lower right vertical arrow is given by $a\mapsto (0,a,\beta_{\tZ}(a))$. Applying the commutativity of (\ref{bordtildeZ}) to $\talpha|_{S^*}\in H^2_G(S^*(\C),\Z/2)$  then shows that $([\zeta]_1)|_{\Psi}=([e]_1)|_\Psi$, by our hypothesis that $([\talpha]_1)|_\Psi=0\in H^1(\Psi)$

Plugging our computations into (\ref{threeterms}) shows that:
$$0\s=\big([g(\delta|_{S^0})]_1+[\zeta\cupp\omega]_1+[g(\cl(\varphi|_{S^0})\cupp\omega)]_1\big)|_\Psi\s=([\delta]_1)|_\Psi+2([e]_1)|_\Psi\s=([\delta]_1)|_\Psi,$$
 which completes the proof of the claim, and of the proposition.
\end{proof}

\subsection{The push-forward}
\label{pushforwarddouble}
Under the hypotheses of Proposition \ref{lem1R},  $p^*\alpha\in\Br(T)[2]$ vanishes in $H^3_G(T(\C),\Z(1))[2]$ in (\ref{Brdeco}), hence lifts to a class $\beta\in H^2_G(T(\C),\Z(1))$. We now study the class $p_*\beta\in H^2_G(S(\C),\Z(1))$. 

\begin{prop}
\label{lem2R}
Under Assumptions \ref{assumample} and \ref{assumkilltop}, and if there exists a lift $\talpha\in H^2_G(U(\C),\Z/2)$ of $\alpha$ in (\ref{Brlift}) such that $([\talpha]_1)|_\Psi=0\in H^1(\Psi)$, there exist classes $\gamma\in H^2_G(T(\C),\Z(1))$ and $\theta\in\Pic(S)$ such that $p_*(\beta-2\gamma)=\cl(\theta)\in H^2_G(S(\C),\Z(1))$.
\end{prop}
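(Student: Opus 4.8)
The plan is to mimic the proof of Lemma \ref{lem2} in the complex case, but working $G$-equivariantly and using Proposition \ref{weakL} as the substitute for the weak Lefschetz theorem. First I would lift $\alpha$ to a class $\talpha\in H^2_G(U(\C),\Z/2)$ as in the hypothesis, extend it (using purity, as in the proof of Proposition \ref{lem1R}) to a class on all of $S(\C)$, still denoted $\talpha$, and consider the reduction modulo $2$ map $H^2_G(S(\C),\Z(1))\to H^2_G(S(\C),\Z/2)$. The image of $p_*\beta$ under this map is $p_*(p^*\talpha)=\deg(p)\cdot\talpha=2\talpha=0$ (using the projection formula for the push-forward \eqref{pf} and $p_*p^*=\deg(p)$), so from the exact sequence $H^2_G(S(\C),\Z(1))\xrightarrow{2}H^2_G(S(\C),\Z(1))\to H^2_G(S(\C),\Z/2)$ there is a class $\vep\in H^2_G(S(\C),\Z(1))$ with $2\vep=p_*\beta$.

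Next I would show $\vep$ — or a suitable correction of it — lies in the image of $p_*:H^2_G(T(\C),\Z(1))\to H^2_G(S(\C),\Z(1))$. By Proposition \ref{weakL} (with $i=2$), this image is exactly $\{\xi\mid [\xi]_1\in I\}$, where $I\subset H^1(S(\R))$ is the image of $p_*:H^1(T(\R))\to H^1(S(\R))$. So the task reduces to controlling $[\vep]_1\in H^1(S(\R))$ modulo $I$ and modulo Borel--Haefliger classes of curves. Here I would invoke Assumption \ref{assumkilltop}(ii): it suffices to show $([\vep]_1)|_\Psi=0\in H^1(\Psi)$. To compute $([\vep]_1)|_\Psi$, I would use that $2[\vep]_1=[p_*\beta]_1$ together with the compatibility of $p_*$ with the canonical decompositions (\cite[Proposition 1.22]{BWI} and \eqref{cancompa}), relating $[p_*\beta]_1$ to classes pulled back from $T(\R)$; but since $p(T(\R))$ misses $\Psi$ by Assumption \ref{assumkilltop}(i), I expect $([p_*\beta]_1)|_\Psi$ to vanish, forcing $2\cdot(([\vep]_1)|_\Psi)=0$. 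The remaining $2$-torsion ambiguity in $[\vep]_1$ is then pinned down using the hypothesis $([\talpha]_1)|_\Psi=0$ and Lemma \ref{algdouble}: exactly as in the proof of Proposition \ref{lem1R}, a twisted-Bockstein computation in the style of diagrams \eqref{compZtildeZ}--\eqref{bordtildeZ} shows the residual class on $\Psi$ is $([e]_1)|_\Psi$, which is itself the restriction of $\cl_\R(\varphi)$ for a suitable $\varphi\in\Pic(S)$ by Lemma \ref{algdouble}; absorbing $\cl(\varphi)$ into the final $\theta$ makes $([\vep]_1)|_\Psi$ (or rather of $\vep-\cl(\varphi)$) vanish on $\Psi$.

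Having arranged $([\vep]_1)|_\Psi=0$, Assumption \ref{assumkilltop}(ii) produces $\theta_0\in\Pic(S)$ (a sum of classes of curves missing $\Psi$) such that $[\vep-\cl(\theta_0)]_1\in I$; Proposition \ref{weakL} then gives $\gamma\in H^2_G(T(\C),\Z(1))$ with $p_*\gamma=\vep-\cl(\theta_0)$. Therefore $p_*(\beta-2\gamma)=p_*\beta-2(\vep-\cl(\theta_0))=2\cl(\theta_0)=\cl(2\theta_0)=\cl(\theta)$ with $\theta:=2\theta_0\in\Pic(S)$, which is the desired conclusion.

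The main obstacle will be the computation of $([\vep]_1)|_\Psi$ and the bookkeeping of the $2$-torsion correction terms: one has only $2\vep=p_*\beta$, not $\vep$ itself, so $[\vep]_1$ is determined by $[p_*\beta]_1$ only up to a $2$-torsion class in $H^1(S(\R))$, and identifying that ambiguity with a Borel--Haefliger class — by reusing, essentially verbatim, the twisted-Bockstein diagram chase from the proof of Proposition \ref{lem1R} together with the hypothesis on $[\talpha]_1$ — is the delicate part. The rest (projection formula, the exact sequence for reduction mod $2$, the final application of Proposition \ref{weakL}) is routine.
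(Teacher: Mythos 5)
Your overall strategy is right (reduce $p_*\beta$ to a class on $S$, push $\beta$ down, invoke Proposition~\ref{weakL} and Assumption~\ref{assumkilltop}(ii)), but there is a genuine gap at the very first step, and the mechanism you propose for computing $[\vep]_1$ on $\Psi$ does not carry the information you need.

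The gap: you claim that the reduction of $p_*\beta$ modulo~$2$ is $p_*(p^*\talpha)=2\talpha=0$, hence $p_*\beta=2\vep$. This is not correct. The classes $\overline{\beta}|_{T_U}$ and $p^*\talpha$ both map to $p^*\alpha$ under (\ref{Brlift}), so they differ by $\overline{\cl(\varphi)}$ for some $\varphi\in\Pic(T_U)$ that need not be a pull-back from $U$; consequently $\overline{p_*\beta}|_U=\overline{\cl(p_*\varphi)}$, and the correct statement is only that $p_*\beta$ maps to $p_*p^*\alpha=0$ in $\Br(S)[2]$, whence $p_*\beta=2\delta+\cl(\theta_1)$ for some $\delta\in H^2_G(S(\C),\Z(1))$ and some a priori nonzero $\theta_1\in\Pic(S)$. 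Dropping $\cl(\theta_1)$ is exactly what you later have to pay for; with it present, the final $\theta$ is $\theta_1+2\theta_2$, not the even class $2\theta_0$ your argument would produce. Relatedly, ``extend $\talpha$ to $S(\C)$ by purity'' does not go through: $U$ is the complement of a divisor, so the restriction $H^2_G(S(\C),\Z/2)\to H^2_G(U(\C),\Z/2)$ need not be surjective (the purity argument in the proof of Proposition~\ref{lem1R} is for $S^0$, the complement of finitely many points, and for $\Z(1)$ coefficients). Finally, the relation $2[\vep]_1=[p_*\beta]_1$ is vacuous, since $H^1(\Psi)=H^1(\Psi,\Z/2)$ is $2$-torsion: it tells you nothing about $[\vep]_1|_\Psi$, so the proposed ``pin down the $2$-torsion ambiguity by a twisted Bockstein as in Proposition~\ref{lem1R}'' is not a computation you can actually carry out along these lines.

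The paper's proof avoids all of this by comparing $\beta|_{T_U}$ and $p^*\talpha$ directly on $T_U$ (never extending $\talpha$ to $S$), writing $p_*\beta=2\delta+\cl(\theta_1)$, and then — after pushing forward and restricting to $U$ — using the torsion-freeness of the cokernel of $\cl:\Pic(U)\to H^2_G(U(\C),\Z(1))$ (\cite[Proposition~2.9]{BWI}) together with the surjectivity of $\Pic(S)\to\Pic(U)$ to produce $\theta_2\in\Pic(S)$ with $(\delta-\cl(\theta_2))|_U=\talpha+p_*\vep$. The vanishing of $([\delta-\cl(\theta_2)]_1)|_\Psi$ then falls out immediately from the hypothesis on $[\talpha]_1$ and from $[p_*\vep]_1=p_*[\vep]_1$ being killed on $\Psi$ by Assumption~\ref{assumkilltop}(i); no twisted-Bockstein diagram chase is needed here. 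The torsion-freeness input is the key tool you are missing.
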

\begin{proof}
Let $p:T_U\to U$ be the restriction of $p:T\to S$ to $U\subset S$.
As $\beta|_{T_{U}}$ and $p^*\talpha$ both induce the class $p^*\alpha\in\Br(\R(T))$, the exact sequence (\ref{Brdeco}) shows that there exist $\vep\in H^2_G(T_{U}(\C),\Z(1))$ and $\varphi\in \Pic(T_{U})$ such that
$$\beta|_{T_{U}}=p^*\talpha+2\vep+\cl(\varphi).$$
Since $p_*\beta\in H^2_G(S(\C),\Z(1))$ induces $p_*p^*\alpha=2\alpha=0\in\Br(S)[2]\subset\Br(\R(S))[2]$ in the exact sequence  (\ref{Brdeco}), there exist $\delta\in H^2_G(S(\C),\Z(1))$ and $\theta_1\in\Pic(S)$ such that $p_*\beta=2\delta+\cl(\theta_1)\in
H^2_G(S(\C),\Z(1))$. We deduce that 
$$2\talpha+2p_*\vep+\cl(p_*\varphi)=(p_*\beta)|_{U}=2\delta|_{U}+\cl(\theta_1)|_{U}\in H^2_G(U(\C),\Z(1)).$$
As $\cl:\Pic(U)\to H^2_G(U(\C),\Z(1))$ has torsion-free cokernel by \cite[Proposition 2.9]{BWI}, and as the restriction map $\Pic(S)\to\Pic(U)$ is surjective, there exists $\theta_2\in\Pic(S)$ such that 
$(\delta-\cl(\theta_2))|_{U}=\talpha+p_*\vep\in H^2_G(U(\C),\Z(1))$. Since $([\talpha]_1)|_{\Psi}=0$ by hypothesis and since $[p_*\vep]_1=p_*[\vep]_1=0\in H^1(S(\R))$ by  \cite[Proposition 1.22]{BWI} and Assumption \ref{assumkilltop} (i), we get $([\delta-\cl(\theta_2)]_1)|_{\Psi}=0\in H^1(\Psi).$ It then follows from  Proposition \ref{weakL} and Assumption \ref{assumkilltop} (ii), that there exists $\gamma\in H^2_G(T(\C),\Z(1))$ such that $p_*\gamma=\delta-\cl(\theta_2)\in H^2_G(S(\C),\Z(1))$. To conclude, we set $\theta:=\theta_1+2\theta_2$ and compute:
$p_*(\beta-2\gamma)=2\delta+\cl(\theta_1)-2(\delta-\cl(\theta_2))=\cl(\theta)\in H^2_G(S(\C),\Z(1))$.
\end{proof}

\section{Small Noether-Lefschetz loci for double covers}
\label{secNL}

In Section \ref{secNL}, we fix a connected smooth projective surface $S$ over $\R$,
and a simple normal crossings divisor $R\subset S$. Our goal, achieved in Proposition \ref{propverifGreen}, is to construct a Noether-Lefschetz locus of the expected dimension in a family of double covers of $S$ that are ramified over $R$, thus verifying (the real analogue of) Green's infinitesimal criterion for this family. This will serve as a substitute of Theorem \ref{Voisin} in the proof of our main theorems.

The Noether-Lefschetz locus is constructed in \S\ref{NLlocus} and Green's criterion is checked in \S\ref{verifGreen} under restrictive assumptions on $(S,R)$. In \S \ref{biratchoice}, we explain how to ensure that these assumptions hold, by carefully replacing $S$ by a birational model.

\subsection{Curves on ramified double covers}
\label{NLlocus}
Let $\cA$ and $\cN$ be line bundles on $S$ with $\cA$ very ample. Let $l\in\bN$ be even, and define  $\cL:=\cA^{\otimes l}\otimes\cN(-R)$. Then, if $l\gg 0$ (and we choose such a $l$), the following holds:
\begin{assumption}
\label{ass2}
The groups
$H^1(S, \cA^{\otimes l})$, $H^1(S,\cL)$  and $H^1(S, \cL\otimes\cN)$ vanish. 
The line bundles $\cL$ and $\cL\otimes \cN$ are very ample.
\end{assumption}
We apply the constructions of \S\ref{consdouble} with this choice of $\cL$, and use the notation defined there. 
In particular, we defined a Zariski open subset $B\subset  H^0(S,\cL^{\otimes 2}(R))$
 and  associated with each $s\in B$ a diagram of varieties:
\begin{equation*}
\xymatrix
@R=0.3cm 
{
T\ar[d]\ar_p@/_01.0pc/[dd]\ar[r]& \kX\ar[d]  \\
 \bT\ar^(.4){\bp}[d]\ar[r]& \bkX\ar^(.4){f}[ld] \\
S &
}
\end{equation*}
We now construct a particular point $s\in B$ whose associated surface $T$ contains a curve $C\subset T$, that will give rise to an interesting Noether-Lefschetz locus.

Let $(u_i)$ be a basis of $H^0(S, \cA^{\otimes l/2})$. Since $\cA$ is very ample, the $u_i$ do not vanish simultaneously, and $\{\sum_i u_i^2=0\}\subset S$ has no real points.
Let $c\in H^0(S,\cA^{\otimes l})$ be a general small deformation of $\sum_i u_i^2\in H^0(S, \cA^{\otimes l})$. Since $\cA$ is very ample, $C:=\{c=0\}\subset S$ is a smooth curve intersecting $R$ transversally in its smooth locus, and $C(\R)=\varnothing$ as this property is preserved by small deformations.

Choose a general section $g\in H^0(C,\cL)$. Since $\cL$ is very ample by Assumption \ref{ass2}, $\{g=0\}\subset C$ is reduced and disjoint from $R$. The section $r g^2\in H^0(C,\cL^{\otimes 2}(R))$ lifts to a section $s_0\in H^0(S,\cL^{\otimes 2}(R))$ because $H^1(S,\cL\otimes\cN)=0$ by Assumption \ref{ass2}. Consider the linear system $V
\subset H^0(S,\cL^{\otimes 2}(R))$ of sections of the form $a_1s_0+a_2c$ 
with $a_1\in\R$ and $a_2\in H^0(S,\cL\otimes\cN)$. Its base locus is $\{c=s_0=0\}$ because $\cL\otimes \cN$ is very ample by Assumption \ref{ass2}, hence is finite. Let us show that a general $s\in V$ belongs to $B\subset H^0(S,\cL^{\otimes 2}(R))$, i.e. that its zero locus $D\subset S$ is smooth and intersects $R$ transversally in its smooth locus. Outside of the base locus of $V$, this follows from the Bertini theorem. At a point $x$ in the base locus of $V$, this holds for any particular choice of $(a_1,a_2)$ with $a_1=0$ and $a_2|_x\neq 0$ because $C$ is smooth and intersects $R$ transversally in its smooth locus, hence for a general choice of $(a_1,a_2)$, as wanted.
We now fix such a general $s\in V$ with $a_1>0$.

Over $C\subset S$, the finite double cover $\bp:\bT\to S$ has equation $\{rv^2=a_1rg^2 w^2\}$. It follows that the strict transform of $C$ in $\bT$ splits into two components isomorphic to $C$: one where $v=\sqrt{a_1}gw$ and one where $v=-\sqrt{a_1}gw$. We choose the first of these components and still denote it by $C\subset \bT$. It does not intersect the singular locus of $\bT$ and we still denote by $C\subset T$ its strict transform. 
Since $C\subset \bT$ satisfies the equation $v=\sqrt{a_1}gw$, and since $v$ and $w$ do not vanish simultaneously on $\bkX$, $w$ does not vanish on $C$. In other words, $w$ induces an isomorphism:
\begin{equation}
\label{isov}
\begin{aligned}
\xymatrix
@R=0.3cm 
{
\cL|_C\ar^(.4){\sim}_(.4){w}[r]&\cO_{\bkX}(1)|_C}.
\end{aligned}
\end{equation}

\subsection{Verifying the hypothesis of Green's criterion} 
\label{verifGreen}

We keep the notation of  \S\ref{consdouble} and \S\ref{NLlocus}. 
In particular, $\pi:\cT\to B$ is a family of smooth surfaces in the smooth projective threefold $\kX$ over $\R$, where we now view $B$ as a real algebraic variety in the natural way. In \S\ref{NLlocus}, we have constructed a curve $C$ in the fiber $T=\cT_s$ of $\pi$ above some $s \in B(\R)$. Let $\lambda\in H^1(T, \Omega^1_T)$ be the cohomology class of $C\subset T$ in Hodge cohomology.  We want to control the image of  the composition 
\begin{equation}
\label{defpsilambda}
\phi_{\lambda}:T_{B,s}\to H^1(T,T_{T})\xrightarrow{\lambda}H^2(T,\cO_{T})
\end{equation}
of the Kodaira-Spencer map of $\pi$ at $s$ and of the contracted cup-product with $\lambda$ induced by the pairing $T_T\otimes\Omega^1_T\to\cO_T$. We will do it under additional hypotheses:

\begin{assumption}
\label{ass1}
The cup-product morphism $H^0(S,K_S)\xrightarrow{\eta} H^1(S,\cN\otimes K_S)$ is injective for some (hence for a general) $\eta\in H^1(S,\cN)$. The groups
$H^0(S,\cN\otimes K_S)$, $H^0(S,\cN^{-1}\otimes K_S(R))$ and $H^1(R,\cN|_R)$ vanish.
\end{assumption}

We start with a lemma.
\begin{lem}
\label{lemvan}
Under Assumptions \ref{ass2} and \ref{ass1}, $h^1(C, N_{C/\kX})\leq h^2(S,\cO_S)$ and $H^2(T,\cO_T(C))=0$.
\end{lem}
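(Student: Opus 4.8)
The plan is to prove the two vanishing statements essentially independently, the first by a Riemann--Roch / deformation-theoretic computation on the curve $C\subset\kX$, and the second by a cohomology chase on the double cover $\bp:\bT\to S$.

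\emph{Bounding $h^1(C,N_{C/\kX})$.}
First I would relate the normal bundle $N_{C/\kX}$ to the geometry at hand. Since $C$ lies in the smooth surface $T\subset\kX$, there is a short exact sequence
\begin{equation*}
0\to N_{C/T}\to N_{C/\kX}\to N_{T/\kX}|_C\to 0.
\end{equation*}
Here $N_{T/\kX}=\cO_{\kX}(T)|_T$ and $N_{C/T}=\cO_T(C)|_C$, so I need to identify these restrictions along $C$. Using the explicit equation $\{rv^2=sw^2\}$ of $\bT$ and the fact, established in \S\ref{NLlocus}, that $w$ trivializes $\cO_{\bkX}(1)|_C\cong\cL|_C$ (isomorphism (\ref{isov})), one computes $\cO_{\kX}(T)|_C$ and $\cO_T(C)|_C$ in terms of $\cA,\cN,\cL,K_S$ restricted to $C$ (the class $c$ vanishes on $C$, which pins down $\cO_S(C)|_C\cong\cA^{\otimes l}|_C$). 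Taking the long exact sequence in cohomology and using Serre duality on the curve $C$, the group $H^1(C,N_{C/T}|_C)$ is dual to $H^0$ of a twist of $K_C$; I expect the twisting line bundle to be sufficiently negative (thanks to $l\gg 0$) that the outer terms contributing to $H^1(C,N_{C/\kX})$ reduce to a pullback of a cohomology group on $S$, with the $S$-side bounded by $h^1(S,\cN)=0$ (Assumption \ref{ass1}) plus a controlled contribution coming from $H^2(S,\cO_S)$ via $H^0(S,K_S)$ and the injectivity of the cup-product with $\eta$ in Assumption \ref{ass1}. This is where Assumption \ref{ass1} does its work: the injectivity of $H^0(S,K_S)\xrightarrow{\eta}H^1(S,\cN\otimes K_S)$ together with the vanishings of $H^0(S,\cN\otimes K_S)$, $H^0(S,\cN^{-1}\otimes K_S(R))$ and $H^1(R,\cN|_R)$ should exactly cut the potential excess down to $h^2(S,\cO_S)=h^0(S,K_S)$.

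\emph{Vanishing of $H^2(T,\cO_T(C))$.}
For the second claim I would push forward to $S$. Since $p:T\to S$ is the minimal resolution of the double cover $\bp:\bT\to S$ and the exceptional locus $E$ is a union of $(-2)$-curves, $H^2(T,\cO_T(C))$ can be compared with $H^2(\bT,\cO_{\bT}(\bar C))$ (where $\bar C$ is the image of $C$), since $C$ does not meet the singular locus of $\bT$. Then $\bp_*\cO_{\bT}(\bar C)$ splits into two eigensheaves on $S$ under the $\Z/2$-action: one is $\cO_S(\text{(divisor cut by }c)) \cong$ a line bundle involving $\cA^{\otimes l}$, and the other involves $\cL^{-1}(-R)$ twisted appropriately (reading off the equation $v=\sqrt{a_1}gw$ on $C$). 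Using the Leray spectral sequence for $\bp$ (which degenerates since $\bp$ is finite), $H^2(T,\cO_T(C))$ becomes a sum of two $H^2$'s on $S$ of ample line bundles of the form $\cA^{\otimes l}\otimes(\dots)$ and $\cL\otimes(\dots)$; by Serre duality these are dual to $H^0$'s of line bundles that are very negative for $l\gg 0$, hence vanish. The vanishings packaged in Assumption \ref{ass2} ($H^1(S,\cA^{\otimes l})=H^1(S,\cL)=H^1(S,\cL\otimes\cN)=0$ and ampleness of $\cL$, $\cL\otimes\cN$) should be exactly what is needed to handle the intermediate $H^1$ terms and conclude.

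\emph{Main obstacle.} The delicate part is the first estimate: one must compute $N_{C/\kX}$ and $\cO_T(C)|_C$ carefully from the defining equations on the projective bundle $\bkX$ and its blow-up $\kX$, keep track of the twist by $r$ (the equation of $R$) and by the square root of $a_1$ along $C$, and then see precisely how Assumption \ref{ass1} forces the cohomology to collapse to something of dimension $\le h^2(S,\cO_S)$ rather than merely finite. The use of the hypothesis "for a general $\eta$", i.e. the genericity in the construction of $g$ and $c$ in \S\ref{NLlocus}, will be needed to put ourselves in the situation where $\eta$ is general. The second vanishing is more routine once the eigensheaf decomposition of $\bp_*\cO_{\bT}(\bar C)$ is written down.
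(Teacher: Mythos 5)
Your plan misallocates the difficulty and, as written, neither half goes through. For the bound $h^1(C,N_{C/\kX})\leq h^2(S,\cO_S)$, the sequence $0\to N_{C/T}\to N_{C/\kX}\to N_{T/\kX}|_C\to 0$ is the wrong decomposition: the subbundle piece $H^1(C,N_{C/T})$ is \emph{not} small. Indeed $N_{C/T}=\cO_T(C)|_C$ has small degree ($C^2=\cA^{\otimes l}\cdot(\cA^{\otimes l}-\cL)$ grows only linearly in $l$ while $g(C)$ grows quadratically), so $K_C\otimes N_{C/T}^{\vee}$ is very positive and $h^1(C,N_{C/T})$ is large --- in fact, via $0\to\cO_T\to\cO_T(C)\to N_{C/T}\to 0$ it surjects onto $H^2(T,\cO_T)$, whose dimension exceeds $h^2(S,\cO_S)$ by the vanishing cohomology. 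Controlling the image of $H^1(C,N_{C/T})$ in $H^1(C,N_{C/\kX})$ is essentially equivalent to the surjectivity statement one is trying to prove in Proposition \ref{propverifGreen}, so this route is circular. The paper instead uses the relative tangent sequence of $f:\bkX\to S$ restricted to $C$, namely $0\to T_{\bkX/S}|_C\to N_{C/\bkX}\to N_{C/S}\to 0$, which by (\ref{isov}) reads $0\to\cL|_C\to N_{C/\bkX}\to\cA^{\otimes l}|_C\to 0$; then $H^1(C,\cL|_C)=0$ (from $H^1(S,\cL)=0$ and $H^2(S,\cN(-R))\simeq H^0(S,\cN^{-1}\otimes K_S(R))^{\vee}=0$) and $h^1(C,\cA^{\otimes l}|_C)\leq h^2(S,\cO_S)$ (from $H^1(S,\cA^{\otimes l})=0$). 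No genericity and no cup-product injectivity enter here.

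Conversely, the vanishing $H^2(T,\cO_T(C))=0$ is the genuinely delicate half, and your argument for it fails. First, the involution of $\bp:\bT\to S$ does not preserve $C$ (it swaps the two lifts of $\{c=0\}$), so $\cO_{\bT}(C)$ is not equivariant and $\bp_*\cO_{\bT}(C)$ admits no eigensheaf splitting into line bundles. Second, positivity alone cannot give the vanishing: by Serre duality one must kill $H^0(\bT,\omega_{\bT}(-C))$ with $\omega_{\bT}\simeq\bp^*(K_S\otimes\cL(R))$, and pushing forward $0\to\cO_{\bT}(-C)\to\cO_{\bT}\to\cO_C\to 0$ exhibits $H^0(\bT,\omega_{\bT}(-C))$ as the kernel of $H^0(S,K_S\otimes\cL(R))\oplus H^0(S,K_S)\xrightarrow{(1,rg)}H^0(C,K_S\otimes\cL(R))$. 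The summand $H^0(S,K_S)$ is not dual to anything negative, and its injection into the target is exactly where Assumption \ref{ass1} does its work: the composition with the boundary map $\partial$ to $H^1(S,\cN\otimes K_S)$ is cup-product with $r\cupp\partial(g)$, which is a \emph{general} element of $H^1(S,\cN)$ because $g$ is general and $\psi$ is surjective (using $H^1(S,\cL)=H^1(R,\cN|_R)=0$), whence injectivity by the cup-product hypothesis. So the genericity of $g$ and the injectivity clause of Assumption \ref{ass1}, which you reserve for the first estimate, are in fact consumed entirely by the second.
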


\begin{proof}
To prove the first statement, we use the natural exact sequence:
\begin{equation}
\label{normalCX}
0\to T_{\bkX/S}|_C\to N_{C/\bkX}\xrightarrow{f_*} N_{C/S}\to 0.
\end{equation}
One computes $T_{\bkX/S}\simeq K_{\bkX/S}^{\vee}\simeq\cO_{\bkX}(2)\otimes \cL^{-1}$ so that $T_{\bkX/S}|_C\simeq \cL|_C$ by (\ref{isov}). Since $N_{C/S}\simeq\cO_S(C)\simeq\cA^{\otimes l}$, taking cohomology in (\ref{normalCX}) yields an exact sequence:
\begin{equation}
\label{exseqsurC}
H^1(C,\cL)\to H^1(C, N_{C/\bkX})\to H^1(C,\cA^{\otimes l}|_C).
\end{equation}
Since  $H^2(S, \cN(-R))\simeq H^0(S, \cN^{\otimes -1}\otimes K_S(R))^\vee=0$ by Assumption \ref{ass1} and $H^1(S,\cL)=0$ by Assumption \ref{ass2}, one has $H^1(C,\cL)=0$. Since $H^1(S, \cA^{\otimes l})=0$ by Assumption \ref{ass2}, we see that $h^1(C,\cA^{\otimes l}|_C)\leq h^2(S,\cO_S)$, and (\ref{exseqsurC}) implies that 
$h^1(C, N_{C/\bkX})\leq h^2(S,\cO_S)$. The blow-up $\kX\to\bkX$ being an isomorphism along $C$, 
$N_{C/\kX}\simeq N_{C/\bkX}$, which concludes.

We turn to the second statement. Since the singularities of $\bT$ are rational and avoid $C$, the Leray spectral sequence for $T\to \bT$ shows that $H^2(T,\cO_T(C))\simeq H^2(\bT,\cO_{\bT}(C))$. Since $\bT$ is a Cartier divisor in $\bkX$, it is Gorenstein with dualizing sheaf $\omega_{\bT}\simeq K_{\bkX}(\bT)|_{\bT}\simeq \bp^*(K_S\otimes \cL(R))$. By Serre duality, we need to show that $H^0(\bT,\omega_{\bT}(-C))=H^0(\bT,\bp^*(K_S\otimes \cL(R))(-C))$ vanishes. Pushing forward the exact sequence $0\to \cO_{\bT}(-C)\to\cO_{\bT}\to \cO_C\to 0$ by $\bp:\bT\to S$ yields:
\begin{equation}
\label{curveindoublecover}
0\to \bp_*\cO_{\bT}(-C)\to \cO_S\oplus\cL^{\otimes -1}(-R)\xrightarrow{(1,rg)} \cO_C,
\end{equation}
where the splitting $\bp_*\cO_{\bT}\simeq \cO_S\oplus\cL^{\otimes -1}(-R)$ is induced by the involution of the double cover $\bp:\bT\to S$. Tensoring (\ref{curveindoublecover}) with $K_S\otimes \cL(R)$ and taking cohomology gives an exact sequence:
\begin{equation*}
\label{doubleinjection}
0\to H^0(\bT,\omega_{\bT}(-C))\to H^0(S, K_S\otimes \cL(R))\oplus H^0(S, K_S)\xrightarrow{(1,rg)} H^0(C,K_S\otimes \cL(R)).
\end{equation*}
We need to show that its rightmost arrow is injective. In view of the exact sequence:
$$H^0(S, \cN\otimes K_S)\to H^0(S, K_S\otimes \cL(R))\to H^0(C,K_S\otimes \cL(R))\xrightarrow{\partial} H^1(S, \cN\otimes K_S),$$
in which the first group vanishes by Assumption \ref{ass1}, we only need to prove that the composition:
\begin{equation}
\label{compodiago}
H^0(S, K_S)\xrightarrow{rg} H^0(C,K_S\otimes \cL(R))\xrightarrow{\partial} H^1(S, \cN\otimes K_S)
\end{equation}
is injective. By \cite[Chapter II, Theorem 7.1 (c)]{Bredon}, the composition (\ref{compodiago}) identifies with the cup-product with $\partial(rg)=r\cupp\partial(g)$, 
where we have denoted by $\partial$ the boundary maps of the short exact sequences $0\to\cN\to  \cL(R)\to \cL(R)|_C\to 0$ and $0\to\cN(-R)\to  \cL\to \cL|_C\to 0$.

At this point, consider the composition:
$$\psi:H^0(C,\cL)\xrightarrow{\partial} H^1(S,\cN(-R))\xrightarrow{\cupp r}H^1(S,\cN).$$
Since $H^1(S,\cL)=H^1(R,\cN|_R)=0$ by Assumptions \ref{ass2} and \ref{ass1}, $\psi$ is surjective. Since $g\in H^0(C,\cL)$ has been chosen general, $\psi(g)=r\cupp\partial(g)\in H^1(S,\cN)$ is general. By Assumption \ref{ass1}, 
the cup-product $H^0(S,K_S)\xrightarrow{r\cupp\partial(g)}H^1(S,\cN\otimes K_S)$ is injective, as we wanted.
\end{proof}

\begin{prop}
\label{propverifGreen}
Under Assumptions \ref{ass2} and \ref{ass1}, the cokernel of the morphism $\phi_{\lambda}$ defined in (\ref{defpsilambda}) is of dimension at most $h^2(S,\cO_S)$. 
\end{prop}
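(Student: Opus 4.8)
The plan is to identify $\phi_{\lambda}$ with an explicit composition of maps governed by the normal bundles of $C\subset T\subset\kX$, and then to bound its cokernel by means of Lemma \ref{lemvan}. Since $B$ is Zariski-open in the vector space $H^0(S,\cL^{\otimes 2}(R))$, one has $T_{B,s}=H^0(S,\cL^{\otimes 2}(R))$, and the Kodaira--Spencer map of $\pi$ at $s$ factors as $T_{B,s}\to H^0(T,N_{T/\kX})\to H^1(T,T_{T})$, the first arrow sending a deformation direction $s'$ to the class of $s'w^2|_{T}\in\cO_{\kX}(T)|_{T}=N_{T/\kX}$ (a section which is well defined at least along $C$, as $C$ avoids the exceptional locus of $\kX\to\bkX$), and the second arrow being the connecting map of $0\to T_{T}\to T_{\kX}|_{T}\to N_{T/\kX}\to 0$. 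Recall the geometric description of the infinitesimal Noether--Lefschetz map attached to the cycle class $\lambda=[C]$ (see \cite[\S 5]{CHM}, \cite[\S 17.3.4]{Voisin}, and \cite[\S 1]{NL3} for the variant over $\R$): the morphism $\phi_{\lambda}$ coincides with the composition
\begin{equation*}
T_{B,s}\xrightarrow{\ \rho\ } H^0(C,N_{T/\kX}|_{C})\xrightarrow{\ \partial_1\ } H^1(C,N_{C/T})\xrightarrow{\ \partial_2\ } H^2(T,\cO_{T}),
\end{equation*}
where $\rho$ is the restriction to $C$ of the map $T_{B,s}\to H^0(T,N_{T/\kX})$ above, $\partial_1$ is the connecting map of the normal bundle exact sequence $0\to N_{C/T}\to N_{C/\kX}\to N_{T/\kX}|_{C}\to 0$, and $\partial_2$ is the connecting map of $0\to\cO_{T}\to\cO_{T}(C)\to N_{C/T}\to 0$ (using $\cO_{T}(C)|_{C}\simeq N_{C/T}$).

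The second step is to check that $\rho$ and $\partial_2$ are surjective. The cokernel of $\partial_2$ injects into $H^2(T,\cO_{T}(C))$, which vanishes by Lemma \ref{lemvan}. For $\rho$: since $\kX\to\bkX$ is an isomorphism near $C$, one has $N_{T/\kX}|_{C}\simeq\cO_{\bkX}(\bT)|_{C}\simeq\bigl(\cO_{\bkX}(2)\otimes f^{*}\cO_{S}(R)\bigr)|_{C}$, and the isomorphism $w\colon\cL|_{C}\xrightarrow{\sim}\cO_{\bkX}(1)|_{C}$ of (\ref{isov}) identifies this with $\cL^{\otimes 2}(R)|_{C}$; under this identification $\rho$ becomes, up to sign, the restriction map $H^0(S,\cL^{\otimes 2}(R))\to H^0(C,\cL^{\otimes 2}(R)|_{C})$. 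Its cokernel injects into $H^1\bigl(S,\cL^{\otimes 2}(R)\otimes\cO_{S}(-C)\bigr)$; as $\cO_{S}(C)\simeq\cA^{\otimes l}$ (because $C=\{c=0\}$ with $c\in H^0(S,\cA^{\otimes l})$) and $\cL\simeq\cA^{\otimes l}\otimes\cN(-R)$, one computes $\cL^{\otimes 2}(R)\otimes\cO_{S}(-C)\simeq\cL\otimes\cN$, so this cokernel vanishes by Assumption \ref{ass2}.

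To conclude: since $\rho$ is surjective, $\Ima(\phi_{\lambda})=\partial_2(\Ima(\partial_1))$, and since $\partial_2$ is surjective, $\Coker(\phi_{\lambda})=H^2(T,\cO_{T})/\partial_2(\Ima(\partial_1))$ is a quotient of $H^1(C,N_{C/T})/\Ima(\partial_1)=\Coker(\partial_1)$. By exactness of the normal bundle sequence, $\Coker(\partial_1)$ injects into $H^1(C,N_{C/\kX})$, whence $\dim\Coker(\phi_{\lambda})\leq h^1(C,N_{C/\kX})\leq h^2(S,\cO_{S})$ by Lemma \ref{lemvan}. I expect the main obstacle to be the first step: justifying that the abstract map $\phi_{\lambda}$ agrees with the concrete composition $\partial_2\circ\partial_1\circ\rho$, that is, recognizing the infinitesimal variation of the Hodge class $\lambda=[C]$ in terms of the geometry of $C$ inside the fixed threefold $\kX$. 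The remaining steps are bookkeeping with the explicit line bundles of \S\ref{NLlocus} and with Lemma \ref{lemvan}.
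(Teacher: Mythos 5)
Your proposal is correct and follows essentially the same route as the paper: the identification of $\phi_{\lambda}$ with the composition $\partial_2\circ\partial_1\circ\rho$ is exactly what the paper obtains from \cite[Proposition 3.2.9 (i)]{Sernesi} and \cite[Proposition 2.1]{NL3}, and the subsequent steps (surjectivity of $\rho$ via $H^1(S,\cL\otimes\cN)=0$ and the isomorphism (\ref{isov}), surjectivity of $\partial_2$ via $H^2(T,\cO_T(C))=0$, and the bound $\dim\Coker(\partial_1)\leq h^1(C,N_{C/\kX})\leq h^2(S,\cO_S)$ from Lemma \ref{lemvan}) coincide with the paper's. The only step you flag as delicate — recognizing $\phi_{\lambda}$ in these geometric terms — is precisely what the cited results of Sernesi and \cite{NL3} supply, so there is no gap.
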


\begin{proof}
By \cite[Proposition 3.2.9 (i)]{Sernesi}, the Kodaira-Spencer map may be described as the composition $T_{B,s}\to H^0(T, N_{T/\kX})\to H^1(T,T_T)$ of the map classifying infi\-ni\-tesimal deformations of $T$ in $\kX$ and of the boundary of the short exact sequence:
\begin{equation*}
\label{conormalex}
0\to T_T\to T_{\kX}|_T\to N_{T/\kX}\to 0.
\end{equation*}
It then follows from \cite[Proposition 2.1]{NL3} that $\phi_{\lambda}$ coincides with the composition:
\begin{equation}
\label{bigcomposition}
T_{B,s}\to H^0(T, N_{T/\kX})\to H^0(C, N_{T/\kX}|_C)\to H^0(C, N_{C/T})\to H^2(T,\cO_T)
\end{equation}
of the map classifying infinitesimal deformations of $T$ in $\kX$, of the restriction to $C$ and of the boundary maps of the short exact sequences
\begin{equation}
\label{normalbundles}0\to N_{C/T}\to N_{C/\kX}\to N_{T/\kX}|_C\to 0,
\end{equation}
\begin{equation}
\label{eqdeCdansT}0\to \cO_T\to \cO_T(C)\to \cO_T(C)|_C\simeq N_{C/T}\to 0.
\end{equation}
By Lemma \ref{lemvan} and the short exact sequences (\ref{normalbundles}) and (\ref{eqdeCdansT}), the last arrow in (\ref{bigcomposition})  
is surjective, and the third has cokernel of dimension at most $h^2(S,\cO_S)$.
To conclude, it remains to show that the composition of the first two arrows in (\ref{bigcomposition})  
 is surjective. 
 Since the resolution of singularities $T\to\bT$ is an isomorphism along $C$, it coincides with the analogous composition:
\begin{equation}
\label{rewritingcomposition}
T_{B,s}\to H^0(\bT, N_{\bT/\bkX})\to H^0(C, N_{\bT/\bkX}|_C).
\end{equation}
In (\ref{rewritingcomposition}), one has $T_{B,s}=H^0(S, \cL^{\otimes 2}(R))$, $N_{\bT/\bkX}=\cO_{\bkX}(\bT)|_{\bT}=\cO_{\bkX}(2)(R)|_{\bT}$, and the morphism $T_{B,s}\to H^0(\bT, N_{\bT/\bkX})$ describes the variation with $s$ of the equation of $\bT$. Since $\bT=\{r v^2=sw^2\}\subset \bkX$, it is given by multiplication by $w^2$. It follows that (\ref{rewritingcomposition}) is the composition $$H^0(S, \cL^{\otimes 2}(R))\to H^0(C, \cL^{\otimes 2}(R)|_C)\xrightarrow{w^2} H^0(C, \cO_{\bkX}(2)(R)|_C)$$
of the restriction map and of the multiplication by $w^2$.
The former is surjective~as $H^1(S, \cL\otimes\cN)=0$ by Assumption \ref{ass2} and the latter is an isomorphism by (\ref{isov}).
\end{proof}

\subsection{Choice of a birational model of $S$}
\label{biratchoice}
In this section, we explain how to ensure that Assumption \ref{ass1} is satisfied, after replacing $S$ by a blow-up $S'$ at finitely many points lying outside of $R$.

Let $P,Q\subset S$ be two disjoint reduced  finite subschemes of $S$ not meeting $R$, and let $\cI_P$ and $\cI_Q$ be their ideal sheaves. We consider the blow-up $\mu:S'\to S$ of $P\cup Q$, with exceptional divisor $E\cup F\subset S'$ where $E=\mu^{-1}(P)$ and $F=\mu^{-1}(Q)$.

\begin{prop}
\label{twistedlb}
Let $\cA$ be an ample line bundle on $S$, such that $H^1(R,\cA|_R)=0$. Let $\cN:=\mu^*\cA(2F-2E)$.
If $P$ and $Q$ consist of sufficiently many general points, then there exists $\eta\in H^1(S',\cN)$ such that:
\begin{enumerate}[(i)]
\item $H^0(S',\cN\otimes K_{S'})=0$,
\item $H^0(S',\cN^{-1}\otimes K_{S'}(R))=0$,
\item $H^1(R,\cN|_R)=0$,
\item The cup-product map $H^0(S',K_{S'})\xrightarrow{\cupp\eta} H^1(S',\cN\otimes K_{S'})$ is injective
\end{enumerate}
\end{prop}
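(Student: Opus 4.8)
The plan is to compute every group after pull-back along $\mu$. Recall $K_{S'}=\mu^*K_S(E+F)$, that $\mu$ is an isomorphism over a neighbourhood of $R$ (as $R$ meets neither $P$ nor $Q$), and the standard identities $\mu_*\cO_{S'}(D)=\cO_S$ for $D$ effective and exceptional, $\mu_*\cO_{S'}(-E)=\cI_P$, $\mu_*\cO_{S'}(-F)=\cI_Q$, $R^1\mu_*\cO_{S'}(bC)=0$ for an exceptional curve $C$ and $b\leq 1$, while $R^1\mu_*\cO_{S'}(bF_q)$ is a nonzero sheaf supported at $q$ for $b\geq 2$ (here $F_q$ denotes the component of $F$ over $q\in Q$, and likewise $E_p$). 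Then $\cN|_R\cong\cA|_R$, so (iii) is exactly the hypothesis $H^1(R,\cA|_R)=0$. Since $\cN\otimes K_{S'}=\mu^*(\cA\otimes K_S)(3F-E)$ and $\cN^{-1}\otimes K_{S'}(R)=\mu^*(\cA^{-1}\otimes K_S(R))(3E-F)$, pushing forward gives $H^0(S',\cN\otimes K_{S'})=H^0(S,(\cA\otimes K_S)\otimes\cI_P)$ and $H^0(S',\cN^{-1}\otimes K_{S'}(R))=H^0(S,(\cA^{-1}\otimes K_S(R))\otimes\cI_Q)$; as general points impose independent conditions on a nonempty linear system, both vanish once $P$ (resp. $Q$) is a general set of at least $h^0(S,\cA\otimes K_S)$ (resp. $h^0(S,\cA^{-1}\otimes K_S(R))$) points. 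This proves (i) and (ii).

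The substance is (iv); I may assume $p_g(S)=h^0(S,K_S)>0$, since otherwise the target is zero and there is nothing to prove. The class $\eta$ will be built from the exceptional curves $F_q$, $q\in Q$: near such a $q$, after trivialising the pull-backs of $\cA$ and $K_S$, one has $\cN\cong\cO_{S'}(2F_q)$ and $\cN\otimes K_{S'}\cong\cO_{S'}(3F_q)$, so $R^1\mu_*\cN$ is supported on $Q$, of dimension one at each point. The key point is that every global section of $K_{S'}$ vanishes along $\bigcup_q F_q$: indeed $H^0(S',K_{S'})=H^0(S,K_S)$, and these sections lie in the subsheaf $\mu^*K_S\subseteq\mu^*K_S(E+F)=K_{S'}$. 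Consequently, if $s\in H^0(S',K_{S'})$ corresponds to $\bar s\in H^0(S,K_S)$, then near $q$ the multiplication $\cdot s\colon\cN\to\cN\otimes K_{S'}$ factors, in the above trivialisations, as the tautological inclusion $\cO_{S'}(2F_q)\hookrightarrow\cO_{S'}(3F_q)$ followed by multiplication by the local function $\mu^*\bar s$, which restricts to the constant $\bar s(q)$ along $F_q$. Applying $R^1\mu_*$, the induced map $R^1\mu_*\cN\to R^1\mu_*(\cN\otimes K_{S'})$ is, at each $q$, a fixed injection rescaled by $\bar s(q)$; in particular it is injective at every $q$ off the base locus of $|K_S|$, provided $\bar s\neq0$.

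To conclude I would use the Leray edge maps $r_\cF\colon H^1(S',\cF)\to H^0(S,R^1\mu_*\cF)$, which are functorial in $\cF$ and hence compatible with cup-product by a fixed $s$. By the Leray spectral sequence for $\mu$, the image of $r_\cN$ is the kernel of $d_2\colon H^0(S,R^1\mu_*\cN)\to H^2(S,\mu_*\cN)$, and $H^2(S,\mu_*\cN)\cong H^2(S,\cA)$ because $\mu_*\cN$ and $\cA$ differ by a sheaf supported on $P$; so the image of $r_\cN$ has codimension at most $h^2(S,\cA)$ in $H^0(S,R^1\mu_*\cN)$. Hence, for $|Q|$ large, a general $\eta\in H^1(S',\cN)$ has $r_\cN(\eta)$ with nonzero component at every point of a subset $Q'\subseteq Q$ of cardinality $\geq|Q|-h^2(S,\cA)\geq p_g(S)$; taking $Q$ general and disjoint from the base locus of $|K_S|$, we get $H^0(S,K_S\otimes\cI_{Q'})=0$. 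For this $\eta$ and any $0\neq s\in H^0(S',K_{S'})$, functoriality identifies the image of $s\cupp\eta$ under $r_{\cN\otimes K_{S'}}$ with the image of $r_\cN(\eta)$ under $R^1\mu_*(\cdot s)$; since $\bar s$ does not vanish identically on $Q'$ and the local maps of the previous paragraph are injective, this image is nonzero, so $s\cupp\eta\neq0$. Thus $\cupp\eta$ is injective, which is (iv).

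Altogether (i)–(iii) are bookkeeping once the push-forward formulas are recorded; the real work is (iv), and within it the identification of the local multiplication on $R^1\mu_*$ with a rescaled tautological inclusion. This is exactly what pins down the twist $2F-2E$: the coefficient $2$ along $Q$ is the least making $R^1\mu_*\cN$ nonzero there, while the coefficient $-2$ along $P$ is what lets $\cI_P$ absorb $H^0(S',\cN\otimes K_{S'})$ in (i). I expect the only genuinely delicate point to be this local computation together with the "general position'' arithmetic forcing $|P|$ and $|Q|$ large enough to meet all four requirements simultaneously.
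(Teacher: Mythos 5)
Your proof is correct, and for the substantive part (iv) it takes a genuinely different route from the paper. Parts (i)--(iii) are identical to the paper's (push forward, identify $H^0(S',\cN\otimes K_{S'})=H^0(S,\cA\otimes K_S\otimes\cI_P)$ and $H^0(S',\cN^{-1}\otimes K_{S'}(R))=H^0(S,\cA^{-1}\otimes K_S(R)\otimes\cI_Q)$, and use that $\mu$ is an isomorphism near $R$). For (iv), the paper works entirely with the $E_2^{1,0}$ piece of the Leray spectral sequence: it takes $\eta$ in the image of the injection $H^1(S,\cA\otimes\cI_P^2)\hookrightarrow H^1(S',\cN)$, reduces to producing $\nu\in H^1(S,\cA\otimes\cI_P)$ with $\cupp\nu$ injective on $H^0(S,K_S)$, and builds $\nu=\partial(\xi)$ from a section $\xi\in H^0(P,\cA|_P)$ chosen via a splitting $P=P_1\cup P_2$; the points $Q$ play no role in (iv) there. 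You instead exploit the $E_2^{0,1}$ piece: $R^1\mu_*\cN$ is a length-one skyscraper at each point of $Q$ (and vanishes along $E$), the edge map $r_{\cN}$ hits all but at most $h^2(S,\cA)$ of these coordinates, and multiplication by $s=\mu^*\bar s\cdot e_{E+F}$ acts on each such stalk as $\bar s(q)$ times a fixed injection because the image of $R^1\mu_*\cO(2F_q)$ in $R^1\mu_*\cO(3F_q)$ is killed by $\mathfrak m_q$. All the local statements check out ($H^1(\bP^1,\cO(-2))\neq 0$ gives the nonvanishing, the tautological map $R^1\mu_*\cO(2F_q)\to R^1\mu_*\cO(3F_q)$ is injective, and the genericity bookkeeping -- at most $h^2(S,\cA)$ dead coordinates, every $p_g$-element subset of a general $Q$ imposing independent conditions on $|K_S|$ -- is sound). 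What each approach buys: the paper's argument never needs the positive coefficient $+2F$ for (iv) and keeps the cup-product computation on $S$ itself, at the price of the slightly fiddly choice of $\xi$ on $P_1\cup P_2$; yours isolates cleanly why the twist $+2F$ is the threshold at which $R^1\mu_*$ becomes nonzero and turns the injectivity into a pointwise evaluation of $\bar s$ on $Q$, at the price of the local $R^1\mu_*$ computation and the module-theoretic observation that the relevant submodule is annihilated by $\mathfrak m_q$. Either argument establishes the proposition.
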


\begin{proof}
 Condition (iii) follows from our choice of $\cA$.
Since $H^0(S',\cN\otimes K_{S'})=H^0(S',\mu^*(\cA\otimes K_S)(3F-E))=H^0(S,\cA\otimes K_S\otimes \cI_P)$, this group vanishes if $P$ contains sufficiently many general points. By the same argument, $H^0(S',\cN^{-1}\otimes K_{S'}(R))$ vanishes if $Q$ contains sufficiently many general points. 

It remains to check (iv). If $\zeta\in H^0(S,K_S)$, consider the diagram:
\begin{equation}
\begin{aligned}
\label{cupwithdiff}
\xymatrix
@R=0.35cm 
{
H^1(S',\cN)\ar[rr]^(.45){\cupp\mu^*\zeta}&
&  H^1(S',\cN\otimes K_{S'})  \\
H^1(S,\cA\otimes \cI_P^2)\ar@{->>}[r]\ar@{^{(}->}[u]
& H^1(S,\cA\otimes \cI_P)\ar[r]^(.43){\cupp\zeta}&H^1(S,\cA\otimes \cI_P\otimes K_S).\ar@{^{(}->}[u]
}
\end{aligned}
\end{equation}
The vertical maps are edge maps in the Leray spectral sequence for $\mu$, hence are injective. The bottom left horizontal arrow, induced by the inclusion $\cA\otimes\cI_P^2\subset\cA\otimes\cI_P$, is surjective as the cokernel of this inclusion is supported on $P$.  The two other arrows are cup-products by $\zeta$ and by its pull-back $\mu^*\zeta\in H^0(S', K_{S'})$. The description of $\mu^*\zeta$ as induced by the inclusion $\mu^*K_S\subset \mu^*K_S(E+F)\simeq K_{S'}$ shows that (\ref{cupwithdiff}) commutes. Since $\mu^*:H^0(S,K_S)\to H^0(S',K_{S'})$ is an isomorphism,  it suffices to construct $\nu\in H^1(S,\cA\otimes \cI_P)$ such that the 
cup-product map $H^0(S,K_S)\xrightarrow{\cupp\nu} H^1(S,\cA\otimes \cI_P\otimes K_S)$ is injective. Indeed, it follows from (\ref{cupwithdiff}) that a class $\eta\in H^1(S',\cN)$ constructed by lifting $\nu$ to $H^1(S,\cA\otimes \cI_P^2)$ then sending it to $H^1(S',\cN)$ will satisfy the required property.

To construct $\nu$, we consider, for every $\zeta\in H^0(S, K_S)$, the commutative diagram:
\begin{equation}
\begin{aligned}
\label{cupwithdiff2}
\xymatrix
@R=0.3cm 
{
H^0(S,\cA)\ar[r]\ar[d]^{\cupp\zeta}&H^0(P,\cA|_P)\ar[r]^{\partial}\ar[d]^{\cupp\zeta}
&  H^1(S,\cA\otimes \cI_P) \ar[d]^{\cupp\zeta} \\
H^0(S,\cA\otimes K_S)\ar[r]
& H^0(P,(\cA\otimes K_S)|_P)\ar[r]&H^1(S,\cA\otimes \cI_P\otimes K_S),}
\end{aligned}
\end{equation}
and we choose $\nu=\partial(\xi)$ for some $\xi\in H^0(P,\cA|_P)$. We only have to ensure that $\xi\cupp\zeta\in H^0(P,(\cA\otimes K_S)|_P)$ does not belong to the image of $H^0(S,\cA\otimes K_S)$, for every non-zero $\zeta\in H^0(S,K_S)$. This is possible if $P$ contains sufficiently many general points. Indeed, $P$ may then be written as a disjoint union $P=P_1\cup P_2$ such that no non-zero $\zeta\in H^0(S,K_S)$ vanishes identically on $P_1$ and no non-zero section in $ H^0(S,\cA\otimes K_S)$ vanishes identically on $P_2$. Then any $\xi\in H^0(P,\cA|_P)$ that vanishes at every point of $P_2$ but at no point of $P_1$ does the job. 
 \end{proof}

\section{Sufficient conditions for the equality of period and index}
\label{secproof}

We now combine the results of the previous sections to show the equality of the period and the index of some Brauer classes $\alpha\in\Br(\R(S))$ in \S\ref{perind2}--\ref{perind}, and to compute the $u$-invariant of $\R(S)$ in \S\ref{paru}. The only step missing is a control on the topology of the real locus of ramified double covers $p:T\to S$ constructed as in \S\ref{consdouble}. In  \S\ref{topoRdouble}, we gather technical results that will be used for this purpose.

Let $M$ be a one-dimensional $\R$-vector space. The positive elements of $M^{\otimes 2}$ are those of the form $m\otimes m$ for some non-zero $m\in M$. This notion depends on the chosen representation of $M^{\otimes 2}$ as a tensor square, but it will always be clear which one we consider.
 In particular, if $X$ is variety over $\R$ and $\cM$ is a line bundle on $X$, it makes sense to say that a section in $H^0(X,\cM^{\otimes 2})$ is positive at $x\in X(\R)$.

\subsection{Controlling the real locus of a double cover}
\label{topoRdouble}

In \S\ref{topoRdouble}, we fix a connected smooth projective surface $S$ over $\R$, a simple normal crossings divisor $R\subset S$ with equation $r\in H^0(S,\cO_S(R))$, and a union of connected components  $\Xi\subset S(\R)$ such that $R(\R)\subset \Xi$. We will consider the following:

\begin{assumption}
\label{assS}
There exists a compact one-dimensional manifold $\bS$ and a $\ci$ embedding $\iota:\bS\to \Xi$ meeting $R(\R)$ transversally in its smooth locus such that:
\begin{enumerate}[(i)]
\item Every connected component of $\Xi$ meets $\iota(\bS)$.
\item One has $\iota_*[\bS]=\cl_{\R}(R)\in H^1(\Xi)\subset H^1(S(\R))$.
\item The group $H^1(\Xi)$ is generated by classes of connected components of $\iota(\bS)$ and by Borel-Haefliger classes of curves on $S$ whose real locus is included in $\Xi$.
\item Let $\Sigma$ be a connected component of $\Xi$, define $\Sigma_0:=\Sigma\cap(\iota(\bS)\cup R(\R))$  and write $\Sigma\setminus\Sigma_0$ as a disjoint union $\Sigma_+\sqcup\Sigma_-$ of open closed subsets. If any $x\in \Sigma_0$ belongs to the closures of both $\Sigma_+$ and $\Sigma_-$, then $\Sigma_+\cup\Sigma_0$ is connected.
\end{enumerate}
\end{assumption}

The three following lemmas will be useful to verify that Assumption \ref{assS} holds.

\begin{lem}
\label{lemstab}
If $\iota:\bS\to\Xi$ satisfies Assumption \ref{assS}, then so does any $\iota':\bS\to \Xi$ close enough to $\iota$ in $\ci(\bS,\Xi)$ for the strong $\ci$ topology.
\end{lem}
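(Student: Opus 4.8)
The plan is to show that each of the conditions (i)--(iv) of Assumption \ref{assS}, together with the structural requirements on $\iota$, is an open condition on $\iota$ in the strong $\ci$ topology, so that all of them persist under a sufficiently small perturbation. The structural requirements are open for standard reasons: since $\bS$ is compact, being an embedding is an open condition in the strong $\ci$ topology, and so is being transverse to $R(\R)$ in its smooth locus; moreover the compact set $\iota(\bS)$ lies in the open subset $\Xi\subset S(\R)$ and avoids the closed subset $\Sing(R(\R))$, so any $\iota'$ that is $\co$-close to $\iota$ still maps $\bS$ into $\Xi\setminus\Sing(R(\R))$. Hence, for $\iota'$ close enough to $\iota$, the map $\iota':\bS\to\Xi$ is again a $\ci$ embedding meeting $R(\R)$ transversally in its smooth locus.

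Conditions (i), (ii) and (iii) I would then obtain using only $\co$-closeness. Fixing a Riemannian metric on $S(\R)$, for $\iota'$ close enough to $\iota$ the geodesic interpolation $H(x,t)=\exp_{\iota(x)}(t\,\exp_{\iota(x)}^{-1}(\iota'(x)))$ is a homotopy from $\iota$ to $\iota'$ inside $\Xi$, and its restriction to each circle $\bS_j$ of $\bS$ is a homotopy from $\iota|_{\bS_j}$ to $\iota'|_{\bS_j}$ in $\Xi$. Since $\iota$ and $\iota'$ are embeddings, the connected components of $\iota'(\bS)$ are exactly the $\iota'(\bS_j)$; as the class of such a component in $H^1(\Xi)$ is the Poincar\'e dual of the pushforward of a fundamental class, which depends only on the homotopy class of the embedding, these carry the same classes in $H^1(\Xi)$ as the $\iota(\bS_j)$, and likewise $\iota'_*[\bS]=\iota_*[\bS]=\cl_{\R}(R)$. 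This gives (ii) and (iii). For (i), each connected component of $\Xi$ is open in $S(\R)$ and already meets $\iota(\bS)$, hence also meets $\iota'(\bS)$ as soon as $\iota'$ is $\co$-close to $\iota$.

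The substance of the lemma lies in condition (iv), and here I would use $C^1$-closeness. The straight-line interpolation $\iota_t$ between $\iota_0=\iota$ and $\iota_1=\iota'$, formed inside a tubular neighbourhood of $\iota(\bS)$ in $S(\R)$, is for every $t\in[0,1]$ an embedding of $\bS$ into $\Xi$ that meets $R(\R)$ transversally in its smooth locus, misses $\Sing(R(\R))$, and stays in an arbitrarily small neighbourhood of $\iota(\bS)$. Applying the isotopy extension theorem relatively to the fixed submanifold $R(\R)$ --- legitimate precisely because $\iota_t$ remains transverse to $R(\R)$ and avoids its singular locus throughout --- produces an ambient isotopy $\Phi_t$ of $S(\R)$ with $\Phi_0=\mathrm{id}$, $\Phi_t\circ\iota=\iota_t$, $\Phi_t(R(\R))=R(\R)$, supported in a small neighbourhood of $\iota(\bS)$. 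Then $\Phi:=\Phi_1$ is a diffeomorphism of $S(\R)$ isotopic to the identity (hence preserving every connected component $\Sigma$ of $\Xi$) with $\Phi(R(\R))=R(\R)$ and $\Phi(\iota(\bS))=\iota'(\bS)$, so for each such $\Sigma$ it restricts to a self-homeomorphism of $\Sigma$ carrying $\Sigma\cap(\iota(\bS)\cup R(\R))$ onto $\Sigma\cap(\iota'(\bS)\cup R(\R))$. Since all the notions entering condition (iv) --- being a connected component of the complement of $\iota(\bS)\cup R(\R)$ in $\Sigma$, lying in the closure of a prescribed open-closed piece, and connectedness --- are invariant under such a homeomorphism of pairs, condition (iv) for $\iota$ implies condition (iv) for $\iota'$.

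The main obstacle is thus the relative isotopy extension used for (iv): moving $\iota(\bS)$ onto $\iota'(\bS)$ by an ambient isotopy that keeps $R(\R)$ fixed as a set. Away from the finitely many transverse intersection points of $\iota(\bS)$ with $R(\R)$ this is the ordinary isotopy extension theorem; near such a point, working in local coordinates $(u,v)$ with $R(\R)=\{v=0\}$ and $\iota_t(\bS)$ the small graph $\{u=h_t(v)\}$, one can use the local ambient isotopy $(u,v)\mapsto(u+\chi(v)h_t(v),v)$ with $\chi$ a cutoff function, which preserves $\{v=0\}$ setwise; the local models are then patched by a partition of unity on a neighbourhood of $\iota(\bS)$. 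The $C^1$-smallness of $\iota'-\iota$, which follows from taking $\iota'$ close to $\iota$ in the strong $\ci$ topology, is exactly what keeps $\iota_t$ transverse to $R(\R)$ for all $t$ and keeps $\Phi_t$ (hence $\Phi$) close to the identity.
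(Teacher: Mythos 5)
Your proof is correct, and it reaches the same underlying goal as the paper's --- produce a diffeomorphism of $\Xi$ carrying the pair $(\iota(\bS),R(\R))$ onto $(\iota'(\bS),R(\R))$, so that the purely topological condition (iv) and the transversality requirement transfer --- but by a different route. The paper disposes of (i)--(iii) as ``immediate'' (your geodesic-homotopy argument is a clean way to make that precise) and then handles everything else in one stroke by viewing $(\iota,\Id):\bS\sqcup\tR(\R)\to\Xi$ as an immersion with normal crossings and citing the Golubitsky--Guillemin stability theorem: any nearby map is equivalent to it via diffeomorphisms of source and target, which yields at once the persistence of the normal-crossings configuration and the ambient diffeomorphism needed for (iv). You instead build that diffeomorphism by hand, via an isotopy extension theorem relative to $R(\R)$, with explicit local models at the transverse intersection points. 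This is a valid and more self-contained alternative; what the citation buys the paper is brevity and the avoidance of exactly the patching issues you gesture at. Two small imprecisions in your construction, neither fatal: the local model should move $\{u=h_0(v)\}$ to $\{u=h_t(v)\}$, i.e.\ the translation term should be $h_t(v)-h_0(v)$ (or one normalizes coordinates so that $\iota(\bS)$ is $\{u=0\}$, as you implicitly do); and ``patching by a partition of unity'' must be done at the level of the generating time-dependent vector fields (which one keeps tangent to $R(\R)$ along $R(\R)$, as transversality permits) rather than of the diffeomorphisms themselves --- the standard form of the isotopy extension argument.
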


\begin{proof}
That $\iota'$ still satifies conditions (i), (ii) and (iii) is immediate. To show that $\iota'$ still meets $R(\R)$ transversally in its smooth locus and satisfies condition (iv), introduce the normalization $\tR$ of $R$, consider the $\ci$ normal crossings immersion $(\iota,\Id):\bS\sqcup\tR(\R)\to\Xi$, and use \cite[Chapter III, Theorem 3.11, Definition 1.1]{GoGu}.
\end{proof}

\begin{lem}
\label{lemexistiota}
There exists a blow-up $\mu:S'\to S$ at a finite number of general points such that, letting $\Xi':=\mu^{-1}(\Xi)\subset S'(\R)$, Assumption \ref{assS} is satisfied for $(S',\Xi')$.
\end{lem}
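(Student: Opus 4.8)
The plan is to construct $\iota$ one connected component of $\Xi$ at a time. A blow-up of $S$ at a real point affects the real locus only locally, so it suffices, for each connected component $\Sigma$ of $\Xi$, to blow up finitely many general real points lying in $\Sigma$ and to exhibit over the resulting component a smooth embedded $1$-manifold with the required properties; the pieces are then assembled, using that $H^1(\Xi',\Z/2)=\bigoplus_{\Sigma'}H^1(\Sigma',\Z/2)$. Fix such a $\Sigma$, a compact connected surface containing the (possibly disconnected, possibly nodal) curve $R(\R)\cap\Sigma$. The basic input is that blowing up $S$ at a real point of $\Sigma$ replaces $\Sigma$ by $\Sigma\#\R\bP^2$, enlarging $H^1(\cdot,\Z/2)$ by one dimension whose generator is the class of the exceptional curve $\R\bP^1$ --- a \emph{one-sided} smooth circle which is moreover \emph{algebraic}, with real locus in the new $\Xi'$. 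Thus every real blow-up simultaneously produces a one-sided cohomology class \emph{and} an algebraic curve representing it. I would perform enough of them in $\Sigma$ (roughly as many as the first Betti number of $\Sigma$) to gain the flexibility used below.

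Over such a $\Sigma'$, I would take $\iota(\bS)\cap\Sigma'$ to be a disjoint union of two families of circles. First, a \emph{smoothing} of $R(\R)\cap\Sigma'$: one smooth circle running along each connected component of $R(\R)$ (with hyperbola surgeries resolving the nodes), arranged to cross $R(\R)$ transversally at a few extra points so that this family, together with $R(\R)$, forms a connected subset of $\Sigma'$; being a small perturbation of $R(\R)$ it has total homology class $\cl_\R(R)|_{\Sigma'}$, which yields condition (ii) over $\Sigma'$. Second, finitely many \emph{pairs} of parallel two-sided circles, each pair contributing $0$ to the total class but one class to the span, each crossing $R(\R)$ transversally (so that it too is connected to the previous family), and routed, when convenient, through the cross-caps produced by the blow-ups; these I would choose so that, together with the smoothing circles, with the classes of the exceptional curves, and with the classes of the irreducible components of $R$ (all algebraic with real locus in $\Xi'$), they generate $H^1(\Sigma',\Z/2)$ --- this is condition (iii) over $\Sigma'$. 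A spherical component of $\Xi$ is not blown up and handled directly: there $H^1(\cdot,\Z/2)=0$, so (ii) and (iii) are vacuous, and one takes $\iota(\bS)$ there to be two parallel circles threading transversally through every component of $R(\R)$ met, or a single null-homotopic circle if none is met.

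With this done, $\Sigma_0:=\Sigma'\cap(\iota(\bS)\cup R(\R))$ is a \emph{connected} nonempty subset of $\Sigma'$. Condition (i) is then immediate, and condition (iv) follows because, once $\Sigma_0$ is connected, every connected component $U$ of $\Sigma'\setminus\Sigma_0$ has nonempty boundary contained in $\Sigma_0$, so $\overline{U}\cup\Sigma_0$ is connected; hence $\Sigma_+\cup\Sigma_0$ is connected for any choice of $\Sigma_+$, and the conclusion of (iv) holds unconditionally. Choosing the circles generically to secure the transversality to $R(\R)$ along its smooth locus (the listed conditions being stable under small perturbations, cf. Lemma \ref{lemstab}), and assembling the pieces over all components of $\Xi'$, one obtains an $\iota$ satisfying Assumption \ref{assS} for $(S',\Xi')$.

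The step I expect to be the real obstacle is the construction of the second family of circles, i.e. arranging condition (iii) compatibly with condition (ii). The two requirements genuinely compete: the classes of pairwise disjoint embedded circles have diagonal Gram matrix for the mod-$2$ intersection form, and on a closed surface the nondegeneracy of that form forces \emph{any} family of disjoint circles whose classes span $H^1$ to sum to $w_1$, equal here to $\cl_\R(K_{S'})|_{\Sigma'}$, which in general differs from the prescribed $\cl_\R(R)|_{\Sigma'}$. This is exactly why the algebraic curves are indispensable in (iii), and why blowing up is the right move: each real blow-up injects a one-sided \emph{algebraic} class, the exceptional classes span a nondegenerate subspace $E$ whose dimension is the number of blow-ups, and once enough points have been blown up --- so that the Witt index of the intersection form is large compared to the first Betti number of $\Sigma$ --- a family of disjoint circles through the cross-caps, enlarged by the smoothing of $R(\R)$, can be made to realize a subspace $W\ni\cl_\R(R)|_{\Sigma'}$ with $W+E=H^1(\Sigma',\Z/2)$. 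The remaining work is the linear algebra over $\Z/2$ together with the surface-topology bookkeeping needed to convert this dimension count into actual disjoint embedded circles with the prescribed intersection pattern and connectivity, and to check that the generality imposed on the blown-up points is compatible with the other constraints on $S'$.
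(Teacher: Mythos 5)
Your approach diverges from the paper's in an essential way, and the gap you flag at the end is a real one.

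The paper's trick is simpler and sidesteps the obstruction you identify. One first constructs a $\ci$ \emph{immersion} $\iota:\bS\to\Xi$ (not an embedding) with transverse self-intersections at finitely many general points, arranged so that $\iota_*[\bS]=\cl_\R(R)$, the components of $\bS$ generate $H^1(\Xi)$, and $\Sigma\cap(\iota(\bS)\cup R(\R))$ is connected for each component $\Sigma$. Because one is only asking for an immersion, none of these conditions compete: immersed circles can have arbitrary intersection pattern, so the diagonal-Gram-matrix constraint you correctly observe for disjoint embedded circles simply does not arise. Then $\mu:S'\to S$ is defined to be the blow-up of $S$ at the self-intersection points of $\iota$. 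A transverse double point lifts to two disjoint arcs meeting the exceptional $\R\bP^1$ at two distinct points, so $\iota$ lifts to an \emph{embedding} $\iota'$. Since the exceptional circle meets $\iota'(\bS)$ at two points, $e\cdot\iota'_*[\bS]=0$ and $\iota'_*[\bS]=\mu^*\cl_\R(R)$ has no exceptional correction; condition (iii) is then satisfied because $H^1(\Xi')$ is generated by the lifted circles plus the (algebraic) exceptional classes. Condition (iv) requires a separate argument: it does \emph{not} follow from mere connectedness of $\Sigma'_0$ (which can fail after blow-up, since the exceptional loops only meet $\Sigma'_0$ in two points), but instead uses the hypothesis that a point of $\Sigma'_0$ lies in both $\overline{\Sigma'_+}$ and $\overline{\Sigma'_-}$ to track the two arcs of each exceptional loop.

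Your route — blowing up to alter the topology of $\Sigma$, then constructing embedded circles directly — is not the same, and you correctly diagnose the resulting difficulty (the total class of any disjoint spanning family of circles is forced to be $w_1$). You then defer the resolution: \emph{``the remaining work is the linear algebra over $\Z/2$ together with the surface-topology bookkeeping needed to convert this dimension count into actual disjoint embedded circles with the prescribed intersection pattern and connectivity.''} That is precisely the step that is missing, and it is not routine: realizing an abstract subspace $W$ with $W+E=H^1(\Sigma',\Z/2)$ by \emph{pairwise disjoint} embedded circles with prescribed total class $\cl_\R(R)$, while simultaneously keeping $\Sigma'_0$ connected and transverse to $R(\R)$, is a nontrivial embedding problem in the nonorientable surface $\Sigma'$ which the proposal does not solve. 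There is also a smaller slip: you argue that condition (iv) holds \emph{unconditionally} once $\Sigma'_0$ is connected and nonempty, which is true, but since $\Sigma'_0$ would need to be connected in $\Sigma'$ after all of your surgeries, that connectivity claim is itself part of the unfinished bookkeeping. In short, the paper uses blow-up to \emph{desingularize an immersion}, whereas you use it to \emph{add cross-caps}; the former removes the obstruction you worry about, the latter merely weakens it, leaving a genuine gap.
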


\begin{proof}
Consider a union of loops in $\Xi$ whose classes generate $H^1(\Xi)$. Adding additional loops if necessary, and applying $\ci$ approximation and a transversality theorem, 
one obtains a compact one-dimensional manifold $\bS$ and a $\ci$ immersion $\iota :\bS\to \Xi$ meeting every connected component of $\Xi$, intersecting $R(\R)$ transversally at smooth points, that is injective except at finitely many general points of $\Xi$ where it has transverse self-intersection, such that the connected components of $\bS$ generate $H^1(\Xi)$, such that $\iota_*[\bS]=\cl_{\R}(R)\in H^1(\Xi)$, and such that, for every connected component $\Sigma\subset\Xi$, the set $\Sigma\cap(\iota(\bS)\cup R(\R))$ is connected.

Let $\mu:S'\to S$ be the blow-up of $S$ at the finitely many points of transverse self-intersection of $\iota$, so that $\iota$ lifts to an embedding $\iota':\bS\to \Xi'$. We claim that $\iota'$ satisfies the properties required in Assumption \ref{assS}.
Condition (i) is clear.  Computing the cohomology of a blow-up allows to deduce the equality $\iota'_*[\bS]=\cl_{\R}(R)\in H^1(\Xi')$ of condition (ii) from the equality $\iota_*[\bS]=\cl_{\R}(R)\in H^1(\Xi)$. It also shows that $H^1(\Xi')$ is generated by classes of connected components of $\iota'(\bS)$ and by Borel-Haefliger classes of exceptional divisors of $\mu$, yielding (iii).

Let us verify condition (iv). Let $\Sigma'$ be a connected component of $\Xi'$ and let $\Sigma'_0$, $\Sigma'_+$ and $\Sigma'_-$ be as in Assumption \ref{assS} (iv).  Let $x\in \mu(\Sigma)$ be a point of self-intersection of $\iota$. Its preimage in $\Xi'$ (that is the real locus of an exceptional divisor of $\mu$) is a loop meeting transversally $\iota'(\bS)$ at exactly two points. The complement of these two points in the loop has two connected components; one has to belong to $\Sigma'_+$ and the other to $\Sigma'_-$. This shows that these two points belong to the same connected component of  $\Sigma'_+\cup \Sigma'_0$. We then deduce from the connectedness of $\mu(\Sigma'_0)=\mu(\Sigma')\cap(\iota(\bS)\cup R(\R))$ that $\Sigma'_0$ is contained in a unique connected component of $\Sigma'_+\cup \Sigma'_0$. 
The connectedness of $\Sigma'$ then implies that of $\Sigma'_+\cup  \Sigma'_0$, as required.
\end{proof}

\begin{lem}
\label{lemapprox}
 Let $\cA$ and $\cN$ be line bundles on $S$ with $\cA$ very ample, let $\iota:\bS\to \Xi$ be as in Assumption \ref{assS} and let $\cU$ be a neighbourhood of $\iota$ in $\ci(\bS,\Xi)$. 

Then, if $l\in\bN$ is a big enough even integer, there exist elements $\iota'\in\cU$ and $t\in H^0(S,\cA^{\otimes l}\otimes\cN^{\otimes 2}(-R))$ such that $\{t=0\}$ is smooth along its real locus $Z$, such that $Z=\iota'(\bS)$, and such that $rt\in  H^0(S,\cA^{\otimes l}\otimes\cN^{\otimes 2})$ is negative on $S(\R)\setminus\Xi$.
\end{lem}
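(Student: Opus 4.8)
The plan is to first solve the problem with $C^{\infty}$ sections and then to algebraize the solution. Fix once and for all a $C^{\infty}$ hermitian metric with positive curvature on $\cA$ (it exists since $\cA$ is ample), as well as $C^{\infty}$ hermitian metrics on $\cN$ and on $\cO_S(R)$. Since $2\cl_{\R}(\cA)=0$ in $H^1(S(\R))$, the topological real line bundle underlying $\cA^{\otimes 2}|_{S(\R)}$ is trivial, hence admits a nowhere vanishing $C^{\infty}$ section; up to multiplying it by $-1$ on the components where it is negative, we may fix such a section $\rho$ of $\cA^{\otimes 2}$ over $S(\R)$ that is everywhere positive in the sense recalled before \S\ref{topoRdouble}. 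For $l$ even, $\rho^{\otimes l/2}$ is then a nowhere vanishing positive $C^{\infty}$ section of $\cA^{\otimes l}$ over $S(\R)$.

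Next I would build a $C^{\infty}$ model for $t$. By additivity of Borel--Haefliger classes, $\cl_{\R}(\cN^{\otimes 2}(-R))=\cl_{\R}(R)$ in $H^1(S(\R))$, and this class is carried by $\Xi$ since $R(\R)\subset\Xi$. By Assumption \ref{assS}(ii), $\cl_{\R}(R)$ is the image, under Poincar\'e duality, of the fundamental class of the embedded submanifold $\iota(\bS)\subset S(\R)$. Hence the first Stiefel--Whitney class of the real line bundle underlying $\cN^{\otimes 2}(-R)|_{S(\R)}$ is Poincar\'e dual to $\iota(\bS)$, so Thom's construction provides a $C^{\infty}$ section $\tau_\infty$ of $\cN^{\otimes 2}(-R)$ over $S(\R)$ vanishing transversally exactly along $\iota(\bS)$ and nowhere else. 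On any connected component $\Sigma'$ of $S(\R)\setminus\Xi$, neither $r$ nor $\tau_\infty$ vanishes (as $R(\R)\subset\Xi$ and $\iota(\bS)\subset\Xi$), so the square $r\tau_\infty\in\cN^{\otimes 2}$ has a constant sign there; multiplying $\tau_\infty$ by the $C^{\infty}$ function equal to $-1$ on the components of $S(\R)\setminus\Xi$ where this sign is positive and to $1$ elsewhere (which is $C^{\infty}$ because these components are open and closed in $S(\R)$), we may arrange $r\tau_\infty<0$ on $S(\R)\setminus\Xi$ without altering the vanishing locus of $\tau_\infty$. Finally set $t_\infty:=\rho^{\otimes l/2}\otimes\tau_\infty$, a $C^{\infty}$ section of $\cA^{\otimes l}\otimes\cN^{\otimes 2}(-R)$ over $S(\R)$ whose zero locus is $\iota(\bS)$, with transverse vanishing, and such that $rt_\infty=\rho^{\otimes l/2}\otimes(r\tau_\infty)$ is negative on $S(\R)\setminus\Xi$.

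Then I would algebraize $t_\infty$. Using the closed embedding $S\hookrightarrow\bP^N$ defined by the very ample $\cA$, together with the density of real polynomials in $C^k$-topologies on compact subsets of affine space, one obtains the following standard approximation statement: for every $k\geq 1$ and every $\varepsilon>0$ there is $l_0$ such that for every even $l\geq l_0$ there exists $t\in H^0(S,\cA^{\otimes l}\otimes\cN^{\otimes 2}(-R))$ with $t/\rho^{\otimes l/2}$ within $\varepsilon$ of $\tau_\infty$ in the $C^k$-topology on $S(\R)$, for a fixed $C^{\infty}$ metric on $\cN^{\otimes 2}(-R)$. Fix such a $k$ large enough that some $C^k$-neighbourhood of $\iota$ in $\ci(\bS,\Xi)$ is contained in $\cU$, and take $\varepsilon$ small. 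Since $\tau_\infty$ is transverse to the zero section along $\iota(\bS)$ and bounded away from $0$ off a tubular neighbourhood of $\iota(\bS)$, for $\varepsilon$ small enough the real zero locus $Z$ of $t$ is contained in that tubular neighbourhood, where, by the implicit function theorem, it is the image of a $C^k$-small perturbation $\iota'$ of $\iota$; in particular $\iota'\in\cU$ and $Z=\iota'(\bS)\subset\Xi$. Moreover $t$ vanishes transversally along $Z$ in the real sense, hence with nonzero complex differential at each point of $Z$, so $\{t=0\}$ is smooth along $Z$. Finally $rt=\rho^{\otimes l/2}\otimes\bigl(r\cdot(t/\rho^{\otimes l/2})\bigr)$ is $C^0$-close to $rt_\infty$, which is negative and bounded away from $0$ on the compact set $S(\R)\setminus\Xi$, so $rt<0$ there. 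This yields the required $t$ and $\iota'$.

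The main obstacle I expect is the algebraization step: one must approximate the fixed $C^{\infty}$ section $\tau_\infty$ by the sections $t/\rho^{\otimes l/2}$ in a $C^k$-topology --- so that transversality of the zero locus, and hence its topology, is preserved --- and uniformly enough in $l$; this goes beyond the $C^0$-density furnished by the Stone--Weierstrass theorem, and requires either polynomial $C^k$-approximation transported through the embedding defined by $\cA$, or Bergman-kernel asymptotics for the positively curved metric on $\cA$. By contrast, the existence of the $C^{\infty}$ model rests on a single homological input, namely that $\cl_{\R}(R)$ is Poincar\'e dual to $[\iota(\bS)]$, i.e. on Assumption \ref{assS}(ii).
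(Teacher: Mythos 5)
Your reduction to a $C^\infty$ model is sound: the identity $w_1\big((\cN^{\otimes 2}(-R))|_{S(\R)}\big)=\cl_{\R}(R)=\iota_*[\bS]$ together with the classification of real line bundles by $w_1$ does yield a smooth section $\tau_\infty$ vanishing transversally exactly on $\iota(\bS)$; the sign correction on the open and closed set $S(\R)\setminus\Xi$ is legitimate; and the endgame (implicit function theorem for the perturbed zero locus, real transversality forcing the complex differential to be nonzero, $C^0$-control of the sign of $rt$ on a compact set) is correct. This is a genuinely different route from the paper, which stays at the level of divisors and rational functions: it first applies \cite[Theorem 12.4.11]{BCR} to replace $\iota(\bS)$ by the real locus of an algebraic hypersurface $H$ smooth along $S(\R)$ (using exactly that $\iota_*[\bS]=\cl_{\R}(R)$ is an algebraic class), then Br\"ocker's theorem to produce $h_1\in\R(S)^*$ with prescribed zeros along $R$ and $H$ and no other zeros or poles near $S(\R)$, Stone--Weierstrass to fix signs on $S(\R)\setminus\Xi$, and a sum of squares of sections of a suitable twist of $\cA^{\otimes l/2}$ to clear denominators and land in $H^0(S,\cA^{\otimes l}\otimes\cN^{\otimes 2}(-R))$.

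The gap is exactly where you place it, and it is a real one: the "standard approximation statement" you invoke is neither standard nor, as formulated, the right statement. Approximating the fixed smooth section $\tau_\infty$, which is defined only on the totally real locus $S(\R)$, by $t/\rho^{\otimes l/2}$ with $\rho$ a non-algebraic normalizer, uniformly for all large even $l$, does not follow from Stone--Weierstrass; and the Bergman-kernel route is also delicate, since peak-section and $L^2$ methods approximate data given on all of $S(\C)$, not a section prescribed only along $S(\R)$ (one would have to extend and then control the restriction of the projection back to $S(\R)$). What is true, and is all you need, is weaker: $S(\R)$ is a compact nonsingular affine real algebraic variety, $(\cN^{\otimes 2}(-R))|_{S(\R)}$ is a strongly algebraic line bundle, and by the approximation theory of \cite[Chapter 12]{BCR} its smooth sections are $C^k$-limits of regular sections, i.e.\ of sections of the form $t_0/q_0$ with $t_0\in H^0(S,\cA^{\otimes l_0}\otimes\cN^{\otimes 2}(-R))$ and $q_0\in H^0(S,\cA^{\otimes l_0})$ positive on $S(\R)$. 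Setting $t:=t_0\cdot q_1$ with $q_1$ a sum of squares of a basis of $H^0(S,\cA^{\otimes (l-l_0)/2})$ changes neither the real zero locus nor the sign of $rt$ (both are insensitive to positive nonvanishing factors), and all your subsequent arguments go through verbatim with $t_0/q_0$ in place of $t/\rho^{\otimes l/2}$. With that substitution --- or with the paper's divisor-level argument via \cite[Theorem 12.4.11]{BCR} and Br\"ocker --- your proof closes; as written, the pivotal step is asserted rather than proved.
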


\begin{proof}
By \cite[Theorem 12.4.11]{BCR}, there exist $\iota'\in \cU$ and a hypersurface $H\subset S$ that is smooth along $S(\R)$ and such that $H(\R)=\iota'(\bS)$. Since $\iota_*[\bS]=\cl_{\R}(R)$, one has $\cl_{\R}(R+H)=0$. Consequently, one may apply Br\"ocker's theorem \cite[Satz b)]{Brocker}
to find $h_1\in\R(S)^*$ that vanishes at order one along $R$ and $H$ and that has no other zeros or poles along $S(\R)$. 
By the Stone-Weierstrass theorem (see \cite[Satz a)]{Brocker}), there exists $h_2\in\R(S)^*$ invertible along $S(\R)$ that has the same signs as $h_1$ on $S(\R)\setminus \Xi$.
Let $D$ be the divisor of poles of $h_1h_2$, let $l$ be an even number such that $\cA^{\otimes l/2 }\otimes\cN(-R-D)$ is very ample. Let $(u_i)$ be a basis of $H^0(S,\cA^{\otimes l/2}\otimes\cN(-R-D))$ viewed as sections of $\cA^{\otimes l/2}\otimes\cN(-R)$ vanishing on $D$, so that $u:=\sum_i u_i^2\in H^0(S,\cA^{\otimes l}\otimes\cN^{\otimes 2}(-2R))$. Then $t:=-h_1h_2ru$ works.
\end{proof}

We now explain how Assumption \ref{assS} will be used to control the topology of the real locus of a ramified double cover.

\begin{lem}
\label{fromStoT}
Let $\cL$ be a line bundle on $S$, let $s\in H^0(S,\cL^{\otimes 2}(R))$ be a section with smooth zero-locus $D$, such that $rs\in H^0(S,\cL^{\otimes 2}(2R))$ is negative on $S(\R)\setminus\Xi$. Let $p:T\to S$ be the double cover ramified over $\Delta:=R\cup D$ constructed in \S\ref{consdouble}. Define $\bS:=D(\R)$ and assume that the inclusion $\iota:\bS\to \Xi$ satisfies Assumption \ref{assS}.
Then $p:T\to S$ satisfies Assumption \ref{assumkilltop}.
\end{lem}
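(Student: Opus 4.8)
The plan is to verify, one at a time, the three conditions of Assumption \ref{assumkilltop}, working with $\Psi:=S(\R)\setminus\Xi$ (an open and closed subset of $U(\R)$, since $R(\R)\subseteq\Xi$) and using throughout the explicit description of $\bp\colon\bT\to S$ and of its minimal resolution $p\colon T\to S$ from \S\ref{consdouble}. The geometric input I would record first is a local computation of the real fibres of $\bp$: over $x\in S(\R)$ the fibre $\bp^{-1}(x)\cap\bT(\R)$ has two points if the sign of $(rs)(x)$ is positive, one point if $x\in\Delta(\R)$, and is empty if that sign is negative; moreover along a real point of the ramification locus the two sheets of $\bp$ meet transversally, so they become connected there (a ``fold''). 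Recall also that the exceptional curves of $T\to\bT$ lie over $\Sing(R)\subseteq R(\R)\subseteq\Xi$.

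Condition (i) should then be immediate: by hypothesis $rs<0$ on $\Psi=S(\R)\setminus\Xi$, so $\bp$ has no real point over $\Psi$, while the exceptional curves of $T\to\bT$ lie over $\Xi$; hence $p(T(\R))\subseteq\Xi$, which is disjoint from $\Psi$. For condition (ii): as $\Xi$ is a union of connected components of $S(\R)$, the kernel of $H^1(S(\R))\to H^1(\Psi)$ is $H^1(\Xi)$, and both $\cl_{\R}(C)$ for a curve $C$ with $C(\R)\subseteq\Xi$ and the image of $p_*\colon H^1(T(\R))\to H^1(S(\R))$ (using $p(T(\R))\subseteq\Xi$) lie in $H^1(\Xi)$. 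By Assumption \ref{assS}(iii), $H^1(\Xi)$ is generated by such Borel--Haefliger classes together with the classes $[\gamma]$ of the connected components $\gamma$ of $\iota(\bS)=D(\R)$, so it suffices to check each $[\gamma]\in\Ima(p_*)$. I would do this by noting that the reduced preimage of $D$ in $T$ contains a curve mapping birationally onto $D$ and inducing a homeomorphism on real loci (the exceptional curves, lying over $\Sing(R)$, avoid $D$ since $D\cap\Sing(R)=\varnothing$); its real-locus component $\tilde\gamma$ over $\gamma$ is a $1$-cycle in $T(\R)$ with $p_*[\tilde\gamma]=[\gamma]$, and Poincaré duality on the closed surfaces $T(\R)$ and $S(\R)$ then gives $[\gamma]\in\Ima(p_*)$.

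The substantial step is condition (iii). Via Poincaré duality with $\Z/2$-coefficients, $p_*\colon H^2(T(\R))\to H^2(S(\R))$ is identified with $p_*\colon H_0(T(\R))\to H_0(S(\R))$, which is injective exactly when no connected component $\Sigma$ of $S(\R)$ contains two connected components of $T(\R)$, i.e.\ when $p^{-1}(\Sigma)\cap T(\R)$ is connected or empty for every such $\Sigma$. When $\Sigma\not\subseteq\Xi$ one has $rs<0$ on $\Sigma$ and the set is empty. When $\Sigma\subseteq\Xi$, I would put $\Sigma_0:=\Sigma\cap\Delta(\R)=\Sigma\cap(\iota(\bS)\cup R(\R))$, $\Sigma_+:=\{rs>0\}\cap\Sigma$, $\Sigma_-:=\{rs<0\}\cap\Sigma$, matching Assumption \ref{assS}(iv). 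Assumption \ref{assS}(i) forces $\Sigma_0\neq\varnothing$; and since $D$ is smooth, meets $R$ transversally in its smooth locus, and $R$ is a normal crossings divisor, the sign of $rs$ changes transversally across every point of $\Sigma_0$, so each point of $\Sigma_0$ lies in the closure of both $\Sigma_+$ and $\Sigma_-$. Hence Assumption \ref{assS}(iv) applies and $\Sigma_+\cup\Sigma_0$ is connected (trivially so if $\Sigma_-=\varnothing$, when $\Sigma_+\cup\Sigma_0=\Sigma$). Finally I would run the quotient argument: $N:=\bp^{-1}(\Sigma)\cap\bT(\R)$ maps properly onto $\Sigma_+\cup\Sigma_0$ with fibres the orbits of the deck involution $\tau$, so $N/\tau\cong\Sigma_+\cup\Sigma_0$ is connected; since $\Sigma_0\neq\varnothing$, $\tau$ has a fixed point on $N$ and therefore preserves every connected component of $N$, so if $N$ were disconnected its components would map onto at least two disjoint nonempty closed subsets of $N/\tau$ — impossible. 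Thus $N$ is connected, and $p^{-1}(\Sigma)\cap T(\R)$, obtained from $N$ by resolving the finitely many real ordinary double points over $\Sing(R)\cap\Sigma$ (an operation that does not disconnect it), is connected as well; this yields (iii).

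I expect the main obstacle to be precisely condition (iii). Its content is twofold: first, the translation of injectivity of $p_*$ on $H^2$ into a connectedness statement about the real fibre of $T$ over each connected component of $S(\R)$; and second, the proof of that connectedness, which must combine the ``folding'' of the double cover $\bp$ along its real ramification locus with the somewhat ad hoc connectedness condition Assumption \ref{assS}(iv) (together with Assumption \ref{assS}(i)) that was built into the hypotheses. By contrast, conditions (i) and (ii) are essentially formal consequences of the sign hypothesis on $rs$ and of Assumption \ref{assS}(iii).
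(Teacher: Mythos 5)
Your proposal follows essentially the same route as the paper's proof: (i) is read off from the sign of $rs$ and the equation of $\bT$; (ii) follows because $\iota(\bS)=D(\R)$ lifts to $T(\R)$, so the classes of the components of $\iota(\bS)$ lie in the image of $p_*$, and one concludes by Assumption \ref{assS}~(iii); and (iii) is reduced to the connectedness of $p^{-1}(\Sigma)\cap T(\R)$ for each connected component $\Sigma\subset\Xi$, which is deduced from the connectedness of $\Sigma_+\cup\Sigma_0$ (Assumption \ref{assS}~(iv), whose hypothesis you rightly verify) together with the folding of the cover along $\Delta(\R)$. The only real difference is in the last step: the paper argues directly on $T(\R)$ (two components of $p^{-1}(\Sigma)$ whose images meet must both meet the connected fibre over a point of $\Delta(\R)$), while you quotient $\bT(\R)$ by the deck involution $\tau$ and then resolve; both work. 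One inference of yours is misstated: from ``$\tau$ has a fixed point on $N$'' it does not follow that ``$\tau$ preserves every connected component of $N$'' (a fixed point only pins down the component containing it). The repair is short: since $N/\tau\cong\Sigma_+\cup\Sigma_0$ is connected and the images of distinct $\tau$-orbits of components of $N$ are disjoint open closed subsets, the components of $N$ form a single $\tau$-orbit, so $N$ has at most two components; if it had exactly two, swapped by $\tau$, then $\tau$ would act freely on $N$, contradicting the fixed points lying over $\Sigma_0\neq\varnothing$. With that adjustment the argument is complete.
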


\begin{proof} Since $rs$ is negative on   $S(\R)\setminus \Xi$, the equation of $\bT$ given in \S\ref{consdouble} shows that $p(T(\R))\subset \Xi$. In particular, $p(T(\R))$ is disjoint from $\Theta$, and Assumption \ref{assumkilltop} (i) holds. Since $\iota(\bS)=D(\R)$ and $D$ lifts to $T$, the map $\iota$ lifts to a $\ci$ map $\bS\to T(\R)$, and Assumption \ref{assumkilltop} (ii) follows from Assumption \ref{assS} (iii).

 Let $\Sigma\subset \Xi$ be a connected component, and let $p^{-1}(\Sigma)\subset T(\R)$ be its preimage in $T(\R)$. To verify Assumption \ref{assumkilltop} (iii), we will show that $p^{-1}(\Sigma)$ is connected.
 The equation of $\bT$ given in \S\ref{consdouble} shows that $p(p^{-1}(\Sigma))=\{x\in\Sigma\mid rs(x)\geq 0\}$.
 Define $\Sigma_+:=\{x\in\Sigma\mid rs(x)>0\}$ and $\Sigma_-:=\{x\in\Sigma\mid rs(x)<0\}$. By Assumption \ref{assS} (iv), $p(p^{-1}(\Sigma))=\Sigma_+\cup(\Sigma\cap\Delta(\R))$ is connected. Suppose for contradiction that $p^{-1}(\Sigma)$ is not connected. Since $p(p^{-1}(\Sigma))$ is connected, one may find two distinct connected components $\Pi_1,\Pi_2\subset p^{-1}(\Sigma)$ such that $\Pi:=p(\Pi_1)\cap p(\Pi_2)\neq\varnothing$. By Assumption \ref{assS} (i), $\Sigma_-\neq\varnothing$, so that $\Pi\neq\Sigma$. Since $\Pi$ is closed, it is not open, and one may choose $x\in \Pi$ not belonging to the  interior of $\Pi$. Since $\Pi_1\to \Sigma$ and $\Pi_2\to \Sigma$ cannot be both open above $x$, one has $x\in\Delta(\R)$. Since the fibers of $T(\R)\to S(\R)$ above a point in $\Delta(\R)$ are connected, $\Pi_1$ and $\Pi_2$ intersect, a contradiction. 
 Thus, $p^{-1}(\Sigma)$ is connected, and Assumption \ref{assumkilltop} (iii) follows.
\end{proof}

\subsection{Brauer classes of period $2$}
\label{perind2}

We prove, for Brauer classes of period $2$, a statement that is slightly more general than our main theorems, and that will be useful to handle Brauer classes of higher period.

\begin{prop}
\label{propperiod2}
Let $S$ be a connected smooth projective surface over $\R$, and let $\alpha\in\Br(\R(S))[2]$ be a Brauer class of period $2$.
Let $U\subset S$ be the biggest open subset such that $\alpha\in\Br(U)\subset\Br(\R(S))$ as in \S\ref{parBrauer}, and let $\talpha\in H^2_G(U(\C),\Z/2)$ be a lift of $\alpha$ in (\ref{Brlift}). Define $\Theta:=\{x\in U(\R)\mid \alpha|_x\neq 0\in\Br(\R)\}$. 
Write $S(\R)=\Xi\sqcup\Psi$ as a disjoint union of open closed subsets
such that $\Theta\subset \Psi\subset U(\R)$
and $([\talpha]_1)|_{\Psi}=0\in H^1(\Psi)$.
Then there exists a connected smooth projective surface $T$ over $\R$ and a degree $2$ morphism $p:T\to S$ such that $p(T(\R))\subset\Xi$ and $\alpha_{\R(T)}=0\in\Br(\R(T))$.
\end{prop}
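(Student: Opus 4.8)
The plan is to mimic the complex argument of Section~\ref{secdJ}, with Proposition~\ref{propverifGreen} (via the real analogue of Green's infinitesimal criterion) replacing Voisin's Theorem~\ref{Voisin}, and with the cohomological bookkeeping of Sections~\ref{sectopo}--\ref{secNL} doing the work of Lemmas~\ref{lem1}--\ref{lem2}. The overall shape is: construct a suitable family of ramified double covers of (a birational model of) $S$, pull back $\alpha$, check it becomes unramified, then topologically trivial, then Hodge-theoretically trivial for a well-chosen member of the family, whence $\alpha_{\R(T)}=0$.

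First I would arrange the geometric set-up. Let $R\subset S$ be the simple normal crossings ramification divisor of $\alpha$ (so $U=S\setminus R$). Using Lemma~\ref{lemexistiota}, replace $S$ by a blow-up $S'\to S$ at finitely many general points away from $R$ so that Assumption~\ref{assS} holds for an appropriate union of components $\Xi'\subset S'(\R)$ (built from $\Psi$, $\Theta$ and $R(\R)$); the Brauer class and its index are insensitive to such a birational modification, and the condition $([\widetilde\alpha]_1)|_\Psi=0$ persists. Using Proposition~\ref{twistedlb}, blow up further so that the line bundles $\cA,\cN$ can be chosen making Assumption~\ref{ass1} hold, and then pick $l\gg0$ even so that Assumption~\ref{ass2} holds and $\cL:=\cA^{\otimes l}\otimes\cN(-R)$ satisfies Assumption~\ref{assumample} ($\cL(R)=\cA^{\otimes l}\otimes\cN$ ample). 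Next, invoke Lemma~\ref{lemapprox} to produce a section whose vanishing locus $D$ is smooth along its real locus, with $D(\R)=\iota'(\bS)$ satisfying Assumption~\ref{assS}, and with $rs$ negative on $S(\R)\setminus\Xi$; run the construction of \S\ref{consdouble} to get the double cover $p:T\to S$ ramified over $\Delta=R\cup D$. By Lemma~\ref{fromStoT}, $p$ then satisfies Assumption~\ref{assumkilltop}, and by construction $p(T(\R))\subset\Xi$ is disjoint from $\Psi\supset\Theta$. One must also check $T$ is connected (e.g.\ since $S$ is connected and $\Delta$ is a nontrivial ramification divisor) and that $T$ lies in the fixed threefold $\kX$ so that the family $\pi:\cT\to B$ of \S\ref{verifGreen} applies.

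The cohomological heart is then immediate from the earlier propositions. By Proposition~\ref{ramiprop}, $\alpha_{\R(T)}\in\Br(T)[2]$ is unramified. By Proposition~\ref{kill1} and Proposition~\ref{lem1R} (applicable since $([\widetilde\alpha]_1)|_\Psi=0$), the topological class $\tau\in H^3_G(T(\C),\Z(1))[2]$ vanishes, so $\alpha_{\R(T)}$ lifts via (\ref{Brdeco}) to $\beta\in H^2_G(T(\C),\Z(1))$; by Proposition~\ref{lem2R} there are $\gamma\in H^2_G(T(\C),\Z(1))$ and $\theta\in\Pic(S)$ with $p_*(\beta-2\gamma)=\cl(\theta)$. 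Replacing $\beta$ by $\beta-2\gamma-\cl(p^*\theta')$ for a suitable $\theta'$ (using surjectivity of $\Pic(S)\to\Pic(T)$-type arguments, or rather working modulo $\langle2,\Pic(T)\rangle$), one reduces to the case $\beta\in H^2_G(T(\C),\Z(1))_S:=\Ker[p_*]$, exactly as $\beta-n\gamma\in H^2(T(\C),\Z)_S$ in the proof of Proposition~\ref{propprecise}.

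Finally I would let $T$ vary in the family $\pi:\cT\to B$ and apply the real analogue of Green's criterion, fed by Proposition~\ref{propverifGreen} (which bounds the cokernel of $\phi_\lambda$ by $h^2(S,\cO_S)$): this yields, over a contractible neighbourhood of our parameter point, an open cone $\Omega$ in a real Hodge-theoretic space such that classes landing in $\Omega$ become of type $(1,1)$ on a nearby fibre. Since the image of $H^2_G(T(\C),\Z(1))_S$ in the relevant real vector space is a lattice, one can adjust $\beta$ by an integral class in the kernel of $p_*$ (permitted by Proposition~\ref{weakL}, which identifies the image of $p_*$) so that its image lies in $\Omega$; then on the nearby fibre $\beta$ is algebraic, i.e.\ $\beta\in\langle2,\Pic(T)\rangle$, and (\ref{Brdeco}) gives $\alpha_{\R(T)}=0$. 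The main obstacle is the last step: unlike Section~\ref{secdJ}, there is no ready-made real analogue of Voisin's theorem, so one must carefully formulate and apply the infinitesimal Noether--Lefschetz criterion of \cite[\S7.2]{BWII},\cite[\S1]{NL3} in the equivariant setting, track the action of complex conjugation on the period domain, and ensure that the cone $\Omega$ meets the lattice of integral classes killed by $p_*$ — this is where Assumption~\ref{assS} on the topology of the real locus and the precise description of $\operatorname{Im}(p_*)$ in Proposition~\ref{weakL} are indispensable, and where the argument genuinely differs from and is harder than the complex case.
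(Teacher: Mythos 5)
Your proposal follows the paper's own proof essentially step by step: the same birational preparation (Lemma~\ref{lemexistiota}, Proposition~\ref{twistedlb}, choice of $\cL$ satisfying Assumptions~\ref{assumample}, \ref{ass2}, \ref{ass1}), the same reduction via Lemma~\ref{lemapprox}/Lemma~\ref{fromStoT} to ensure Assumption~\ref{assumkilltop}, the same chain Propositions~\ref{ramiprop} $\to$ \ref{lem1R} $\to$ \ref{lem2R}, and the same use of Proposition~\ref{propverifGreen} plus the real Green criterion of \cite{NL3} to conclude. The plan is sound and identifies all the right intermediate results.

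There is, however, one concrete slip in the cohomological middle step. You assert that after Proposition~\ref{lem2R}, by replacing $\beta$ with $\beta-2\gamma-\cl(p^*\theta')$ one can reduce to $\beta\in\Ker[p_*]$, ``exactly as $\beta-n\gamma\in H^2(T(\C),\Z)_S$ in the proof of Proposition~\ref{propprecise}.'' This does not go through: Proposition~\ref{lem2R} gives $p_*(\beta-2\gamma)=\cl(\theta)$, not zero, and $p_*\cl(p^*\theta')=2\cl(\theta')$, so the $p_*$-image can only be killed by $\langle2,\Pic(T)\rangle$-moves if $\theta$ is divisible by $2$ in $\Pic(S)$, which need not be the case. (Subtracting any class with $p_*$-image $\cl(\theta)$ directly would change the Brauer class.) The paper instead keeps both components: it decomposes the image of $\beta-2\gamma$ in $H^2(T(\C),\R(1))^G$ as $(\vep_1,\vep_2)$ with $\vep_1$ in the $p^*H^2(S(\C),\R(1))^G$-factor and $\vep_2$ in the vanishing part, observes that $\vep_1$ is automatically of type $(1,1)$ because it is (over $\R$) half of $p^*\cl(\theta)$, and runs the Noether--Lefschetz density argument only on $\vep_2$. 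The rest of your sketch (lattice adjustment by $\delta$, Green's criterion, real Lefschetz $(1,1)$) is exactly what the paper does and fixes your argument once this modification is made; it is a genuine, but easily repaired, gap.

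One secondary omission: you gloss over the need to pick the parameter $s\in B(\R)$ in the specific linear system of \S\ref{NLlocus} (of the form $a_1 s_0 + a_2 c$, $a_1>0$, $a_2$ near $t$), so that the fibre $T$ actually contains the curve $C$ whose class $\lambda$ feeds Proposition~\ref{propverifGreen}; without that, the hypothesis of the Green criterion at your chosen base point is unverified. This is implicit in your plan but should be stated, together with the appeal to Lemma~\ref{lemstab} to re-adjust $\iota$ after this perturbation.
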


\begin{proof}
Replacing $S$ with a modification, we may assume that the ramification locus $R:= S\setminus U$ of $\alpha$ is a simple normal crossings divisor. Let $r\in H^0(S,\cO_S(R))$ be an equation of $R$. By Proposition \ref{twistedlb}, we may assume that $S$ carries a line bundle $\cN$ satisfying Assumption \ref{ass1}, after blowing up $S$ at finitely many points outside of $R$. Proposition \ref{twistedlb} ensures moreover that such a line bundle will still exist if we blow-up $S$ further at finitely many general points outside of $R$. By  Lemma \ref{lemexistiota}, after such a blow-up, we may suppose that there exists an embedding $\iota:\bS\to \Xi$ as in Assumption \ref{assS}.

Let $\cA$ be a very ample line bundle on $S$. Define $\cL:=\cA^{\otimes l}\otimes\cN(-R)$ for a sufficiently big even integer $l\in\bN$, so that Assumptions \ref{assumample} and \ref{ass2} hold, and so that Lemma \ref{lemapprox} may be applied. Lemma \ref{lemapprox} then shows, up to replacing $\iota$ by a small deformation which is legitimate by Lemma \ref{lemstab},
 the existence of $t\in H^0(S,\cL\otimes\cN)$ such that $\{t=0\}$ is smooth along its real locus, equal to $\iota(\bS)$, and such that $rt\in H^0(S, \cA^{\otimes l}\otimes\cN^{\otimes 2})$ is negative
on $\Psi$. 

We apply the construction of \S\ref{consdouble}, which produces a family $\pi:\cT\to B$ of surfaces that are both ramified double covers of $S$ and hypersurfaces in the smooth projective threefold $\kX$ over $\R$. In \S\ref{NLlocus}, we constructed sections $c\in H^0(S,\cA^{\otimes l})$ and $s_0\in H^0(S,\cL^{\otimes 2}(R))$, and showed that a general $s\in H^0(S,\cL^{\otimes 2}(R))$ of the form $a_1s_0+a_2c$ 
with $a_1\in\R_{>0}$ and $a_2\in H^0(S,\cL\otimes\cN)$ corresponds to a point $s\in B(\R)$ whose associated surface $T:=\cT_s$ contains a particular curve $C\subset T$. We choose such a general section $s$ with $a_1$ sufficiently small and $a_2$  sufficiently close to $t$. As $c$ is positive on $S(\R)$, this ensures that $rs$ is negative on  $\Psi$.
Thanks to Lemma \ref{lemstab},
this also ensures, after modifying $\iota$ again, that $\iota(\bS)$ is the real locus of the smooth divisor $D:=\{s=0\}$. By Lemma \ref{fromStoT}, $p:T\to S$ satisfies Assumption \ref{assumkilltop}.

Having ensured that Assumptions \ref{assumample} and \ref{assumkilltop} hold, 
and in view of our hypothesis that
$([\talpha]_1)|_{\Psi}=0$, we may apply successively Propositions \ref{ramiprop},
\ref{lem1R} and \ref{lem2R}.
This shows that $\alpha_{\R(T)}\in\Br(T)[2]\subset\Br(\R(T))[2]$, that this Brauer class is induced by a class $\beta\in H^2_G(T(\C),\Z(1))$ in (\ref{Brdeco}), and that there exist $\gamma\in H^2_G(T(\C),\Z(1))$ and $\theta\in\Pic(S)$ such that $p_*(\beta-2\gamma)=\cl(\theta)\in H^2_G(S(\C),\Z(1))$.

In the remainder of the proof, we apply \cite[\S 1]{NL3} to the family $\pi:\cT\to B$ of smooth projective surfaces over $\R$. We still denote by $p:\cT\to S$ the natural morphism that realizes the fibers of $\pi$ as ramified double covers of $S$.
The $G$-equivariant local system $\bH^2_\Q:=\RR^2\pi_*\Q$ splits as a direct sum $\bH^2_\Q=H^2(S(\C),\Q)\oplus\bH^2_{\Q,\van}$, where $H^2(S(\C),\Q)$ is the constant sub-local system induced by $p$, and $\bH^2_{\Q,\van}$ is its orthogonal with respect to the cup-product. It carries a variation of Hodge structures: the holomorphic bundle $\cH^2:=\bH^2_\Q\otimes_{\Q}\cO_{B(\C)}$ is endowed with a Gauss-Manin connection $\nabla:\cH^2\to\cH^2\otimes\Omega^1_{B(\C)}$ and with a Hodge filtration.
 The sub-local systems $H^2(S(\C),\Q)$ and $\bH^2_{\Q,\van}$ are sub-variations of Hodge structures, and $H^2(S(\C),\Q)$ is a constant one.

Let $\lambda\in H^1(T,\Omega^1_T)$ be the class of $C$ in Hodge cohomology. By \cite[Remark 1.4]{NL3}, it belongs to $H^{1,1}_{\R}(T(\C))(1)^G:=[H^2(T(\C),\R(1))\cap H^{1,1}(T(\C))]^G$, where $G$ acts both on $T(\C)$ and on $\R(1)$. Griffiths \cite[Th\'eor\` eme 10.21]{voisinbook} has computed that the map $\overline{\nabla}(\lambda):T_{B(\C),s}\to H^2(T,\cO_T)$
 induced by evaluating $\nabla$ at $\lambda$ is exactly (the complexification of) the map $\phi_{\lambda}$ defined in (\ref{defpsilambda}). By Proposition \ref{propverifGreen}, its cokernel has dimension at most $h^2(S,\cO_S)$. Since the sub-Hodge structure $H^2(S(\C),\Q)\subset \bH^2_{\Q}$ is constant, the image of $\overline{\nabla}(\lambda)$ is included in the second factor of the decomposition $H^2(T,\cO_T)=H^2(S,\cO_S)\oplus H^2(T,\cO_T)_{\van}$. It follows that $\overline{\nabla}(\lambda):T_{B(\C),s}\to H^2(T,\cO_T)_{\van}$ is surjective.

Choose a $G$-stable connected  analytic neighbourhood $\Lambda$ of $s$ in $B(\C)$ on which $\bH^2_{\Q,\van}$ is trivialized and such that $\Lambda(\R):=\Lambda\cap B(\R)$ is connected and contractible, as in \cite[\S 1.2]{NL3}. Define $\cT(\C)|_{\Lambda(\R)}$ and $\cT(\R)|_{\Lambda(\R)}$ as the inverse images of $\Lambda(\R)$ by $\pi:\cT(\C)\to B(\C)$ and $\pi:\cT(\R)\to B(\R)$. 
By Assumption \ref{assumkilltop} (i), we may assume after shrinking $\Lambda(\R)$ that $p(\cT(\R)|_{\Lambda(\R)})\subset \Xi$.
By a $G$-equivariant version of Ehresmann's theorem \cite[Lemma 4]{Dimca}, we may assume after further shrinking $\Lambda(\R)$ that there exists a $G$-equivariant isomorphism $\cT(\C)|_{\Lambda(\R)}\xrightarrow{\sim} T(\C)\times \Lambda(\R)$ respecting the projection to $\Lambda(\R)$. From now on, using this $G$-equivariant isomorphism, we identify the Betti cohomology groups and the equivariant Betti cohomology groups of the fibers of $\pi:\cT(\C)|_{\Lambda(\R)}\to \Lambda(\R)$.

By the real analogue of Green's infinitesimal criterion \cite[Proposition 1.1]{NL3} applied to $\bH^2_{\Q,\van}$,
 there exists an open cone $\Omega\subset H^2(T(\C),\R(1))_{\van}^G$ with the property that for every $\nu\in\Omega$, there exists $x\in\Lambda(\R)$ such that $\nu$ is of type $(1,1)$ in the Hodge decomposition of $H^2(\cT_x(\C),\C)$.

Let $(\vep_1,\vep_2)$ be the image of $\beta-2\gamma\in H^2_G(T(\C),\Z(1))$ in the decomposition
\begin{equation}
\label{decovan}
H^2(T(\C),\R(1))^G=H^2(S(\C),\R(1))^G\oplus H^2(T(\C),\R(1))_{\van}^G.
\end{equation}
Let $H^2_G(T(\C),\Z(1))_{\van}$ be the subgroup of $H^2_G(T(\C),\Z(1))$ consisting of classes whose images in the first factor of (\ref{decovan})
vanish. Since the image of $H^2_G(T(\C),\Z(1))$ in $H^2(T(\C),\Z(1))^G$ has finite index by the Hochschild-Serre spectral sequence, the image of  $H^2_G(T(\C),\Z(1))_{\van}$ in $H^2(T(\C),\R(1))_{\van}^G$ is a lattice. 
Since moreover $\Omega\subset H^2(T(\C),\R(1))_{\van}^G$ is an open cone, one may find $\delta\in H^2_G(T(\C),\Z(1))_{\van}$ and $\nu\in \Omega$ such that $\vep_2=2\delta+\nu$.  By definition of $\Omega$, there exists $x\in \Lambda(\R)$ such that $\nu$ is of type $(1,1)$ in $H^2(\cT_x(\C),\C)$.
The equality $p_*(\beta-2\gamma)=\cl(\theta)$ shows that $\vep_1$ is of type $(1,1)$ in $H^2(S(\C),\C)$.
The above facts show that $\beta-2\gamma-2\delta\in H^2_G(\cT_x(\C),\Z(1))$ is a class of type $(1,1)$. By the real Lefschetz (1,1) theorem \cite[Proposition 2.8]{BWI}, there exists $\varphi\in\Pic(\cT_x)$ such that  $\beta=2\gamma+2\delta+\cl(\varphi)\in H^2_G(\cT_x(\C),\Z(1))$. By the exact sequence (\ref{Brdeco}), replacing $T$ with $\cT_x$ proves the proposition.
\end{proof}

\subsection{Brauer classes of arbitrary period}
\label{perind}

It is now possible to complete the proofs of Theorems \ref{pisanspoint} and \ref{piavecpoint}, and of the first half of Theorem \ref{pinr}. 

\begin{prop}
\label{proofarbitraryperiod}
Let $S$ be a connected smooth projective surface over $\R$, and let $\alpha\in\Br(\R(S))[n]$ be of period $n$.
Let $U\subset S$ be, as in \S\ref{parBrauer}, the biggest open subset such that $\alpha\in\Br(U)\subset\Br(\R(S))$, and let $\xi\in H^2_G(U(\C),\Z/2)$ be a lift of $\frac{n}{2}\alpha$ in (\ref{Brlift}). Assume that $\Theta:=\{x\in U(\R)\mid \alpha|_x\neq 0\in\Br(\R)\}$ is a union of connected components of $S(\R)$, and that $([\xi]_1)|_{\Theta}=0\in H^1(\Theta)$. Then $\ind(\alpha)=n$.
\end{prop}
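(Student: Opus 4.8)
The plan is to argue by induction on $n$, using Proposition \ref{propperiod2} to remove one factor of $2$ from the period at each step. The base case is $n$ odd: then de Jong's Theorem \ref{pidJ} gives $\ind(\alpha_{\C(S)})=\per(\alpha_{\C(S)})\mid n$, a norm argument (restriction--corestriction along the quadratic extension $\C(S)/\R(S)$) gives $\ind(\alpha)\mid 2n$, and since $\ind(\alpha)$ and $\per(\alpha)=n$ have the same prime divisors while $n$ is odd, we get $\ind(\alpha)\mid n$, hence $\ind(\alpha)=n$. (In this case $\xi$ is not used, so it does not matter how one interprets $\tfrac n2\alpha$.) So assume from now on that $n=2n'$ is even and that the statement is known for all smaller periods.

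For the inductive step I would first set $\beta:=n'\alpha=\tfrac n2\alpha\in\Br(\R(S))$; since $n'\alpha\neq 0=2n'\alpha$, the class $\beta$ has period exactly $2$. Let $U_\beta\supseteq U$ be the largest open subset of $S$ over which $\beta$ is defined. The plan is to apply Proposition \ref{propperiod2} to $\beta$. To set it up, I would choose a lift $\talpha\in H^2_G(U_\beta(\C),\Z/2)$ of $\beta$ in (\ref{Brlift}) whose restriction to $U(\C)$ equals $\xi$; this is possible because two lifts of $\beta|_U$ over $U$ differ by an element of $\Pic(U)/2$ and the restriction $\Pic(U_\beta)/2\to\Pic(U)/2$ is surjective. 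Then I would take $\Psi:=\Theta$ and $\Xi:=S(\R)\setminus\Theta$, which is an admissible decomposition because $\Theta$ is a union of connected components of $S(\R)$ lying inside $U(\R)\subseteq U_\beta(\R)$. The condition $([\talpha]_1)|_\Psi=0$ is then immediate from $\talpha|_U=\xi$ and $([\xi]_1)|_\Theta=0$, using $\Theta\subseteq U(\R)$ and compatibility of the classes $[\cdot]_1$ with restriction. The one genuinely nontrivial point is to check that $\Theta_\beta:=\{x\in U_\beta(\R)\mid\beta|_x\neq 0\}$ is contained in $\Theta$: for $x\in U(\R)$ this is clear because $\beta|_x$ is a multiple of $\alpha|_x\in\Br(\R)=\Z/2$; for $x\in U_\beta(\R)\setminus U(\R)\subseteq R(\R)$, the connected component $\Sigma$ of $x$ in $S(\R)$ meets $R(\R)$, hence is disjoint from $\Theta$, and one shows $\beta|_x=0$ by a Witt-type argument in the spirit of the proof of Proposition \ref{ramiprop} (restrict $\beta$ to a suitable curve through $x$, use that $\beta$ dies over $\C$ on that curve by Tsen, that $\beta$ vanishes at the generic real point because the real locus of the curve can be kept away from $\Theta$, and \cite[Satz 22]{Witt}, then specialize at $x$). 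Proposition \ref{propperiod2} then yields a connected smooth projective surface $T$ and a degree $2$ morphism $p:T\to S$ with $p(T(\R))\subseteq\Xi$ and $\beta_{\R(T)}=0$.

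Next I would descend the situation to $(T,\alpha_{\R(T)})$. Since $(\tfrac n2\alpha)_{\R(T)}=\beta_{\R(T)}=0$, the period $n''$ of $\alpha_{\R(T)}$ divides $n'=\tfrac n2$, hence is $<n$. Because $\alpha\in\Br(U)$, one has $\alpha_{\R(T)}\in\Br(p^{-1}(U))$, so the ramification of $\alpha_{\R(T)}$ is contained in $p^{-1}(R)$; moreover for $x\in p^{-1}(U)(\R)$ we have $\alpha_{\R(T)}|_x=\alpha|_{p(x)}=0$, since $p(x)\in U(\R)\cap\Xi$ lies outside $\Theta$. For a real point $x$ of $T$ lying over $R(\R)$ at which $\alpha_{\R(T)}$ is unramified, one argues exactly as in the proof of Proposition \ref{ramiprop} (via a curve through $x$ meeting $p^{-1}(U)$ and \cite[Satz 22]{Witt}) that $\alpha_{\R(T)}|_x=0$. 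Thus $\Theta_T:=\{x\in U_T(\R)\mid\alpha_{\R(T)}|_x\neq 0\}$ is empty; in particular it is a union of connected components of $T(\R)$, and the condition on a lift $\xi_T$ of $\tfrac{n''}2\alpha_{\R(T)}$ (needed only if $n''$ is even, and then available by (\ref{Brlift})) is vacuous. The induction hypothesis therefore applies to $(T,\alpha_{\R(T)})$ and gives $\ind(\alpha_{\R(T)})=n''$. Finally, since $[\R(T):\R(S)]=2$, we get $\ind(\alpha)\mid 2\,\ind(\alpha_{\R(T)})=2n''\mid n$, while $n=\per(\alpha)\mid\ind(\alpha)$, so $\ind(\alpha)=n$.

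The main obstacle, and the place where this argument does more than the period-$2$ case, is the verification that the hypotheses of Proposition \ref{propperiod2} are met by $\beta=\tfrac n2\alpha$ and then descend to $T$: concretely, the control of $\Theta_\beta$ (and of $\Theta_T$), where the hypothesis that $\Theta$ be a union of connected components of $S(\R)$ — rather than merely an open closed subset of $U(\R)$, as in Proposition \ref{propperiod2} — is essential, and where the Witt-theoretic arguments of Proposition \ref{ramiprop} must be reworked for the ambient surface and for an arbitrary multiple of $\alpha$ (whose ramification locus may be strictly smaller than that of $\alpha$). Once these topological/arithmetic points are settled, the inductive bookkeeping on periods and indices is routine.
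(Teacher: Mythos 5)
Your proof is correct and follows essentially the same route as the paper: induction on $n$, base case $n$ odd via de Jong plus a norm argument, inductive step by applying Proposition \ref{propperiod2} to $\frac{n}{2}\alpha$ with $\Psi=\Theta$ and $\Xi=S(\R)\setminus\Theta$, then descent to $T$ (and your extra verifications --- the lift over $U_\beta$ extending $\xi$, and $\Theta_\beta\subset\Theta$ --- fill in hypotheses the paper leaves implicit). The only real difference is that your two Witt-type curve arguments are unnecessary: since evaluation of an unramified Brauer class is locally constant on the real locus, and every connected component of $U_\beta(\R)$ (resp.\ of $V(\R)$) meets $U(\R)$ (resp.\ $p^{-1}(U)(\R)$) for dimension reasons, the vanishing propagates directly from $U(\R)\setminus\Theta$ --- this is exactly how the paper handles the descent to $T$, and the same observation disposes of $\Theta_\beta\subset\Theta$.
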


\begin{proof}
If $n$ is odd, that $\ind(\alpha)=n$ follows from de Jong's theorem \cite{deJong} and a norm argument.  We now suppose that $n$ is even and argue by induction on $n$.

We apply Proposition \ref{propperiod2} to the period $2$ class $\frac{n}{2}\alpha\in\Br(\R(S))[2]$, with $\Psi=\Theta$ and $\Xi=S(\R)\setminus\Theta$. We deduce the existence of a degree $2$ morphism $p:T\to S$ between connected smooth projective surfaces over $\R$ such that $\alpha_{\R(T)}\in\Br(\R(T))$ has period $\frac{n}{2}$, and such that $p(T(\R))$ is disjoint from $\Theta$. Let $V\subset T$ be the biggest open subset  such that $\alpha_{\R(T)}\in\Br(V)\subset\Br(\R(T))$, as in \S\ref{parBrauer}. Since $p(T(\R))$ and $\Theta$ do not intersect, $\alpha_{\R(T)}|_x=0$ for every $x\in p^{-1}(U)(\R)$. As this property only depends on the connected component of $V(\R)$ to which $x$ belongs, the same holds for every $x\in V(\R)$. By the induction hypothesis,  $\alpha_{\R(T)}$ has index $\frac{n}{2}$. It follows, as wanted, that $\ind(\alpha)=n$.
\end{proof}

Applying Proposition \ref{proofarbitraryperiod} when $\Theta=\varnothing$ gives a proof of Theorem \ref{piavecpoint}.
Theorem \ref{pisanspoint} is the even more particular of Proposition \ref{proofarbitraryperiod} when $S(\R)=\varnothing$. The first half of Theorem \ref{pinr} also follows from Proposition \ref{proofarbitraryperiod}, applied when $U=S$. We will complete the proof of Theorem \ref{pinr} in \S\ref{subobstr}.

\subsection{The $u$-invariant of function fields of real surfaces}
\label{paru}

We now explain why Theorem \ref{uinvariant} follows from Theorem \ref{piavecpoint}.
We pointed out in \S\ref{parLang} that Theorems \ref{quadrique} and \ref{dP4} are, in turn, consequences of Theorem \ref{uinvariant}.

\begin{proof}[Proof of Theorem \ref{uinvariant}]
Let $\R\subset K$ be an extension of transcendence degree $2$. By Pfister's criterion  \cite[Proposition 9]{Pfisterabelian}, showing that $u(K)\leq 4$ is equivalent to proving that every non-zero $\alpha\in \Br(K)[2]$ such that $\alpha|_{\bK}=0$ for all real closures $K\subset \bK$ has index $2$.
Let $\R\subset K_0\subset K$ be a subfield of $K$ finitely generated over $\R$ such that $\alpha$ is the image of a class $\alpha_0\in\Br(K_0)[2]$.
Write $K=\cup_i K_i$ as the union of all finite extensions of $K_0$, and let $\alpha_i\in\Br(K_i)[2]$ be the image of $\alpha_0$. Let $X_i$ be the space of orderings of $K_i$ endowed with the Harrison topology \cite[VIII, \S 6]{Lam}.
The subset $Z_i\subset X_i$ of orderings such that $\alpha_i$ does not vanish on the associated real closure is closed by \cite[Hilfssatz 2]{Arason},
 hence compact \cite[VIII, Theorem 6.3]{Lam}. Since $\varprojlim_i Z_i=\varnothing$ by our hypothesis on $\alpha$,  there exists $i$ such that $Z_i=\varnothing$ by Tychonoff's theorem. Replacing $K$ with $K_i$ and $\alpha$ with $\alpha_i$, we may assume from now on that $K$ is the function field of an integral surface $S$  over $\R$.

Let $U\subset S$ be an open subset such that $\alpha\in \Br(U)\subset\Br(\R(S))$. Let us prove that $\alpha|_x=0\in\Br(\R)$ for every $x\in U(\R)$. To do so, we choose a local system of parameters $z_1,z_2\in\cO_{S,x}$ of $S$ at $x$, and let $\R((z_1,z_2))\subset\bK$ be any real closed extension, such as $\bK=\cup_{n_2}\big[\cup_{n_1}\R((z_1^{1/n_1}))\big]((z_2^{1/n_2}))$. In the
diagram $\Br(\R)\to\Br(\R[[z_1,z_2]])\to\Br(\bK)$, the first arrow is an isomorphism
by proper base change and has a retraction given by restriction to $x$, and the composition of the two arrows is an isomorphism as both $\Br(\R)$ and $\Br(\bK)$ are generated by the  quaternion class $(-1,-1)$. That $\alpha_{\bK}=0$ implies at once that $\alpha|_x=0$, as wanted.
Theorem \ref{piavecpoint} now shows that $\alpha$ has index $2$, as wanted.

To prove the easier inequality $u(\R(S))\geq 4$, let $x\in S$ be a closed point  with residue field $\kappa(x)$ isomorphic to $\C$, and let $\mathfrak{m}_{S,x}\subset\cO_{S,x}$ be the maximal ideal. Choose $w\in\cO_{S,x}$ inducing $\sqrt{-1}\in\C\simeq\kappa(x)$ and let $z_1,z_2\in \cO_{S,x}$ be a local system of parameters at $x$ such that neither $z_1$ nor $z_2$ is proportional to $w^2+1$ in $\mathfrak{m}_{S,x}/\mathfrak{m}_{S,x}^2$. Define $y_j:=(1+z_j)^2+(w+z_j)^2$, so that  $y_1,y_2\in \R(S)$ also forms a local system of parameters at $x$. Completion at $x$ induces an inclusion $\R(S)\subset\C((y_1))((y_2))$. By Springer's results on quadratic forms over complete discrete valuation fields \cite{Springer} (see \cite[VI, Proposition 1.9 (2)]{Lam}), the quadratic form $\langle 1,y_1,-y_2,-y_1y_2\rangle$ is anisotropic over $\C((y_1))((y_2))$, hence over $\R(S)$. Since $y_1$ and $y_2$ are positive with respect to every ordering of $\R(S)$, one deduces that $u(\R(S))\geq 4$.
\end{proof}

\section{An obstruction to the equality of period and index}
\label{secpush}

The main result of this section is the following proposition, proven in \S\ref{subobstr}.

\begin{prop}
\label{topoobstr}
 Let $p:T\to S$ be a morphism of connected smooth projective surfaces over $\R$ that is generically finite of even degree $n$. Let $\alpha\in\Br(\R(S))[n]$ be such that $p^*\alpha=0\in \Br(\R(T))$. Let $U\subset S$ be the biggest open subset such that $\alpha\in\Br(U)\subset\Br(\R(S))$, as in \S\ref{parBrauer}, define $\Theta:=\{x\in U(\R)\ |\ \alpha|_x\neq 0\in\Br(\R)\}$, let ${\talpha\in H^2_G(U(\C),\Z/n(1))}$ be a lift of $\alpha$ in (\ref{Brlift}), and let $\xi\in H^2_G(U(\C),\Z/2)$ be its reduction modulo $2$.
Then there exists $\nu\in\Pic(S)$ with ${([\xi]_1)|_\Theta=\cl_{\R}(\nu)|_\Theta \in H^1(\Theta)}$.
\end{prop}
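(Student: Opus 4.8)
The plan is to pull the lift $\xi$ back to $T$, where the hypothesis $p^{*}\alpha=0$ forces it to become algebraic, and then to transport the resulting datum back down to $S$. The geometric point that makes everything run is that \emph{over $\Theta$ the morphism $p$ has no real points}: a real point $y$ of $p^{-1}(x)$ with $x\in\Theta\subset U(\R)$ lies in $p^{-1}(U)(\R)$, so restricting the identity $p^{*}\alpha=0$ to $y$ would give $\alpha|_{x}=0$, contradicting $x\in\Theta$. Hence $\Theta$ is disjoint from the branch locus of $p$ and from $p(T(\R))$, and over each point of $\Theta$ the fibre of $p$ is a disjoint union of $n/2$ conjugate pairs of complex points (which also re-explains the parity of $n$). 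I would begin with the reductions: after blowing up $S$ at finitely many points of $S(\R)\setminus U(\R)$ and at finitely many conjugate pairs of complex points — operations that change neither $\Theta$ nor the subgroup $\cl_{\R}(\Pic(S))|_{\Theta}\subseteq H^{1}(\Theta)$ — one may assume that $R$ is a simple normal crossings divisor and that $p$ is as well-behaved as needed over $S\setminus R$. One should keep in mind, though, that $p$ \emph{must} ramify along $R$ (since $p^{*}\alpha=0$ is unramified while $\alpha$ is ramified along $R$), so the branch locus of $p$ may well accumulate towards the closure of $\Theta$; this is the source of the later difficulties.

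Next, the cohomological input. Put $V:=p^{-1}(U)$, so $p^{*}\alpha=0$ in $\Br(V)[2]$ and a fortiori $p^{*}\big(\tfrac{n}{2}\alpha\big)=0$. Pulling back the exact sequence (\ref{Brlift}) along $V\to U$ and using that $\xi$ lifts $\tfrac{n}{2}\alpha$, one gets that $p^{*}\xi\in H^{2}_{G}(V(\C),\Z/2)$ lies in the image of $\Pic(V)/2$; that is, $p^{*}\xi$ is the reduction mod $2$ of $\cl(\cM)$ for some $\cM\in\Pic(V)$. Applying $[\,\cdot\,]_{1}$, together with the naturality of the canonical decomposition (\ref{can2}) under pullback and restriction and with (\ref{krasnovcompa}), this reads
\[
p^{*}\!\big([\xi]_{1}\big)=[\,p^{*}\xi\,]_{1}=\cl_{\R}(\cM)\in H^{1}\!\big(V(\R)\big).
\]
When $n=2$ this can be made completely explicit: the quadratic extension $\R(T)/\R(S)$ splits $\alpha$, so $\alpha$ is a quaternion symbol $(f,g)$, and (after shrinking $U$ by divisors disjoint from $\Theta$, which leaves $\Theta$ and $[\xi]_{1}|_{\Theta}$ unchanged) both $f$ and $g$ become units on $U$; on $\Theta$ the non-triviality of $\alpha$ forces $f$ and $g$ to be negative, so the $[\,\cdot\,]_{0}$-components of the corresponding classes in $H^{1}_{G}(U(\C),\Z/2)$ equal $1$ on $\Theta$, and then the cup-product formula \cite[(1.28)]{BWI} for (\ref{can2}) together with Lemma \ref{algdouble} express $[\xi]_{1}|_{\Theta}$ as $\cl_{\R}$ of a ($2$-torsion) line bundle on $U$ — which extends to $S$ — finishing the case $n=2$.

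For general $n$ the relation displayed above lives on $V(\R)$, which is empty over $\Theta$, so it does not by itself control $[\xi]_{1}|_{\Theta}$: one has to use the equivariant identity $p^{*}\xi=\cl(\cM)\bmod 2$ near $\Theta$, and I would do this loop by loop. For a loop $\gamma\subset\Theta$, approximate it by the real locus of a smooth curve $B\subset U$ (via \cite[Theorem 12.4.11]{BCR}, as in Lemma \ref{lemapprox}) with $B(\R)\subset\Theta$, and analyse the ramified cover $p^{-1}(B)\to B$, which has no real points over $B(\R)$; the freeness of the complex conjugation on the fibres of this cover lets one run a transfer (norm) of $\cM|_{p^{-1}(B)}$ to identify $\cl_{\R}(\cM)$ near $\gamma$ with a Borel--Haefliger class of a divisor that is itself the restriction of a divisor on $S$ (again using Lemma \ref{algdouble} and the vanishing on $\Theta$ of the relevant $[\,\cdot\,]_{0}$-components). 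Running this over a system of generators of $H^{1}(\Theta)$ and extending the resulting line bundle from $U$ to $S$ through the surjection $\Pic(S)\twoheadrightarrow\Pic(U)$ — noting that exceptional and purely complex divisors contribute trivially to $\cl_{\R}(-)|_{\Theta}$ — produces the desired $\nu$.

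I expect the main obstacle to be this last descent step: making the "transfer of Picard classes along the cover over a loop in $\Theta$" precise when $p$ is only generically finite and is ramified, with its branch locus disjoint from $\Theta$ but reaching the closure of $\Theta$ — exactly the topological analysis of ramified covers of real surfaces that this section is devoted to. A secondary, more routine but still delicate point is verifying that the reductions of the first paragraph genuinely preserve both $\Theta$ and the hypothesis $p^{*}\alpha=0$.
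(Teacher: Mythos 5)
Your opening observations are sound and match the paper's starting point: the fact that $p^*\alpha=0$ forces $p(T(\R))\cap\Theta=\varnothing$, and the diagnosis that the relation $p^*[\xi]_1=\cl_{\R}(\cM)$ on $p^{-1}(U)(\R)$ is vacuous over $\Theta$ precisely because the fibre of $p$ over $\Theta$ has no real points. But the step you yourself flag as "the main obstacle" is not a gap in the write-up; it is the entire content of the proof, and the "transfer/norm along loops" you propose fails for a concrete reason. If you push the relation $p^*\xi=\cl(\cM)\bmod 2$ forward by $p$, you get $\cl(p_*\cM)\equiv p_*p^*\xi=n\xi\equiv 0\pmod 2$ since $n$ is even: the naive norm destroys all information. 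The paper circumvents this by refining the mod-$2$ statement to an integral one adapted to the fibre structure of $p$: it introduces the sheaf $\sQ=\Coker(p^*:\Z\to p_*\Z)$, the trace map, and the exact sequence (\ref{shortramified}), lifts $\talpha|_{U^0}$ (the mod-$n$ lift, not just $\xi$) to a pair $(\eta,\zeta)\in H^2_G(U^0(\C),\Z(1))\oplus H^2_G(U^0(\C),\sQ(1))$ using the vanishing of the topological part of $p^*\alpha$ (Lemma \ref{etazeta}), and uses a map $\psi:\sQ\to p_*\Z$ with $\Tr\circ\psi=0$ so that the push-forward of the algebraicity relation on $T$ kills the $\zeta$-contribution \emph{exactly}, yielding $n(\eta-p_*\gamma)=\cl(p_*\varphi)$, from which $\eta$ is controlled by dividing by $n$ (torsion-freeness of $\Coker(\cl)$). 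None of this structure is present in your sketch, and without it there is no way to descend from $T$ to $S$.

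The remaining piece, $([\phi(\zeta)]_1)|_{\Theta^0}$, is where the real work lies (Proposition \ref{algebraicity}): on each loop $\iota:\bS^1\to\Theta^0$ one must compare the double cover built from the $G$-structure of $\sQ(1)$ (where conjugation acts freely on the $n$ sheets) with the \emph{alternating} double cover $\hat S\to S$ attached to the sign of the monodromy representation, and prove the identity $\tep=\hep+\sum_{4\nmid m_i}\iota^*\cl_{\R}(\Delta_i)$ by a local monodromy computation at the points where the loop crosses the branch locus; algebraicity then follows because $\hat S\to S$ is étale over $V$ (Lemma \ref{algdouble}) and the $\Delta_i$ are divisors on $S$. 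Your phrase "the freeness of the complex conjugation on the fibres lets one run a transfer" gestures at the first half of this but supplies neither the alternating cover nor the parity analysis of the multiplicities $m_i$, which is where the correction term — and hence the class $\nu$ — actually comes from. Separately, your $n=2$ shortcut rests on the unjustified claim that a symbol presentation $\alpha=(f,g)$ can be chosen with the divisors of $f$ and $g$ disjoint from $\Theta$; if those divisors meet $\Theta$ the restriction $H^1(\Theta)\to H^1(\Theta\cap U'(\R))$ need not be injective and the argument does not close. In short: correct setup and correct identification of the difficulty, but the proof of the central assertion is missing.
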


The conclusion of Proposition \ref{topoobstr} should be thought of as an obstruction to the index of $\alpha$ dividing $n$. It will be used in \S\ref{subobstr} to finish the proof of Theorem \ref{pinr}.

We keep the notation of the statement of Proposition \ref{topoobstr} throughout Section \ref{secpush}.

\subsection{The topology of a ramified cover}
\label{toposubsec}
We first note for later use that the vanishing $p^*\alpha=0\in\Br(\R(T))$ implies that $p(T(\R))\cap\Theta=\varnothing$.

Let $j:S^{*}\hookrightarrow S$ be the open subset over which $p$ is \'etale.
Let $\sQ$ be the cokernel of the natural injection $p^*:\Z\to p_*\Z$ of $G$-equivariant sheaves on $S(\C)$. Note that the adjunction map $\Z\to j_*j^*\Z$ (resp. $p_*\Z\to j_*j^*p_*\Z$) is an isomorphism because removing $S(\C)\setminus S^*(\C)$ (resp. $p^{-1}(S(\C)\setminus S^*(\C))$) does not disconnect $S(\C)$ (resp. $T(\C)$) locally. 
We deduce that the adjunction map $\sQ\to j_*j^*\sQ$ is injective.

The trace map $\Tr: p_*\Z\to \Z$, defined over $S^*(\C)$ by summing over the fiber, extends uniquely to $S(\C)$ because $\Z\xrightarrow{\sim}j_*j^*\Z$. It induces a morphism $\phi:\sQ\to\Z/n$, yielding a commutative diagram with exact rows of $G$-equivariant sheaves on $S(\C)$:
\begin{equation}
\begin{aligned}
\label{diagQ}
\xymatrix
@R=0.3cm 
{
0 \ar[r]&\Z\ar[r]^{p^*}\ar@{=}[d]&p_*\Z\ar[r]^{\pi}\ar[d]^{\Tr}
&  \sQ \ar[d]^{\phi}\ar[r] &0\\
0\ar[r]&\Z\ar[r]^n
& \Z \ar[r]&\Z/n\ar[r] &0.}
\end{aligned}
\end{equation}
The diagram (\ref{diagQ}) induces a short exact sequence of $G$-equivariant sheaves on $S(\C)$:
\begin{equation}
\label{shortramified}
0\to p_*\Z\xrightarrow{(\Tr,\pi)}\Z\oplus\sQ\xrightarrow{(1,-\phi)}\Z/n\to 0.
\end{equation}

For $x\in S^*(\C)$, if we choose a bijection between $p^{-1}(x)$ and $\{1,\dots,n\}$, the fiber $\sQ_x$ of $\sQ$ at $x$ identifies with the cokernel of $\Z\xrightarrow{a\mapsto(a,\dots, a)}\Z^n$. It follows that the assignment $(a_1,\dots,a_n)\mapsto (\sum_i a_i-na_1,\dots,\sum_i a_i-na_n)$ yields a well-defined morphism $j^*\sQ\to j^*p_*\Z$ of $G$-equivariant sheaves on $S^*(\C)$. This morphism extends to a morphism $\psi:\sQ\to p_*\Z$ of $G$-equivariant sheaves on $S(\C)$ 
because $p_*\Z\xrightarrow{\sim}j_*j^*p_*\Z$. The morphism $\Tr\circ\psi:\sQ\to \Z$ vanishes on $S^*(\C)$ as one checks by computing its stalks, hence on $S(\C)$, because $\Z\xrightarrow{\sim}j_*j^*\Z$.

\subsection{The alternating double cover}
\label{alternating}
Keep the notation of \S\ref{toposubsec}.
Let $x_0\in S^*(\C)$ be a base point, choose a bijection between $p^{-1}(x_0)$ and $\{1,\dots,n\}$, and consider the representation $\pi_1^{\et}(S^*,x_0)\to\mathfrak{S}_n$ associated with the finite \'etale cover $p^{-1}(S^*)\to S^*$. Composing  with the signature morphism $\mathfrak{S}_n\to\Z/2$, one obtains a representation $\pi_1^{\et}(S^*,x_0)\to\Z/2$ that corresponds to a finite \'etale double cover $\hat{p}:\hat{S}^*\to S^*$. By construction, a point $y\in \hat{S}^*(\C)$ is uniquely determined by $x=\hat{p}(y)\in S^*(\C)$, and by a  bijection between $p^{-1}(x)$ and $\{1,\dots,n\}$ well-defined up to the action of the alternating group $\mathfrak{A}_n$. This double cover extends uniquely to a finite double cover with normal total space $\hat{p}:\hat{S}\to S$.

 Let us compute the biggest open subset $V\subset S$ over which $\hat{p}$ is unramified.
Let $\Delta\subset S$ be the ramification divisor of $p$, with irreducible components $(\Delta_i)_{i\in I}$, and let $F_i$ be the geometric fiber of $p$ at the generic point of $\Delta_i$. For $i\in I$, define the multiplicity $m_i$ of $\Delta_i$ by the formula $m_i=n-|F_i|$. It is equivalently computed as $m_i=\sum_{y\in F_i} (e(y)-1)$, where $e(y)$ is the ramification index of $p$ at $y$. Since the monodromy around a component of $\Delta_i(\C)$ is an even permutation if and only if $m_i$ is even, and since the ramification locus of $\hat{p}$ has pure dimension $1$ by the Zariski-Nagata purity theorem, one has $V=S\setminus(\underset{m_i\textrm{ odd }}{\cup}\Delta_i)$. We denote by $\hp:\hV\to V$ the induced finite \'etale double cover.

 Let $U^0\subset U$ be the biggest open subset over which $p$ is finite flat with smooth ramification locus. It is the complement of finitely many points in $U$. Define $\Theta^0:=\Theta\cap U^0(\R)$. 
We claim that $\Theta^0\subset V(\R)$. Otherwise, there would exist $i\in I$ such that $m_i$ is odd and $x\in \Delta_i(\R)\cap\Theta^0$. Since $\Theta^0\subset S(\R)$ is open and since $\Delta_i$ is smooth along $\Theta^0$, we may assume that $x$ is a general point of $\Delta_i$. The geometric fiber of $p$ over $x$ then has cardinality $n-m_i$. Since it is an odd number, we would have $x\in p(T(\R))$ contradicting the fact that $p(T(\R))\cap\Theta=\varnothing$.

\subsection{The pull-back of the Brauer class}
\label{subspullback}
Keep the notation of \S\S\ref{toposubsec}--\ref{alternating}, and
 denote by $p:T_{U^0}\to U^0$ the base-change of $p$ by the inclusion $U^0\subset S$.

\begin{lem}
\label{etazeta}
The class $\talpha|_{U^0}\in H^2_G(U^0(\C),\Z/n(1))$ is the image under the morphism $(1,-\phi)$ of (\ref{shortramified}) of a class $(\eta,\zeta)\in H^2_G(U^0(\C),\Z(1))\oplus H^2_G(U^0(\C),\sQ(1))$.
\end{lem}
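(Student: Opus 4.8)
The plan is to prove that the obstruction to $\talpha|_{U^0}$ lying in the image of $(1,-\phi)$ vanishes. Twisting (\ref{shortramified}) by $\Z(1)$ and taking $G$-equivariant cohomology over $U^0(\C)$ yields an exact sequence
\[
H^2_G(U^0(\C),\Z(1))\oplus H^2_G(U^0(\C),\sQ(1))\xrightarrow{(1,-\phi)}H^2_G(U^0(\C),\Z/n(1))\xrightarrow{\partial}H^3_G(U^0(\C),p_*\Z(1)),
\]
so it is enough to show $\partial(\talpha|_{U^0})=0$. Since $p$ is finite over $U^0$, the projection formula together with $Rp_*=p_*$ provide a canonical identification $H^3_G(U^0(\C),p_*\Z(1))\simeq H^3_G(T_{U^0}(\C),\Z(1))$, under which the morphism induced by the unit $\Z(1)\to p_*\Z(1)$ becomes the pullback $p^*$ in equivariant cohomology.

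The key computation is that, under this identification, $\partial(\talpha|_{U^0})$ equals $\beta_n\bigl((p^*\talpha)|_{T_{U^0}}\bigr)$, where $\beta_n$ denotes the connecting (Bockstein) map of $0\to\Z(1)\xrightarrow{n}\Z(1)\to\Z/n(1)\to0$ on $T_{U^0}(\C)$. This is exactly where the identity $\Tr\circ p^*=n$ of \S\ref{toposubsec} enters: it is what makes the diagram (\ref{diagQ}) interlock (\ref{shortramified}) with the Kummer-type sequence in its bottom row. Concretely, one represents $\talpha|_{U^0}$ by a cocycle, lifts it through the first factor $\Z(1)\to\Z/n(1)$ of $(1,-\phi)$ to a cochain $c$, writes $dc=na$ with $a$ a $\Z(1)$-valued cocycle, and observes that $(na,0)$ has a unique $(\Tr,\pi)$-preimage, namely $p^*a$; hence $\partial(\talpha|_{U^0})=p^*[a]=p^*\bigl(\beta_n(\talpha|_{U^0})\bigr)$, and the claim follows from functoriality of $\beta_n$ under pullback. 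Equivalently, this is a snake-lemma argument applied to (\ref{diagQ}) tensored with $\Z(1)$.

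To conclude, I would invoke the hypothesis $p^*\alpha=0$. The set $T_{U^0}=p^{-1}(U^0)$ is an open, hence smooth integral, subvariety of $T$ contained in the unramified locus of $p^*\alpha$, so $(p^*\alpha)|_{T_{U^0}}=0$ in $\Br(T_{U^0})\subset\Br(\R(T))$. By the exact sequence (\ref{Brlift}) for $T_{U^0}$, the lift $(p^*\talpha)|_{T_{U^0}}$ of $0$ lies in the image of $\Pic(T_{U^0})/n$; since that map is the reduction modulo $n$ of Krasnov's cycle class map, $(p^*\talpha)|_{T_{U^0}}$ lifts to $H^2_G(T_{U^0}(\C),\Z(1))$, whence $\beta_n\bigl((p^*\talpha)|_{T_{U^0}}\bigr)=0$. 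Therefore $\partial(\talpha|_{U^0})=0$, which produces the desired pair $(\eta,\zeta)$.

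The main obstacle is the cochain-level identification $\partial=p^*\circ\beta_n$ of the second paragraph: it is the only nonformal step, and it requires keeping careful track of the $\Z(1)$-twists and of the finiteness identification $H^3_G(U^0(\C),p_*\Z(1))\simeq H^3_G(T_{U^0}(\C),\Z(1))$ throughout. Once that is in place, the statement is a direct consequence of $p^*\alpha=0$ and (\ref{Brlift}).
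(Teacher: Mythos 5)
Your proof is correct and follows essentially the same route as the paper: both reduce the statement to the vanishing of the connecting map of (\ref{shortramified}) on $\talpha|_{U^0}$, identify $H^3_G(U^0(\C),p_*\Z(1))$ with $H^3_G(T_{U^0}(\C),\Z(1))$ using finiteness of $p$ over $U^0$, and use the morphism of short exact sequences from the Kummer-type sequence to (\ref{shortramified}) (your cochain computation is exactly the compatibility of connecting maps encoded in the paper's commutative diagram) to recognize the obstruction as the image of $p^*\alpha|_{T_{U^0}}$ under (\ref{Brdeco}), which vanishes by hypothesis. The only cosmetic difference is that you phrase the final vanishing via a lift through $\Pic(T_{U^0})/n$ and (\ref{Brlift}) rather than directly via (\ref{Brdeco}); these are the same computation.
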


\begin{proof}
We have to show that $\talpha|_{U^0}$ vanishes in $H^3_G(U^0(\C),p_*\Z(1))$ under the boundary map associated to (\ref{shortramified}). Since $p$ is finite over $U^0$, the Leray spectral sequence of $p:T^{}_{U^0}\to U^0$ degenerates, and the natural morphism $H^3_G(U^0(\C),p_*\Z(1))\to H^3_G(T_{U^0}(\C),\Z(1))$ is an isomorphism. The commutative exact diagram
\begin{equation*}
\begin{aligned}
\xymatrix
@R=0.4cm 
{
0 \ar[r]&\Z(1)\ar[r]^n\ar[d]^{p^*}&\Z(1)\ar[r]\ar[d]^{(1,0)}
&  \Z/n(1) \ar@{=}[d]\ar[r] &0\\
0\ar[r]&p_*\Z(1)\ar[r]^{(\Tr,\pi)\hspace{1.1em}}
& \Z(1)\oplus\sQ(1) \ar[r]^{(1,-\phi)}&\Z/n(1)\ar[r] &0}
\end{aligned}
\end{equation*}
shows that the image of $\talpha|_{U^0}$ in $H^3_G(T_{U^0}(\C),\Z(1))$ may be computed as the image of $p^*\alpha|_{U^0}\in\Br(T_{U^0})$ by (\ref{Brdeco}), that vanishes by our assumption on $\alpha$.
\end{proof}

 From now on, fix $(\eta,\zeta)\in H^2_G(U^0(\C),\Z(1))\oplus H^2_G(U^0(\C),\sQ(1))$ as in Lemma \ref{etazeta}.
Recall the definition of $\psi:\sQ\to p_*\Z$ in \S\ref{toposubsec}. We still denote by $\psi$ the composition:
$$ H^2_G(U^0(\C),\sQ(1))\xrightarrow{\psi}H^2_G(U^0(\C),p_*\Z(1))
\to H^2_G(T_{U^0}(\C),\Z(1)).$$

\begin{lem}
\label{etazetauseful}
The class $p^*\talpha|_{U^0}\in H^2_G(T_{U^0}^{}(\C),\Z/n(1))$ coincides with the reduction modulo~$n$ of 
$p^*\eta+\psi(\zeta)\in H^2_G(T_{U^0}^{}(\C),\Z(1))$.
\end{lem}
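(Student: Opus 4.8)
The plan is to carry out the comparison at the level of $G$-equivariant sheaves on $T_{U^0}(\C)$, reducing it to a single stalkwise identity that one checks by trivializing the finite \'etale cover $p\colon T_{U^0}\to U^0$.

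Since $p$ is finite over $U^0$, for every $G$-equivariant sheaf $\mathcal{F}$ on $T_{U^0}(\C)$ the Leray spectral sequence of $p\colon T_{U^0}(\C)\to U^0(\C)$ degenerates and its edge map $H^\bullet_G(U^0(\C),p_*\mathcal{F})\to H^\bullet_G(T_{U^0}(\C),\mathcal{F})$ is an isomorphism, equal to the composition of pull-back along $p$ with the counit $\epsilon\colon p^*p_*\mathcal{F}\to\mathcal{F}$ of the adjunction $(p^*,p_*)$; this is the second arrow used to define $\psi$ on cohomology, so that $\psi(\zeta)=\epsilon_*\bigl((p^*\psi)_*(p^*\zeta)\bigr)$ in $H^2_G(T_{U^0}(\C),\Z(1))$. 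Next, I would pull back along $p$ the short exact sequence (\ref{shortramified}) restricted to $U^0$: using $p^*\Z=\Z$, one obtains a short exact sequence of $G$-equivariant sheaves on $T_{U^0}(\C)$ with surjection $(1,-p^*\phi)\colon\Z(1)\oplus p^*\sQ(1)\to\Z/n(1)$. Since forming $(1,-\phi)$ commutes with pull-back, Lemma \ref{etazeta} gives that $p^*\talpha|_{U^0}$ is the image of $(p^*\eta,p^*\zeta)$ under this surjection, that is, $p^*\talpha|_{U^0}$ is the reduction modulo $n$ of $p^*\eta$ plus the image of $p^*\zeta$ under $(-p^*\phi)_*$.

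By the formula for $\psi(\zeta)$ above, it then remains to compare, modulo $n$, the two morphisms of $G$-equivariant sheaves $p^*\phi$ and $\epsilon\circ p^*\psi$ from $p^*\sQ$ to $\Z/n$ on $T_{U^0}(\C)$. This is a stalkwise check. Fixing $t\in T_{U^0}(\C)$ with image $s:=p(t)$ and a bijection $p^{-1}(s)\simeq\{1,\dots,n\}$ carrying $t$ to $1$, one has $(p^*\sQ)_t=\sQ_s$, the cokernel of the diagonal $\Z\to\Z^n$. By the explicit descriptions of $\phi$ and $\psi$ recalled in \S\ref{toposubsec}, the stalk of $p^*\phi$ at $t$ sends the class of $(a_1,\dots,a_n)$ to $\sum_i a_i$ modulo $n$, while the stalk of $p^*\psi$ at $t$ sends it to $(\sum_j a_j-na_i)_{1\le i\le n}$, of which $\epsilon_t$ selects the first coordinate $\sum_j a_j-na_1\equiv\sum_j a_j\pmod n$. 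Hence $p^*\phi$ and $\epsilon\circ p^*\psi$ agree modulo $n$ as morphisms $p^*\sQ\to\Z/n$; combining this with the previous paragraph, and keeping track of the signs fixed in \S\ref{toposubsec} and in (\ref{shortramified}), yields the asserted equality that $p^*\talpha|_{U^0}$ is the reduction modulo $n$ of $p^*\eta+\psi(\zeta)$.

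The step requiring the most care is the bookkeeping around the identification $H^\bullet_G(U^0(\C),p_*\mathcal{F})\simeq H^\bullet_G(T_{U^0}(\C),\mathcal{F})$ and the counit $\epsilon$, together with the sign conventions attached to (\ref{shortramified}) and to the definition of $\psi$ on cohomology; one must also verify that nothing is disturbed by the $G$-action or the Tate twist $\Z(1)$, which causes no difficulty since $p$ is a $G$-equivariant finite \'etale cover above $U^0$ and all the maps $p^*$, $\pi$, $\Tr$, $\phi$, $\psi$, $\epsilon$ are $G$-equivariant and compatible with twisting. Everything else is a formal diagram chase resting on Lemma \ref{etazeta} and the sheaf identities established in \S\ref{toposubsec}.
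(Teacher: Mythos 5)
Your strategy is the same as the paper's: degenerate the Leray spectral sequence of the finite map $p$ over $U^0$, reduce the lemma to an identity between two morphisms of $G$-equivariant sheaves (you compare $p^*\phi$ and $\epsilon\circ p^*\psi$ on $T_{U^0}(\C)$, the paper equivalently compares $p^*\circ\phi$ and $\psi\bmod n$ as maps $\sQ\to p_*(\Z/n)$ on $S(\C)$), and verify that identity on stalks. The sign bookkeeping you defer to the end is harmless and no worse than the paper's own.

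There is, however, a genuine gap in the stalkwise step. You call $p\colon T_{U^0}\to U^0$ a finite \emph{\'etale} cover and check the identity at an arbitrary $t\in T_{U^0}(\C)$ by choosing a bijection $p^{-1}(s)\simeq\{1,\dots,n\}$ and writing $\sQ_s$ as the cokernel of the diagonal $\Z\to\Z^n$. But $U^0$ is only the locus where $p$ is finite \emph{flat} with smooth ramification locus; it contains (all but finitely many points of) the branch divisor of $p$, over which $|p^{-1}(s)|<n$ and the stalks of $p_*\Z$ and $\sQ$ are not the ones you use. Moreover $\phi$ and $\psi$ are given by explicit stalk formulas only over $S^*$, and are extended across the branch locus via the isomorphisms $\Z\xrightarrow{\sim}j_*j^*\Z$ and $p_*\Z\xrightarrow{\sim}j_*j^*p_*\Z$ of \S\ref{toposubsec}; so your computation establishes the identity only over $S^*\cap U^0$. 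To conclude over all of $U^0$ you need the extension step that the paper makes explicit: the two sheaf morphisms agree on the dense open \'etale locus, and since the relevant adjunction maps to $j_*j^*(-)$ are injective (removing the branch locus does not locally disconnect $S(\C)$ or $T(\C)$), they agree everywhere. This is exactly the role of the sentence ``Since $\sQ\to j_*j^*\sQ$ is injective, they coincide over $S(\C)$'' in the paper's proof; without it, or an equivalent argument on $T$, your comparison is unproved precisely at the branch points, which is the only place where it is not immediate.
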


\begin{proof}
The statement follows from the commutativity of the two diagrams:
\begin{equation}
\label{diageta}
\begin{aligned}
\xymatrix
@R=0.3cm 
{
H^2_G(U^0(\C),\Z(1))\ar[r]\ar[d]^{p^*}&H^2_G(U^0(\C),\Z/n(1))\ar[d]^{p^*} &\\
H^2_G(T_{U^0}^{}(\C),\Z(1))\ar[r]
& H^2_G(T_{U^0}^{}(\C),\Z/n(1)) & \text {and}
}
\end{aligned}
\end{equation}
\vspace{-0.5em}
\begin{equation}
\label{diagzeta}
\begin{aligned}
\xymatrix
@R=0.3cm 
{
H^2_G(U^0(\C),\sQ(1))\ar[r]^{\phi\hspace{1em}}\ar[d]^{\psi}&H^2_G(U^0(\C),\Z/n(1))\ar[d]^{p^*}& \\
H^2_G(T_{U^0}(\C),\Z(1))\ar[r]
& H^2_G(T_{U^0}(\C),\Z/n(1)),&
}
\end{aligned}
\end{equation}
applied to $\eta$ and $\zeta$ respectively. The commutativity of (\ref{diageta}) is obvious. To show that of (\ref{diagzeta}), consider the two morphisms of $G$-equivariant sheaves on $S(\C)$ given by $p^*\circ\phi: \sQ\to p_*(\Z/n)$ and $(\psi \mod n): \sQ\to p_*(\Z/n)$. They coincide on $S^*(\C)$ because their stalk at $x\in S^*(\C)$ are both given by the assignment $(a_1,\dots,a_n)\mapsto (\sum_i a_i,\dots,\sum_i a_i)$. Since $\sQ\to j_*j^*\sQ$ is injective, they coincide over $S(\C)$. The commutativity of (\ref{diagzeta}) results by taking equivariant cohomology.
\end{proof}

\subsection{An algebraicity result}
\label{algebraicitypar}
We keep the notation of \S\S\ref{toposubsec}--\ref{subspullback}. Recall that $\zeta\in H^2_G(U^0(\C),\sQ(1))$ has been constructed in \S\ref{subspullback}. In \S\ref{algebraicitypar}, we study its image $\phi(\zeta)\in H^2_G(U^0(\C),\Z/n(1))$ by $\phi$.  
Our main goal is  the following proposition, that is key in proving the second part of Theorem \ref{pinr}.

\begin{prop}
\label{algebraicity}
 One has $([\phi(\zeta)]_1)|_{\Theta^0}\s=\cl_{\R}(\theta)|_{\Theta^0}\in H^1(\Theta^0)$ for some $\theta\in \Pic(S)$.
\end{prop}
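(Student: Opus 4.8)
The plan is to pass to mod-$2$ coefficients, localise near $\Theta^0$ — where, by \S\ref{alternating}, we sit over $V$ and the alternating double cover $\hat p\colon\hat V\to V$ is finite étale — and there express (the relevant reduction of) $\phi(\zeta)$ in terms of the class of $\hat p$ together with classes coming from $\Pic(S)$; the algebraicity then comes from Lemma~\ref{algdouble} and the compatibility (\ref{krasnovcompa}). First I would note that since $\Z/2(1)=\Z/2$ as $G$-modules, the class $([\phi(\zeta)]_1)|_{\Theta^0}$ depends only on the image of $\phi(\zeta)$ in $H^2_G(U^0(\C),\Z/2)$, which equals $\phi_2(\overline\zeta)$ where $\overline\zeta\in H^2_G(U^0(\C),\sQ/2)$ is the reduction of $\zeta$ and $\phi_2\colon\sQ/2\to\Z/2$ the reduction of $\phi$. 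Since $([\cdot]_1)|_{\Theta^0}$ factors through the restriction to $(V\cap U^0)(\C)$ (recall $\Theta^0\subset V(\R)\cap U^0(\R)$), I may work over $V\cap U^0$, and I would introduce the $G$-equivariant sheaf $\hat\sL$ on $V(\C)$ and the class $\hat e=e_{\Z/2}^{\hat\sL}\in H^1_G(V(\C),\Z/2)$ attached to $\hat p$ as in \S\ref{doublesec}.

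The heart of the argument is a local analysis of the sheaf $\sQ$ and of the maps $\pi,\Tr,\phi,\psi$ of \S\ref{toposubsec} over $V$, where the branch components are exactly the $\Delta_i$ with $m_i$ even. Over $S^*$ the monodromy representation on $\sQ$ is the permutation representation of $\mathfrak S_n$ on $\Z^n/\Z$, whose top exterior power is the sign character; this character extends over $V$ (that is why $\hat p$ is étale there), and a computation along $\Delta_i\cap V$ identifies its incarnation inside $\sQ$ with $\hat\sL$. Feeding this through the relations $\Tr\circ\psi=0$, $\pi\circ\psi=-n$, $\phi\circ\pi=\overline\Tr$ and the exact sequences (\ref{diagQ}) and (\ref{shortramified}), and using Lemma~\ref{etazetauseful} — which shows that $p^*\eta+\psi(\zeta)$ reduces to the \emph{algebraic} class $p^*\talpha|_{U^0}$ on $T_{U^0}$ — I expect to obtain, in $H^2_G((V\cap U^0)(\C),\Z/2)$, an identity of the shape
\[
\phi_2(\overline\zeta)\;=\;\hat e\cupp\rho\;+\;(\cl(\varphi)\bmod 2),
\]
possibly with the twisted Bockstein $\beta_{\hat\sL}(\hat e)$ or a similar term in place of $\hat e\cupp\rho$, where $\varphi\in\Pic(V)$ and $\rho\in H^1_G((V\cap U^0)(\C),\Z/2)$ is a class whose contribution is controlled on $\Theta^0$: by the cup-product formula \cite[(1.28)]{BWI} one has $[\hat e\cupp\rho]_1=[\hat e]_0\cupp[\rho]_1+[\hat e]_1\cupp[\rho]_0$, and on $\Theta^0$ the class $\hat e$ has exactly the ``double-cover'' shape to which Lemma~\ref{algdouble} applies, so each term on the right is a $\cl_\R$ of a line bundle restricted to $\Theta^0$. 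Removing from $V(\R)$ the finitely many points of $S\setminus U^0$ does not change any of this, so the passage between $V$ and $V\cap U^0$ is harmless.

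To conclude, Lemma~\ref{algdouble} gives $[\hat e]_1=\cl_\R(\upsilon(\hat e))$ with $\upsilon(\hat e)\in\Pic(V)$, and since $\Pic(S)\to\Pic(V)$ is surjective ($V$ being open in the smooth variety $S$), every algebraic ingredient above is the restriction of some line bundle on $S$; assembling them and invoking (\ref{krasnovcompa}) produces $\theta\in\Pic(S)$ with $([\phi(\zeta)]_1)|_{\Theta^0}=\cl_\R(\theta)|_{\Theta^0}$, as required. The main obstacle is the middle step: \emph{modulo $2$ the sign character is trivial}, so the cover $\hat V\to V$ is invisible at the level of the monodromy of $\sQ/2$ and only surfaces through the way $\sQ$ jumps along $\Delta\cap V$ and through twisted Bockstein data; getting this local bookkeeping right, and correctly transporting the algebraicity of $p^*\talpha|_{U^0}$ along $\psi$ down to $S$, is the technical core of the proof.
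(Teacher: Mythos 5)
You have correctly identified the two bookends of the argument---that the alternating double cover $\hp:\hV\to V$ of \S\ref{alternating} is the object controlling $([\phi(\zeta)]_1)|_{\Theta^0}$, and that Lemma~\ref{algdouble}, the surjectivity of $\Pic(S)\to\Pic(V)$, and (\ref{krasnovcompa}) convert its class into $\cl_{\R}$ of a line bundle on $S$---but the middle step, which you yourself flag as the ``main obstacle'' and describe only as an identity you ``expect to obtain, possibly with [another term] in place of'' the main one, is precisely the content of the proposition. The paper does not prove a global identity in $H^2_G((V\cap U^0)(\C),\Z/2)$. Instead it restricts to $\ci$ embeddings $\iota:\bS^1\to\Theta^0$ transverse to $\Delta$, whose classes generate $H_1(\Theta^0,\Z/2)$, and exploits the fact that $p^{-1}(\Theta^0)$ contains no real points: the stalks of $\iota^*p_*\Z$ are then induced $G$-modules $\Z[G]^k$ with no higher $G$-cohomology, so $\RHom_G(\Z,\iota^*\sQ(1))$ collapses and $\iota^*[\phi(\zeta)]_1$ is identified (Lemma~\ref{Gzeta}) with the class $\tep$ of an explicit double cover $\tSI\to\bS^1$ built from the extension (\ref{Gext}); the normalization here uses that $\phi(\zeta)|_x=-\talpha|_x\neq 0$ for $x\in\Theta^0$. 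The comparison of $\tSI$ with $\iota^*\hV$ is then a concrete monodromy computation at each crossing point of $\iota(\bS^1)$ with a component $\Delta_i$, yielding $\tep=\hep+\sum_{4\nmid m_i}\iota^*\cl_{\R}(\Delta_i)$ (Lemma~\ref{algprecise}).

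This loop-by-loop reduction is not a cosmetic difference from your plan: as you note, mod $2$ the sign character is invisible in the monodromy of $\sQ/2$, so there is no evident morphism of sheaves on $V(\C)$ producing $\hat{e}$ from $\overline\zeta$, and your proposed identity $\phi_2(\overline\zeta)=\hat e\cupp\rho+(\cl(\varphi)\bmod 2)$ remains a guess at the shape of the answer. Over a transverse real circle, by contrast, complex conjugation (not just monodromy) acts on the fibers of $p$, and the comparison reduces to checking parities of explicit permutations; this is where the dichotomy $4\mid m_i$ versus $4\nmid m_i$ appears. Your sketch also omits the correction terms $\cl_{\R}(\Delta_i)$; they happen to be algebraic, so the final statement would survive, but without carrying out the local bookkeeping one cannot exclude a non-algebraic discrepancy between $[\phi(\zeta)]_1$ and $[\he]_1$, which is exactly what the proposition must rule out.
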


Let $\iota:\bS^1\to\Theta^0$ be a $\ci$ embedding meeting the ramification locus of $p$ transversally at general points.
By abuse of notation, we will still denote by $\iota$ the induced embedding of $\bS^1$ in $S(\C)$, $S(\R)$, etc.
Define a sheaf $\sG$ on $\bS^1$ by the exact sequence:
\begin{equation}
\label{defG}
\iota^*\sQ(1)\xrightarrow{(\phi_2 ,1+\sigma)} \Z/2\oplus(\iota^* \sQ(1))^{G}\to\sG\to 0,
\end{equation}
where $\phi_2:\sQ(1)\to\Z/2$ is the composition of $\phi:\sQ(1)\to\Z/n(1)$ and of the surjection $\Z/n(1)\to\Z/2$.  
We consider the diagram of sheaves on $\bS^1$:
\begin{equation}
\label{Gext}
0\to\Z/2\xrightarrow{(1,0)}\sG\xrightarrow{(0,\phi)}\Z/2\to 0,
\end{equation}
where we still denote by $\phi $ the restriction $(\iota^*\sQ(1))^{G}\to(\Z/n(1))^{G}\simeq \Z/2$ of $\phi$.
\begin{lem}
\label{Gzeta}
The diagram (\ref{Gext}) is an exact sequence of sheaves on $\bS^1$. Moreover, the image  of $1\in H^0(\bS^1)$ by its boundary map is $\iota^*[\phi(\zeta)]_1\in H^1(\bS^1)$.
\end{lem}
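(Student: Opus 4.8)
The plan is to verify the two assertions of Lemma~\ref{Gzeta} separately, first the exactness of (\ref{Gext}), then the computation of the boundary map. For exactness, I would work stalkwise on $\bS^1$. At a point $x$ not lying on the ramification locus of $p$, the sheaf $\iota^*\sQ(1)$ has stalk isomorphic to $(\Z(1))^{n-1}$ with $G$ acting through the monodromy, and since $x\in\Theta^0\subset S(\R)$ the local computation of \S\ref{alternating} shows the relevant geometric fiber either has odd cardinality (impossible, as then $x\in p(T(\R))$, contradicting $p(T(\R))\cap\Theta=\varnothing$) so in fact the monodromy is trivial at such $x$ after shrinking, and the definition (\ref{defG}) collapses to the obvious extension $0\to\Z/2\to(\Z/2)^2\to\Z/2\to0$. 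At a point $x$ where $\iota$ meets the ramification divisor transversally, I would use that $\iota$ meets it at a general point of a single component $\Delta_i$ with $m_i$ even (again because $m_i$ odd would force $x\in p(T(\R))$), analyze the $G$-action on the stalk of $\sQ(1)$ explicitly, and check that (\ref{defG}) produces a stalk of $\sG$ isomorphic to $(\Z/2)^2$ fitting into (\ref{Gext}). The key point in both cases is that $\phi_2$ is surjective on stalks and that $(1+\sigma)$ lands in the $G$-invariants, so that the cokernel $\sG$ is an extension of $\Z/2$ by $\Z/2$ as claimed, with the maps $(1,0)$ and $(0,\phi)$ making (\ref{Gext}) exact.

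For the second assertion, I would trace through the definitions of $[\,\cdot\,]_1$ and of $\zeta$. Recall from \S\ref{subsecrest} that $[\phi(\zeta)]_1\in H^1(U^0(\R))$ is obtained by restricting $\phi(\zeta)\in H^2_G(U^0(\C),\Z/n(1))$ to the real locus, applying the canonical decomposition (\ref{can2}) (after reduction mod $2$), and projecting to the degree-$1$ factor. The idea is that, after pulling back along $\iota$, the $G$-equivariant cohomology $H^2_G(\bS^1,\iota^*\sQ(1))$ and its reduction decompose via (\ref{canZ}) and (\ref{can2}), and the three-term exact sequence (\ref{defG}) together with (\ref{Gext}) is precisely engineered so that the degree-$1$ component of $\phi(\zeta)|_{\bS^1}$ is computed by the boundary map of (\ref{Gext}) applied to the class $1\in H^0(\bS^1)$. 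Concretely, I would build a commutative diagram relating the long exact sequence of (\ref{Gext}) to the image of $\zeta$ under $\iota^*$ and the decomposition morphisms, using the defining equation $\talpha|_{U^0}=(1,-\phi)(\eta,\zeta)$ from Lemma~\ref{etazeta} and the compatibility (\ref{cancompa}) of the canonical decompositions with reduction mod $2$. The class $\zeta$ enters through its image in $(\iota^*\sQ(1))^G$-cohomology, which is exactly the second coordinate appearing in (\ref{defG}), so that the connecting homomorphism of (\ref{Gext}) reads off $\iota^*[\phi(\zeta)]_1$.

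I expect the main obstacle to be the careful bookkeeping in the second part: identifying the boundary map of the short exact sequence (\ref{Gext}) with the degree-$1$ graded piece of $\phi(\zeta)$ requires chasing through several layers of canonical decompositions (the mod~$2$ one (\ref{can2}), the twisted integral one (\ref{canZ})), the $G$-invariants functor, and the defining property of $\zeta$ from Lemma~\ref{etazeta}, all restricted along $\iota$. The geometric input---that $\iota$ can be taken to avoid odd-multiplicity components and to meet even-multiplicity ones transversally at general points---is available from \S\ref{alternating} and from general position, so the stalkwise exactness check, while somewhat tedious, should be routine. The real work is assembling the right commutative diagram that exhibits the boundary map of (\ref{Gext}) as the relevant component of the restriction-and-decompose operation defining $[\phi(\zeta)]_1$; once that diagram is in place, evaluating it on $1\in H^0(\bS^1)$ gives the claim immediately.
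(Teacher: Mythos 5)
There is a genuine gap, and the exactness part also contains errors of fact. The paper's proof of both assertions rests on a single homological input that your proposal never surfaces: for $x\in\bS^1$, the $G$-module $(\iota^*p_*\Z)_x$ is isomorphic to $\Z[G]^k$ (because $p(T(\R))\cap\Theta=\varnothing$ forces $\sigma$ to act freely on $p^{-1}(\iota(x))$), hence is cohomologically trivial: $H^q(G,(\iota^*p_*\Z)_x)=0$ for $q>0$. Feeding this into the short exact sequence $0\to\Z(1)\to p_*\Z(1)\to\sQ(1)\to0$ gives an isomorphism $\tau_{\geq1}\tau_{\leq2}\RHom_G(\Z,\iota^*\sQ(1))\simeq\tau_{\geq1}\tau_{\leq2}\RHom_G(\Z,\Z(1)[1])$, and working out both sides via the explicit complex $(\ref{cxcohoeq})$ simultaneously identifies $(\ref{Gext})$ as a short exact sequence (its class being the resulting morphism $\Z/2[-2]\to\Z/2[-1]$ in $D^+(\bS^1)$) and sets up the truncation diagram needed to read off the boundary map. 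Your proposal treats the two assertions separately and never uses this vanishing, so you have no mechanism to tie the extension class of $(\ref{Gext})$ to the cohomology class $\phi(\zeta)$.

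Your stalkwise exactness sketch also has factual problems. You say the stalk of $\sG$ is $(\Z/2)^2$; the paper notes it is $\Z/2\oplus\Z/2$ only when $4\mid n$, and is $\Z/4$ otherwise. More importantly, your assertion that "the monodromy is trivial at such $x$ after shrinking, and the definition $(\ref{defG})$ collapses to the obvious extension" conflates the monodromy of the underlying locally constant sheaf (which is local and irrelevant) with the $G$-equivariant structure at a real point, which is precisely what is nontrivial and drives the whole computation: $\sigma$ acts on $\sQ(1)_x$ by a fixed-point-free involution of the fiber together with $-1$ on $\Z(1)$, and the definition of $\sG$ via $(\phi_2,1+\sigma)$ only has content because of this.

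Finally, for the boundary-map claim, your plan to "build a commutative diagram" is missing the concrete step that makes the identification work: since $H^2(G,\Z(1))=0$, one has $\eta|_x=0$ for $x\in\bS^1$, so Lemma~\ref{etazeta} gives $\phi(\zeta)|_x=-\talpha|_x$, which is nonzero because $\iota(x)\in\Theta$. This is what shows that $\iota^*\zeta$ maps to $1\in H^0(\bS^1)$ under the truncation decomposition, after which the boundary of $1$ along $(\ref{Gext})$ is forced to equal $\iota^*[\phi(\zeta)]_1$. Without this evaluation and without the cohomological-triviality input, the diagram chase you envision has no way to close.
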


\begin{proof}
If $x\in\bS^1$, one has $\sExt^q_G(\Z,\iota^*p_*\Z)_x=H^q(G,(\iota^*p_*\Z)_x)=0$ for $q>0$. Indeed, the first equality is explained in \cite[\S 4.4]{Tohoku}, and the vanishing follows from the existence of an isomorphism of $G$-modules 
$(\iota^*p_*\Z)_x\simeq\Z[G]^k$ for some $k\geq 0$,
as $p(T(\R))\cap\Theta=\varnothing$.
 Consider the commutative diagram in $D^+(\bS^1)$:
\begin{equation}
\label{truncdiag}
\begin{aligned}
\xymatrix
@R=0.4cm 
@C=0.5cm 
{
\tau_{\geq 1}\tau_{\leq 2}\RHom_{G}(\Z,\iota^*\sQ(1)) \ar^{\sim}[r]\ar^{\phi}[d]&\tau_{\geq 1}\tau_{\leq 2}\RHom_{G}(\Z,\Z(1)[1])  \\
\tau_{\geq 1}\tau_{\leq 2}\RHom_{G}(\Z,\Z/n(1)) \ar[r]\ar[ur]&\tau_{\geq 1}\tau_{\leq 2}\RHom_{G}(\Z,\Z/2),
}
\end{aligned}
\end{equation}
whose arrows are induced by (\ref{diagQ}), except for the lower horizontal arrow that is given by reduction modulo $2$, and where $\tau_{\geq 1}$ and $\tau_{\leq 2}$ are truncation functors. We have shown above that $\RHom_{G}(\Z,\iota^*p_*\Z)\in D^+(\bS^1)$ is concentrated in degree $0$; it thus follows from (\ref{diagQ}) that the upper horizontal arrow of (\ref{truncdiag}) is an isomorphism.

All complexes and arrows in (\ref{truncdiag}) may be computed using (\ref{cxcohoeq}), allowing to rewrite (\ref{truncdiag}) as follows:
\begin{equation}
\label{truncdiag2}
\begin{aligned}
\xymatrix
@R=0.4cm 
@C=0.5cm 
{
[\iota^*\sQ(1)/\langle 1-\sigma\rangle\xrightarrow{1+\sigma}(\iota^*\sQ(1))^G] \ar^(0.74){\sim}[r]\ar_{(\phi_2,\phi)}[d]&\Z/2[-2]  \\
\Z/2[-1]\oplus \Z/2[-2] \ar^{}[r]\ar_(.65){(0,1)}[ur]&\Z/2[-1]\oplus \Z/2[-2],
}
\end{aligned}
\end{equation}
where the upper left complex is concentrated in degrees $1$ and $2$. Consider the morphism $\Z/2[-2]\to\Z/2[-1]$ in $D^+(\bS^1)$ obtained by composing the inverse of the upper horizontal arrow of (\ref{truncdiag2}), the left vertical arrow of (\ref{truncdiag2}), and the projection to the factor $\Z/2[-1]$. It corresponds to an extension of $\Z/2$ by $\Z/2$ that is, in view of (\ref{truncdiag2}), given by (\ref{Gext}). Let us emphasize that this shows the exactness of  (\ref{Gext}).

Applying $H^2$ to any of the diagrams (\ref{truncdiag}) or (\ref{truncdiag2}) yields a commutative diagram:
\begin{equation}
\label{truncdiag3}
\begin{aligned}
\xymatrix
@R=0.4cm 
@C=0.5cm 
{
H^2_G(\bS^1,\iota^*\sQ(1)) \ar^(0.6){\sim}[r]\ar
[d]& H^0(\bS^1)  \\
H^1(\bS^1)\oplus H^0(\bS^1) \ar^{}[r]\ar_(.65){(0,1)}[ur]&H^1(\bS^1)\oplus H^0(\bS^1),
}
\end{aligned}
\end{equation}
where we have used that $\bS^1$ has cohomological dimension $1$ to identify the upper left group with $H^2_G(\bS^1,\iota^*\sQ(1))$. We now study the class $\iota^*\zeta\in H^2_G(\bS^1,\iota^*\sQ(1))$. 

If $x\in\bS^1$, one has $H^2_G(x,\Z(1))=H^2(G,\Z(1))=0$, so that $\eta|_x=0$. We deduce from Lemma \ref{etazeta} that $\phi(\zeta)|_x=-\talpha|_x\neq 0\in H^2_G(x,\Z/n(1))$, where the non-vanishing follows from the definition of $\Theta$ and (\ref{Brlift}). As a consequence, the image of $\iota^*\zeta$ in $H^0(\bS^1)$ by the left vertical arrow of (\ref{truncdiag3}), or equivalently by the upper horizontal arrow of (\ref{truncdiag3}), is equal to $1$. It follows that the image of $\iota^*\zeta$ in $H^1(\bS^1)$ by the left vertical arrow of (\ref{truncdiag3}) is the image of $1\in H^0(\bS^1)$ by the boundary map of (\ref{Gext}). Since the morphism $\Z/2[-1]\to\Z/2[-1]$ in the lower horizontal arrow of (\ref{truncdiag2}) is the identity, this class also equals the image of $\iota^*\zeta$ in the factor $H^1(\bS^1)$ of the right bottom group of (\ref{truncdiag3}), which is exactly $\iota^*[\phi(\zeta)]_1$, proving the lemma.
\end{proof}

It follows from Lemma \ref{Gzeta} that the sheaf $\sG$ is locally constant on $\SI$, as an extension of locally constant sheaves. One can compute that its stalks are isomorphic to $\Z/2\oplus\Z/2$ if $4\mid n$ (resp. to $\Z/4$ if $4\nmid n$), but we will not need this fact. 

 It is easy to describe a double cover $\trho:\tSI\to \bS^1$ whose class $\tep\in H^1(\bS^1)$ is the image of $1\in H^0(\bS^1)$ by the boundary map of (\ref{Gext}): define $\tSI$ to be the subset of the \'etal\'e space of $\sG$ whose fiber over $x\in \bS^1$ consists of the elements in $\sG_x$ whose image by $(0,\phi)$ in (\ref{Gext}) is equal to $1\in \Z/2$.

Recall from \S\ref{alternating} the construction of the finite \'etale double cover $\hp:\hV\to V$. Pulling back the covering $\hp:\hV(\C)\to V(\C)$ by $\iota:\bS^1\to V(\C)$
yields a double cover $\hrho:\hSI\to\SI$.
We denote its class by $\hep\in H^1(\bS^1)$.

\begin{lem}
\label{algprecise}
One has the identity:
$$\tep=\hep +\sum_{4\nmid m_i}\iota^*\cl_{\R}(\Delta_i)\in H^1(\bS^1).$$
\end{lem}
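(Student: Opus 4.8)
The plan is to compare the two double covers $\trho\colon\tSI\to\bS^1$ and $\hrho\colon\hSI\to\bS^1$ along a decomposition of $\bS^1$ adapted to the finite set $P:=\iota^{-1}(\Delta)$ of points where $\iota$ crosses the ramification divisor $\Delta$ of $p$. Since $\iota$ factors through $\Theta^0\subset V(\R)$ and $V=S\setminus\bigcup_{m_i\text{ odd}}\Delta_i$ by \S\ref{alternating}, the loop $\iota(\bS^1)$ is disjoint from $\Delta_i(\R)$ whenever $m_i$ is odd; hence every $x\in P$ lies on a component $\Delta_{i(x)}$ with $m_{i(x)}$ even, and $\iota^*\cl_\R(\Delta_i)=0$ in $H^1(\bS^1)$ for every $i$ with $m_i$ odd. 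Consequently the right-hand side of the asserted identity equals $\sum_{x\in P,\ 4\nmid m_{i(x)}}1\in H^1(\bS^1)\simeq\Z/2$, and it suffices to produce, for each $x\in P$, a local class $c_x\in\Z/2$ with $\tep+\hep=\sum_{x\in P}c_x$ and $c_x\neq 0$ precisely when $4\nmid m_{i(x)}$.

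First I would treat the \emph{étale locus} $\Sigma^*:=\iota^{-1}(S^*)=\bS^1\setminus P$. Over $S^*$ the sheaf $\sQ$ is locally constant and both covers are built from the monodromy of the finite étale cover $p^{-1}(S^*)\to S^*$: by construction $\hSI|_{\Sigma^*}$ is the pull-back along $\iota$ of the alternating (signature) cover $\hat S^*\to S^*$, while the description of $\tSI$ through the sheaf $\sG$ of (\ref{defG})--(\ref{Gext}), combined with the explicit formula $\phi(\bar a)=\sum_j a_j$ for the trace (so that $\phi_2$ is the reduction mod $2$ of $\sum_j$), lets one identify $\tSI|_{\Sigma^*}$ with the same cover; concretely, $w_1$ of the $\mathbb F_2$-local system $\sQ/2$ computes both. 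This yields a canonical isomorphism $\Phi\colon\tSI|_{\Sigma^*}\xrightarrow{\ \sim\ }\hSI|_{\Sigma^*}$. Since $\tSI$ and $\hSI$ are double covers of $\bS^1$, an elementary relative-cohomology argument (using $H^1(\bS^1,\Sigma^*;\Z/2)\simeq\bigoplus_{x\in P}\Z/2$ together with $H^1(\Sigma^*,\Z/2)=0$ when $P\neq\varnothing$, the case $P=\varnothing$ being handled directly by $\Phi$) shows that $\tep+\hep$ is the sum over $x\in P$ of the obstruction $c_x\in\Z/2$ to extending $\Phi$ across the point $x$; here $c_x$ is intrinsic, being the discrepancy between the ways $\tSI$ and $\hSI$ are continued across $\iota^{-1}(\Delta)$ on a small arc around $x$, and is unchanged by the auxiliary trivialization choices because these alter the two sides equally.

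The crux is the purely local computation of $c_x$. Near $x$ one has the standard model: an analytic neighbourhood $D_v\times D_u\subset S(\C)$ in which $\Delta_{i(x)}=\{v=0\}$, the real structure is $\R^2$, and $\iota(\bS^1)$ is the real axis $\{u=0,\ v\in\R\}$, while $p^{-1}(D_v\times D_u)=\bigsqcup_{y\in F_{i(x)}}W_y$ with each $W_y\to D_v\times D_u$ given by $w\mapsto w^{e(y)}$; since $x\notin p(T(\R))$, complex conjugation acts freely on $F_{i(x)}$, pairing $y\leftrightarrow\bar y$ with $e(y)=e(\bar y)$, so $F_{i(x)}=F_{i(x)}^{+}\sqcup F_{i(x)}^{-}$ with $m_{i(x)}=2\sum_{y\in F_{i(x)}^{+}}(e(y)-1)$. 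In this model the monodromy of $p$ around $\Delta_{i(x)}$ is a product of cycles of lengths $e(y)$, whose signature is $(-1)^{m_{i(x)}}=+1$, so $\hSI$ is continued \emph{trivially} across $x$; on the other hand, unwinding the construction of $\sG$ --- the $(1+\sigma)$-invariants of $\iota^*\sQ(1)$, which, because $\sigma$ acts on $\Z(1)$ by $-1$ and permutes the fibre freely, amount to a choice of "square root" of the fibre indexed by $F_{i(x)}^{+}$, together with $\phi_2$ --- shows that $\tSI$ is continued across $x$ with the sign $(-1)^{\sum_{y\in F_{i(x)}^{+}}(e(y)-1)}=(-1)^{m_{i(x)}/2}$. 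Hence $c_x\equiv\tfrac12 m_{i(x)}\pmod 2$, nonzero exactly when $4\nmid m_{i(x)}$; summing over $x\in P$ and reorganising by component of $\Delta$ gives $\tep+\hep=\sum_i\#\{x\in P:\ i(x)=i,\ 4\nmid m_i\}=\sum_{4\nmid m_i}\iota^*\cl_\R(\Delta_i)$, as wanted.

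The main obstacle is precisely this local computation: making rigorous, in terms of $\sG$ and the twist $\Z(1)$, why $\tSI$ only detects the \emph{half}-ramification $\tfrac12 m_{i(x)}$ while $\hSI$ detects its parity --- this is the source of the distinction between $4\mid m_i$ and $4\nmid m_i$ rather than between $2\mid m_i$ and $2\nmid m_i$ --- and verifying that the local continuations used to define $c_x$ are compatible with the global identification $\Phi$ over $\Sigma^*$, so that the $c_x$ genuinely sum to $\tep+\hep$. A secondary technical point is the identification carried out over the étale locus, which rests on Lemma \ref{Gzeta} (so that $\tSI$ is governed by $[\phi(\zeta)]_1$), on the explicit shape of $\psi$ and $\phi$ from \S\ref{toposubsec}, and on the rigidity of $\phi(\zeta)$ over $S^*$ imposed by $p^*\alpha=0$.
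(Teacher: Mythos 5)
Your overall strategy is the same as the paper's: both arguments (1) produce an isomorphism between the two double covers over $W=\bS^1\setminus\iota^{-1}(\Delta(\R))$, and (2) show that the obstruction to extending it across a crossing point of $\Delta_i$ is $m_i/2\bmod 2$. Your preliminary reductions are correct and match the paper: since $\iota$ lands in $\Theta^0\subset V(\R)$, the loop avoids every $\Delta_i$ with $m_i$ odd, so those terms contribute nothing; and $\tep+\hep$ is indeed the sum over the crossing points of the local discrepancies $c_x$.

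There are, however, two genuine gaps, both at the points you yourself flag. First, the identification over the \'etale locus is not ``computed by $w_1$ of the $\mathbb{F}_2$-local system $\sQ/2$'': over $\mathbb{F}_2$ the determinant of a permutation representation is trivial, so the signature character is invisible in $\sQ/2$ and this cannot produce the isomorphism. The isomorphism $\tSI|_W\simeq\hSI|_W$ exists only because of the real structure: as $p(T(\R))\cap\Theta=\varnothing$, conjugation acts freely on each fibre $p^{-1}(\iota(x))$, and the paper's $\chi_x$ sends an ordering $(a_1,\dots,a_{n/2},\sigma(a_1),\dots,\sigma(a_{n/2}))$, well defined up to $\mathfrak{A}_n$, to the class in $\sG_x$ of the function equal to $\sqrt{-1}$ on the $a_j$ and to $0$ on the $\sigma(a_j)$; one must check this is unchanged exactly under even reorderings. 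Second, the local sign $(-1)^{m_i/2}$ is asserted via the ``square root of the fibre indexed by $F^+$'' heuristic but never derived, and this computation \emph{is} the content of the lemma. The paper's argument chooses a path $c$ from $x^+$ to $x^-$ in $\Lambda\setminus\Delta(\C)$ with $c^{-1}\bar c$ generating the local fundamental group, takes a monodromy-stable half $\{a_j^+\}$ of the fibre (possible precisely because no sheet is monodromy-equivalent to its conjugate), and compares the monodromy-transported ordering $(a^-,b^-)$ with the conjugation-adapted one $(a^-,\sigma(a^-))$: the discrepancy is a permutation whose signature is $(-1)^{m_i/2}$ because the full local monodromy has signature $(-1)^{m_i}$ and splits into two conjugate factors, one on each half. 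One must also verify, via an explicit section of $\sG$ over a neighbourhood of $x$ in $\bS^1$, that the $\sG$-side continuation is compatible with the conjugation-adapted orderings, so that the whole discrepancy sits on the $\hSI$-side. Without these verifications the distinction between $4\mid m_i$ and $2\mid m_i$ is not established.
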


\begin{proof}
 Define $W:=\{x\in\bS^1\mid \iota(x)\notin\Delta(\R)\}$: it is the complement of finitely many points in $\bS^1$. Our proof has two steps. First, we construct an isomorphism $\chi:\hW\xrightarrow{\sim}\tW$ between the double covers $\hrho:\hW\to W$ and $\trho:\tW\to W$ of $W$ obtained by restricting $\hrho$ and $\trho$ to $W$. Second, we fix $x\in \bS^1$ with $\iota(x)\in \Delta_i(\R)$, and prove that the isomorphism $\chi$ extends through $x$ if and only if $4\mid m_i$. The lemma follows.

Let $x\in W$, and let $(a_1,\dots,a_{n/2},b_1,\dots,b_{n/2})$ be an ordering of $p^{-1}(x)\subset T(\C)$ defining an element $y\in\hrho^{-1}(x)$. Up to reordering using an even permutation, we may assume that the complex conjugation acts by $\sigma(a_j)=b_j$. The function $f:p^{-1}(x)\to \Z(1)$ defined by $f(a_j)=\sqrt{-1}$ and $f(b_j)=0$, viewed as an element in $(p_*\Z(1))_x$, 
induces an element in $\sQ(1)_x$ that one verifies to be $G$-invariant. As a consequence, $(0,f)\in \Z/2\oplus\sQ(1)^G_x$ induces, via (\ref{defG}), an element $z\in\sG_x$. The image $\phi(f)$ of $z$ by the right arrow  of  (\ref{Gext}) is the non-zero element of $\Z/n(1)^G\simeq \Z/2$. Thus, $z$ may be viewed as an element in $\trho^{-1}(x)$. It is a verification to check that changing the ordering of $p^{-1}(x)$ by a permutation changes the element  $z\in\trho^{-1}(x)$ if and only if the permutation is odd. The assignment $y\mapsto z$ thus induces a well-defined canonical bijection $\chi_x:\hrho^{-1}(x)\to \trho^{-1}(x)$, giving rise to a canonical isomorphism 
$\chi:\hW\xrightarrow{\sim}\tW$, and completing the first step of the proof.

We proceed to the second step. Fix $x\in \bS^1\setminus W$, and let $\Delta_i\subset \Delta$ be the component such that $\iota(x)\in \Delta_i(\R)$.
Let $(z_1,z_2)\in\cO_{S,\iota(x)}$ be a local system of parameters at $\iota(x)$ such that $z_1$ is a local equation of $\Delta$ at $\iota(x)$.
The rational map $(z_1,z_2):S(\C)\dashrightarrow \C^2$ is a local diffeomorphism at $\iota(x)$. Pulling back a small enough ball centered at $(0,0)\in\C^2$ yields a $G$-stable contractible neighbourhood $\Lambda$ of $\iota(x)$ in $ V(\C)\subset S(\C)$, isomorphic to the unit ball in $\C^2$ with coordinates $(z_1,z_2)$, such that $\Delta(\C)\cap\Lambda$ is defined in $\Lambda$ by the equation $z_1=0$, and on which $\sigma$ acts by $(z_1,z_2)\mapsto (\bar{z}_1,\bar{z}_2)$.
 Let $\Lambda_{\bS^1}$ be a contractible neighbourhood of $x$ in $\bS^1$ such that $\iota(\Lambda_{\bS^1})\subset \Lambda$ and such that $\Lambda_{\bS^1}\cap\iota^{-1}(\Delta(\C))=\{x\}$. Let $x^+, x^-\in \Lambda_{\bS^1}$ be two points in the two connected components of $\Lambda_{\bS^1}\setminus\{x\}$. Let $c:[0,1]\to \Lambda\setminus(\Delta(\C)\cap\Lambda)$ be a continuous path joining $\iota(x^+)$ and $\iota(x^-)$, and let $\bar{c}:[0,1]\to \Lambda\setminus(\Delta(\C)\cap\Lambda)$ be defined by $\bar{c}(t)=\sigma(c(t))$. Our system of coordinates in $\Lambda$ makes it clear that one may choose $c$ so that the loop $c^{-1}\bar{c}$
is a generator of $\pi_1(\Lambda\setminus(\Delta(\C)\cap\Lambda),\iota(x^-))\simeq\Z$.

Consider the diagram of bijections of sets with two elements:
\begin{equation}
\begin{aligned}
\label{diagbij}
\xymatrix
@R=0.3cm 
{
\hrho^{-1}(x^+)\ar[r]_{\sim}^{\chi_{x^+}}\ar_{\hu}^{\wr}[d]&  \trho^{-1}(x^+)\ar^{\wr}_{\tu}[d]\\
\hrho^{-1}(x^-) \ar_{\sim}^{\chi_{x^-}}[r]&  \trho^{-1}(x^-)
}
\end{aligned}
\end{equation}
whose vertical isomorphisms stem from the unique trivializations of $\hrho$ and $\trho$ on $\Lambda_{\bS^1}$. Our goal is to understand when (\ref{diagbij}) commutes. Since $p^{-1}(x)\subset T(\C)$ contains no real points, no point $a\in p^{-1}(x^+)$ belongs to the same orbit as $\sigma(a)$ under the monodromy action of $\pi_1(\Lambda\setminus(\Delta(\C)\cap\Lambda),\iota(x^+))\simeq \Z$. It follows that there exists an ordering $(a_1^+,\dots,a_{n/2}^+,b_1^+,\dots,b_{n/2}^+)$ of 
$p^{-1}(x^+)\subset T(\C)$ with the property that $\{a_1^+,\dots, a_{n/2}^+\}$ is stable under the monodromy action of 
$\pi_1(\Lambda\setminus(\Delta(\C)\cap\Lambda),\iota(x^+))$, and that $\sigma(a^+_j)=b^+_{j}$. Let $y^+\in \hrho^{-1}(x^+)$ be the point thus defined. Since the map $p:T(\C)\to S(\C)$  is unramified over the image of $c$, one may lift $c$ to a path in $T(\C)$ going from $a_j^+$ (resp. $b_j^+$) to a point that we denote by $a_j^-$ (resp. $b_j^-$). The ordering 
$(a_1^-,\dots,a_{n/2}^-,b_1^-,\dots,b_{n/2}^-)$ of $p^{-1}(x^-)\subset T(\C)$ represents $\hu(y^+)\in \hrho^{-1}(x^-)$ because $\hp:\hS(\C)\to S(\C)$ is unramified over the contractible set $\Lambda\subset V(\C)$. 

The sets $\{a_1^-,\dots,a_{n/2}^-\}$ and  $\{b_1^-,\dots,b_{n/2}^-\}$ are stable under the monodromy action of $\pi_1(\Lambda\setminus(\Delta(\C)\cap\Lambda),\iota(x^-))\simeq \Z$ on $p^{-1}(x^-)$, by our choice of $\{a_1^+,\dots, a_{n/2}^+\}$. The generator $c^{-1}\bar{c}$ acts on $\{a_1^-,\dots,a_{n/2}^-\}$ and $\{b_1^-,\dots,b_{n/2}^-\}$ by the permutations $(a_1^-,\dots,a_{n/2}^-)\mapsto (\sigma(b_1^-),\dots,\sigma(b_{n/2}^-))$ and $(b_1^-,\dots,b_{n/2}^-)\mapsto (\sigma(a_1^-),\dots,\sigma(a_{n/2}^-))$,
since $\sigma(a_j^+)=b_j^+$. These descriptions show that these two permutations have the same decompositions as products of cycles. We deduce that they are even if $4\mid m_i$ and odd otherwise. The ordering $(a_1^-,\dots,a_{n/2}^-,\sigma(a_1^-),\dots,\sigma(a_{n/2}^-))$ of $p^{-1}(x^-)$ thus induces a point $y^-\in \hrho^{-1}(x^-)$ that is equal to $\hu(y^+)$ if and only if $4\mid m_i$.

Since $\{a_1^+,\dots, a_{n/2}^+\}$ is stable under the monodromy, there exists a continuous function on $p^{-1}(\Lambda)\subset T(\C)$ that is equal to $\sqrt{-1}$ on the $a_j^+$ and on the $a_j^-$, and that is equal to $0$ on the $b_j^+$ and the $b_j^-$. Viewed as section in $H^0(\Lambda_{\bS^1},\iota^*p_*\Z(1))$, it induces a section $g\in H^0(\Lambda_{\bS^1},(\iota^*\sQ(1))^G)$. The section of $H^0(\Lambda_{\bS^1},\sG(1))$  induced by $(0,g)$ in  (\ref{defG}) has stalk $\chi_{x^+}(y^+)$ at $x^+$ and $\chi_{x^-}(y^-)$ at $x^-$, certifying that $\tu(\chi_{x^+}(y^+))=\chi_{x^-}(y^-)$.  We have proven that  (\ref{diagbij}) commutes if and only if $4\mid m_i$, thus completing the second step of the proof.
\end{proof}

It is now possible to prove Proposition \ref{algebraicity}.

\begin{proof}[Proof of Proposition \ref{algebraicity}]
Let $\he\in H^1_G(V(\C),\Z/2)$ be the class associated to the finite \'etale double cover $\hp:\hV\to V$ as in \S\ref{doublesec}. 
By Lemma \ref{algdouble} and by the surjectivity of the restriction map $\Pic(S)\to\Pic(V)$, there exists a line bundle $\varphi\in \Pic(S)$ such that $[\he]_1=\cl_{\R}(\varphi)|_{V(\R)}\in H^1(V(\R))$. We are going to prove the identity
\begin{equation}
\label{identityprecise}
([\phi(\zeta)]_1)|_{\Theta^0}=\Big(\cl_{\R}(\varphi)+\sum_{4\nmid m_i}\cl_{\R}(\Delta_i)\Big)|_{\Theta^0}\in H^1(\Theta^0),
\end{equation}
that implies the proposition.
The group $H_1(\Theta^0,\Z/2)$ is generated by classes of $\co$ loops $\bS^1\to \Theta^0$. One can in fact restrict to classes of $\ci$ embeddings
 $\iota:\bS^1\to\Theta^0$ meeting the ramification locus of $p$ transversally at general points (combine $\ci$ approximation,
 transversality theorems,
and replace a loop by a union of loops to remove self-intersections). By duality, it suffices to prove that
\begin{equation}
\label{identityprecise2}
\iota^*[\phi(\zeta)]_1=\iota^*\cl_{\R}(\varphi)+\sum_{4\nmid m_i}\iota^*\cl_{\R}(\Delta_i)\in H^1(\bS^1),
\end{equation}
for any such embedding $\iota$. The commutativity of the diagram
\begin{equation*}
\begin{aligned}
\xymatrix
@R=0.3cm 
@C=0.5cm 
{
H^1_G(V(\C),\Z/2)\ar[r]\ar[d]&H^1_G(V(\R),\Z/2)\ar[d]  \\
H^1(V(\C),\Z/2)\ar[r]&H^1(V(\R),\Z/2)\ar^(.65){\iota^*}[r]&H^1(\bS^1)
}
\end{aligned}
\end{equation*}
shows that $\hep=\iota^*[\he]_1\in H^1(\bS^1)$, so that $\hep=\iota^*\cl_{\R}(\varphi)\in H^1(\bS^1)$. The identity (\ref{identityprecise2}) then follows from Lemmas \ref{Gzeta} and \ref{algprecise}, proving the proposition.
\end{proof}

\subsection{The obstruction}
\label{subobstr}

We keep the notation of \S\S\ref{toposubsec}--\ref{algebraicitypar}.

\begin{proof}[Proof of Proposition \ref{topoobstr}]
By Lemma \ref{etazetauseful}, the reduction of the class $p^*\eta+\psi(\zeta)\in H^2_G(T_{U^0}^{}(\C),\Z(1))$ modulo $n$ is $p^*\talpha|_{U^0}$.
 Since $p^*\alpha|_{U^0}\in\Br(T^{}_{U^0})\subset \Br(\R(T))$ vani\-shes by hypothesis, it follows from (\ref{Brdeco}) that there exists $\gamma\in H^2_G(T_{U^0}^{}(\C),\Z(1))$ and $\varphi\in \Pic(T_{U_0})$ such that:
\begin{equation}
\label{equalityupstairs}
p^*\eta+\psi(\zeta)=n\gamma+\cl(\varphi)\in H^2_G(T^{}_{U^0}(\C),\Z(1)).
\end{equation}
The class $p_*\psi(\zeta)$ vanishes because $\Tr\circ\psi=0$. Pushing forward (\ref{equalityupstairs}) by $p$ yields:
\begin{equation}
\label{equalitydownstairs}
n(\eta-p_*\gamma)=\cl(p_*\varphi)\in H^2_G(U^0(\C),\Z(1)).
\end{equation}
Since the cokernel of Krasnov's cycle class map $\cl:\Pic(U^0)\to H^2_G(U^0(\C),\Z(1))$ is torsion-free \cite[Proposition 2.9]{BWI}, and since the restriction map $\Pic(S)\to\Pic(U^0)$ is surjective, there exists $\mu\in \Pic(S)$ such that $\eta-p_*\gamma=\cl(\mu|_{U^0})\in H^2_G(U^0(\C),\Z(1))$. 
Applying (\ref{krasnovcompa}) shows that
\begin{equation}
\label{eqrestreinte}[\eta]_1-[p_*\gamma]_1=\cl_{\R}(\mu|_{U^0})\in H^1(U^0(\R)).
\end{equation}
By
 \cite[Proposition 1.22]{BWI}, $[p_*\gamma]_1=p_*([\gamma]_1)$. Since $p(T(\R))\cap\Theta=\varnothing$,
one has $(p_*([\gamma]_1))|_{\Theta^0}=0\in H^1(\Theta^0)$. By (\ref{eqrestreinte}), we see that $([\eta]_1)|_{\Theta^0}=\cl_{\R}(\mu)|_{\Theta^0}\in H^1(\Theta^0)$.
By Proposition \ref{algebraicity}, $([\phi(\zeta)]_1)|_{\Theta^0}=\cl_{\R}(\theta)|_{\Theta^0}\in H^1(\Theta^0)$ for some $\theta\in \Pic(S)$. By definition of $\zeta$ and $\eta$, $([\xi]_1)|_{\Theta^0}=\cl_{\R}(\nu)|_{\Theta^0}\in H^1(\Theta^0)$ for $\nu=\theta+\mu$.
Since $U\setminus U^0$ is finite, the map $H^1(\Theta)\hookrightarrow H^1(\Theta^0)$ is injective, and $([\xi]_1)|_{\Theta}=\cl_{\R}(\nu)|_{\Theta}\in H^1(\Theta)$.
\end{proof}

We may now complete the proof of Theorem \ref{pinr}.

\begin{proof}[Proof of Theorem \ref{pinr}]
The first statement is a consequence of Proposition \ref{proofarbitraryperiod}.

To prove the second statement, let $S$ be a connected smooth projective surface over $\R$, and let $\alpha\in\Br(S)\subset\Br(\R(S))$ be a class of even period $n$. By de Jong's theorem \cite{deJong} and a norm argument, $\ind(\alpha)$ is equal to $n$ or $2n$. Suppose that $\ind(\alpha)=n$. Then there exists a degree $n$ extension $L/\R(S)$
such that $\alpha_{L}=0$. Let $T$ be a connected smooth projective surface over $\R$ with function field $L$ such that $\R(S)\subset L$ is induced by a morphism $p:T\to S$. 
Let $\talpha\in H^2_G(S(\C),\Z/n(1))$ be a lift of $\alpha$ in (\ref{Brlift}). The reduction $\xi\in H^2_G(S(\C),\Z/2)$ of $\talpha$ modulo $2$ is a lift of $\frac{n}{2}\alpha\in\Br(S)[2]$ in (\ref{Brlift}).
 Proposition \ref{topoobstr} shows the existence of $\nu\in\Pic(S)$
such that  $([\xi]_1)|_\Theta=\cl_{\R}(\nu)|_\Theta \in H^1(\Theta)$. Let $\xi'\in H^2_G(S(\C),\Z/2)$ be the difference between $\xi$ and the reduction of $\cl(\nu)$ modulo $2$. It is another lift of $\frac{n}{2}\alpha\in \Br(S)[2]$ in (\ref{Brlift}), and satisfies $([\xi']_1)|_\Theta=0\in H^1(\Theta)$ by (\ref{krasnovcompa}). This concludes the proof.
\end{proof}

\section{Examples}
\label{secex}

We now illustrate the real period-index problem with a few examples.

\subsection{Real Enriques surfaces}
\label{EnriquesR}

In \S\ref{EnriquesR}, we describe which Enriques surfaces $S$ over $\R$ carry Brauer classes $\alpha\in\Br(S)$ with $\per(\alpha)\neq\ind(\alpha)$, thus proving Theorem \ref{thEnr}. 

The geometry of real Enriques surfaces $S$ is well understood. The Brauer group $\Br(S)$ and the image of the Borel-Haefliger map $\cl_{\R}:\Pic(S)\to H^1(S(\R))$ have been computed by Mangolte and van Hamel \cite[Theorems 1.3 and 4.4]{MvH} (see also \cite[Remark 3.18 (ii)]{BWI}), the Witt group $W(S)$ has been computed by Sujatha and van Hamel \cite[Theorems 2.6 and 3.3]{SvH}, and their possible topological types  have been classified by Degtyarev, Itenberg and Kharlamov \cite{DIK}.

We will rely on Proposition \ref{duality} below,
that is an application of the duality theorem \cite[Theorem 1.12]{BWI} proven in a joint work with Wittenberg.
If $X$ is a connected smooth projective variety of dimension $d$ over $\R$, we let
 $\deg: H^d(X(\R))\to\Z/2$ be the degree map. We also denote by $\deg: H^*(X(\R))\to \Z/2$ the map constructed as the composition of the projection on $H^d(X(\R))$ and of the degree map.

\begin{prop}
\label{duality}
Let $S$ be a connected smooth projective surface over $\R$ and let $w=(w_i)\in H^*(S(\R))$ be the total Stiefel--Whitney class of $S(\R)$.

Then the image of the composition
$$\rho:H^2_G(S(\C),\Z/2)\to H^2_G(S(\R),\Z/2)\xrightarrow{\sim} H^0\soplus H^1\soplus H^2(S(\R))\to H^0\soplus H^1(S(\R))$$
of the restriction to the real locus, of the canonical decomposition (\ref{can2}) and of the projection is
the set of $(a_0,a_1)\in H^0\soplus H^1(S(\R))$ such that, for every class $\delta\in H^1_G(S(\C),\Z/2)$ with  $[\delta]_0=0$, one has $\deg([\delta]_1\cupp(a_1+a_0\cupp w_1))=0$.
\end{prop}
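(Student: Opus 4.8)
The strategy is to recognize $\rho$ as (half of) a map appearing in the long exact sequences of \S\ref{subsecrest}, and then to identify its image via the duality theorem \cite[Theorem 1.12]{BWI}. First I would dispose of the factor $H^0(S(\R))$: since $S(\R)$ may be disconnected, the component $a_0$ records which connected components of $S(\R)$ carry the "constant" piece of the class, and one checks directly that every value of $a_0$ is attained (using the splitting $H^2_G(S(\C),\Z/2)\supset H^2(S(\C),\Z/2)^G$ together with the classes pulled back from $H^2_G(\mathrm{pt},\Z/2)$ on each component, i.e.\ the classes whose restriction to a component is $\omega^2$). So the content is to determine, for each admissible $a_0$, which $a_1\in H^1(S(\R))$ occur. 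Equivalently, after fixing $a_0$, one must compute the image of the composite $H^2_G(S(\C),\Z/2)\to H^1(S(\R))$ landing in the $a_1$-slot.

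The key step is a pairing computation. The map $\rho$ fits into the long exact sequence for restriction to the real locus: the cokernel of $H^2_G(S(\C),\Z/2)\to H^2_G(S(\R),\Z/2)$ is controlled, via Poincar\'e--Lefschetz duality on $S(\C)$ equivariantly, by the kernel of a dual map in complementary degree, which by the duality theorem \cite[Theorem 1.12]{BWI} is a perfect pairing between appropriate equivariant cohomology groups valued in $\Z/2$ through $\deg\circ(\cupp)$. Concretely, $(a_0,a_1)$ lies in the image of $\rho$ if and only if $(a_0,a_1)$ pairs trivially against everything in the kernel of the dual restriction map $H^1_G(S(\C),\Z/2)\to H^1_G(S(\R),\Z/2)$, i.e.\ against classes $\delta\in H^1_G(S(\C),\Z/2)$ whose restriction to $S(\R)$ has vanishing $H^0$-component, which is exactly the condition $[\delta]_0=0$. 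The pairing itself, computed in terms of the canonical decompositions (\ref{can2}) and the cup-product formula \cite[(1.28)]{BWI}, unwinds: the equivariant cup product $\delta\cupp(\text{lift of }(a_0,a_1))$ restricted to $S(\R)$ and evaluated in the top degree $H^2(S(\R))$ produces, by \cite[(1.28)]{BWI}, the term $[\delta]_1\cupp a_1$ plus a correction term coming from the way the Borel construction twists the product; that correction term is precisely $a_0\cupp([\delta]_1\cupp w_1)$, where $w_1=w_1(S(\R))$ enters because the relative orientation class of $S(\R)\hookrightarrow S(\C)$ in equivariant cohomology is governed by the first Stiefel--Whitney class (this is the standard appearance of $w_1$ in equivariant Poincar\'e duality for real loci, cf.\ \cite[\S 1.2]{BWI}). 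Rearranging, the condition becomes $\deg([\delta]_1\cupp(a_1+a_0\cupp w_1))=0$, as stated.

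The main obstacle will be the precise bookkeeping in the cup-product formula on the real locus: one must carefully track the shift in the grading of the canonical decomposition (\ref{can2}) under cup product with $\omega$ (the generator of $H^1(G)$) and verify that the Stiefel--Whitney correction is exactly $w_1$ and appears exactly multiplied by $a_0$, not, say, by a higher $w_i$ or with a different degree shift. This is where the hypothesis "$[\delta]_0=0$" is genuinely used: it kills the terms of the product that would otherwise contribute out of top degree, so that only the $[\delta]_1$-part of $\delta$ matters and only the pairing against the top cohomology of $S(\R)$ survives. The rest is routine: check surjectivity onto the $a_0$-coordinate as above, invoke \cite[Theorem 1.12]{BWI} to turn "image of $\rho$" into "annihilator of the kernel of the dual restriction", and identify that kernel with $\{\delta : [\delta]_0=0\}$ using that $H^1_G(S(\R),\Z/2)=H^0(S(\R))\oplus H^1(S(\R))$ and that the restriction $H^1_G(S(\C),\Z/2)\to H^1_G(S(\R),\Z/2)$ is, on $H^1$-components, controlled by $\Pic$ and thus need not be described explicitly for this argument.
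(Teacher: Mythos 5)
Your proposal assembles the right ingredients (the duality theorem of \cite[Theorem 1.12]{BWI}, the canonical decompositions, the role of $w_1$), but the central step is asserted rather than derived, and the assertion is not in a form that any cited theorem provides. You claim that $(a_0,a_1)\in\operatorname{im}(\rho)$ if and only if it annihilates the ``kernel of the dual restriction map'' --- which you then silently replace by the larger set $\{\delta : [\delta]_0=0\}$ (these are different: the kernel also forces $[\delta]_1=0$, and its annihilator under your pairing would be everything). More importantly, \cite[Theorem 1.12]{BWI} is not a statement about the image of the full restriction map $H^2_G(S(\C),\Z/2)\to H^*(S(\R))$; it is a duality between the image of $H^1_G(S(\C),\Z/2)\to H^0\oplus H^1(S(\R))$ and the image of a map out of $H^2_G(S(\C)\setminus S(\R),\Z/2)$ into $H^1\oplus H^2(S(\R))$. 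To extract the proposition from it one must first invoke the canonically split localization sequence $0\to H^2_{G,S(\R)}(S(\C),\Z/2)\to H^2_G(S(\C),\Z/2)\to H^2_G(S(\C)\setminus S(\R),\Z/2)\to 0$ of \cite[Proposition 1.3]{BWI} and compute the image of $\rho$ on each summand separately. This decomposition is the missing idea; without it the duality theorem does not apply to $\rho$.

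The decomposition also corrects two further points. First, $w_1$ does not enter as a ``correction term in the cup product on the Borel construction'': it enters because, via equivariant purity, the summand supported on $S(\R)$ is $H^0(S(\R))$ and its restriction back to $S(\R)$ is cup product with the total Stiefel--Whitney class (the class $\gamma$ of \cite[Definition 1.4]{BWI}, equal to $w$ by \cite[Remark 1.6 (i)]{BWI}); this summand therefore contributes exactly the pairs $(a_0,a_0\cupp w_1)$. Second, your argument for surjectivity onto the $a_0$-coordinate via classes pulled back from $H^2_G(\mathrm{pt},\Z/2)$ only produces constant $a_0$ (the connected components of $S(\R)$ are invisible to classes pulled back from a point); arbitrary $a_0$ is reached precisely through the purity summand just described. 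The complementary summand $H^2_G(S(\C)\setminus S(\R),\Z/2)$ contributes pairs $(0,a_1)$, and it is only to this summand that the duality theorem applies, giving that the admissible $a_1$ are those orthogonal to $\{[\delta]_1:[\delta]_0=0\}$; summing the two images then yields the stated condition $\deg([\delta]_1\cupp(a_1+a_0\cupp w_1))=0$.
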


\begin{proof}
Consider the diagram:
\begin{equation}
\label{diagEnr}
\begin{aligned}
\xymatrix
@R=0.3cm 
@C=0.22cm 
{
0\ar[r]&H^2_{G,S(\R)}(S(\C),\Z/2)\ar[r]\ar@{=}[d]&H^2_{G}(S(\C),\Z/2)\ar[r]\ar[d]&H^2_{G}(S(\C)\ssetminus S(\R),\Z/2)\ar@/_01.0pc/[l]_{\tau}\ar[r]&0  \\
&H^0(S(\R))\ar^(.388){\cupp w}[r]&H^0\soplus H^1\soplus H^2(S(\R))\ar[d]&\\
&&H^0\soplus H^1(S(\R))&
}
\end{aligned}
\end{equation}
whose top row stems from the long exact sequence of cohomology with support and is exact and canonically split by \cite[Proposition 1.3]{BWI}, whose left vertical identification is given by equivariant purity \cite[(1.20)]{BWI}, and whose middle column is the morphism $\rho$. The horizontal arrow making the diagram commute
is the cup-product by the class $\gamma\in H^*(S(\R))$ of \cite[Definition 1.4]{BWI}, that is equal to the total Stiefel-Whitney class $w$ by \cite[Remark 1.6 (i)]{BWI}. The canonical section $\tau$ of the top row of (\ref{diagEnr}) described in the proof of \cite[\S 1.3.1]{BWI} induces a decomposition 
\begin{equation}
\label{decdual}
H^2_{G}(S(\C),\Z/2)=H^2_{G,S(\R)}(S(\C),\Z/2)\oplus H^2_{G}(S(\C)\setminus S(\R),\Z/2).
\end{equation}
We now compute separately the image by $\rho$ of the two factors of (\ref{decdual}). 

A glance at diagram (\ref{diagEnr}) shows that the image of the factor $H^2_{G,S(\R)}(S(\C),\Z/2)$ by $\rho$ is equal to the set of $(a_0,a_1)\in H^0\oplus H^1(S(\R))$ such that $a_1=a_0\cupp w_1$.

The image by $\rho$ of the factor $H^2_{G}(S(\C)\setminus S(\R),\Z/2)$ consists of classes of the form $(0,a_1)\in H^0\oplus H^1(S(\R))$, in view of the construction of $\tau$ given in \cite[\S 1.3.1]{BWI}.
 We now describe what are the classes $a_1\in H^1(S(\R))$ that appear. The duality theorem \cite[Theorem 1.12]{BWI} (more precisely, the duality between the images of the maps denoted by $w_1$ and $w_2$ in \emph{loc.\ cit.}) shows, after unravelling  definitions, that
the image of the composition
$$H^2_{G}(S(\C)\ssetminus S(\R),\Z/2)\xrightarrow{\tau}H^2_{G}(S(\C),\Z/2)\to H^0\soplus H^1\soplus H^2(S(\R))\to H^1\soplus H^2(S(\R))$$
of $\tau$, of the restriction to $S(\R)$ and of (\ref{can2}), and of the projection, is dual to the image of the morphism $H^1_G(S(\C),\Z/2)\to H^0\oplus H^1(S(\R))$ given by the canonical decomposition (\ref{can2})  with respect to the natural pairing $(x,y)\mapsto\deg(x\cupp y)$. We deduce that the classes $a_1\in H^1(S(\R))$ that appear are exactly those that are orthogonal to $ [\delta]_1\in H^1(S(\R))$ for every $\delta\in H^1_G(S(\C),\Z/2)$ such that $[\delta]_0=0$.

Combining these two computations gives a complete description of the image of $\rho$, and proves the proposition.
\end{proof}

To prove Theorem \ref{thEnr}, we combine Proposition \ref{duality} in the case of an Enriques surface $S$ and \cite[Theorem 4.4]{MvH}. We still denote by $w=(w_i)\in H^*(S(\R))$ the total Stiefel-Whitney class of $S(\R)$.

\begin{proof}[Proof of Theorem \ref{thEnr}]
If $S(\R)=\varnothing$, the equality $\per(\alpha)=\ind(\alpha)$ holds for every $\alpha\in \Br(S)$ by Theorem \ref{pinr}. From now on, we suppose that $S(\R)\neq\varnothing$.

By (\ref{BrauerC}), one has
$\Br(S_{\C})\xrightarrow{\sim} H^3(S(\C),\Z)_{\tors}=\Z/2$  since $H^2(S,\cO_S)=0$. It follows that $\Br(S)$ is $4$-torsion (see also the more precise \cite[Theorem 1.3]{MvH}). If $\alpha\in\Br(S)$ has period $4$, $\alpha_{\C}\in \Br(S_{\C})$ has period $2$, hence index $2$ by de Jong's theorem \cite{deJong}. We deduce that $\alpha$ has index $4$. It remains to study classes $\alpha\in\Br(S)$ of period $2$.

Consider the diagram:
\begin{equation}
\label{BrEnr}
\begin{aligned}
\xymatrix
@R=0.3cm 
@C=0.5cm 
{
0\ar[r]&\Pic(S)/2\ar[r]\ar_{\cl_{\R}}[rd]& H^2_G(S(\C),\Z/2)\ar[d]\ar[r]&\Br(S)[2]\ar[r]&0  \\
&&H^1(S(\R))&
}
\end{aligned}
\end{equation}
whose top row is (\ref{Brlift}), whose vertical arrow is $\xi\mapsto[\xi]_1$, that commutes by (\ref{krasnovcompa}).
By \cite[Theorem 4.4]{MvH}, the image of $\cl_{\R}:\Pic(S)\to H^1(S(\R))$ is the orthogonal of $w_1\in H^1(S(\R))$. In particular, if $S(\R)$ is orientable, $\cl_{\R}$ is surjective \cite[Theorem 1.1]{MvH} and it follows from (\ref{BrEnr}) that every class $\alpha\in \Br(S)[2]$ of period $2$ has a lift $\talpha\in H^2_G(S(\C),\Z/2)$ with $[\talpha]_1=0$. Theorem \ref{pinr} then implies that $\ind(\alpha)=2$, proving the theorem in this case.  From now on, assume that $S(\R)$ is not orientable, i.e. that $w_1\neq 0$. By Poincar\'e duality, this implies that $\cl_{\R}$ is not surjective.

 The isomorphism
$H^1_G(S(\C),\Z/2)\simeq H^1_{\et}(S,\Z/2)$ between equivariant Betti cohomology and \'etale cohomology \cite[Corollary 15.3.1]{Scheiderer} and the Kummer exact sequence give a short exact sequence:
$$0\to\R^*/\R^{*2}\to H^1_G(S(\C),\Z/2)\simeq H^1_{\et}(S,\Z/2)\to\Pic(S)[2]\to 0.$$
The group $\Pic(S)[2]$ is isomorphic to $\Z/2$, and is generated by the canonical bundle $K_S$. The two preimages of $K_S$ in $H^1_{\et}(S,\Z/2)$ are the two finite \'etale covers of $S$ considered in \cite[\S 1.3]{Halves}, giving rise to the two halves of $S$. It follows that there exists a non-zero class $\delta\in H^1_G(S(\C),\Z/2)$ with  $[\delta]_0=0$ if and only if one of the two halves of $S$ is empty, and that this class is unique.

If the two halves of $S$ are nonempty, there exists no such class $\delta\in H^1_G(S(\C),\Z/2)$. By Proposition \ref{duality}, there exists $\talpha\in H^2_G(S(\C),\Z/2)$ such that $[\talpha]_0=1$ and $[\talpha]_1$ does not belong to the image of $\cl_{\R}$. The induced class $\alpha\in\Br(S)[2]$ does not have index $2$ by Theorem \ref{pinr} and (\ref{BrEnr}), proving the theorem in this case.

Suppose from now on that $S$ has exactly one nonempty half. Then there is a unique $\delta\in H^1_G(S(\C),\Z/2)$ with  $[\delta]_0=0$. By Lemma \ref{algdouble} and \cite[Th\'eor\` eme 4]{Kahn}, one has $[\delta]_1=\cl_{\R}(K_S)=w_1$. 
Note that if $\Sigma$ is a connected component of $S(\R)$, one has $\congru{w_1(\Sigma)^2}{\chi(\Sigma)}{2}$ by classification of compact $\ci$ surfaces.

Assume first that $S(\R)$ has an odd number of connected components with odd Euler characteristic. Equivalently, $\deg(w_1^2)\neq 0$. By Proposition \ref{duality}, there exists $\talpha\in H^2_G(S(\C),\Z/2)$ such that  $[\talpha]_0=1$ and $[\talpha]_1=w_1$. Since $w_1$ is not in the image of $\cl_{\R}$ because $\deg(w_1^2)\neq 0$,
Theorem \ref{pinr} and (\ref{BrEnr}) show that the Brauer class $\alpha\in\Br(S)[2]$ induced by $\talpha$ had index $4$.

To conclude, assume that $S(\R)$ has an even number of connected components with odd Euler characteristic. We choose a class $\alpha\in\Br(S)[2]$ of period $2$, and a lift $\talpha\in H^2_G(S(\C),\Z/2)$ of $\alpha$ in (\ref{BrEnr}).
By Proposition \ref{duality}, $\deg([\talpha]_1\cupp w_1\splus[\talpha]_0\cupp w_1^2)\s=0$. If $\deg([\talpha]_1\cupp w_1)=0$, define $a:=[\talpha]_1\in H^1(S(\R))$. Otherwise, $\deg([\talpha]_0\cupp w_1^2)\neq 0$ so that there exists a connected component $\Sigma\subset S(\R)$ of odd Euler characteristic such that $([\talpha]_0)|_{\Sigma}= 0$. In this case, define $a\in H^1(S(\R))$ by $a|_{S(\R)\setminus \Sigma}=([\talpha]_1)|_{S(\R)\setminus \Sigma}$ and $a|_\Sigma=([\talpha]_1)|_{\Sigma}+w_1(\Sigma)$. In both cases, $\deg(a\cupp w_1)=0$, so that there exists $\theta\in \Pic(S)$ such that $\cl_{\R}(\theta)=a$ by  \cite[Theorem 4.4]{MvH}. Modifying $\talpha$ by the image of $\theta$ in (\ref{BrEnr}) and applying Theorem \ref{pinr} shows that $\ind(\alpha)=2$ and concludes.
\end{proof}
\subsection{A K3 surface over a non-archimedean real closed field}
\label{K3nonarch}
We now prove Proposition \ref{propK3nonarch}, thus showing that Theorem \ref{pinr} fails over general real closed fields such as $\bK:=\cup_n \R((t^{1/n}))$. The surface $S$ we use is exactly that of \cite[Example 15.2.2]{BCR}. It has the property that no rational function on $S$ is positive on one of the semi-algebraic connected components of $S(\bK)$ and negative on the others. 

\begin{proof}[Proof of Proposition \ref{propK3nonarch}]
Let $[u:v:w]$ be coordinates on $\bP^2_{\bK}$, where $\bK$ is the real closed field $\cup_n \R((t^{1/n}))$. Consider the sextic double cover $\cX$ of $\bP^2_{\R[[t]]}$ defined by:
$$\cX:=\{z^2=(w^2-u^2-v^2)u^2v^2-tw^6\}.$$
Since $\cX_{\bK}$ has rational double points as singularities, its minimal resolution of singularities  is a K3 surface $S$ over $\bK$. One checks that $S(\bK)$ has four semi-algebraic connected components separated by the signs of $u/w$ and $v/w$, that are semi-algebraically isomorphic to spheres. In particular, $H^1(S(\bK),\Z/2)=0$. 

Let $\Xi\subset S(\bK)$ be the connected component such that $u/w,v/w>0$.
By \cite[Proposition 3.1.2]{CTParimala} (see also \cite[Theorem 2.8]{vanHamel}), there exists $\alpha\in\Br(S)[2]$ such that $\alpha$ is trivial in restriction to $x\in S(\bK)$ if and only if $x\in\Xi$. Suppose for contradiction that $\alpha$ has index $2$. Then it is a quaternion class: there exist $f,g\in\bK(S)^*$ such that $\alpha=(f,g)\in \Br(\R(S))$. In particular, if $x\in S(\bK)$ lies outside of the zeros and poles of $f$ and $g$,  one has $x\in \Xi$ if and only at least one of $f(x)$ and $g(x)$ is positive.

Let $n$ be such that $f,g \in\R((t^{1/n}))(\cX_{\R((t^{1/n}))})^*$.
Multiplying $f$ and $g$ by an appropriate power of $t^{1/n}$, we may specialize them to rational functions $f_0,g_0\in\R(\cX_{\R})^*$ on the special fiber $\cX_{\R}$ of $\cX$.
Let $Q$ be the normalization of $\cX_{\R}$: it is the quadric $Q=\{z^2=w^2-u^2-v^2\}\subset\bP^3_{\R}$ over $\R$. View $f_0,g_0$ as rational functions on $Q$. By our choice of $f$ and $g$, if $x\in Q(\R)$ is such that $u(x),v(x)\neq 0$ and lies outside of the poles of $f_0$ and $g_0$, then at least one of $f_0(x)$ and $g_0(x)$ is positive if and only if $u/w(x),v/w(x)>0$.

Since the signs of $f_0$ and $g_0$ are constant in a neighbourhood of $[u:v:w:z]=[0:-1:1:0]\in Q(\R)$, we deduce that the orders of vanishing of $f_0$ and $g_0$ along the divisor $D:=\{u=0\}\subset Q$ are even. It follows that if $x\in Q(\R)$ is chosen such that $w\neq 0$, $u=0$ and $v/w(x)>0$, and such that $x$ does not belong to any divisor of poles of $f_0$ or $g_0$ distinct of $D$, then the signs of $f_0$ and $g_0$ are constant in a neighbourhood of $x\in Q(\R)$. This is the required contradiction.
\end{proof}

\bibliographystyle{plain}
\bibliography{realperiodindex}

\end{document}